\documentclass{tran-l}

% ===== 基础包与定理环境 =====
\usepackage{verbatim}
\usepackage{appendix}
 \usepackage{amssymb}
\usepackage{xcolor}
\newtheorem{theorem}{Theorem}[section]
\newtheorem{lemma}[theorem]{Lemma}
\newtheorem{proposition}[theorem]{Proposition}
\newtheorem*{claim}{Claim}

\newtheorem{corollary}[theorem]{Corollary}
\theoremstyle{definition}
\newtheorem{definition}[theorem]{Definition}
\newtheorem{example}[theorem]{Example}

\theoremstyle{remark}
\newtheorem{remark}[theorem]{Remark}
\numberwithin{equation}{section}

\let\origcolor\color

% ===== 依赖（与你最初相同） =====
\usepackage{marginnote}               % 数学环境中也安全
\usepackage{tikz}
\usetikzlibrary{calc,chains,fit}
\setlength{\marginparwidth}{25mm}
\newcommand{\changed}[1]{\textcolor{blue}{#1}}
\newcommand{\changednew}[1]{{\origcolor{purple}#1}}

% ================== SWITCHES（保持原样） ==================
\newif\ifpubinfo      \pubinfofalse       % 1) show journal info
\newif\ifannotations  \annotationsfalse % 2) enable both {\color} and \refchange

% 关闭注解时：提供“哑实现”，兼容 \refchange[<skip>]{<nums>}{<text>}
\usepackage{xparse} % 只加载一次即可
\makeatletter
\ifannotations
  % 开启注解：用真实实现
  \let\refchange\refchange@full
\else
  % 先清掉可能存在的旧定义（两参版等），避免冲突
  \let\refchange\relax
  % 如果未定义就新建，否则重定义；第3参数用 +m，允许跨段/含环境
  \@ifundefined{refchange}{%
    \NewDocumentCommand{\refchange}{ O{} m +m }{#3}%
  }{%
    \RenewDocumentCommand{\refchange}{ O{} m +m }{#3}%
  }%
\fi
\makeatother

\makeatletter
\providecommand{\refchange@full}[3][]{#3}
\makeatother
\providecommand{\refchange}[2]{#2}
\makeatletter
\ifannotations
  \let\refchange\refchange@full
\else
    \renewcommand{\color}[2][]{\relax}
    \renewcommand{\textcolor}[2]{#2}

\fi
\makeatother

% ================== 真正的 refchange 实现（每次调用左右交替；盒子贴在文本行尾） ==================
\newcommand{\refstagger}{1.2ex}        % 默认轻微上下错位
\newcounter{refchangeid}

% 全局侧别切换：1,2,3,… 奇数→右，偶数→左；不按页重置
\newcount\rcSideToggle
\rcSideToggle=0

\makeatletter
% \refchange@full[<vskip>]{<visible number(s)>}{<main text>}
\renewcommand{\refchange@full}[3][]{%
  \stepcounter{refchangeid}%
  % 1) 可选垂直偏移：若无则用 ±\refstagger（原样传给 \marginnote）
  \def\rcOptSkip{#1}%
  \ifx\rcOptSkip\@empty
    \ifodd\value{refchangeid}%
      \def\rcOptSkip{\refstagger}%
    \else
      \def\rcOptSkip{-\refstagger}%
    \fi
  \fi
  %
  % 2) 组装边注内容（保持你的 TikZ 锚点与箭头起点）
  \def\rcContent{%
    \tikz[remember picture,baseline,start chain=going right,node distance=2pt]{
      \def\rcfirstname{}\def\rclastname{}%
      \foreach \n [count=\rcidx] in {#2}{
        \expandafter\node\expandafter[on chain,draw=black,rounded corners,
             inner sep=2pt,fill=white,text=black]
             (rcbox-\therefchangeid-\rcidx) {\textbf{\footnotesize \n}};
        \ifx\rcfirstname\empty \xdef\rcfirstname{rcbox-\therefchangeid-\rcidx}\fi
        \xdef\rclastname{rcbox-\therefchangeid-\rcidx}%
      }%
      \node[inner sep=0pt,fit=(\rcfirstname) (\rclastname),name=mref-\therefchangeid]{};
      \path (mref-\therefchangeid.center) coordinate (mrefstart-\therefchangeid);
    }%
  }%
  %
  % 3) 主文先排出 —— 这样我们可以把边注放在“这一行的行尾”对应的边上
  #3%
  %
  % 4) 在文本末尾放置“正文端锚点”（行尾基线处）
  \llap{\tikz[remember picture,baseline]\node (rctxt-\therefchangeid) {}; }%
  %
  % 5) 全局左右交替：每次调用切换一侧（奇数→右，偶数→左），并把边注放在此行处
  \advance\rcSideToggle by 1\relax
  \begingroup
    \ifodd\rcSideToggle
      \normalmarginpar        % 本次放右侧
    \else
      \reversemarginpar       % 本次放左侧
    \fi
    % 关键：把 \marginnote 放在 #3 之后（即你文本的行尾处），并使用你给的可选偏移
    \marginnote{\rcContent}[\rcOptSkip]%
  \endgroup
  %
  % 6) 仅画一次箭头（与你原逻辑一致；现已具备两端锚点）
  \expandafter\ifx\csname refdraw@\the\value{refchangeid}\endcsname\relax
    \expandafter\gdef\csname refdraw@\the\value{refchangeid}\endcsname{1}%
    \begin{tikzpicture}[remember picture,overlay]
      \draw[-stealth,gray,shorten <=6pt,line cap=round]
        (mrefstart-\therefchangeid) -- (rctxt-\therefchangeid);
    \end{tikzpicture}%
  \fi
}
\makeatother

\newif\ifannotationsnew  \annotationsnewfalse % 2) enable both {\color} and \refchangenew
\makeatletter
\ifannotationsnew
  % 开启注解：用真实实现
  \let\refchangenew\refchangenew@full
\else
  % 先清掉可能存在的旧定义（两参版等），避免冲突
  \let\refchangenew\relax
  % 如果未定义就新建，否则重定义；第3参数用 +m，允许跨段/含环境
  \@ifundefined{refchangenew}{%
    \NewDocumentCommand{\refchangenew}{ O{} m +m }{#3}%
  }{%
    \RenewDocumentCommand{\refchangenew}{ O{} m +m }{#3}%
  }%
\fi
\makeatother
\makeatletter
\providecommand{\refchangenew@full}[3][]{#3}
\makeatother
\providecommand{\refchangenew}[2]{#2}
\makeatletter
\ifannotationsnew
  \let\refchangenew\refchangenew@full
\else
    \renewcommand{\origcolor}[2][]{\relax}

\fi
\makeatother

% ================== 真正的 refchangenew 实现（每次调用左右交替；盒子贴在文本行尾） ==================
\newcommand{\refstaggernew}{1.2ex}        % 默认轻微上下错位
\newcounter{refchangenewid}

% 全局侧别切换：1,2,3,… 奇数→右，偶数→左；不按页重置
\newcount\rcSideToggle
\rcSideToggle=0

\makeatletter
% \refchangenew@full[<vskip>]{<visible number(s)>}{<main text>}
\renewcommand{\refchangenew@full}[3][]{%
  \stepcounter{refchangenewid}%
  % 1) 可选垂直偏移：若无则用 ±\refstaggernew（原样传给 \marginnote）
  \def\rcOptSkip{#1}%
  \ifx\rcOptSkip\@empty
    \ifodd\value{refchangenewid}%
      \def\rcOptSkip{\refstaggernew}%
    \else
      \def\rcOptSkip{-\refstaggernew}%
    \fi
  \fi
  %
  % 2) 组装边注内容（保持你的 TikZ 锚点与箭头起点）
  \def\rcContent{%
    \tikz[remember picture,baseline,start chain=going right,node distance=2pt]{
      \def\rcfirstname{}\def\rclastname{}%
      \foreach \n [count=\rcidx] in {#2}{
        \expandafter\node\expandafter[on chain,draw=black,rounded corners,
             inner sep=2pt,fill=white,text=black]
             (rcbox-\therefchangenewid-\rcidx) {\textbf{\footnotesize \n}};
        \ifx\rcfirstname\empty \xdef\rcfirstname{rcbox-\therefchangenewid-\rcidx}\fi
        \xdef\rclastname{rcbox-\therefchangenewid-\rcidx}%
      }%
      \node[inner sep=0pt,fit=(\rcfirstname) (\rclastname),name=mref-\therefchangenewid]{};
      \path (mref-\therefchangenewid.center) coordinate (mrefstart-\therefchangenewid);
    }%
  }%
  %
  % 3) 主文先排出 —— 这样我们可以把边注放在“这一行的行尾”对应的边上
  #3%
  %
  % 4) 在文本末尾放置“正文端锚点”（行尾基线处）
  \llap{\tikz[remember picture,baseline]\node (rctxt-\therefchangenewid) {}; }%
  %
  % 5) 全局左右交替：每次调用切换一侧（奇数→右，偶数→左），并把边注放在此行处
  \advance\rcSideToggle by 1\relax
  \begingroup
    \ifodd\rcSideToggle
      \normalmarginpar        % 本次放右侧
    \else
      \reversemarginpar       % 本次放左侧
    \fi
    % 关键：把 \marginnote 放在 #3 之后（即你文本的行尾处），并使用你给的可选偏移
    \marginnote{\rcContent}[\rcOptSkip]%
  \endgroup
  %
  % 6) 仅画一次箭头（与你原逻辑一致；现已具备两端锚点）
  \expandafter\ifx\csname refdraw@\the\value{refchangenewid}\endcsname\relax
    \expandafter\gdef\csname refdraw@\the\value{refchangenewid}\endcsname{1}%
    \begin{tikzpicture}[remember picture,overlay]
      \draw[-stealth,gray,shorten <=6pt,line cap=round]
        (mrefstart-\therefchangenewid) -- (rctxt-\therefchangenewid);
    \end{tikzpicture}%
  \fi
}
\makeatother

% ================== 出版信息开关（保持原样） ==================
\makeatletter
\ifpubinfo
  % do nothing (show journal info as usual)
\else
  \let\ISSN\@empty
  \def\@serieslogo{}%
  \def\@setcopyright{}%
  \let\copyrightinfo\@gobbletwo
  \gdef\@volume{}%
  \gdef\@number{}%
  \gdef\@pages{}%
  \gdef\@date{}%
  \def\ps@firstpage{\ps@empty}%
\fi
\makeatother

\newif\ifshowstuff
%  \showstufftrue      
   \showstufftrue    

\usepackage{comment}

\ifshowstuff
  \includecomment{showonly}     
  \newcommand{\maybe}[1]{#1}   
\else
  \excludecomment{showonly}    
  \newcommand{\maybe}[1]{}     
\fi

\begin{document}

\title{Critical Exponent Rigidity for $\Theta$-positive Representations}

\author{\maybe{Zhufeng Yao}}

\maybe{
  \thanks{The author was supported by the NUS-MOE grants  A-8000458-00-00 and A-8001950-00-00.}
}

\subjclass[2010]{Primary 22E40}

\date{}

\dedicatory{}

\begin{abstract}
We prove that for a $\Theta$-positive representation into a simple Lie group from \refchange{2}{{\color{blue}a non-elementary discrete subgroup}} $\Gamma \subset \mathsf{PSL}(2,\mathbb{R})$, the critical exponent for any $\alpha \in \Theta$ is at most one. When $\Gamma$ is geometrically finite, equality holds if and only if $\Gamma$ is a lattice. This paper is a \refchangenew{1}{\changednew{generalization of the result by Canary–Zhang–Zimmer~\cite{CaZhZi}}}, \refchange{1}{{\color{blue}which concerns a special class of positive representations, namely Hitchin representations into $\mathsf{PSL}(d,\mathbb{R})$ arising from geometrically finite Fuchsian groups.}}
\end{abstract}

\maketitle
\setcounter{tocdepth}{1}
\tableofcontents

\section{Introduction}\label{sec1}
Let $S$ be a closed, oriented and connected hyperbolic surface. Hitchin~\cite{HITCHIN1992449} introduced a special connected component of 
\refchange{3}{{\color{blue}the representation variety modulo conjugation 
$Hom(\pi_1(S), \mathsf{PSL}(d,\mathbb{R}))/\mathsf{\refchangenew{3}{\changednew{PGL}}}(d,\mathbb{R})$}},
now known as the \emph{Hitchin component}, using the theory of Higgs bundles. When $d=2$, this component coincides with the classical Teichmüller space \refchange{4}{{\color{blue}of $S$}}. More generally, the Hitchin component is topologically a cell and provides a natural generalization of Teichmüller theory to higher rank Lie groups, initiating the field of higher Teichmüller theory. Representations in this component are called Hitchin representations.

Labourie \cite{labourie2005anosovflowssurfacegroups} showed that Hitchin representations are {\color{blue} \refchange{5}{discrete and faithful, and} possess a strong dynamical property which is called the \refchange[1.0\baselineskip]{6}{\emph{Anosov property}}}. This property generalizes convex cocompactness in rank one Lie groups. On a different but complementary front, Fock–Goncharov \cite{fock2006modulispaceslocalsystems} took an algebraic perspective and showed that Hitchin representations are exactly those admitting a continuous, equivariant positive map from the Gromov boundary $\partial \pi_1(S)$ of $\pi_1(S)$ to the total flag manifold $\mathcal{F}_d$. These results position Hitchin representations as natural higher-rank analogues of Fuchsian representations, mirroring many of the geometric and dynamical features of classical Teichmüller theory.

Another prominent family of higher Teichmüller spaces arises from the work of Burger–Iozzi–Wienhard \cite{maximalearlier, burger2008surfacegrouprepresentationsmaximal}, inspired by earlier work of Goldman \cite{Goldman1988}. These are the maximal representations—representations of $\pi_1(S)$ into Hermitian Lie groups of tube type that maximize the Toledo invariant. Maximal representations form connected components in the corresponding character variety, and, like Hitchin representations, they are discrete, faithful, and Anosov. A characterization parallel to that of Hitchin representations holds: a representation is maximal if and only if it admits a continuous, equivariant positive map to the Shilov boundary of the Hermitian symmetric space.

The central role of positivity in both Hitchin and maximal representations motivated Guichard–Wienhard \cite{GW2} to develop a unifying framework: the theory of $\Theta$-positivity. Generalizing Lusztig’s notion of total positivity~\cite{Lusztig1998TotalPI}, they defined the notion of positivity for a family of pairs $(G,\Theta)$, where $G$ is a semisimple Lie group and $\Theta$ is a nonempty subset of the set of simple roots \refchange{7}{{\color{blue}(note that not every pair $(G,\Theta)$ admits a positive structure)}}. In this setting, one can define positive configurations in the associated $\Theta$-flag manifold $\mathcal{F}_{\Theta}$. A representation is called $\Theta$-positive if it admits a continuous, equivariant map from $\partial\pi_1(S)$ to $\mathcal{F}_{\Theta}$ that sends cyclically oriented $n$-tuples to $\Theta$-positive configurations. This notion encompasses both Hitchin and maximal representations and is consistent with their respective positivity structures. {\color{blue}\refchangenew{9}{\changednew{More recently}}, it has been shown that $\Theta$-positive representations form connected components of the character variety 
$\mathrm{Hom}(\pi_1(S),G)/G$ (see Guichard–Labourie–Wienhard~\cite{GLW} and Beyer–Guichard–Labourie–Pozzetti–Wienhard~\cite{beyrer2024positivitycrossratioscollarlemma}).  One of the key steps is to show that the positive equivariant map on \refchange{8}{$\partial \pi_1(S)$} has good dynamical properties, which imply that $\Theta$-positive representations are $\Theta$-Anosov.

\refchange{9}{In this paper}, we further analyze the dynamics and regularity properties of the associated positive equivariant maps. As an application, our main result establishes a rigidity phenomenon for the critical exponent associated to each simple root in $\Theta$: it is always bounded \refchange{10}{from above} by one. When $\Gamma$ is geometrically finite, the equality holds if and only if $\Gamma$ is \refchange{11}{a lattice}. This generalizes earlier results of Potrie–Sambarino~\cite{potrie2017eigenvaluesentropyhitchinrepresentation}, Pozzetti–Sambarino–Wienhard~\cite{Liplim}, and Canary–Zhang–Zimmer~\cite{CaZhZi}. The first paper proved that for a Hitchin representation into $\mathsf{PSL}(d,\mathbb{R})$ from a closed surface group, the critical exponent of each root is $1$; the second paper proved that for an $\alpha$-Anosov representation from a closed surface group whose $\alpha$-limit set is a Lipschitz submanifold, the critical exponent of $\alpha$ is always $1$; and the third paper proved that for a Hitchin representation into $\mathsf{PSL}(d,\mathbb{R})$ from a discrete subgroup of $\mathsf{PSL}(2,\mathbb{R})$, the critical exponent of each root $\alpha$ equals the Hausdorff dimension of the $\alpha$-conical \refchange{12}{limit set}.}

\refchangenew{9}{\changednew{In the Introduction, we make the following assumptions:}}
\changednew{
\begin{enumerate}
    \item $G$ is a non-compact simple Lie group with finitely many components.
    \item The identity component $G^{\circ}$ has finite center.
    \item The conjugation action of $G$ on the Dynkin diagram is trivial (see Section~\ref{assumption}).
\end{enumerate}
}
These hypotheses will be in force throughout most of the paper, although we will restate the assumptions on $G$ explicitly in each section as needed.

\subsection{Critical Exponent Rigidity}

Let $\Gamma \subset \mathsf{PSL}(2,\mathbb{R})$ be a \refchange{13}{{\color{blue}non-elementary}} discrete subgroup acting on the hyperbolic disk $\mathbb{D}$, and let $\Lambda(\Gamma)\subset \partial \mathbb{D} = S^1$ denote its limit set. Let $G$ be a simple Lie group with Lie algebra $\mathfrak{g}$. Fix a Cartan decomposition $\mathfrak{g} = \mathfrak{k} \oplus \mathfrak{p}$, choose a Cartan subspace $\mathfrak{a} \subset \mathfrak{p}$, a positive Weyl chamber $\mathfrak{a}^+ \subset \mathfrak{a}$ \refchangenew{19}{\changednew{(which is open)}}, and let $\Delta$ denote the corresponding set of positive simple roots. Let $\kappa: G \to \overline{\mathfrak{a}^+}$ be the Cartan projection, and let $(\mathfrak{a}^*)^+ \subset \mathfrak{a}^*$ denote the cone of functionals positive on $\mathfrak{a}^+$ \refchangenew{19}{\changednew{(which is a closed cone with $\{0\}$ removed)}}.

Given a non-empty subset $\Theta \subset \Delta$, the corresponding flag variety and its opposite are denoted by $\mathcal{F}_\Theta,\mathcal{F}_{\Theta}^{opp}$. A representation $\rho: \Gamma \to G$ is called \emph{$\Theta$-transverse} if there exists a pair of continuous, $\rho$-equivariant, transverse, and strongly dynamics-preserving map $(\xi^\Theta,\xi^{\Theta,opp}): \Lambda(\Gamma) \to \mathcal{F}_\Theta\times \mathcal{F}_{\Theta}^{opp}$, referred to as the \emph{limit map} of $\rho$ (see Definition \ref{Anosov}). \refchange{17}{{\color{blue}Moreover, we will see that as a property of transverse representations, such a limit map is unique.}} When $\Gamma$ is geometrically finite, such representations are equivalent to relatively $\Theta$-Anosov representations (see Canary–Zhang–Zimmer~\cite{CaZhZi}, \cite{CaZhZi2} and \cite{CaZhZi3}).

\refchange{14}{{\color{blue}Suppose that}} $G$ admits a $\Theta$-positive structure; one can define positivity for $n$-tuples in $\mathcal{F}_\Theta$ {\color{blue}(for example, when $G=\mathsf{PSL}(d,\mathbb{R})$\changednew{,} $\Theta$ consists of all simple roots, and as mentioned above, positive tuples are described in Lusztig~\cite{Lusztig1998TotalPI}\refchangenew{14}{\changednew{)}}}. A representation $\rho: \Gamma \to G$ is \emph{$\Theta$-positive} if there exists a continuous and $\rho$-equivariant map $\xi^\Theta: \Lambda(\Gamma) \to \mathcal{F}_\Theta$ that sends \refchange{15}{{\color{blue}all}} cyclically ordered $n$-tuples in $\Lambda(\Gamma) \subset S^1$ to positive $n$-tuples in $\mathcal{F}_\Theta$. One can show that (see Guichard-Wienhard~\cite[Theorem~3.4]{GW2}) when $G$ admits a $\Theta$-positive structure, the set $\Theta$ is symmetric under any involution of the Dynkin diagram. Consequently, the map $\xi^{\Theta}$ tautologically induces a map 
\(
\xi^{\Theta,opp} : \Lambda(\Gamma)\;\longrightarrow\; \mathcal{F}_{\Theta}^{opp}.
\)

In the case where $\Gamma$ is a closed surface group, Guichard, Labourie and Wienhard showed in \cite{GLW} that $\Theta$-positive representations are $\Theta$-Anosov. \refchange{16}{{\color{blue}In the case where $\Gamma$ is a non-elementary geometrically finite subgroup of $\mathsf{PSL}(2,\mathbb{R})$, Canary, Zhang and Zimmer showed in \cite{CaZhZi2} that Hitchin representations from $\Gamma$ to $\mathsf{PSL}(d,\mathbb{R})$ are relatively Anosov with respect to all simple roots. We extend these results:}}

\begin{proposition}[Proposition \ref{thm17}]\label{propB}
Let $\Gamma \subset \mathsf{PSL}(2,\mathbb{R})$ be a \refchange{16}{{\color{blue}non-elementary}} discrete subgroup. If $\rho: \Gamma \to G$ is a $\Theta$-positive representation, then $\rho$ is $\Theta$-transverse. Moreover, the positive map $\xi^\Theta$ coincides with the limit map associated with the $\Theta$-transverse representation, \refchange{17}{{\color{blue}which is unique as a property of $\Theta$-transverse representation.}}
\end{proposition}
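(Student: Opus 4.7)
The plan is to verify each clause of Definition \ref{Anosov}. Continuity and $\rho$-equivariance of $\xi^\Theta$ are built into the definition of a positive map, so the substantive work is to establish transversality, the strongly dynamics-preserving property, and uniqueness of the limit map.

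I would first get transversality of $\xi^\Theta$ on any two distinct points $x,y \in \Lambda(\Gamma)$ essentially for free. Every positive triple of flags in $\mathcal{F}_\Theta$ is, by definition, pairwise transverse; picking any auxiliary $z \in \Lambda(\Gamma) \setminus \{x,y\}$ (which exists whenever $\Gamma$ is non-elementary, the elementary case being trivial), the positivity of the triple $(\xi^\Theta(x),\xi^\Theta(z),\xi^\Theta(y))$ immediately forces $\xi^\Theta(x)$ and $\xi^\Theta(y)$ to be transverse.

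The main step is the strongly dynamics-preserving property, and my approach is to reduce to the closed surface case of \cite{GLW} via approximation by Schottky subgroups. Every non-elementary discrete $\Gamma \subset \mathsf{PSL}(2,\mathbb{R})$ contains two-generator convex cocompact subgroups $\Gamma_0$ whose limit sets embed cyclically into $S^1$; the restriction $\rho|_{\Gamma_0}$ remains $\Theta$-positive, and either by embedding $\Gamma_0$ into a cocompact Fuchsian group by a combination argument or by appealing directly to the cross-ratio and collar-lemma estimates of \cite{beyrer2024positivitycrossratioscollarlemma}, $\rho|_{\Gamma_0}$ is $\Theta$-Anosov with limit map $\xi^\Theta|_{\Lambda(\Gamma_0)}$. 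Now given $\gamma_n \to \infty$ in $\Gamma$ with $\gamma_n \to b^+$ and $\gamma_n^{-1} \to b^-$ uniformly on compact subsets of $\partial\mathbb{D} \setminus \{b^\mp\}$, I would choose a Schottky $\Gamma_0 \subset \Gamma$ containing auxiliary limit points $z_1,z_2$ that cyclically separate $b^-$ from $b^+$. For any flag $F$ transverse to $\xi^\Theta(b^-)$, the sequence $\rho(\gamma_n)F$ then lies inside a fixed positive chart between $\rho(\gamma_n)\xi^\Theta(z_1)$ and $\rho(\gamma_n)\xi^\Theta(z_2)$, both of which converge to $\xi^\Theta(b^+)$ by the Schottky case. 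The contraction built into positive triples then forces $\rho(\gamma_n)F \to \xi^\Theta(b^+)$.

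Uniqueness of the limit map and the identification claim follow from a standard argument: any two continuous, $\rho$-equivariant, strongly dynamics-preserving maps must agree on the dense subset of $\Lambda(\Gamma)$ consisting of attracting fixed points of loxodromic elements of $\Gamma$, and continuity then forces them to coincide. The hard part will be the dynamics-preserving step. Unlike the surface-group case of \cite{GLW}, the action of $\Gamma$ on $S^1$ is not cocompact: parabolic elements, bounded parabolic and non-conical limit points, and non-uniform escape behaviour at infinity must all be accommodated. I expect to bypass these complications by performing all contraction arguments at the Schottky subgroup level, where the estimates of \cite{beyrer2024positivitycrossratioscollarlemma} and \cite{GLW} apply uniformly, and transferring the conclusion to $\Gamma$ via the density of Schottky limit points in $\Lambda(\Gamma)$ together with the continuity of $\xi^\Theta$.
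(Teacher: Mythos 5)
Your argument for transversality is correct and matches the paper's, and your uniqueness argument via attracting fixed points is standard and fine. The gap is in the strongly dynamics-preserving step, and it appears in two places.

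First, the phrase ``the contraction built into positive triples then forces $\rho(\gamma_n)F \to \xi^\Theta(b^+)$'' is precisely the technical heart of the proof, and it is not a formal consequence of positivity. Knowing that $\rho(\gamma_n)\xi^\Theta(z_1)$ and $\rho(\gamma_n)\xi^\Theta(z_2)$ both converge to $\xi^\Theta(b^+)$ does not by itself force the intermediate ``positive interval'' between them to shrink to a point: one has to rule out that the positive chart stretches in some direction while its two marked endpoints come together. The paper makes this rigorous by writing a positive quintuple in cone coordinates $g_n(\mathfrak{p}_{\Theta},x_{1,n}\cdots x_{4,n}\mathfrak{p}_\Theta^{opp},\ldots,\mathfrak{p}_\Theta^{opp})$, invoking the formal group law $\pi_1(v_{xy}) = \pi_1(v_x)+\pi_1(v_y)$ to split the grade-one cone coordinate into a sum of nonnegative pieces, and then using \emph{acuteness} of the tangent cone to conclude that if the total goes to zero each piece does. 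Without identifying this mechanism the assertion is unjustified; it is exactly what distinguishes positivity from mere transversality.

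Second, you work with an arbitrary flag $F$ transverse to $\xi^{\Theta,opp}(b^-)$, but such an $F$ has no reason to sit in any positive relation to the image of $\xi^\Theta$ — positivity is a closed condition on tuples in the limit set, not an open neighborhood of every transverse flag. The paper sidesteps this by invoking \cite[Lemma 2.2]{CaZhZi2}, which says it suffices to verify the strongly dynamics-preserving property on \emph{some} open set of flags, and then takes $\mathcal{O}$ to be the open set of $F$ making $(\xi^\Theta(y),\xi^\Theta(b),F,\xi^\Theta(a),\xi^\Theta(x))$ positive. Your proposal omits this reduction, so the argument you sketch cannot even begin for a generic $F$.

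A further concern about the Schottky reduction: for the contraction to transfer to $\Gamma$ you would need the sequence $\gamma_n \in \Gamma$ to be approximated inside a fixed Schottky $\Gamma_0$, and for the relevant positivity estimates to be uniform in $n$; neither is automatic. Also, embedding a Schottky $\Gamma_0$ in a cocompact Fuchsian group by combination is a statement about Fuchsian groups, and there is no reason for the $\Theta$-positive representation $\rho|_{\Gamma_0}$ to extend to the ambient lattice, so \cite{GLW} does not apply off the shelf. The paper's direct proof avoids all of this; the Schottky detour adds difficulties rather than removing them.
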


Let $\mathfrak{H}_\Theta^+ \subset \refchangenew{19}{\changednew{(\mathfrak{a}^*)^+}}$ {\color{blue} denote the sub-cone consisting of positive linear combinations of roots \refchange{18}{in $\Theta$.}} For any $\phi = \sum_{\alpha \in \Theta} c_\alpha \alpha \in \mathfrak{H}_\Theta^+$ \refchange{18}{{(\color{blue}note that $c_{\alpha}>0$ for any $\alpha \in \Theta$)}}, define

\[
a(\phi) := \sum_{\alpha \in \Theta} c_\alpha, \quad \hat{\phi} := \frac{\phi}{a(\phi)}.
\]
\changed{For each $\phi\in (\mathfrak{a}^*)^+$}, we define the \emph{$\phi$-Poincaré series} of the representation $\rho$ as
\[
Q^{\phi}_{\rho(\Gamma)}(s) := \sum_{\gamma \in \Gamma} e^{-s \phi(\kappa(\rho(\gamma)))},
\]
and the associated \emph{$\phi$-critical exponent} \refchange{19}{as}
\[
\delta^{\phi}_{\rho}(\Gamma) := \inf \left\{ {\color{blue}s \in \mathbb{R}^{>0} } \mid Q^{\phi}_{\rho(\Gamma)}(s) < \infty \right\}.
\]

\refchange{20}{{\color{blue}It has to be noted that for a general representation $\rho$, $\delta^{\phi}_{\rho}(\Gamma)$ might not be a finite number.}} However, if $\Gamma$ is geometrically finite and $\rho$ is relatively $\Theta$-Anosov, then  \refchange{21}{{\color{blue} Canary–Zhang–Zimmer \cite[Theorem~1.1]{CaZhZi2} (see also Proposition~\ref{anosovgap}) showed that}} for any fixed base point $b_0 \in \mathbb{D}$, there exist constants $a > 1$ and $A > 0$ such that for all $\gamma \in \Gamma$ and $\alpha \in \Theta$,
\begin{equation}\label{eqanosovgap}
    \frac{1}{a} d_{\mathbb{D}}(b_0, \gamma b_0) - A \le \alpha(\kappa(\rho(\gamma))) \le a d_{\mathbb{D}}(b_0, \gamma b_0) + A.
\end{equation}

{\color{blue}As the critical exponent of the series $d_{\mathbb{D}}(b_0, \gamma b_0),\gamma\in \Gamma$ is finite (for example, see \refchange{21}{Beardon \cite{beardonconvergence} and Patterson \cite{Patterson1976}})} , $\delta^{\phi}_\rho(\Gamma) < \infty$ for all $\phi \in \mathfrak{H}_\Theta^+$.

Our main rigidity result for critical exponents is as follows:

\begin{theorem}\label{thmA}
Let $\Gamma \subset \mathsf{PSL}(2,\mathbb{R})$ be a \refchange{22}{{\color{blue}non-elementary discrete subgroup}}, and let $\rho: \Gamma \to G$ be a $\Theta$-positive representation. Then for all $\alpha \in \Theta$:
\begin{enumerate}
    \item $\delta^{\alpha}_{\rho}(\Gamma) \le 1$.
    \item If $\Gamma$ is a lattice, then $\delta^{\alpha}_{\rho}(\Gamma) = 1$.
    \item If $\Gamma$ is geometrically finite but not a lattice, then $\delta^{\alpha}_{\rho}(\Gamma) < 1$.
\end{enumerate}
\end{theorem}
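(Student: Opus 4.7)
The strategy is to show, for every $\alpha\in\Theta$, the equality $\delta^\alpha_\rho(\Gamma)=\delta(\Gamma)$, where $\delta(\Gamma)$ denotes the standard critical exponent of $\Gamma$ acting on $\mathbb{D}$. Once this is in hand, all three conclusions follow from classical hyperbolic geometry: $\delta(\Gamma)\le 1$ for every discrete $\Gamma\subset\mathsf{PSL}(2,\mathbb{R})$ yields (1); $\delta(\Gamma)=1$ for lattices yields (2); and Beardon's theorem $\delta(\Gamma)<1$ for geometrically finite non-lattices yields (3). The equality $\delta^\alpha_\rho(\Gamma)=\delta(\Gamma)$ will in turn be obtained by comparing $Q^\alpha_{\rho(\Gamma)}(s)$ term-by-term with the hyperbolic Poincaré series $\sum_\gamma e^{-s\,d_\mathbb{D}(b_0,\gamma b_0)}$.

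For the inequality $\delta^\alpha_\rho(\Gamma)\le\delta(\Gamma)$, which already gives (1), it suffices to prove the pointwise domination
\[
\alpha\bigl(\kappa(\rho(\gamma))\bigr)\ge d_\mathbb{D}(b_0,\gamma b_0)-C_\alpha
\]
for some $C_\alpha>0$ independent of $\gamma\in\Gamma$, since this immediately yields $Q^\alpha_{\rho(\Gamma)}(s)\le e^{sC_\alpha}\sum_\gamma e^{-s\,d_\mathbb{D}(b_0,\gamma b_0)}$. This is where $\Theta$-positivity is indispensable: the $\Theta$-transverse framework recalled before the statement only produces a multiplicative comparison with constant $a>1$, and it is positivity that should pin the leading coefficient to~$1$. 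I would derive this domination by combining the $\Theta$-transversality of $\rho$ (granted by Proposition~\ref{propB}) with a positive cross-ratio estimate on $\Lambda(\Gamma)\subset S^1$: since $\xi^\Theta$ sends cyclically ordered $4$-tuples to $\Theta$-positive $4$-tuples in $\mathcal{F}_\Theta$, the $\alpha$-cross-ratio built from $\xi^\Theta$ is dominated by the hyperbolic cross-ratio, and approximating Cartan projections by cross-ratios along conjugates of hyperbolic axes converts this into the displayed inequality. This is the natural $\Theta$-positive analogue of the Potrie--Sambarino domination in the Hitchin case.

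For the reverse inequality $\delta^\alpha_\rho(\Gamma)\ge\delta(\Gamma)$, needed only in the geometrically finite setting (parts (2) and (3)), I would establish the matching pointwise upper bound $\alpha(\kappa(\rho(\gamma)))\le d_\mathbb{D}(b_0,\gamma b_0)+C'_\alpha$. Away from cusps this should follow from a Lipschitz-type regularity of positive limit maps in the spirit of Pozzetti--Sambarino--Wienhard. At a parabolic fixed point $p\in\Lambda(\Gamma)$, one analyzes the restriction of $\rho$ to the parabolic stabilizer of $p$: $\Theta$-positivity together with the relative Anosov behavior of $\rho$ should force the Cartan projection of parabolic orbits to grow at exactly the same asymptotic rate, with leading coefficient~$1$, as the hyperbolic displacement. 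A standard thick--thin decomposition of $\Gamma$ then promotes this into $Q^\alpha_{\rho(\Gamma)}(s)\ge e^{-sC'_\alpha}\sum_\gamma e^{-s\,d_\mathbb{D}(b_0,\gamma b_0)}$, hence $\delta^\alpha_\rho(\Gamma)\ge\delta(\Gamma)$. The principal technical obstacle I anticipate is precisely this cusp analysis: upgrading the $\Theta$-transverse multiplicative control to additive control along parabolic orbits in the absence of the explicit Jordan models available in the Hitchin and maximal settings will require a careful exploitation of the abstract $\Theta$-positive structure on $\mathcal{F}_\Theta$ near cusp limit points.
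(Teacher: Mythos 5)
Your strategy is to reduce to the classical exponent $\delta(\Gamma)$ by proving the two pointwise additive bounds $\alpha(\kappa(\rho(\gamma)))\ge d_{\mathbb{D}}(b_0,\gamma b_0)-C_\alpha$ and $\alpha(\kappa(\rho(\gamma)))\le d_{\mathbb{D}}(b_0,\gamma b_0)+C'_\alpha$. The second bound is the one that cannot hold in general, and since it is needed for parts (2) and (3), this is a genuine gap rather than a fixable technical obstacle. Together, the two bounds would force the ratio $\alpha(\kappa(\rho(\gamma)))/d_{\mathbb{D}}(b_0,\gamma b_0)$ to converge to $1$ along divergent orbits, hence (via shadow estimates) force $\xi^\alpha$ to be bi-Lipschitz onto its image. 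But Remark~\ref{rem15.5} shows only $\liminf\le 1\le\limsup$ for this ratio, and Proposition~\ref{prop5} yields bi-H\"older, not bi-Lipschitz, precisely because $c_1$ and $c_2$ there are distinct in general. Indeed, if one had $c_1=c_2=1$ for a uniform lattice, one would be in the rigidity regime of Potrie--Sambarino and Canary--Zhang--Zimmer that \emph{characterizes} Fuchsian representations; the Hitchin component away from the Fuchsian locus already furnishes counterexamples to your proposed upper pointwise bound. The qualitative ``thick--thin'' and ``Lipschitz regularity near cusps'' arguments you sketch therefore cannot succeed: they would be proving something strictly stronger than Theorem~\ref{thmA}(2)--(3), which in fact fails.

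There is also a subtler point affecting the target equality $\delta^\alpha_\rho(\Gamma)=\delta(\Gamma)$ itself. The paper proves in Theorem~\ref{thmC} that $\delta^\alpha_\rho(\Gamma)=\dim\xi^\alpha(\Lambda_c(\Gamma))$ for non-boundary roots, but $\dim\xi^\alpha(\Lambda_c(\Gamma))$ lives in $\mathcal{F}_\alpha$ while $\delta(\Gamma)=\dim_H\Lambda_c(\Gamma)$ is taken in $S^1$; a rectifiable (but not bi-Lipschitz) map can change Hausdorff dimension. For the boundary root $\alpha_\Theta$ the paper only establishes inequalities and leaves equality open, so the identity $\delta^{\alpha_\Theta}_\rho(\Gamma)=\delta(\Gamma)$ is not known for general $\Theta$-positive representations.

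The paper's actual route is structurally different and sidesteps these issues. For (1) it first treats the case $\Lambda(\Gamma)=S^1$ (Proposition~\ref{whenS1}) using rectifiability of $\xi^\alpha$ (Proposition~\ref{prop3}) plus the shadow estimate (Proposition~\ref{prop4}): disjoint shadows give disjoint arcs of a rectifiable curve, so a divergent sum of $c_\alpha(z)$ is impossible when $\delta^\alpha>1$. The case $\Lambda(\Gamma)\ne S^1$ is then reduced to this by doubling the representation (Proposition~\ref{prop6}). For (2) the paper pairs Theorem~\ref{thmA}(1) with the Hausdorff-dimension lower bound of Theorem~\ref{thmB}: since the limit curve is rectifiable and $\Lambda(\Gamma)-\Lambda_c(\Gamma)$ is countable when $\Gamma$ is a lattice, $\dim\xi^\alpha(\Lambda_c(\Gamma))=1$, giving $\delta^\alpha_\rho(\Gamma)\ge 1$. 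For (3), after doubling, $\Gamma$ is an infinite-index geometrically finite subgroup of the lattice $\Gamma^D$, and the strict entropy-drop result (Proposition~\ref{prop7}) gives $\delta^\alpha_\rho(\Gamma)<\delta^\alpha_{\rho^D}(\Gamma^D)=1$. None of these steps requires, or implies, the additive comparison with $d_{\mathbb{D}}$ that your proposal relies on.
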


For closed surface groups, Potrie–Sambarino \cite[Theorem B]{potrie2017eigenvaluesentropyhitchinrepresentation} first established that Hitchin representations satisfy \(\delta^\alpha_\rho(\Gamma) = 1\) for every simple root \(\alpha\). This result was later extended by Pozzetti–Sambarino–Wienhard \cite{Liplim} to a broader class of Anosov representations under Lipschitz regularity assumptions on the limit set, establishing critical exponent rigidity for both maximal and \(\Theta\)-positive representations into \(\mathsf{SO}(p,q)\).

For geometrically finite \refchange{23}{{\color{blue}$\Gamma\subset \mathsf{PSL}(2,\mathbb{R})$}}, Canary–Zhang–Zimmer \refchange{24}{{\color{blue}\cite[Theorem 8.1 and Proposition 11.2]{CaZhZi}}} adapted techniques from Pozzetti-Sambarino-Wienhard~\cite{pozzetti2020conformalityrobustclassnonconformal} to show that, for cusped Hitchin representations, the critical exponent coincides with the Hausdorff dimension of the limit set. In particular, this dimension is bounded from above by one, with equality if and only if \(\Gamma\) is a lattice.

Our approach to proving Theorem~\ref{thmA} generalizes the strategy of Canary–Zhang–Zimmer~\cite{CaZhZi}, who utilized inner and outer radius estimates for shadows from Pozzetti–Sambarino–Wienhard~\cite{pozzetti2020conformalityrobustclassnonconformal} in the context of \((1,1,2)\)-hypertransverse representations from projectively visible subgroups. In the setting of \(\Theta\)-positive representations, however, the limit map for the boundary root {\color{blue} (see the discussion between Theorem \ref{thm7} and Theorem \ref{thm8})} fails to be \((1,1,2)\)-hypertransverse in general. \refchangenew{25}{\changednew{For instance, the second exterior power of the maximal representation in Example~\ref{ergodicitycounterexample} is not $2$-Anosov. Indeed, let $\alpha'$ denote the unique simple root of $\mathfrak{sl}(2,\mathbb{R})$. The gap between the second and third logarithmic singular values of the exterior square representation is given by
\[
|\alpha'(\kappa(\rho_1(\gamma)))-\alpha'(\kappa(\rho_2(\gamma)))|.
\]
Thus, if the exterior square representation were $2$-Anosov, then $\rho$ in Example~\ref{ergodicitycounterexample} would be Anosov with respect to all simple roots. By a result of Davalo~\cite[Theorem 1.1]{Colin}, this implies that $\rho$ is Hitchin, which in turn implies that any deformation of $\rho$ is Hitchin. However, $\rho$ deforms to $\rho_1\oplus \rho_1$, which is not Hitchin, yielding a contradiction.
}}To address this issue, we establish the requisite inner and outer radius estimates by exploiting the monotonicity property of the \refchange{26}{{\color{blue}limit map}} (See Theorem~\ref{thm12}).

\vspace{3mm}

The convexity of $e^{-s}$ implies that for any $0 \le t \le 1$,
\[
t Q^{\phi_1}_{\rho(\Gamma)}(s) + (1-t) Q^{\phi_2}_{\rho(\Gamma)}(s) \ge Q^{t\phi_1 + (1-t)\phi_2}_{\rho(\Gamma)}(s).
\]
Moreover, for any $c > 0$, we have $\delta^{c\phi}_\rho(\Gamma) = \frac{1}{c} \delta^\phi_\rho(\Gamma)$. Applying these observations with Theorem \ref{thmA}, we obtain:

\begin{corollary}\label{corG}
Let $\Gamma \subset \mathsf{PSL}(2,\mathbb{R})$ be a  \refchange{22}{{\color{blue} non-elementary discrete subgroup }}. If $\rho: \Gamma \to G$ is a $\Theta$-positive representation, then for any $\phi \in \mathfrak{H}_\Theta^+$,
\[
\delta^{\phi}_{\rho}(\Gamma) \le \frac{1}{a(\phi)}.
\]
If $\Gamma$ is geometrically finite and the equality is attained, then $\Gamma$ is a lattice.
\end{corollary}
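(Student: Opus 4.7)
The plan is to reduce everything to Theorem \ref{thmA} using the two preliminary observations the author has already set up: convexity of the map $\phi \mapsto Q^{\phi}_{\rho(\Gamma)}(s)$, and the scaling identity $\delta^{c\phi}_\rho(\Gamma) = \frac{1}{c}\delta^{\phi}_\rho(\Gamma)$. First I would normalize: write
\[
\hat{\phi} = \frac{\phi}{a(\phi)} = \sum_{\alpha \in \Theta} \frac{c_\alpha}{a(\phi)}\,\alpha,
\]
which is a convex combination of the simple roots in $\Theta$. Then scaling reduces the inequality $\delta^{\phi}_\rho(\Gamma) \le 1/a(\phi)$ to the claim $\delta^{\hat{\phi}}_\rho(\Gamma) \le 1$, and reduces the rigidity statement to showing that, under geometric finiteness, $\delta^{\hat{\phi}}_\rho(\Gamma) = 1$ forces $\Gamma$ to be a lattice.

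For the inequality I would iterate the convexity bound to obtain the finite-sum estimate
\[
Q^{\hat{\phi}}_{\rho(\Gamma)}(s) \;\le\; \sum_{\alpha \in \Theta} \frac{c_\alpha}{a(\phi)}\, Q^{\alpha}_{\rho(\Gamma)}(s).
\]
By Theorem \ref{thmA}(1) each $Q^{\alpha}_{\rho(\Gamma)}(s)$ converges for $s > 1$, so the right-hand side is finite for $s > 1$, yielding $\delta^{\hat{\phi}}_\rho(\Gamma) \le 1$ as required.

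For the rigidity half, suppose $\Gamma$ is geometrically finite and $\delta^{\hat{\phi}}_\rho(\Gamma) = 1$. Then $Q^{\hat{\phi}}_{\rho(\Gamma)}(s) = \infty$ for every $s < 1$, and the convexity inequality above forces, for each such $s$, at least one $\alpha \in \Theta$ with $c_\alpha > 0$ to satisfy $Q^{\alpha}_{\rho(\Gamma)}(s) = \infty$. Since $\Theta$ is finite, a pigeonhole argument applied along a sequence $s_n \uparrow 1$ produces a single $\alpha \in \Theta$ with $Q^{\alpha}_{\rho(\Gamma)}(s_n) = \infty$ for all $n$, hence $\delta^{\alpha}_\rho(\Gamma) \ge 1$. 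Combined with Theorem \ref{thmA}(1), equality holds, and the contrapositive of Theorem \ref{thmA}(3) then forces $\Gamma$ to be a lattice. The corollary is essentially a formal consequence of Theorem \ref{thmA}, so I do not anticipate any real obstacle beyond carefully assembling the convexity inequality and the finiteness of $\Theta$; the substantive work is entirely in the proof of Theorem \ref{thmA} itself.
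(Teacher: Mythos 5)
Your proposal is correct and follows exactly the route the paper intends: it normalizes via the scaling identity $\delta^{c\phi}_\rho = \tfrac{1}{c}\delta^\phi_\rho$, iterates the convexity bound to dominate $Q^{\hat\phi}$ by a convex combination of the $Q^\alpha$, and then invokes Theorem \ref{thmA}(1) for the inequality and a pigeonhole plus the contrapositive of Theorem \ref{thmA}(3) for the rigidity. The paper treats the corollary as an immediate consequence of the displayed convexity/scaling observations together with Theorem \ref{thmA}, and your write-up simply makes that one-line deduction explicit.
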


\refchange{27}{{\color{blue}When $G=\mathsf{PSL}(d,\mathbb{R})$ and $\Theta = \Delta$, it was shown by Potrie–Sambarino~{\color{blue}\cite{potrie2017eigenvaluesentropyhitchinrepresentation}} (for $\Gamma$ a closed surface group) and Canary–Zhang–Zimmer~{\color{blue}\cite{CaZhZi}}}} (for $\Gamma$ geometrically finite) that equality in the critical exponent bound characterizes \refchange{28}{{\color{blue}representations whose image lies in some irreducible copy of $\mathsf{PSL}(2,\mathbb{R})$. That is, if $\delta^{\phi}_{\rho}(\Gamma) = \frac{1}{a(\phi)}$ for some $\phi = \sum_{\alpha \in \Delta} c_\alpha \alpha$ with $c_\alpha > 0$ for all $\alpha \in \Delta$, then $\rho(\Gamma)$ lies in an irreducible copy of $\mathsf{PSL}(2,\mathbb{R})$ in $\mathsf{PSL}(d,\mathbb{R})$. 
}}It is an interesting question whether one can show a similar strong rigidity result for $\Theta$-positive representations.

\subsection{Ergodicity}

Suppose $\Gamma \subset \mathsf{PSL}(2,\mathbb{R})$ is a discrete subgroup with $\Lambda(\Gamma) = \partial \mathbb{D} = S^1$, and let $\rho: \Gamma \to G$ be a $\Theta$-positive representation with limit map $\xi^{\Theta}$. For each $\alpha \in \Theta$, let $\xi^{\alpha}: \Lambda(\Gamma) \to \mathcal{F}_{\alpha}$ be the corresponding sub-limit map. Fix a Riemannian metric $d_{\alpha}$ on $\mathcal{F}_{\alpha}$ \refchange{30}{{\color{blue}(note that $\mathcal{F}_{\alpha}$ is compact, so any two Riemannian metrics on $\mathcal{F}_{\alpha}$ are bi-Lipschitz equivalent; hence, the choice of $d_{\alpha}$ does not affect the remaining arguments).}}
Proposition \ref{prop3} shows that for each $\alpha \in \Theta$, the image $\xi^{\alpha}(\Lambda(\Gamma))$ is a rectifiable curve, so we can define a Lebesgue measure $m_{\alpha}$ on $\xi^{\alpha}(\Lambda(\Gamma))$, which is quasi-invariant under the $\rho(\Gamma)$ action.

\begin{theorem}\label{ergodictheorem}
If $\Gamma \subset \mathsf{PSL}(2,\mathbb{R})$ is a lattice, and $\rho: \Gamma \to G$ is a $\Theta$-positive representation, then for any $\alpha \in \Theta$, {\color{red}the dynamical system $(\rho(\Gamma), m_{\alpha})$ has at most $D(\mathfrak{g},\alpha)$ ergodic components, where $D(\mathfrak{g},\alpha)$ is a constant depending only on the Lie algebra $\mathfrak{g}$ and the root $\alpha$.}
\end{theorem}

{\color{red}
\begin{example}\label{ergodicitycounterexample}
Assume $\Gamma$ is torsion-free and let $\rho_1, \rho_2 : \Gamma \to \mathsf{SL}(2,\mathbb{R})$ be two non-conjugate Fuchsian representations, written as
\[
\rho_i(\gamma) =
\begin{pmatrix}
a_i(\gamma) & b_i(\gamma) \\
c_i(\gamma) & d_i(\gamma)
\end{pmatrix}, 
\quad i = 1,2.
\]
Consider the product representation
\[
\rho = \rho_1 \times \rho_2 :
\Gamma \longrightarrow 
\mathsf{SL}(2,\mathbb{R}) \times \mathsf{SL}(2,\mathbb{R}) 
\longrightarrow \mathsf{Sp}(4,\mathbb{R}),
\]
given explicitly by
\[
\rho(\gamma) =
\begin{pmatrix}
a_1(\gamma) & & b_1(\gamma) & \\
& a_2(\gamma) & & b_2(\gamma) \\
c_1(\gamma) & & d_1(\gamma) & \\
& c_2(\gamma) & & d_2(\gamma)
\end{pmatrix},
\]
which is maximal \refchangenew{31}{\changednew{(see Burger, Iozzi, Labourie and Wienhard~\cite[Example 3.9]{maximal})}}, and thus $\Theta$-positive, where $\Theta$ is the long root in $\mathfrak{sp}(4,\mathbb{R})$.

Let $m'_1, m'_2, m'$ denote the pullbacks to $\Lambda(\Gamma)$ of the Lebesgue measures on the respective limit curves associated to $\rho_1, \rho_2$, and $\rho$. Then we can check that (see Appendix~\ref{append3}) $m'_1$ and $m'_2$ are mutually singular ergodic measures, while $m'$ and $m'_1 + m'_2$ are mutually absolutely continuous. Consequently, $m'$ is not ergodic, and decomposes into two distinct ergodic components corresponding to $m'_1$ and $m'_2$. 

Moreover, we observe that $D(\mathfrak{sp}(4,\mathbb{R}),\alpha) = 2$ (see also Appendix~\ref{append3}). Hence, this example demonstrates that our estimate is optimal.

\end{example}

}

{\color{blue}
\begin{remark}\label{psdiff}
\refchange{31}{Assume $\Gamma\subset \mathsf{PSL}(2,\mathbb{R})$ is a lattice and $\rho:\Gamma \to \mathsf{PGL}(d,\mathbb{R})$ is a projective relatively Anosov representation whose $1$-limit set is a Lipschitz manifold. Let $m$ be the Lebesgue measure on the $1$-limit set. Then there exists an $m$-measurable and $\rho$-equivariant map from the $1$-limit set to the $(1,2)$-flag manifold, sending almost every point to the flag consisting of the point and the tangent line through it.

Pozzetti–Sambarino–Wienhard~\cite[Section~6.1]{Liplim} showed that $m$ can be chosen (within the same measure class) so that the transformation cocycle of $m$ under the $\rho(\Gamma)$ action is the partial Iwasawa cocycle on the $(1,2)$-flag manifold composed with the above measurable embedding of the $1$-limit set. However, one should note that the possible discontinuity of the tangent lines may cause the transformation cocycle to be discontinuous with respect to the limit curve. This prevents us from applying the classical Patterson–Sullivan theory (for example, Dey–Kapovich~\cite{Dey_2022}, Kim-Oh-Wang~\cite{kim_properly_2025} and Canary–Zhang–Zimmer~\cite{CaZhZi3}) whose cocycle is continuous with respect to the limit set. As a result, there is no contradiction in the fact that $m_{\alpha}$ might not be non-ergodic, whereas the classical Patterson–Sullivan measure is ergodic.}
\end{remark}

\begin{remark}
There are two kinds of roots in $\Theta$: boundary roots and non-boundary roots (see the discussion between Theorem~\ref{thm7} and Theorem~\ref{thm8}). One can further check for which roots $\alpha \in \Theta$ the measure $m_{\alpha}$ may be non-ergodic.

If $\alpha \in \Theta$ is a non-boundary root, then $m_{\alpha}$ is indeed ergodic. There are two ways to show this. The first method is to note that for non-boundary roots, the $\alpha$-limit set is a $C^{1}$ curve. This fact is well known for Hitchin representations: Labourie~\cite{labourie2005anosovflowssurfacegroups} showed that the limit curve of a Hitchin representation is a Frenet curve. For other types of $\Theta$-positive structures, see for example Pozzetti–Sambarino–Wienhard~\cite[Corollary~10.4]{Liplim} for $G = \mathsf{SO}(p,q)$. Hence the partial Iwasawa cocycle is continuous, and the ergodicity of $m_{\alpha}$ then follows directly from the classical Patterson–Sullivan theory. The second method uses Definition~\ref{rankofcone} and Theorem~\ref{ergodictheoreminpaper}:  By the definition therein, we have $D(\mathfrak{g},\alpha)=1$ for any non-boundary root. \refchange{31}{So the non-ergodic phenomenon only happens for boundary roots.}
\end{remark}

}

\subsection{Dimension of the Limit Set}\label{ssec11}

We continue to use \( \xi^{\Theta} \) and \( \xi^{\alpha} \) to denote the limit maps \( \Lambda(\Gamma) \to \mathcal{F}_{\Theta}, \mathcal{F}_{\alpha} \), respectively. For any \( \Theta \subset \Delta \), let \( L_{\Theta} \) be the corresponding Levi subgroup and \( S_{\Theta} \) its semisimple part. The Hausdorff dimension is considered with respect to fixed Riemannian metrics \( d_{\alpha} \) on \( \mathcal{F}_{\alpha} \). Denote by \( \Lambda_c(\Gamma) \) the conical limit set of \( \Gamma \). \refchange{32}{{\color{blue} As mentioned above, we can classify the elements of $\Theta$ into boundary roots and non-boundary roots.}}

\begin{theorem}\label{thmC}
Let \( \Gamma \subset \mathsf{PSL}(2,\mathbb{R}) \) be a \changed{non-elementary} discrete subgroup, and let \( \rho: \Gamma \to G \) be a \( \Theta \)-positive representation where $G$ is simple.  
\begin{enumerate}
    \item For any non-boundary root \( \alpha \in \Theta\) in the Dynkin diagram , we have
    \[
    \delta^{\alpha}_{\rho}(\Gamma) = \dim\big( \xi^{\alpha}(\Lambda_c(\Gamma)) \big).
    \]
    
    \item If $\Theta\not = \Delta$. For the boundary root \( \alpha_{\Theta} \in \Theta\) in the Dynkin diagram, the following inequalities hold:
    \[
    \frac{r}{2} \delta^{\omega'_{\alpha_{\Theta}}}_{\rho}(\Gamma) \le \dim\big( \xi^{\alpha_{\Theta}}(\Lambda_c(\Gamma)) \big) \le \delta^{\alpha_{\Theta}}_{\rho}(\Gamma) \le 1,
    \]
    where \( \omega'_{\alpha_{\Theta}} \) is the fundamental weight corresponding to \( \alpha_{\Theta} \), considered as a root of \( S_{\Theta - \alpha_{\Theta}} \), and \( r>1\) is the real rank of \( S_{\Theta - \alpha_{\Theta}} \) \refchangenew{35}{\changednew{(see Theorem~\ref{thm8})}}.
    
    \item Moreover, if the representation is transverse not only with respect to \( \Theta \) but also with respect to the adjacent roots in the Dynkin diagram, then for any \( \alpha \in \Theta \),
    \[
    \dim\big( \xi^{\alpha}(\Lambda_c(\Gamma)) \big) = \delta^{\alpha}_{\rho}(\Gamma).
    \]
\end{enumerate}
\end{theorem}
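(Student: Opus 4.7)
My plan is to follow the shadow-based strategy of Pozzetti--Sambarino--Wienhard \cite{Liplim} and Canary--Zhang--Zimmer \cite{CaZhZi}, which compares the critical exponent with the Hausdorff dimension of the conical limit set by producing matching inner and outer diameter estimates for images of the form $\xi^{\alpha}(U_{\gamma})$, where $U_{\gamma}\subset\Lambda(\Gamma)$ is a shrinking neighborhood of $\gamma^{+}$ comparable to the shadow cast by $\rho(\gamma)$. The first step, valid for every $\alpha\in\Theta$, is to combine the $\Theta$-transversality of $\rho$ (Proposition~\ref{propB}) with the Cartan gap estimates coming from the relative Anosov property to get the outer estimate $\operatorname{diam}_{d_{\alpha}}\!\big(\xi^{\alpha}(U_{\gamma})\big)\lesssim e^{-\alpha(\kappa(\rho(\gamma)))}$. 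A standard covering of $\Lambda_{c}(\Gamma)$ by such shadows then yields $\dim(\xi^{\alpha}(\Lambda_{c}(\Gamma)))\le\delta^{\alpha}_{\rho}(\Gamma)$; this already supplies the middle inequality in (2) and half of the equality in (1), while the bound $\delta^{\alpha_{\Theta}}_{\rho}(\Gamma)\le 1$ is Theorem~\ref{thmA}.

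For (1), when $\alpha$ is a non-boundary root, the target flag variety is ``fully transverse'' in the sense of \cite{pozzetti2020conformalityrobustclassnonconformal,Liplim}, and the same machinery supplies the matching inner estimate $\operatorname{diam}_{d_{\alpha}}\!\big(\xi^{\alpha}(U_{\gamma})\big)\gtrsim e^{-\alpha(\kappa(\rho(\gamma)))}$. Constructing an $\alpha$-Patterson--Sullivan measure of exponent $\delta^{\alpha}_{\rho}(\Gamma)$ on $\Lambda(\Gamma)$ and pushing it forward through $\xi^{\alpha}$, the two matching estimates upgrade to a Frostman-type regularity and yield the reverse inequality, completing Part (1).

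The hard part will be the lower bound $\frac{r}{2}\delta^{\omega'_{\alpha_{\Theta}}}_{\rho}(\Gamma)\le\dim(\xi^{\alpha_{\Theta}}(\Lambda_{c}(\Gamma)))$ in Part (2). As pointed out in Remark~\ref{psdiff}, the image $\xi^{\alpha_{\Theta}}(\Lambda(\Gamma))$ is only rectifiable, its tangent lines need not vary continuously, and no genuine inner shadow estimate relative to $\alpha_{\Theta}$ itself is available. My plan is to replace $\alpha_{\Theta}$ by a functional that \emph{does} see a clean inner estimate, by exploiting the Levi decomposition associated with $\Theta-\alpha_{\Theta}$: the stabilizer of a point in $\mathcal{F}_{\alpha_{\Theta}}$ contains the semisimple factor $S_{\Theta-\alpha_{\Theta}}$, and the fundamental weight $\omega'_{\alpha_{\Theta}}$, regarded as a root of $S_{\Theta-\alpha_{\Theta}}$, controls the direction in which monotonicity of the positive map forces the image to spread. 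Using the regular distortion property of Section~\ref{sec4}, I expect to prove
\[
\operatorname{diam}_{d_{\alpha_{\Theta}}}\!\big(\xi^{\alpha_{\Theta}}(U_{\gamma})\big)\ \gtrsim\ e^{-\frac{2}{r}\,\omega'_{\alpha_{\Theta}}(\kappa(\rho(\gamma)))},
\]
with the factor $2/r$ tracking how $\omega'_{\alpha_{\Theta}}$ decomposes as an average of $r$ positive roots of $S_{\Theta-\alpha_{\Theta}}$. Pushing a $\omega'_{\alpha_{\Theta}}$-Patterson--Sullivan measure through $\xi^{\alpha_{\Theta}}$ and applying Frostman's mass-distribution principle then gives the desired bound.

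Finally, Part (3) I plan to obtain by a diagonal embedding: if $\rho$ is also transverse at the roots adjacent to $\alpha_{\Theta}$, then $\xi^{\alpha_{\Theta}}$ factors as $\xi^{\Theta'}$ (for the enlarged set $\Theta'$ of roots) composed with a projection $\mathcal{F}_{\Theta'}\to\mathcal{F}_{\alpha_{\Theta}}$, and in $\mathcal{F}_{\Theta'}$ the root $\alpha_{\Theta}$ is effectively non-boundary, so the argument of the non-boundary case applies and gives equality. The simplicity of $G$ and the hypothesis $\psi(G)=\operatorname{Inn}(\mathfrak{g})$ enter only in identifying the Levi structure and in the Patterson--Sullivan construction; the main technical obstacle throughout is the third paragraph, where the absence of a $C^{1}$-tangent direction for the boundary root forces one to work with the smaller group $S_{\Theta-\alpha_{\Theta}}$ and to pay the explicit constant $r/2$.
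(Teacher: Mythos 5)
Your outer estimate and the resulting upper bound $\dim(\xi^{\alpha}(\Lambda_c(\Gamma)))\le\delta^{\alpha}_{\rho}(\Gamma)$ match the paper (this is Theorem~\ref{thmB}), and you correctly locate the source of the constant $r/2$ in the comparison $\omega'_{\alpha_{\Theta}}\ge\frac{r}{2}\alpha_{\Theta}$ inside the tube-type factor $S_{\Theta-\alpha_{\Theta}}$. But there is a genuine gap in every lower bound: you invoke ``an $\alpha$-Patterson--Sullivan measure of exponent $\delta^{\alpha}_{\rho}(\Gamma)$'' (resp.\ an $\omega'_{\alpha_{\Theta}}$-one) without saying how such a measure, with the shadow decay $\mu(\mathcal{O}_r(b_0,\gamma b_0))\lesssim e^{-\delta\,\phi(\kappa(\rho(\gamma)))}$, is to be produced for a general discrete $\Gamma$ and a general functional $\phi$. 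The paper's construction (Proposition~\ref{prop8}, Lemmas~\ref{lem17} and~\ref{lem16}) is a Bishop--Jones tree argument, and it only works when $\phi$ is \emph{strongly coarsely additive}: $\phi(\kappa(\rho(\gamma\eta)))\ge\phi(\kappa(\rho(\gamma)))+\phi(\kappa(\rho(\eta)))-C$ for pairs with separated endpoint data. A simple root $\alpha$ has no such property a priori. The actual content of the proof of Theorem~\ref{thmC} is (i) that each fundamental weight $\omega_{\beta}$ is strongly coarsely additive whenever $\rho$ is $\beta$-transverse, via the Tits representation and the identification $\operatorname{Stab}_G([V_{\chi_\beta}])=P_{\beta}$, and (ii) a case-by-case check against the classification in Theorem~\ref{thm7}, using the Cartan matrix, that every non-boundary root of $\Theta$ is a linear combination of the $\omega_{\beta}$ with $\beta\in\Theta$ (e.g.\ $\alpha_k=2\omega_{\alpha_k}-\omega_{\alpha_{k-1}}-\omega_{\alpha_{k+1}}$ for $\mathsf{SO}(p,q)$), whereas the boundary root is not --- this is precisely what forces the substitute functional $\omega'_{\alpha_{\Theta}}$ in part (2) and the extra transversality hypothesis in part (3), which serves only to make the fundamental weights of the adjacent roots available. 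Your proposal contains no analogue of this step, so the measures you need are not constructed.

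A secondary but real misdiagnosis: you attribute the boundary-root difficulty to the failure of the inner diameter estimate and propose to prove the weaker bound $\operatorname{diam}(\xi^{\alpha_{\Theta}}(U_{\gamma}))\gtrsim e^{-\frac{2}{r}\omega'_{\alpha_{\Theta}}(\kappa(\rho(\gamma)))}$. In fact the regular distortion property (Theorem~\ref{thm12}) and Proposition~\ref{weakshadow} already give the stronger two-sided estimate $\asymp e^{-\alpha(\kappa(\rho(\gamma)))}$ for \emph{every} $\alpha\in\Theta$, boundary root included; that is the whole point of the monotonicity/tangent-cone argument of Section~\ref{sec4}. What degrades for the boundary root is not the geometry of the shadows but the availability of a coarsely additive functional comparable to $\alpha_{\Theta}$, i.e.\ the measure-theoretic half of the Frostman argument. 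Relatedly, your reduction of part (3) to ``$\alpha_{\Theta}$ is effectively non-boundary in $\mathcal{F}_{\Theta'}$'' does not work as stated, since the $\Theta$-positive structure does not extend to $\Theta'$; the correct use of the adjacent transversality is only to certify strong coarse additivity of the adjacent fundamental weights and then write $\alpha_{\Theta}$ in terms of them.
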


\begin{remark}
It remains an interesting question whether the equality 
\[
\dim\big( \xi^{\alpha}(\Lambda_c(\Gamma)) \big) = \delta^{\alpha}_{\rho}(\Gamma)
\]
holds in general. If not, how might one construct a counterexample? Furthermore, is there a sharper estimate available in the general case?
\end{remark}

\vspace{3mm}

\textbf{Comparison with Previous Works:}

Our paper partially generalizes the argument of Canary–Zhang–Zimmer~\cite{CaZhZi} to all \( \Theta \)-positive representations arising from arbitrary discrete subgroups \( \Gamma \subset \mathsf{PSL}(2,\mathbb{R}) \) to arbitrary simple Lie groups $G$ \refchangenew{34}{\changednew{admitting $\Theta$-positive structure}}. \refchange{34}{{\color{blue}It is worth noting that $G$ need not be $\mathsf{PSL}(d,\mathbb{R})$, and that $\Theta$ may be a proper subset of the set of simple roots.}} \changed{And although our discussion focuses on simple Lie groups $G$, groups admitting $\Theta$-positive structures may also be semisimple. However, all the results in this paper extend directly to part of semisimple cases, for example, provided that $G$ is a direct product of simple Lie groups that admit $\Theta$-positive structures.}

Compared to the work of Pozzetti–Sambarino–Wienhard~\cite{Liplim} containing the critical exponent rigidity of maximal representations and \( \Theta \)-positive representations into \( \mathsf{SO}(p,q) \), a minor advantage of our approach—specifically regarding Theorem \ref{thmA}—is that it does not require any assumption on the algebraicity of \( G \), nor does it involve analyzing the irreducibility or Zariski closure of \( \rho(\Gamma) \). Furthermore, the regular distortion property introduced in Section \ref{sec4} may have broader applications in the study of dynamics of \( \Theta \)-positive representations.

\refchange{35}{{\color{blue}There are also many other works on the Hausdorff dimension of limit sets of Anosov subgroups (see, for instance, Glorieux–Monclair–Tholozan~\cite{glorieux2023hausdorffdimensionlimitsets}, Dey–Kapovich~\cite{Dey_2022}, Dey–Kim–Oh~\cite{dey2024ahlforsregularitypattersonsullivanmeasures}, Pozzetti–Sambarino–Wienhard~\cite{pozzetti2020conformalityrobustclassnonconformal}, and Kim–Minsky–Oh~\cite{kim2023hausdorffdimensiondirectionallimit}). Similar estimates to those in our Theorem~\ref{thmC} have appeared in the first three papers. Glorieux–Monclair–Tholozan~\cite{glorieux2023hausdorffdimensionlimitsets} proved that for a projective Anosov subgroup, the Hausdorff dimension of the symmetric limit set with respect to the Riemannian metric is bounded above by the simple root critical exponent and bounded below by the translation length critical exponent. Dey–Kim–Oh~\cite{dey2024ahlforsregularitypattersonsullivanmeasures} proved that the Hausdorff dimension of the limit set with respect to the symmetrized visual pre-metric equals the critical exponent of the symmetrized functional determining the visual pre-metric (see also Dey–Kapovich~\cite{Dey_2022}). As an application, they also obtained similar estimates for the Hausdorff dimension with respect to the Riemannian metric as in Glorieux–Monclair–Tholozan~\cite{glorieux2023hausdorffdimensionlimitsets}: it is bounded above by the critical exponent of the root and bounded below by the critical exponent of the symmetrized fundamental weight. It should be noted that our estimate in Theorem~\ref{thmC} applies to positive representations from general discrete subgroups to general Lie groups with $\Theta$-positive structure, which are not necessarily Anosov or relatively Anosov, and the lower bound we obtain is stronger than in previous works, being $\frac{r}{2}$ times the fundamental weight critical exponent.}}

\vspace{3mm}

\textbf{Outline of the Paper:}

\refchange{36}{{\color{blue}
Section \ref{sec2} is devoted to preliminaries. We review the basic theory of Lie groups and, following Canary–Zhang–Zimmer~\cite{CaZhZi3} and Guichard–Wienhard~\cite{GW1}, introduce the basic theory of transverse representations.

In Section \ref{sec3}, following Guichard–Wienhard~\cite{GW2}, we introduce the basic theory of $\Theta$-positivity. We also prove that $\Theta$-positive representations are $\Theta$-transverse and that the limit curve is rectifiable when $\Lambda(\Gamma) = S^1$.
}}

In Section \ref{sec4}, we establish that the limit map of a \( \Theta \)-positive representation satisfies a \emph{regular distortion property} \refchange{37}{{\color{blue}(see Definition \ref{def13})}}, which plays a central role in the proof of the rigidity of critical exponents. The key technique involves analyzing the action of \( \rho(\Gamma) \) on positive tuples in \( \xi^{\Theta}(\Lambda(\Gamma)) \), using tangent cone approximations of the positive semigroup. As a corollary of the regular distortion property, we obtain several shadow lemmas for \( \Theta \)-positive representations. Theorem \ref{ergodictheorem} is also proved in this Section.

In Section \ref{sec5} we follow strategy in Pozzetti–Sambarino–Winhard~\cite{Liplim} to prove \changed{Theorem \ref{thmA}~(1) and (2).}

Section \ref{sec6} demonstrates the existence of the double of a \( \Theta \)-positive representation \changed{from a geometrically finite $\Gamma$}, thereby completing the proof of the remaining Theorem \ref{thmA}~(3).

Finally, in Section \ref{append2}, we outline the proof of Theorem \ref{thmC}, closely following the arguments in Canary–Zhang–Zimmer~\cite[Section 8]{CaZhZi}.

\vspace{3mm}
\textbf{Acknowledgement:}
The author wishes to express his sincere gratitude to his advisor \maybe{Prof. Tengren Zhang} for introducing him to the subject of this work and for his continuous support throughout its development. His advisor's consistent guidance, as well as his patient and insightful feedback, have been of fundamental importance to the author’s academic growth.

The author also wishes to express sincere thanks to the referee of this paper, who provided a very detailed review and many useful suggestions.

\section{Preliminaries}\label{sec2}

\subsection{Review of Semisimple Lie Groups}\label{ssec1}
We first review some basic theory about semisimple Lie groups, parabolic subgroups and flag manifolds.

\subsubsection{Basic Structure Theory}\label{sssec1}
In this section we always assume that $G$ is a real \changed{non-compact} semisimple Lie group with finitely many components. Denote its Lie algebra by $\mathfrak{g}$ and identity component by $G^{\circ}$. We further assume that $G^{\circ}$ has finite center $Z(G^{\circ})$. Fix a \emph{Cartan decomposition} $\mathfrak{g} = \mathfrak{k}\oplus \mathfrak{p}$ where $\mathfrak{k}$ is the Lie algebra of a maximal compact subgroup $K\subset G$. It can be shown that if $G$ is connected, then $K$ is connected (see Helgason~\cite[Chapter VI: Theorem 1.1]{helgason}). Let $\mathfrak{a}\subset \mathfrak{p}$ be a fixed maximal abelian subalgebra. Let $\Delta\subset \mathfrak{a}^*$ be a choice of positive simple roots \changed{and for each $\alpha\in \Delta$, let $\omega_{\alpha}$ denote its fundamental weight.} Let $\Sigma,\Sigma^{+},\Sigma^{-}\subset \mathfrak{a}^*$ be the set of non-zero roots, positive roots and negative roots \refchange{38}{{\color{blue}respectively}},  let $\mathfrak{k}_0$ be the centralizer of $\mathfrak{a}$ in $\mathfrak{k}$, let $W$ be the restricted Weyl group, and let $\mathfrak{g}_{\alpha}$ be the root subspace for root $\alpha\in\Sigma$. Set $\mathfrak{g}_0 = \mathfrak{a}\oplus \mathfrak{k}_0$, then we have the restricted root space decomposition: 
$$
\mathfrak{g} = \mathfrak{a}\oplus \refchangenew{39}{\changednew{\mathfrak{k}_0}} \refchange{39}{{\color{blue} \oplus}} \bigoplus_{\alpha\in\ \Sigma} \mathfrak{g}_{\alpha} = \bigoplus_{\alpha\in\ \Sigma\cup\{0\}} \mathfrak{g}_{\alpha} .
$$

\vspace{3mm}

Let $\mathfrak{a}^{+}$ denote the open positive Weyl Chamber, i.e. \[\mathfrak{a}^{+}: = \{v\in\mathfrak{a}|\forall \alpha\in \Delta, \alpha(v)>0\}.\] Any $g\in G$ has a (not unique) \emph{Cartan decomposition in $G$} such that $g=k_1 \exp (\kappa(g)) k_2$ where $k_1, k_2\in K$ and $\kappa(g)\in \overline{\mathfrak{a}^+}$. The vector $\kappa(g)$ is determined uniquely and continuously by $g$ and it is called the \emph{Cartan projection}. 

When $G = \mathsf{SL}(d,\mathbb{R})$, we may choose $K$ to be $\mathsf{SO}(d)$ with orthonormal basis $\{e_1,e_2,...,e_d\}$, $\mathfrak{p}$ to be the space of  \refchange{40}{{\color{blue}$\mathbb{R}$-valued $d\times d$}} symmetric matrices, and $\mathfrak{a}$ to be the space of trace-zero diagonal matrices $\{\mathrm{diag}(\lambda_1,...\refchange{41}{{\color{blue},}}\lambda_d)|\sum_{i = 1}^{d}\lambda_i = 0\}$. We may also choose the simple roots to be 
$$
\alpha_i (\mathrm{diag}(\lambda_1,...\refchange{41}{{\color{blue},}}\lambda_d)) = \lambda_i-\lambda_{i+1},1\le i\le d-1
$$
and the positive Weyl chamber is
$$
\mathfrak{a}^+ = \{\mathrm{diag}(\lambda_1,...\refchange{41}{{\color{blue},}}\lambda_d)\in \mathfrak{a}|\forall 1\le i\le d-1\  \lambda_i> \lambda_{i+1}\}.
$$
Moreover, in this case
$$
\kappa(g) = \mathrm{diag}(\log \sigma_1(g),...\refchange{41}{{\color{blue},}} \log \sigma_d(g))
$$
where $\sigma_1(g)\ge \sigma_2(g)\refchange[\baselineskip]{42}{{\color{blue}\ge}}...\ge \sigma_d(g)$ are the singular values of $g\in \mathsf{SL}(d,\mathbb{R})$.

\subsubsection{Parabolic Subgroups}\label{sssec2}
For any non-empty $\Theta\subset \Delta$, we define the corresponding \emph{Levi subalgebra} $\mathfrak{l}_{\Theta}$, \emph{nilpotent subalgebra} $\mathfrak{u}_{\Theta}$ and \emph{parabolic subalgebra} $\mathfrak{p}_{\Theta}$ as follows:
$$
\Sigma_{\Theta}: = \Sigma-\mathrm{Span}(\Delta-\Theta),\quad \Sigma_{\Theta}^{+}: =\Sigma_{\Theta}\cap \Sigma^{+},\quad \Sigma_{\Theta}^{-}: =\Sigma_{\Theta}\cap \Sigma^{-},   $$
$$
\mathfrak{l}_{\Theta} : = \mathfrak{g}_0\oplus \sum_{\alpha\in \mathrm{Span}(\Delta-\Theta)\cap \Sigma^{+}} [\mathfrak{g}_{\alpha}\oplus \mathfrak{g}_{-\alpha}],\quad
\mathfrak{u}_{\Theta} : = \sum_{\alpha\in\Sigma_{\Theta}^{+} } \mathfrak{g}_{\alpha},\quad \mathfrak{p}_{\Theta}: = \mathfrak{l}_{\Theta}\oplus \mathfrak{u}_{\Theta}.
$$
Moreover, we can define the opposite parabolic subalgebra and the opposite nilpotent subalgebra by reversing the sign:
$$
\mathfrak{u}_{\Theta}^{opp} : = \sum_{\alpha\in\Sigma_{\Theta}^{-} } \mathfrak{g}_{\alpha},\quad \mathfrak{p}_{\Theta}^{opp}: = \mathfrak{l}_{\Theta}\oplus \mathfrak{u}_{\Theta}^{opp}.
$$
\vspace{3mm}

\changed{Throughout this paper, for any $g\in G$, we use $Ad_{g}(\cdot)$ to denote the adjoint action of $g$ on the Lie algebra $\mathfrak{g}$, and $\mathsf{c}_g(\cdot)$ to denote the conjugation action by $g$ on the group $G$.}

Define the \emph{parabolic subgroups} $P_{\Theta}: = N_{G}(\mathfrak{p}_{\Theta}) = \{g\in G \refchange{43}{{\color{blue}\mid}} Ad_{g} \mathfrak{p}_{\Theta} = \mathfrak{p}_{\Theta}\}$ and $P_{\Theta}^{opp}: = N_{G}(\mathfrak{p}_{\Theta}^{opp})$. Next define the \emph{Levi subgroup} $L_{\Theta}: = P_{\Theta}\cap P_{\Theta}^{opp}$ with the identity component $L_{\Theta}^{\circ}$, and define $U_{\Theta}$ and $U_{\Theta}^{opp}$ to be the connected \emph{unipotent subgroups} of $G$ with Lie algebra $\mathfrak{u}_{\Theta},\mathfrak{u}_{\Theta}^{opp}$. $P_{\Theta}$, $P_{\Theta}^{opp}$ and $L_{\Theta}$ are all closed subgroups of $G$, and their Lie algebras are $\mathfrak{p}_{\Theta}$, $\mathfrak{p}_{\Theta}^{opp}$, and $\mathfrak{l}_{\Theta}$ respectively. $U_{\Theta}, U_{\Theta}^{opp}$ are also closed, and they are diffeomorphic to their Lie algebras via the exponential map.

\vspace{3mm}

Let $\mathfrak{z}_{\Theta}$ be the center of $\mathfrak{l}_{\Theta}$ and $\mathfrak{s}_{\Theta} = [\mathfrak{l}_{\Theta},\mathfrak{l}_{\Theta}]$. As $\mathfrak{l}_{\Theta}$ is reductive, we have $\mathfrak{l}_{\Theta} = \mathfrak{z}_{\Theta}\oplus \mathfrak{s}_{\Theta}$ and $\mathfrak{s}_{\Theta}$ is semisimple. Moreover $S_{\Theta}:=[L_{\Theta},L_{\Theta}]$ is a semisimple subgroup of $G$ with Lie algebra $\mathfrak{s}_{\Theta}$. Define $\mathfrak{t}_{\Theta} = \{\refchange{44}{{\color{blue}v\in \mathfrak{a}}}|,  \forall \alpha\in (\Delta-\Theta), \alpha(v) = 0\}$ and one can check  $\mathfrak{t}_{\Theta} = \mathfrak{z}_{\Theta}\cap \mathfrak{a}$. Since the Killing form $B:\mathfrak{g}\times \mathfrak{g}\to \mathbb{R}$ restricts to an inner product on $\mathfrak{a}$, we may set $\mathfrak{a}_{\Theta}$ to be the orthogonal complement of $\mathfrak{t}_{\Theta}$ in $\mathfrak{a}$ and $H_{\alpha}$ to be vectors such that $\forall H\in \mathfrak{a}, B(H_{\alpha},H) = \alpha(H)$. We can show that 
 $\mathfrak{a}_{\Theta}$ is a Cartan subspace of $\mathfrak{s}_{\Theta}$ and $\{H_{\alpha}|\alpha\in (\Delta-\Theta)\}$ is a basis of $\mathfrak{a}_{\Theta}$. Furthermore we may choose the positive Weyl chamber of $\mathfrak{a}_{\Theta}$ \refchange{45}{{\color{blue}to be $\{v\in \mathfrak{a}_{\Theta}|\forall \alpha \in (\Delta-\Theta), \alpha(v)>0\}$}}, in which case the set of positive simple roots of $S_{\Theta}$ is the restriction of the roots $(\Delta-\Theta)$ to $\mathfrak{a}_{\Theta}$, and the Dynkin diagram of $S_{\Theta}$ is the sub-diagram of the Dynkin diagram of $G$ induced by the vertices $(\Delta-\Theta)$.
\vspace{3mm}

\refchange{46}{{\color{blue}The adjoint action of $L_{\Theta}^{\circ}$ on $\mathfrak{u}_{\Theta}$ gives a representation from $L_{\Theta}^{\circ}$ to $\mathsf{SL}(\mathfrak{u}_{\Theta})$,}} and thus induces a weight space decomposition \refchange{46}{{\color{blue}with respect to $\mathfrak{t}_{\Theta}$}}:
$$
\mathfrak{u}_{\Theta}=\bigoplus_{\lambda\in P(\mathfrak{u}_{\Theta},\mathfrak{t}_{\Theta})} \mathfrak{u}_{\lambda}
$$
where $P(\mathfrak{u}_{\Theta},\mathfrak{t}_{\Theta})\subset \mathfrak{t}_{\Theta}^{*}$ is the set of weights and $\forall \lambda\in P(\mathfrak{u}_{\Theta},\mathfrak{t}_{\Theta})$, $\mathfrak{u}_{\lambda}$ is the associated weight space. One can show that $P(\mathfrak{u}_{\Theta},\mathfrak{t}_{\Theta}) = \{\beta|_{\mathfrak{t}_{\Theta}}:\beta\in \Sigma^{+}\}$. So we abuse the notation and write $\mathfrak{u}_{\beta}:=\mathfrak{u}_{\lambda}$ if \refchange{47}{{\color{blue}$\beta\in \Sigma^+$}} satisfies $\lambda = \beta|_{\mathfrak{t}_{\Theta}}$. For any $\beta\in \Sigma^{+}$, \refchange{48}{{\color{blue} we have}}
$$
\mathfrak{u}_{\beta} = \bigoplus_{\alpha\in \Sigma,\alpha-\beta \in \mathrm{Span}(\Delta -\Theta)} \mathfrak{g}_{\alpha},
$$
and $\forall \beta\in \Theta$, $\mathfrak{u}_\beta \not = 0$. Similar discussion holds for $\mathfrak{u}_{\Theta}^{opp}$, and for each $\beta\in \Sigma^{-}$ we define
$$
\mathfrak{u}_{\beta}^{opp} = \bigoplus_{\alpha\in \Sigma,\alpha-\beta \in \mathrm{Span}(\Delta -\Theta)} \mathfrak{g}_{-\alpha}.
$$

To avoid confusion, we use $\mathfrak{u}_{\{\alpha\}}$ (respectively $\mathfrak{u}_{\{\alpha\}}^{opp}$) to denote the nilpotent subalgebra for $P_{\alpha}$ (respectively, $P_{\alpha}^{opp}$) and distinguish them with $\mathfrak{u}_{\beta},\mathfrak{u}_{\beta}^{opp}$ above.

The following theorem summarizes basic facts about the weight subspaces (all the below facts \refchange{50}{{\color{blue}carry}} analogously to $\mathfrak{u}_{\Theta}^{opp}$\refchange{51}{{\color{blue}).}}

\begin{theorem}\label{thm6}(Guichard-Wienhard~\cite[Theorem 2.2]{GW2} and  \refchange{52}{{\color{blue} Kostant~\cite{Kostant2010}}})
     \begin{enumerate}
         \item For every $\lambda \in P(\mathfrak{u}_{\Theta},\mathfrak{t}_{\Theta})$, $\mathfrak{u}_{\lambda}$ is an irreducible representation of $L_{\Theta}^{\circ}$.

         \item For every $\lambda,\lambda'\in P(\mathfrak{u}_{\Theta},\mathfrak{t}_{\Theta})$, $[\mathfrak{u}_{\lambda},\mathfrak{u}_{\lambda'}] = \mathfrak{u}_{\lambda+\lambda'}$.

         \item The Lie algebra $\mathfrak{u}_{\Theta}$ is generated by $\mathfrak{u}_{\beta},\beta\in \Theta$. 
     \end{enumerate}
\end{theorem}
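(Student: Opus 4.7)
The three claims are proved in order, since (2) uses (1) and (3) uses (2). The foundational observation is that $\mathfrak{t}_\Theta \subset \mathfrak{z}_\Theta$ lies in the center of $\mathfrak{l}_\Theta$, so the adjoint action of $L_\Theta^\circ$ on $\mathfrak{u}_\Theta$ automatically preserves the $\mathfrak{t}_\Theta$-weight decomposition $\mathfrak{u}_\Theta = \bigoplus_\lambda \mathfrak{u}_\lambda$, making each $\mathfrak{u}_\lambda$ an $L_\Theta^\circ$-subrepresentation and the Lie bracket $L_\Theta^\circ$-equivariant.

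For (1), I would refine the decomposition to $\mathfrak{u}_\lambda = \bigoplus_{\alpha \in \Sigma_\lambda} \mathfrak{g}_\alpha$ where $\Sigma_\lambda := \{\alpha \in \Sigma^+ : \alpha|_{\mathfrak{t}_\Theta} = \lambda\}$; any two elements of $\Sigma_\lambda$ differ by an element of $\mathrm{Span}(\Delta - \Theta)$, which is the $\mathbb{R}$-span of the root system of $\mathfrak{s}_\Theta$. Since $L_\Theta^\circ$ is connected, $L_\Theta^\circ$-irreducibility reduces to $\mathfrak{l}_\Theta$-irreducibility and hence to $\mathfrak{s}_\Theta$-irreducibility. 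I would argue by a highest-weight argument: fix $\alpha_0 \in \Sigma_\lambda$ maximal in the dominance order induced by $\Delta - \Theta$, so that $\alpha_0 + \gamma \notin \Sigma$ for every $\gamma \in \Delta - \Theta$. Then $\mathfrak{g}_{\alpha_0}$ is annihilated by the positive-root vectors of $\mathfrak{s}_\Theta$, so any nonzero $\mathfrak{s}_\Theta$-submodule of $\mathfrak{u}_\lambda$ meets $\mathfrak{g}_{\alpha_0}$ nontrivially; iterated lowering via $\mathfrak{g}_{-\gamma}$ for $\gamma \in \Delta - \Theta$, together with the $\mathfrak{k}_0$-action (where $\mathfrak{k}_0$ is the centralizer of $\mathfrak{a}$ in $\mathfrak{k}$) on each real restricted root space, then recovers all of $\mathfrak{u}_\lambda$.

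For (2), the inclusion $[\mathfrak{u}_\lambda, \mathfrak{u}_{\lambda'}] \subseteq \mathfrak{u}_{\lambda + \lambda'}$ is immediate from $[\mathfrak{g}_\alpha, \mathfrak{g}_{\alpha'}] \subseteq \mathfrak{g}_{\alpha + \alpha'}$ and the $\mathfrak{t}_\Theta$-weight decomposition. For the reverse, equivariance makes $[\mathfrak{u}_\lambda, \mathfrak{u}_{\lambda'}]$ an $L_\Theta^\circ$-submodule of $\mathfrak{u}_{\lambda + \lambda'}$, so by (1) it is either zero or the entire weight space; non-vanishing when $\lambda + \lambda' \in P(\mathfrak{u}_\Theta, \mathfrak{t}_\Theta)$ is established by producing $\alpha \in \Sigma_\lambda$, $\alpha' \in \Sigma_{\lambda'}$ with $\alpha + \alpha' \in \Sigma$ and $[\mathfrak{g}_\alpha, \mathfrak{g}_{\alpha'}] \neq 0$ via a root-string descent starting from any $\gamma \in \Sigma_{\lambda + \lambda'}$. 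For (3), I would induct on the $\Theta$-height $\mathrm{ht}_\Theta(\lambda) := \sum_{\beta \in \Theta} n_\beta$ where $\lambda = \sum_{\beta \in \Theta} n_\beta (\beta|_{\mathfrak{t}_\Theta})$, using that $\{\beta|_{\mathfrak{t}_\Theta}\}_{\beta \in \Theta}$ is a basis of $\mathfrak{t}_\Theta^*$ (since $\Delta$ is independent in $\mathfrak{a}^*$ and $\{\alpha|_{\mathfrak{t}_\Theta}\}_{\alpha \in \Delta - \Theta}$ vanishes). The base case $\mathrm{ht}_\Theta(\lambda) = 1$ is tautological, and for $\mathrm{ht}_\Theta(\lambda) \geq 2$ a short root-system argument produces two weights $\mu, \mu'$ of strictly smaller $\Theta$-height with $\lambda = \mu + \mu'$, after which (2) gives $\mathfrak{u}_\lambda = [\mathfrak{u}_\mu, \mathfrak{u}_{\mu'}]$.

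The hardest step is the irreducibility claim (1): in the real semisimple setting the restricted root spaces $\mathfrak{g}_\alpha$ need not be one-dimensional, so the standard complex highest-weight argument must be supplemented by a careful account of the $\mathfrak{k}_0$-action on each $\mathfrak{g}_\alpha$. This action is what allows a single nonzero vector of $\mathfrak{g}_{\alpha_0}$ to generate the full space $\mathfrak{g}_{\alpha_0}$, and hence (via the lowering operators of $\mathfrak{s}_\Theta$) the whole weight space $\mathfrak{u}_\lambda$.
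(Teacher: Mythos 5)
The paper does not actually prove this statement: Theorem \ref{thm6} is quoted verbatim from \cite[Theorem 2.2]{GW2} and used as a black box, so there is no in-paper argument to compare yours against. That said, your architecture — the $\mathfrak{t}_\Theta$-weight decomposition preserved by $L_\Theta^\circ$-equivariance, a highest-weight argument for (1), then (2) from (1) via equivariance plus a non-vanishing bracket, then (3) from (2) by induction on $\Theta$-height — is the standard route to this result, and parts (2) and (3) are essentially complete modulo (1) and two standard-but-not-immediate restricted-root facts (that every positive root is reachable through partial sums which are roots, and that $[\mathfrak{g}_\alpha,\mathfrak{g}_{\alpha'}]\neq 0$ whenever $\alpha,\alpha',\alpha+\alpha'\in\Sigma$, which in the restricted setting requires an $\mathfrak{sl}_2$-string argument rather than being automatic).

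The genuine gap is in (1), concentrated exactly where you flag it but not resolved there. First, the assertion that any nonzero submodule of $\mathfrak{u}_\lambda$ meets $\mathfrak{g}_{\alpha_0}$ presupposes that $\Sigma_\lambda$ has a \emph{unique} maximal element in the $(\Delta-\Theta)$-dominance order; if two incomparable maximal roots existed, a submodule could be generated by highest-weight vectors sitting over the other one and miss $\mathfrak{g}_{\alpha_0}$ entirely. This uniqueness is itself part of what must be proved (it is the content of Kostant's theorem on the nilradical in the complex case and needs a separate argument for restricted root systems). Second, the step ``a single nonzero vector of $\mathfrak{g}_{\alpha_0}$ generates all of $\mathfrak{g}_{\alpha_0}$ via the $\mathfrak{k}_0$-action'' is asserted, not proved, and it is precisely the point where the real case is harder than the complex one: restricted root spaces have no a priori reason to be irreducible under the relevant part of $\mathfrak{k}_0$, and your reduction from $\mathfrak{l}_\Theta$ to $\mathfrak{s}_\Theta$ discards the portion of $\mathfrak{k}_0$ lying in the center $\mathfrak{z}_\Theta$, so you are attempting to prove a statement ($\mathfrak{s}_\Theta$-irreducibility) that is strictly stronger than the one claimed and may require exactly the ingredient you dropped. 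The clean resolution is to cite \cite[Theorem 2.2]{GW2} as the paper does, or to supply the uniqueness of the highest restricted weight and the $\mathfrak{k}_0$-generation lemma explicitly.
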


\vspace{3mm}

\subsubsection{Adapted Representation}\label{sssec4}

We introduce the \emph{adapted representation}, \changed{which will later be used to linearize transverse representations.} The idea first appeared in Guichard--Wienhard~\cite[Remark 4.12]{GW1} \changed{and we follow the construction of Gueritaud--Guichard--Kassel--Wienhard~\cite{Gu_ritaud_2017}}. We call such representations ``adapted representations,'' following Canary--Zhang--Zimmer~\cite[Appendix B]{CaZhZi2}.

\refchange{53,54}{
{\color{blue}{
\begin{lemma}{\cite[Lemma 3.2 and Lemma 3.3]{Gu_ritaud_2017}}\label{lem4}
Fix $\Theta\subset \Delta$. For any positive combination of $\Theta$-fundamental weights $\chi = \sum_{\alpha\in \Theta} n_{\alpha}\omega_{\alpha}, n_{\alpha}\in \mathbb{Z}^{> 0}$, 
there exists an irreducible representation 
\[
\psi_{\Theta,\chi}: G\to \mathsf{SL}(V_{\Theta,\chi})
\] 
with the following properties:
\begin{enumerate}

    \item $\psi_{\Theta,\chi}$ is $1$-proximal, and its highest weight is $N\chi$ for some $N\in \mathbb{Z}_{>0}$. 
More precisely, $\psi_{\Theta,\chi}(\exp(\mathfrak{a}))$ consists of diagonal matrices, and if $H\in \mathfrak{a}^+$, then the entries of $\psi_{\Theta,\chi}(\exp(H))$ are non-decreasing, with the top entry being the unique maximum, namely $\exp\!\big(N\chi(H)\big)$.

    \item For all $g\in G$, the first simple root of $\psi_{\Theta,\chi}(g)$ in $\mathsf{SL}(V_{\Theta,\chi})$ is 
    \[
    \min_{\alpha \in \Theta} \alpha(\kappa(g)).
    \]

    \item Let $\eta_{\Theta,\chi}\subset V_{\Theta,\chi}$ be the weight space of the highest weight, and let $\eta_{\Theta,\chi}^*$ be the sum of all other weight spaces. Then:  
    \begin{enumerate}
        \item $\dim \eta_{\Theta,\chi} = 1$.  
        \item $\mathsf{Stab}_{G}(\eta_{\Theta,\chi}) = P_{\Theta},\qquad 
        \mathsf{Stab}_{G}(\eta_{\Theta,\chi}^*) = P_{\Theta}^{opp}$.
    \end{enumerate}
\end{enumerate}
\end{lemma}

For each $\Theta\subset \Delta$, set $\chi_{\Theta} = \sum_{\alpha \in \Theta} \omega_{\alpha}$,
we adopt the shorthand notation:
\[
\psi_{\Theta,\chi_\Theta} = \psi_{\Theta}, \quad 
V_{\Theta,\chi_\Theta} = V_{\Theta}, \quad 
\eta_{\Theta,\chi_\Theta} = \eta_{\Theta}, \quad 
\eta_{\Theta,\chi_\Theta}^* = \eta_{\Theta}^*.
\]
}}}

\vspace{3mm}

From the min--max description, we have the following properties of singular values:

\begin{lemma} \label{lem5}
For any $A,B\in \mathsf{SL}(d,\mathbb{R})$ and any $1\le k\le d$,
\[
\sigma_k(A)\,\sigma_1(B)\ \ge\ \sigma_k(AB)\ \ge\ \sigma_k(A)\,\sigma_d(B),
\]
\[
\sigma_k(A)\,\sigma_1(B)\ \ge\ \sigma_k(BA)\ \ge\ \sigma_k(A)\,\sigma_d(B).
\]
\end{lemma}

For a proof, see for example Canary~\cite[Lemma~29.1]{AnosovNotes}.

Lemma~\ref{lem4} shows that one can realize the simple roots as singular value gaps, so we can generalize Lemma~\ref{lem5} to obtain a uniform continuity statement for the Cartan projection. 

\begin{corollary}{\refchange{55}{{\color{blue}(Benoist~\cite[Section~4.6]{Benoistasym} and \cite[Section~5.1]{benoist2}).}}}\label{cor1}
If $E\subset G$ is a compact subset, then there exists a constant $C>0$ depending only on $E$ such that for any $\alpha\in\Delta$, $g\in G$, and $h\in E$, we have
\[
\big|\alpha(\kappa(\rho(hg))) - \alpha(\kappa(\rho(g)))\big| < C,
\qquad
\big|\alpha(\kappa(\rho(gh))) - \alpha(\kappa(\rho(g)))\big| < C.
\]
\end{corollary}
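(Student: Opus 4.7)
The plan is to combine Lemma~\ref{lem4} with standard singular-value inequalities. By that lemma, for each $\alpha \in \Delta$ we have $\alpha(\kappa(g)) = \log(\sigma_1(\psi_\alpha(g))/\sigma_2(\psi_\alpha(g)))$, so the question reduces to controlling how the top singular-value gap of $\psi_\alpha(g)$ changes when $g$ is multiplied by a bounded element. (I read the $\rho$ in the statement as a typo and work directly with $\kappa(hg)$ and $\kappa(gh)$; applying the result to $\rho(\gamma)$ in place of $g$ recovers the stated form.)

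First I would note that, since $\psi_\alpha: G \to \mathsf{PGL}(V_\alpha)$ is continuous and $E$ is compact, the image $\psi_\alpha(E)$ is compact in $\mathsf{PGL}(V_\alpha)$. Choosing any norm on $V_\alpha$ compatible with the identification $\psi_\alpha(K)\subset \mathsf{PO}(\dim V_\alpha)$, the quantity $\|A\|\cdot \|A^{-1}\|$ descends to a well-defined continuous function on $\mathsf{PGL}(V_\alpha)$ (scale invariance absorbs the projective ambiguity). Consequently
\[
M_\alpha \;:=\; \sup_{h \in E} \log\!\bigl(\|\psi_\alpha(h)\|\cdot \|\psi_\alpha(h)^{-1}\|\bigr) \;<\; \infty.
\]
The set $E^{-1}$ is also compact, so the analogous supremum over $h^{-1}$ is finite; we replace $M_\alpha$ by the maximum of the two.

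Next I would invoke the elementary inequalities, valid for any invertible $A$ and any index $i$:
\[
\sigma_i(AB) \le \|A\|\,\sigma_i(B), \qquad \sigma_i(B) = \sigma_i(A^{-1}(AB)) \le \|A^{-1}\|\,\sigma_i(AB).
\]
Taking $A = \psi_\alpha(h)$, $B = \psi_\alpha(g)$, the upper bound on $\sigma_1$ and the lower bound on $\sigma_2$ combine to give
\[
\log \frac{\sigma_1(\psi_\alpha(hg))}{\sigma_2(\psi_\alpha(hg))} \;\le\; \log \frac{\sigma_1(\psi_\alpha(g))}{\sigma_2(\psi_\alpha(g))} + M_\alpha ,
\]
and the symmetric inequality, obtained by writing $g = h^{-1}(hg)$ and using compactness of $E^{-1}$, yields the matching lower bound. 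By Lemma~\ref{lem4} these are exactly $\alpha(\kappa(hg))$ and $\alpha(\kappa(g))$, so $|\alpha(\kappa(hg)) - \alpha(\kappa(g))| \le M_\alpha$. The same argument, multiplying on the right and using $\sigma_i(BA) = \sigma_i(A^{T}B^{T})$ to recycle the left-multiplication bound, handles $\kappa(gh)$. Setting $C := \max_{\alpha \in \Delta} M_\alpha$, which is finite because $\Delta$ is finite, concludes the proof.

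The argument is essentially routine; the only mild subtlety is verifying that the singular-value machinery passes to $\mathsf{PGL}(V_\alpha)$, which it does because both the ratio $\sigma_1/\sigma_2$ and the product $\|A\|\|A^{-1}\|$ are scale invariant. No deeper use of the structure of $\psi_\alpha$ is needed beyond the singular-value gap formula of Lemma~\ref{lem4}.
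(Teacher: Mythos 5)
Your proposal is correct and follows essentially the same route as the paper: both reduce via the singular-value-gap formula of Lemma~\ref{lem4} to the adapted representation and then apply the standard estimates $\sigma_k(A)\sigma_d(B)\le\sigma_k(AB)\le\sigma_k(A)\sigma_1(B)$ (the paper's Lemma~\ref{lem5}, which is exactly your $\|A\|$, $\|A^{-1}\|$ bounds), together with compactness of $\psi_\alpha(E)$. Your reading of the $\rho$ in the statement as a typo is also consistent with how the corollary is used later in the paper.
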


\subsubsection{Flag Manifold}\label{subsecflag}

For a general Lie group $G$, let $Inn(\mathfrak{g})\subset Aut(\mathfrak{g})$ be the subgroup of inner automorphisms, i.e., the identity component of $Aut(\mathfrak{g})$. We define the $\Theta$-flag manifold $\mathcal{F}_{\Theta}$ to be the $Inn(\mathfrak{g})$ conjugacy classes of $\mathfrak{p}_{\Theta}$ in $\mathfrak{g}$ and define the opposite $\Theta$-flag manifold $\mathcal{F}_{\Theta}^{opp}$ to be the $Inn(\mathfrak{g})$ conjugacy classes of $\mathfrak{p}_{\Theta}^{opp}$ in $\mathfrak{g}$.

We recall two decompositions of parabolic subgroups with the additional assumption: If $\psi:G\to Aut(\mathfrak{g})$ is the adjoint representation, then we further assume that $\psi(G) = Inn(\mathfrak{g})$ (the reason is that we want to make $G$ reductive in the sense of Knapp~\cite[Chapter VII: Section 2]{knapp}). Note that if $G$ is connected, then the assumption holds.

Let $M_{\Theta} = Z_{G}^{0}(\mathfrak{t}_{\Theta})\cap K$ where $Z_{G}^{}(\mathfrak{t}_{\Theta})$ is the $G-$centralizer of $\mathfrak{t}_{\Theta}$ and $Z_{G}^{0}(\mathfrak{t}_{\Theta})$ is the co-split component of $Z_{G}^{}(\mathfrak{t}_{\Theta})$ (see Knapp~\cite[Proposition 7.27]{knapp}).

\begin{theorem}\label{thm1} 

We have the decomposition $ P_{\Theta}: = M_{\Theta} \exp{\mathfrak{a}} U_{\Delta}$. Moreover, it is direct, which means:
$$
M_{\Theta} \times \refchange{56}{{\color{blue}(\exp{\mathfrak{a}})}} \times U_{\Delta}\to P_{\Theta}\quad (m,a,u)\mapsto mau
$$
is a diffeomorphism.
\end{theorem}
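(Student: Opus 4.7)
The plan is to bootstrap the claim from the global Iwasawa decomposition $G = KAN$ (with $A := \exp(\mathfrak{a})$ and $N := U_{\Delta}$), which under the hypothesis $\psi(G) = \mathrm{Inn}(\mathfrak{g})$ is a diffeomorphism in the Knapp reductive framework. Everything hinges on the identity
\[
K \cap P_{\Theta} = M_{\Theta}.
\]
Granted this, for every $p \in P_{\Theta}$ the unique Iwasawa factorization $p = kan$ forces $k = p(an)^{-1} \in K \cap P_{\Theta} = M_{\Theta}$ (since $A, N \subset P_{\Theta}$, verified below), so $M_{\Theta} \times A \times N \to P_{\Theta}$ is surjective; injectivity and the diffeomorphism property then follow because the map is the restriction of the global Iwasawa diffeomorphism to the closed embedded submanifold $M_{\Theta} \times A \times N \subset K \times A \times N$.

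The inclusion $M_{\Theta} A N \subset P_{\Theta}$ is routine from the definitions: $M_{\Theta} \subset Z_G(\mathfrak{t}_{\Theta}) \subset L_{\Theta} \subset P_{\Theta}$; the chain $\mathfrak{a} \subset \mathfrak{g}_0 \subset \mathfrak{l}_{\Theta}$ gives $A \subset P_{\Theta}$; and each $\alpha \in \Sigma^+$ lies either in $\Sigma_{\Theta}^+$ or in $\mathrm{Span}(\Delta - \Theta) \cap \Sigma^+$, whence $\mathfrak{u}_{\Delta} \subset \mathfrak{l}_{\Theta} \oplus \mathfrak{u}_{\Theta} = \mathfrak{p}_{\Theta}$ and $N \subset P_{\Theta}$.

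The main obstacle is proving $K \cap P_{\Theta} = M_{\Theta}$, which I would split into two steps. Step one, $K \cap P_{\Theta} \subset L_{\Theta}$: the Cartan involution $\theta$ fixes $K$ pointwise and, because $\theta(\mathfrak{g}_{\alpha}) = \mathfrak{g}_{-\alpha}$, satisfies $\theta(\mathfrak{p}_{\Theta}) = \mathfrak{p}_{\Theta}^{opp}$; hence at the group level $\theta(P_{\Theta}) = N_G(\mathfrak{p}_{\Theta}^{opp}) = P_{\Theta}^{opp}$. For $k \in K \cap P_{\Theta}$ this yields $k = \theta(k) \in P_{\Theta}^{opp}$, so $k \in P_{\Theta} \cap P_{\Theta}^{opp} = L_{\Theta}$. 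Step two, $K \cap L_{\Theta} = M_{\Theta}$: by \cite[Proposition 7.27]{knapp} the Levi admits a direct decomposition $L_{\Theta} = Z_G^0(\mathfrak{t}_{\Theta}) \cdot \exp(\mathfrak{t}_{\Theta})$; because $\mathfrak{t}_{\Theta} \subset \mathfrak{p}$, the global Cartan decomposition $G \simeq K \times \exp(\mathfrak{p})$ forces $\exp(\mathfrak{t}_{\Theta}) \cap K = \{e\}$. Thus every element of $K \cap L_{\Theta}$ has trivial $\exp(\mathfrak{t}_{\Theta})$-factor and therefore lies in $Z_G^0(\mathfrak{t}_{\Theta}) \cap K = M_{\Theta}$.

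With the lemma in hand, the diffeomorphism conclusion is immediate from the Iwasawa diffeomorphism. The delicate point throughout is cleanly separating the split center $\exp(\mathfrak{t}_{\Theta})$ of $L_{\Theta}$ from its co-split complement $Z_G^0(\mathfrak{t}_{\Theta})$, which is exactly where Knapp's reductive framework — ensured by the hypothesis $\psi(G) = \mathrm{Inn}(\mathfrak{g})$ — is indispensable.
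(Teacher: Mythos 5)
Your route is genuinely different from the paper's: the paper simply cites Knapp (Propositions~7.82, 7.83) for the group-level identity and Helgason's Iwasawa decomposition for directness, whereas you give a self-contained argument by reducing everything to the identity $K\cap P_{\Theta}=M_{\Theta}$ and then invoking the global Iwasawa diffeomorphism. This is a clean and natural way to see why the decomposition holds, and the reduction is sound: the inclusion $M_{\Theta}\exp(\mathfrak{a})U_{\Delta}\subset P_{\Theta}$ and Step~one ($K\cap P_{\Theta}\subset L_{\Theta}$ via the Cartan involution $\theta$, using $\theta(\mathfrak{p}_{\Theta})=\mathfrak{p}_{\Theta}^{opp}$) are both correct.

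There is, however, a gap in Step~two. From $\exp(\mathfrak{t}_{\Theta})\cap K=\{e\}$ you infer that ``every element of $K\cap L_{\Theta}$ has trivial $\exp(\mathfrak{t}_{\Theta})$-factor,'' but this does not follow. Given $k\in K\cap L_{\Theta}$ with factorization $k=ma$, $m\in Z_G^0(\mathfrak{t}_{\Theta})$, $a\in\exp(\mathfrak{t}_{\Theta})$, the element $a=m^{-1}k$ need not lie in $K$ (since $m$ need not), so the fact $\exp(\mathfrak{t}_{\Theta})\cap K=\{e\}$ cannot be applied to it directly. Two quick fixes are available. First, both factors of the direct product $L_{\Theta}=Z_G^0(\mathfrak{t}_{\Theta})\times\exp(\mathfrak{t}_{\Theta})$ are $\Theta$-stable and $\Theta$ acts by inversion on $\exp(\mathfrak{t}_{\Theta})$ (as $\mathfrak{t}_{\Theta}\subset\mathfrak{p}$); applying $\Theta$ to $k=ma$ and using uniqueness of the factorization gives $a=a^{-1}$, hence $a=e$ since $\exp(\mathfrak{t}_{\Theta})\cong(\mathbb{R}^n,+)$ has no $2$-torsion. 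Alternatively, the projection $L_{\Theta}\to\exp(\mathfrak{t}_{\Theta})$ is a continuous homomorphism sending the compact group $K\cap L_{\Theta}$ to a compact subgroup of $\mathbb{R}^n$, which must be trivial. With either fix, Step~two closes and your proof goes through.
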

For the proof of the equality, see Knapp~\cite[\refchange{57}{{\color{blue}Propositions}} 7.82, 7.83]{knapp}, and the directness comes from the Iwasawa decomposition (see Helgason~\cite[Ch.VI, Theorem 5.1]{helgason}).

Under the assumption that $\psi(G) = Inn(\mathfrak{g})$, we have $\mathcal{F}_{\Theta} = G/P_{\Theta}$ and $\mathcal{F}_{\Theta}^{opp} = G/P_{\Theta}^{opp}$, \refchange{58}{{\color{blue} and we now state two standard results}}.

\begin{corollary} \label{lem1}
    If $\Theta_{1}\subset \Theta_2$, then there is a natural $G$ equivariant projection $\pi^{\Theta_2}_{\Theta_1}:\mathcal{F}_{\Theta_2}\to \mathcal{F}_{\Theta_1}$. 
\end{corollary}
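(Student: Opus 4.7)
The plan is to construct $\pi^{\Theta_2}_{\Theta_1}$ as the map $gP_{\Theta_2} \mapsto gP_{\Theta_1}$ and reduce the assertion to the containment $P_{\Theta_2} \subset P_{\Theta_1}$. Once this containment is established, the map is well-defined (independent of coset representative) and the $G$-equivariance is automatic since $G$ acts by left multiplication on both flag manifolds.

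To establish $P_{\Theta_2} \subset P_{\Theta_1}$, I would first check the corresponding Lie algebra inclusion $\mathfrak{p}_{\Theta_2} \subset \mathfrak{p}_{\Theta_1}$. The hypothesis $\Theta_1 \subset \Theta_2$ gives $\Delta - \Theta_1 \supset \Delta - \Theta_2$, and hence $\mathrm{Span}(\Delta - \Theta_1) \supset \mathrm{Span}(\Delta - \Theta_2)$. From the defining formulas in Subsection \ref{sssec2} this immediately yields $\mathfrak{l}_{\Theta_2} \subset \mathfrak{l}_{\Theta_1}$. For the nilpotent part, any $\alpha \in \Sigma^+$ with $\mathfrak{g}_{\alpha} \subset \mathfrak{u}_{\Theta_2}$ (i.e.\ $\alpha \notin \mathrm{Span}(\Delta - \Theta_2)$) falls into one of two cases: either $\alpha \notin \mathrm{Span}(\Delta - \Theta_1)$, in which case $\mathfrak{g}_{\alpha} \subset \mathfrak{u}_{\Theta_1}$; or $\alpha \in \mathrm{Span}(\Delta - \Theta_1) \cap \Sigma^{+}$, in which case $\mathfrak{g}_{\alpha} \subset \mathfrak{l}_{\Theta_1}$. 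Either way $\mathfrak{g}_{\alpha} \subset \mathfrak{p}_{\Theta_1}$, so $\mathfrak{u}_{\Theta_2} \subset \mathfrak{p}_{\Theta_1}$ and thus $\mathfrak{p}_{\Theta_2} \subset \mathfrak{p}_{\Theta_1}$.

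To promote this to a group-level inclusion I would use the direct decomposition $P_{\Theta} = M_{\Theta} \exp(\mathfrak{a}) U_{\Delta}$ furnished by Theorem \ref{thm1}. The factors $\exp(\mathfrak{a})$ and $U_{\Delta}$ do not depend on $\Theta$, so it suffices to check $M_{\Theta_2} \subset M_{\Theta_1}$. Since $\Theta_1 \subset \Theta_2$ forces $\mathfrak{t}_{\Theta_1} \subset \mathfrak{t}_{\Theta_2}$ (centralizing a larger set is a stronger condition), one obtains $Z_G(\mathfrak{t}_{\Theta_2}) \subset Z_G(\mathfrak{t}_{\Theta_1})$; passing to the co-split component and intersecting with $K$ gives $M_{\Theta_2} \subset M_{\Theta_1}$, as required.

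The main technical obstacle is the last step: one cannot simply infer $P_{\Theta_2} \subset P_{\Theta_1}$ from $\mathfrak{p}_{\Theta_2} \subset \mathfrak{p}_{\Theta_1}$ through the normalizer definition $P_{\Theta} = N_G(\mathfrak{p}_{\Theta})$, since a normalizer of a smaller subalgebra need not lie in the normalizer of a larger one. The decomposition of Theorem \ref{thm1} is what makes the passage to groups clean, but one must verify compatibility of the co-split component construction (in the sense of \cite[Proposition 7.27]{knapp}) with the inclusion of the split tori $\mathfrak{t}_{\Theta_1} \subset \mathfrak{t}_{\Theta_2}$. This is the only subtle point; once it is handled, the projection and its $G$-equivariance are immediate.
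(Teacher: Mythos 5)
Your proposal is correct and follows essentially the same route as the paper: reduce well-definedness to $P_{\Theta_2}\subset P_{\Theta_1}$, and obtain that inclusion from $\mathfrak{t}_{\Theta_1}\subset\mathfrak{t}_{\Theta_2}$, hence $M_{\Theta_2}\subset M_{\Theta_1}$, via the decomposition of Theorem \ref{thm1}. The "subtle point" you flag about the co-split component respecting the inclusion of the split tori is exactly what the paper's citation of \cite[Proposition 7.27]{knapp} is invoked for, and your preliminary Lie-algebra inclusion $\mathfrak{p}_{\Theta_2}\subset\mathfrak{p}_{\Theta_1}$ is a harmless extra step the paper omits.
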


\begin{proof}
We show that $P_{\Theta_2}\subset P_{\Theta_1}$ so we can well define $\pi^{\Theta_2}_{\Theta_1}(gP_{\Theta_2}):=gP_{\Theta_1}$. By definition $\Theta_1\subset \Theta_2$ implies that $\mathfrak{t}_{\Theta_1}\subset \mathfrak{t}_{\Theta_2}$. And $M_{\Theta_i} = Z^{0}_G(\mathfrak{t}_{\Theta_i})\cap K$ shows that $M_{\Theta_2}\subset M_{\Theta_1}$ (see Knapp~\cite[Proposition 7.27]{knapp}), so Theorem \ref{thm1} implies that $P_{\Theta_2}\subset P_{\Theta_1}$.
\end{proof}

\begin{corollary} \label{lem2}
For any $\Theta\subset \Delta$, $\mathcal{F}_{\Theta}$ is compact.
\end{corollary}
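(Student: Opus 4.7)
The plan is to exhibit $\mathcal{F}_\Theta = G/P_\Theta$ as the continuous image of the compact group $K$, which immediately yields compactness. The key ingredient is the Iwasawa decomposition $G = K \cdot \exp(\mathfrak{a}) \cdot U_\Delta$ together with Theorem~\ref{thm1}, which identifies $P_\Theta$ as the internal product $M_\Theta \cdot \exp(\mathfrak{a}) \cdot U_\Delta$.

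First I would record the chain of inclusions. Since $\exp(\mathfrak{a}) \subset P_\Theta$ and $U_\Delta \subset P_\Theta$ by Theorem~\ref{thm1}, the Iwasawa decomposition gives
\[
G \;=\; K \cdot \exp(\mathfrak{a}) \cdot U_\Delta \;\subset\; K \cdot P_\Theta \;\subset\; G,
\]
so $G = K \cdot P_\Theta$. Consequently the natural map $K \to G/P_\Theta = \mathcal{F}_\Theta$, $k \mapsto k P_\Theta$, is surjective. This map is continuous because it factors through the continuous quotient $G \to G/P_\Theta$. Since $K$ is compact (as the maximal compact subgroup of $G$ is closed and compact by assumption, and in particular $K$ has finitely many components sitting over a compact identity component as discussed in Section~\ref{sssec1}), its continuous image $\mathcal{F}_\Theta$ is compact.

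The only nontrivial point is confirming the inclusion $\exp(\mathfrak{a}) \cdot U_\Delta \subset P_\Theta$ for an arbitrary non-empty $\Theta \subset \Delta$; this is where I would invoke Theorem~\ref{thm1} directly, since it asserts $P_\Theta = M_\Theta \cdot \exp(\mathfrak{a}) \cdot U_\Delta$ as a diffeomorphic product, so both factors $\exp(\mathfrak{a})$ and $U_\Delta$ lie inside $P_\Theta$. I do not anticipate any obstacle: the argument is essentially a one-line consequence of Theorem~\ref{thm1} combined with the Iwasawa decomposition cited from \cite[Ch.~VI, Theorem 5.1]{helgason}, and Hausdorffness of $G/P_\Theta$ (which follows from $P_\Theta$ being closed) ensures compactness is the correct topological conclusion rather than mere quasi-compactness.
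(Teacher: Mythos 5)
Your proof is correct and follows essentially the same strategy as the paper: both use the Iwasawa decomposition together with Theorem~\ref{thm1}. The only (minor) stylistic difference is that you observe $G = K\cdot P_\Theta$ and conclude via surjectivity of the continuous map $K\to G/P_\Theta$, whereas the paper pushes one step further to identify $\mathcal{F}_\Theta$ with $K/M_\Theta$; your version is slightly more economical since it avoids verifying the double-coset identification.
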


\begin{proof}
The Iwasawa decomposition (Helgason~\cite[Ch.VI, Theorem 5.1]{helgason}) shows that we can write $G = K \exp{\mathfrak{a}} U_{\Delta}$, which means that for any $g\in G$, there exists a unique triple $(k(g),a(g),n(g))\in K \times \exp{\mathfrak{a}}\times U_{\Delta} $ such that $g = k(g) a(g) n(g)$. As a result
$$
\mathcal{F}_{\Theta} = K\exp{\mathfrak{a}} U_{\Delta}/M_{\Theta} \exp{\mathfrak{a}} U_{\Delta} = K \exp{\mathfrak{a}} U_{\Delta}/U_{\Delta} \exp{\mathfrak{a}} M_{\Theta}=   K/M_{\Theta}. 
$$
(The second equality is by $P_{\Theta} = P_{\Theta}^{-1}$).

$M_{\Theta} = Z_{G}^{0}(\mathfrak{t}_{\Theta})\cap K$ shows that it is a closed subgroup of the compact group $K$(see \cite[Proposition 7.27]{knapp}), so $K/M_{\Theta}$ is a compact manifold.
\end{proof}

Second we recall the global \emph{Levi decomposition} of $P_{\Theta}$(which is also true for $P_{\Theta}^{opp}$). \changed{Although Warner \cite{warner_harmonic_1972} states the result for connected semisimple Lie groups, the assumption $\psi(G) = Inn(\mathfrak{g})$ ensures that the decomposition remains valid.}

\begin{proposition}{\cite[Proposition 1.2.4.14]{warner_harmonic_1972}}\label{Levi}
For any parabolic subgroup $P_{\Theta}$, we have a semidirect product decomposition
\[
P_{\Theta} = L_{\Theta} U_{\Theta}.
\]
\end{proposition}

\begin{remark}\label{General Flag}
\refchange{60}{{\color{blue}Recall that $\psi :G\to Aut(\mathfrak{g})$ denotes the adjoint representation and we still assume that $\psi(G) =  Inn(\mathfrak{g})$.}} Under the notations in Section \ref{sssec4}, let $[\eta_{\Theta}]\in \mathbb{P}(V_{\Theta})$ be the projectivization of $\eta_{\Theta}$. \changed{From the definition of parabolic subgroups and Lemma \ref{lem4}~(3)}, we see that $\psi_{\Theta}(g) [\eta_{\Theta}] = [\eta_{\Theta}]$ if and only if $Ad_g(\mathfrak{p}_{\Theta}) = \mathfrak{p}_{\Theta}$. So the flag manifold $\mathcal{F}_{\Theta}$ can be identified as the $\psi_{\Theta}(G)$ orbit of $[\eta_{\Theta}]\in \mathbb{P}(V_{\Theta})$. Corollary \ref{lem2} shows that it is compact, thus closed. So $\mathcal{F}_{\Theta}$ is a closed, smooth (indeed real analytic) and homogeneous projective manifold. Moreover the assumption $\psi(G) = Inn(\mathfrak{g})$ shows that $\mathcal{F}_{\Theta}$ is connected. Similar discussions hold for $\mathcal{F}_{\Theta}^{opp}$.
\end{remark}

   We further discuss the situation when $\psi(G) \not =  Inn(\mathfrak{g})$. As $Aut(\mathfrak{g})$ is an algebraic group, it has finitely many components as cosets of $Inn(\mathfrak{g})$. And there is a canonical action of $Aut(\mathfrak{g})$ on the Dynkin diagram of the restricted root system $\Delta$ (Guichard-Wienhard~\cite[Section 5.3]{GW2}). Let $Aut_1(\mathfrak{g})$ be the subgroup which acts trivially. Clearly $Inn(\mathfrak{g})\subset  Aut_1(\mathfrak{g})$.

   We say $(x,y)\in \mathcal{F}_{\Theta}\times \mathcal{F}_{\Theta}^{opp}$ is \emph{transverse} if there exists $g\in Inn(\mathfrak{g})$ such that $(x,y) = g(\mathfrak{p}_{\Theta},\mathfrak{p}_{\Theta}^{opp})$. Note that the choice of $g$ is arbitrary up to a right multiplication of $\psi(L_{\Theta})\cap Inn(\mathfrak{g})$.

\begin{lemma}\label{l1}
$Aut_1(\mathfrak{g})$ acts on $\mathcal{F}_{\Theta}$ and $\mathcal{F}_{\Theta}^{opp}$. Moreover, for any $g\in Aut_1(\mathfrak{g})$, $g(\mathfrak{p}_{\Theta},\mathfrak{p}_{\Theta}^{opp})$
is still a transverse pair in $\mathcal{F}_{\Theta}\times \mathcal{F}_{\Theta}^{opp}$.   
\end{lemma}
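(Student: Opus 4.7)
The strategy is to handle both claims simultaneously by reducing $\sigma$, up to an inner automorphism, to a Lie algebra automorphism that literally stabilizes the standard pair $(\mathfrak{p}_{\Theta}, \mathfrak{p}_{\Theta}^{opp})$; the transversality of the image pair then follows automatically with the \emph{same} inner conjugator.

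Given $\sigma \in Aut_1(\mathfrak{g})$, I would first use the $Inn(\mathfrak{g})$-conjugacy of Cartan subspaces of $\mathfrak{p}$, together with the fact that the restricted Weyl group $W$ is realized inside $Inn(\mathfrak{g})$, to find $h \in Inn(\mathfrak{g})$ such that $\sigma' := h\sigma$ normalizes both $\mathfrak{a}$ and $\mathfrak{a}^+$. The dual action of $\sigma'$ on $\mathfrak{a}^*$ then permutes the root system $\Sigma$ while preserving $\Sigma^+$, hence permutes $\Delta$, and this permutation is precisely the induced automorphism of the Dynkin diagram. Since $h \in Inn(\mathfrak{g}) \subset Aut_1(\mathfrak{g})$ and $\sigma \in Aut_1(\mathfrak{g})$, we also have $\sigma' \in Aut_1(\mathfrak{g})$, so this induced Dynkin automorphism must be trivial. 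Consequently $\sigma'$ fixes every $\alpha \in \Delta$, and as $\Delta$ is a basis of $\mathfrak{a}^*$, the dual action of $\sigma'$ on all of $\mathfrak{a}^*$ is the identity. In particular $\sigma'(\mathfrak{g}_\alpha) = \mathfrak{g}_\alpha$ for every $\alpha \in \Sigma \cup \{0\}$.

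Inspecting the root-space descriptions of $\mathfrak{p}_{\Theta}$ and $\mathfrak{p}_{\Theta}^{opp}$ in Subsection \ref{sssec2}, both are direct sums of $\mathfrak{g}_0$ and certain root spaces, so $\sigma'(\mathfrak{p}_{\Theta}) = \mathfrak{p}_{\Theta}$ and $\sigma'(\mathfrak{p}_{\Theta}^{opp}) = \mathfrak{p}_{\Theta}^{opp}$. Taking $g := h^{-1} \in Inn(\mathfrak{g})$, I obtain
$$
(\sigma\mathfrak{p}_{\Theta},\, \sigma\mathfrak{p}_{\Theta}^{opp}) \;=\; g\,(\sigma'\mathfrak{p}_{\Theta},\, \sigma'\mathfrak{p}_{\Theta}^{opp}) \;=\; g\,(\mathfrak{p}_{\Theta},\, \mathfrak{p}_{\Theta}^{opp}),
$$
which simultaneously yields the action of $Aut_1(\mathfrak{g})$ on $\mathcal{F}_{\Theta}$ and $\mathcal{F}_{\Theta}^{opp}$, and the transversality of the image pair in the sense defined just above the lemma.

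The main point to handle carefully is identifying the abstract ``canonical action of $Aut(\mathfrak{g})$ on the Dynkin diagram'' referenced from \cite[Section 5.3]{GW2} with the concrete permutation of $\Delta$ produced by any representative normalizing $(\mathfrak{a},\mathfrak{a}^+)$; this is the place where the hypothesis $\sigma \in Aut_1(\mathfrak{g})$ is essentially used, for without it one would only conclude that $\sigma\mathfrak{p}_{\Theta}$ lies in $\mathcal{F}_{\Theta'}$ for some $\Theta'$ in the Dynkin-orbit of $\Theta$ rather than in $\mathcal{F}_{\Theta}$ itself.
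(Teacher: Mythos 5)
Your proof is correct and follows essentially the same route as the paper's: reduce $\sigma$ modulo $Inn(\mathfrak{g})$ to an automorphism $\sigma'$ normalizing $(\mathfrak{a},\mathfrak{a}^+)$, observe that the resulting permutation of $\Delta$ is the induced Dynkin-diagram automorphism, and conclude from $\sigma\in Aut_1(\mathfrak{g})$ that $\sigma'$ fixes all root spaces and hence $(\mathfrak{p}_{\Theta},\mathfrak{p}_{\Theta}^{opp})$. One small imprecision: to produce $h$ you should first invoke the $Inn(\mathfrak{g})$-conjugacy of Cartan decompositions (equivalently, Cartan involutions) so that the corrected automorphism preserves $\mathfrak{p}$ before applying the conjugacy of Cartan subspaces \emph{inside} $\mathfrak{p}$ and of Weyl chambers --- as the paper lists all three facts --- since $\sigma(\mathfrak{a})$ need not lie in $\mathfrak{p}$ a priori.
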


\begin{proof} 

 Recall that any two Cartan decompositions of $\mathfrak{g}$ are conjugate to each other under $Inn(\mathfrak{g})$. For a fixed Cartan decomposition $\mathfrak{g} = \mathfrak{k}\oplus \mathfrak{p}$, any two Cartan subalgebras in $\mathfrak{p}$ are conjugate to each other under the maximal compact subgroup $K_0\subset Inn(\mathfrak{g})$ whose Lie algebra is $\mathfrak{k}$ (Helgason~\cite[Chapter III: Theorem~7.2 and Chapter V: Lemma~6.3]{helgason}). Furthermore, for a fixed Cartan decomposition $\mathfrak{g} = \mathfrak{k}\oplus \mathfrak{p}$ and a fixed Cartan subalgebra $\mathfrak{a}\subset \mathfrak{p}$, the Weyl group $W$ acts transitively on Weyl chambers. 
 
The $Inn(\mathfrak{g})$-conjugation sending $g(\mathfrak{k},\mathfrak{p},\mathfrak{a}^+)$ to $(\mathfrak{k},\mathfrak{p},\mathfrak{a}^+)$ implies that $g(\mathfrak{p}_{\Theta},\mathfrak{p}_{\Theta}^{opp})$ is $Inn(\mathfrak{g})$-conjugate to \refchange{61}{{\color{blue}$(\mathfrak{p}_{\phi_g(\Theta)},\mathfrak{p}_{\phi_g(\Theta)}^{opp})$}}, where $\phi_g$ is the induced Dynkin diagram automorphism. Since $g \in Aut_1(\mathfrak{g})$, we have $\phi_g(\Theta)=\Theta$, which concludes the proof.

\end{proof}

\subsubsection{A further assumption}\label{assumption} 

By Lemma~\ref{l1}, if $\psi(G)\subset Aut_1(\mathfrak{g})$, then $G$ still acts on $\mathcal{F}_{\Theta}$ \changed{(this action is clearly transitive)}, so we can continue to identify $\mathcal{F}_{\Theta}$ with $G/P_{\Theta}$ \refchange{62}{{\color{blue}(recall that all discussions prior to Remark~\ref{General Flag} are made under the assumption that $\psi(G)=Inn(\mathfrak{g})$).}} Furthermore, Corollary~\ref{lem1} remains valid: there is always an $Inn(\mathfrak{g})$-equivariant projection $\pi^{\Theta_2}_{\Theta_1}$, and since $\psi(G)\subset Aut_1(\mathfrak{g})$, the proof of Lemma~\ref{l1} shows that $\pi^{\Theta_2}_{\Theta_1}$ is in fact $G$-equivariant. And note that Corollary~\ref{lem2} remains valid as well, since the definition of $\mathcal{F}_{\Theta}$ depends only on $Inn(\mathfrak{g})$.

So in most parts of this paper, we will further assume without loss of generality that $\psi(G)\subset Aut_1(\mathfrak{g})$. This assumption is harmless, as it holds for common examples of semisimple Lie groups.

\changed{And for brevity, given any $g \in G$ and $\Theta \subset \Delta$, we write $g \mathfrak{p}_{\Theta}$ for $\psi(g)P_{\Theta} \in \mathcal{F}_{\Theta}$ whenever $\psi(G) \subset Aut_1(\mathfrak{g})$.}

\subsection{Fuchsian Groups}\label{ssec2}

Let $(\mathbb{D},d_{\mathbb{D}})$ be the Klein model of the hyperbolic disc with Hilbert metric. Let $\Gamma\subset \mathsf{PSL}(2,\mathbb{R})\cong \mathsf{PSO}^+(2,1)=\mathrm{Isom}^+(\mathbb{D},d_{\mathbb{D}})$ be a discrete subgroup \refchange{63}{{\color{blue}(where $\mathrm{Isom}^+(\mathbb{D},d_{\mathbb{D}})$ is the group of orientation preserving isometries)}}. From now on we fix a base point $b_0$ inside $\mathbb{D}$, take the boundary of $\mathbb{D}$ in $\mathbb{P}(\mathbb{R}^{2,1})$ and identify $\partial \mathbb{D} = S^1$. We define the \emph{limit set} of $\Gamma$ to be $\Lambda(\Gamma): =\overline{\Gamma b_0}\cap \partial{\mathbb{D}}$. Note that $\Lambda(\Gamma)$ is compact and it does not depend on the choice of $b_0$. We say $\Gamma$ is \emph{non-elementary} if $|\Lambda(\Gamma)|>2$, and in this case $\Lambda(\Gamma)$ is furthermore an uncountable perfect set.

Every $\gamma\in \Gamma$ with \refchange{64}{{\color{blue} infinite order}} is  \refchange{65}{{\color{blue}either hyperbolic, meaning that $\gamma$ has a pair of attracting and repelling fixed points (denoted by $\gamma^+$ and $\gamma^-$, respectively) on $\Lambda(\Gamma)$, or parabolic, meaning that $\gamma$ has a unique fixed point on $\Lambda(\Gamma)$. In the parabolic case, we abuse notation by letting both $\gamma^+$ and $\gamma^-$ denote this unique fixed point.}} 

\refchange{66}{{\color{blue}Assume $\gamma_n \in \Gamma$ is a sequence, we have the following convergence properities (the proofs can be found on Bowditch~\cite{Bowditch+1999+23+54} and Canary-Zhang-Zimmer~\cite{CaZhZi}):}}

\begin{lemma}{\refchange{67}{{\color{blue}\cite[Lemma 2.5]{Bowditch+1999+23+54} and \cite[Proposition 3.5]{CaZhZi}.}}}\label{lem3}
    
    Let $\Gamma\subset \mathsf{PSL}(2,\mathbb{R})$ be a non-elementary discrete subgroup. Suppose $\gamma_n(b_0)\to x\in \Lambda(\Gamma)$ and $\gamma_n^{-1}(b_0)\to y\in \Lambda(\Gamma)$, then for any $z\in \Lambda(\Gamma)-\{y\}$, we have $\gamma_n(z)\to x$. The convergence is uniform on compact sets of $\Lambda(\Gamma)-\{y\}$.  Similarly, $\gamma_n^{-1}(z)\to y$ uniformly on compact sets of $\Lambda(\Gamma)-\{x\}$.

    Moreover, if each $\gamma_n$ has infinite order (i.e., it is either parabolic or hyperbolic), then $\gamma_n^{+} \to x$ and $\gamma_n^{-} \to y$. 

    Furthermore, if $x \neq y$, then the sequence $\gamma_n$ is eventually hyperbolic (even without assuming that for $n\gg 1$, $\gamma_n$  has infinite order), with $\gamma_n^{+} \to x$ and $\gamma_n^{-} \to y$.
\end{lemma}

{\color{blue}It should be noted that the ``Moreover'' part is not stated explicitly in the references, so we provide an explanation. Since the case $x \neq y$ has already been addressed in the ``Furthermore'' part, we may assume that $x = y$.

Suppose, for contradiction, that $\gamma_n^{+} \not\to x$. Then there exists a subsequence with $\gamma_n^{+} \to x' \in \Lambda(\Gamma)$, where $x' \neq x$. Nevertheless, we still have $\gamma_n(b_0) \to x$ and $\gamma_n^{-1}(b_0) \to y$. As $x \neq x'$, the sequence $\gamma_n^{+}$ is eventually contained in a compact subset of $\Lambda(\Gamma) - \{x\}$. By the first paragraph of Lemma~\ref{lem3}, this implies that 
\[
\gamma_n^{-1}(\gamma_n^{+}) \to y = x.
\]
On the other hand, $\gamma_n^{-1}(\gamma_n^{+}) = \gamma_n^{+}$, so we obtain $x' = y = x$, a contradiction. Hence $\gamma_n^{+} \to x$.
The argument for $\gamma_n^{-} \to y$ is analogous.}

\vspace{3mm}

{\color{blue}

\refchangenew{76}{\changednew{
A discrete subgroup $\Gamma \subset \mathsf{PSL}(2,\mathbb{R})$ is called  \textbf{\emph{geometrically finite}} if it is finitely generated. And if $\Gamma$ is non-elementary, it is called
\begin{enumerate}
    \item \textbf{a \emph{lattice}} if it admits a fundamental domain of finite area.
    \item \textbf{a \emph{uniform lattice}} if this fundamental domain is compact. 
    \item \textbf{\emph{convex cocompact}} if its convex core
\[
C(\Gamma) := \operatorname{Hull}_{\mathbb{D}}(\Lambda(\Gamma))/\Gamma
\]
is compact, where $\operatorname{Hull}_{\mathbb{D}}$ denotes the geodesic convex hull in $\mathbb{D}$.
\end{enumerate}
It is a standard result that every lattice (whether uniform or not) and every convex cocompact subgroup is geometrically finite. Furthermore, $\Gamma$ is a lattice if and only if it is geometrically finite and its limit set satisfies \(\Lambda(\Gamma)=\partial\mathbb{D}\). And note that every uniform lattice is a convex cocompact subgroup, satisfying $\operatorname{Hull}_{\mathbb{D}}(\Lambda(\Gamma)) = \mathbb{D}$.
}}

If $\Gamma$ is torsion-free, then $\Gamma$ acts freely on $\mathbb{D}$ and $\mathbb{D}/\Gamma$ is a hyperbolic surface. Moreover, $\Gamma$ is a lattice if and only if $\mathbb{D}/\Gamma$ has finite area, $\Gamma$ is a uniform lattice if and only if $\mathbb{D}/\Gamma$ is a closed hyperbolic surface, \refchangenew{76}{\changednew{and $\Gamma$ is convex cocompact if and only if $\mathbb{D}/\Gamma$ is a surface not homeomorphic to a cylinder or disk, and with finitely many genus and funnels.}} \refchange{68}{{\color{blue}
Finally, note that by Selberg's lemma, any geometrically finite subgroup $\Gamma$ admits a finite-index torsion-free subgroup.
}}

\subsection{Transverse and relatively Anosov Representations}\label{ssec3}
We refer to Canary-Zhang-Zimmer~\cite{CaZhZi2} and \cite{CaZhZi3} for a quick introduction to transverse representations from general discrete subgroups $\Gamma\subset \mathsf{PSL}(2,\mathbb{R})$.  We still use the notations and assumptions in \refchange{70}{{\color{blue}Sections}} \ref{sssec1}, \ref{ssec2} and \ref{assumption}. And we \refchange{71}{{\color{blue}denote by }}$\rho:\Gamma\to G$ a representation.

\begin{definition}\label{def2}
We say a pair of maps 
$$
(\xi^{\Theta},\xi^{\Theta,opp}): \Lambda(\Gamma)\to \mathcal{F}_{\Theta}\times \mathcal{F}_{\Theta}^{opp},\quad x \refchange{72}{{\color{blue}\, \mapsto \,}}  (\xi^{\Theta}(x), \xi^{\Theta,opp}(x))
$$
is 
\begin{enumerate}
    \item \textbf{$\rho-$equivariant} \refchange{73}{{\color{blue}if for any $\gamma\in\Gamma$ and $x\in \Lambda(\Gamma)$, we have $\xi^{\Theta}(\gamma x) = \rho(\gamma) \xi^{\Theta}(x)$ and $\xi^{\Theta,opp}(\gamma x) = \rho(\gamma) \xi^{\Theta,opp}(x)$.}}

    \item \textbf{transverse} \refchange{74}{{\color{blue}if for any distinct $x,y\in\Lambda(\Gamma)$, we have $\xi^{\Theta}(x)$ is transverse to $\xi^{\Theta,opp}(y)$.}}

    \item \textbf{strongly dynamics preserving} \refchange{75}{{\color{blue}if $\gamma_n\in \Gamma$ is a sequence and $\gamma_n(b_0)\to x$ and $\gamma_n^{-1} (b_0)\to y$, then 
    \[
\rho(\gamma_n)F \to \xi^{\Theta}(x) \quad \text{for any } F \in \mathcal{F}_{\Theta} \text{ transverse to } \xi^{\Theta,opp}(y),
\]
and
\[
\rho(\gamma_n)^{-1}F^{opp} \to \xi^{\Theta,opp}(y) \quad \text{for any } F^{opp} \in \mathcal{F}_{\Theta}^{opp} \text{ transverse to } \xi^{\Theta}(x).
\]}}
    
\end{enumerate}
\end{definition}

\vspace{3mm}
Let $\Theta\subset \Delta$ and $\Gamma \subset \mathsf{PSL}(2,\mathbb{R})$ be a discrete subgroup.

\begin{definition}\label{Anosov}
     A representation $\rho:\Gamma\to G$ is called \emph{$\Theta$-transverse} if there exists a $\rho-$equivariant, transverse, continuous, and strongly dynamics preserving map $(\xi^{\Theta},\xi^{\Theta,opp}):\Lambda(\Gamma)\to \mathcal{F}_{\Theta}\times \mathcal{F}_{\Theta}^{opp}$. 
\end{definition}

\changed{
Corollary~\ref{lem1} shows that for any $\Theta' \subset \Theta$, there exists a $G$-equivariant projection
\[
\mathcal{F}_{\Theta} \times \mathcal{F}_{\Theta}^{opp} \to 
\mathcal{F}_{\Theta'} \times \mathcal{F}_{\Theta'}^{opp}
\]
which preserves transverse pairs. Consequently, if $\rho$ is $\Theta$-transverse, then it is also $\Theta'$-transverse.
}

\refchange{76}{
{\color{blue}
\begin{remark}\label{convexcocompact}
If $\Gamma$ is convex cocompact, then any $\Theta$-transverse representation  
$\rho: \Gamma \to G$ is precisely a \emph{$\Theta$-Anosov} representation.  
Moreover, if $\Gamma$ is non-elementary and geometrically finite, then $\Theta$-transverse representations coincide with  
\emph{relatively $\Theta$-Anosov} representations. For example, see Kapovich-Leeb-Porti~\cite{kapovich2017anosov} and Canary-Zhang-Zimmer~\cite{CaZhZi2}.
\end{remark}
}}

\changed{We list several properties of $\Theta$-transverse representations. \refchange{79}{{\color{blue}Till the end of this Section, we still assume that $G$ is a non-compact real semisimple Lie group with finitely many components whose identity component has finite centre, $\Gamma\subset \mathsf{PSL}(2,\mathbb{R})$ is a discrete subgroup, $\rho:\Gamma\to G$ is a transverse representation, and we use the assumption and notations in Section \ref{assumption}.}} Although many of the statements in the references are formulated under the assumption that $G$ is connected, if $G$ has only finitely many components, then the subgroup 
\[
\Gamma_0 = \{\gamma \in \Gamma \mid \rho(\gamma) \in G^{\circ}\}
\]
has finite index in $\Gamma$, and thus $\Lambda(\Gamma_0) = \Lambda(\Gamma)$. The restriction $\rho|_{\Gamma_0} : \Gamma_0 \to G^{\circ}$ is still a $\Theta$-transverse representation, and the corresponding properties carry over directly from the finite-index pair $(\Gamma_0,\rho|_{\Gamma_0})$ to the original pair $(\Gamma,\rho)$.}

\vspace{3mm}

$\Theta$-transverse representations have the following \emph{Cartan property}, which implies that $(\xi^{\Theta},\xi^{\Theta,opp})$ is unique \refchange{172}{\changed{and gives a local criterion for transverse representations (see the below item (3))}}. From now on, we refer to it as the \emph{limit map}.

\begin{proposition}{\refchange{80}{{\color{blue}\cite{kapovich2017anosov} and \cite[Proposition~2.3, Proposition~2.6 and Observation~6.1]{CaZhZi3}}}}\label{cartanproperty}

Fix $b_0\in \mathbb{D}$. Let $\Gamma\subset \mathsf{PSL}(2,\mathbb{R})$ be a discrete subgroup and $\rho:\Gamma\to G$ be a $\Theta$-transverse representation with limit map $(\xi^{\Theta},\xi^{\Theta,opp})$. \refchange{77}{{\color{blue}Let $\gamma_n\in \Gamma$ be a sequence.}} If $\gamma_n(b_0)\to x\in \Lambda(\Gamma)$, $\gamma_n^{-1}(b_0)\to y\in \Lambda(\Gamma)$ and $\rho(\gamma_n) = \refchange{78}{{\color{blue}\mu_n \exp(\kappa(\rho(\gamma_n))) \nu _n}}$ is the Cartan decomposition of $\rho(\gamma_n)$, then 
\begin{enumerate}
    \item \refchange[\baselineskip]{79}{{\color{blue}$\mu_n \mathfrak{p}_{\Theta}\to \xi^{\Theta}(x)$}} in $\mathcal{F}_{\Theta}$ \refchange[2\baselineskip]{204}{\changed{and $\nu_n^{-1}\mathfrak{p}_{\Theta}^{opp}\to  \xi^{\Theta,opp}(y)$ in $\mathcal{F}_{\Theta}^{opp}$.}}
    \item $\lim_{n\to \infty} \min_{\alpha\in \Theta} \alpha(\kappa(\rho(\gamma_n)))=\infty.$

\refchange{172}{\color{blue}{
    \item $\rho$ is $\Theta$-transverse if and only if there exists a $\rho$-equivariant, transverse, and continuous map 
\[
(\xi^{\Theta}, \xi^{\Theta,opp}) : \Lambda(\Gamma) \to \mathcal{F}_{\Theta} \times \mathcal{F}_{\Theta}^{opp}
\]
with the following local strongly dynamics-preserving property: There exist open subsets 
$U \subset \mathcal{F}_{\Theta}$ and $U^{opp} \subset \mathcal{F}_{\Theta}^{opp}$ such that for any sequence $\gamma_n\in \Gamma$ as assumed above,
\[
\rho(\gamma_n)F \to \xi^{\Theta}(x) \quad \text{for any } F \in U \text{ transverse to } \xi^{\Theta,opp}(y),
\]
and
\[
\rho(\gamma_n)^{-1}F^{opp} \to \xi^{\Theta,opp}(y) \quad \text{for any } F^{opp} \in U^{opp} \text{ transverse to } \xi^{\Theta}(x).
\]}}
\end{enumerate}
\end{proposition}

\vspace{1mm}
{\color{blue}{
Canary–Zhang–Zimmer \cite{CaZhZi3} showed that the adapted representation can be used to linearize $\Theta$-transverse representations.

For any $\Theta$ and $\chi$ as in Lemma~\ref{lem4}, part~(3) of that lemma yields equivariant embeddings which preserve transverse pairs:
\[
\xi_{\Theta,\chi} : \mathcal{F}_{\Theta} \to \mathbb{P}(V_{\Theta,\chi}), 
\quad g\mathfrak{p}_{\Theta} \mapsto g[\eta_{\Theta,\chi}],
\]
\[
\xi_{\Theta,\chi}^{opp} : \mathcal{F}_{\Theta}^{opp} \to \mathbb{P}(V_{\Theta,\chi})^{*}, 
\quad g\mathfrak{p}_{\Theta}^{opp} \mapsto g[\eta_{\Theta,\chi}^*],
\]
where $\mathbb{P}(V_{\Theta,\chi})$ is identified with the flag manifold associated to the first simple root (say it is $\alpha_1$) of $\mathsf{SL}(V_{\Theta,\chi})$, and $\mathbb{P}(V_{\Theta,\chi})^{*}$ is identified with the corresponding opposite flag manifold, which is equivalently the projective dual space, or, equivalently, the space of hyperplanes in $V_{\Theta,\chi}$.

\begin{proposition}{\cite[Proposition B.1]{CaZhZi3}}\label{linearize}
Let $\Gamma\subset \mathsf{PSL}(2,\mathbb{R})$ be a discrete subgroup and $\rho : \Gamma \to G$ be a $\Theta$-transverse representation.  
For any adapted representation $\psi_{\Theta,\chi}$ as in Lemma~\ref{lem4}, the composed representation 
\[
\psi_{\Theta,\chi} \circ \rho : \Gamma \to \mathsf{SL}(V_{\Theta,\chi})
\]
is $\alpha_1$-transverse with limit maps
\[
\xi_{\Theta,\chi} \circ \xi^{\Theta} 
\quad \text{and} \quad
\xi_{\Theta,\chi}^{opp} \circ \xi^{\Theta,opp}.
\]
\end{proposition}

}}

And when $\Gamma$ is non-elementary and geometrically finite, $\Theta$-transverse representations are relatively $\Theta$-Anosov representations and have the following growth property: We say $g\in \mathsf{SL}(d,\mathbb{R})$ is \emph{weakly unipotent} if the \refchange{81}{{\color{blue}Jordan-Chevally}} decomposition of $g$ has elliptic semisimple part and non-trivial unipotent part. \refchange[\baselineskip]{82}{{\color{blue}For any $1\le k \le d-1$}}, we say $g\in \mathsf{SL}(d,\mathbb{R})$ is \emph{$k-$proximal} if $\lambda_k(g)>\lambda_{k+1}(g)$, where $\lambda_1(g)\ge\lambda_2(g)\refchange[\baselineskip]{83}{{\color{blue}\ge}}...\ge\lambda_d(g)$ are the modulus of $g$'s eigenvalues. \changed{For each $\alpha \in \Theta$, as noted in the discussions following Definition~\ref{Anosov}, relatively $\Theta$-Anosov representations are also relatively $\{\alpha\}$-Anosov. By applying the $\psi_{\{\alpha\},\omega_{\alpha}}$ in Lemma \ref{lem4} and combining Lemma~\ref{lem4}~(2), Proposition~\ref{linearize} and the cited theorem below, we obtain:}

\begin{proposition}{\refchange{84}{{\color{blue}\cite[Theorem 1.1]{CaZhZi2}}}}\label{anosovgap}

Fix $b_0\in \mathbb{D}$. Let $\Gamma\subset \mathsf{PSL}(2,\mathbb{R})$ be a non-elementary and geometrically finite subgroup and $\rho:\Gamma\to G$ be a relatively $\Theta$-Anosov representation.

\begin{enumerate}

    \item  If $\alpha\in \Theta$, then there exists $a>1,A>0$, such that for any $\gamma\in \Gamma$
    $$
\frac{1}{a} d_{\mathbb{D}}(b_0,\gamma(b_0))-A\le \alpha (\kappa(\rho(\gamma)))\le a d_{\mathbb{D}}(b_0,\gamma(b_0))+A.
    $$
    
    \item  $\rho$ is type preserving in the following sense. Let $\psi_{\Theta,\chi}$ be an adapted representation. If $\gamma$ is parabolic, then $\psi_{\Theta,\chi}\circ \rho(\gamma)$ is weakly unipotent. If $\gamma$ is hyperbolic, then $\psi_{\Theta,\chi}\circ \rho(\gamma)$ is $1-$proximal.

\end{enumerate}

\end{proposition}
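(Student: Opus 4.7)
The argument splits naturally according to the two parts of the proposition.

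For part (1), I first argue that $\Theta$-transversality is inherited by each $\alpha \in \Theta$. Given the $\Theta$-limit map $(\xi^\Theta, \xi^{\Theta,opp})$, I postcompose with the $G$-equivariant projections $\pi^\Theta_\alpha$ and $\pi^{\Theta,opp}_\alpha$ from Corollary \ref{lem1} to produce a candidate $\alpha$-limit map. Equivariance and continuity are automatic; transversality transfers because the open $G$-orbit of transverse pairs in $\mathcal{F}_\Theta \times \mathcal{F}_\Theta^{opp}$ maps into the open orbit in $\mathcal{F}_\alpha \times \mathcal{F}_\alpha^{opp}$; the strongly dynamics-preserving property passes by lifting any $F' \in \mathcal{F}_\alpha$ transverse to $\xi^{\alpha,opp}(y)$ to some $F \in \mathcal{F}_\Theta$ transverse to $\xi^{\Theta,opp}(y)$, which is possible because the transverse locus is open in the fiber $(\pi^\Theta_\alpha)^{-1}(F')$. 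For the growth estimates, I apply Lemma \ref{lem4} to rewrite $\alpha(\kappa(\rho(\gamma))) = \log(\sigma_1/\sigma_2)(\psi_\alpha(\rho(\gamma)))$. The upper bound follows from $\log \sigma_1(\psi_\alpha(\rho(\gamma))) \le \log \|\psi_\alpha(\rho(\gamma))\| = O(|\gamma|_S)$, where $|\gamma|_S$ is the word length of $\gamma$, which is quasi-isometric to $d_\mathbb{D}(b_0, \gamma b_0)$ by Milnor--\v{S}varc. The lower bound is the linear-growth characterization of relatively $\Theta$-Anosov representations established in \cite{CaZhZi2}, applied to each $\alpha \in \Theta$.

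For part (2), I apply the bounds of part (1) to the iterates $\gamma^n$. If $\gamma$ is hyperbolic with translation length $t(\gamma)>0$, then $d_\mathbb{D}(b_0, \gamma^n b_0) = n\,t(\gamma) + O(1)$, so $\alpha(\kappa(\rho(\gamma^n)))$ grows linearly in $n$ for every $\alpha \in \Theta$. The highest weight of $\psi_\Theta$ is $\lambda_\Theta = \sum_{\beta \in \Sigma_\Theta^+} \dim(\mathfrak{g}_\beta)\beta$; a Weyl-reflection argument shows that $\lambda_\Theta$ is orthogonal to every $\alpha_i^\vee$ with $\alpha_i \in \Delta-\Theta$ (since $s_{\alpha_i}$ permutes $\Sigma_\Theta^+$ and preserves multiplicities), hence $\lambda_\Theta$ is $\Theta$-dominant with multiplicity one, and the second-highest weight of $\psi_\Theta$ lies on or below $\lambda_\Theta - \alpha_i$ for some $\alpha_i \in \Theta$. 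Consequently, linear growth of $\min_{\alpha \in \Theta} \alpha(\kappa(\rho(\gamma^n)))$ forces the top singular value gap of $\psi_\Theta(\rho(\gamma^n))$ to grow exponentially, and passing to the Jordan projection via $\lambda(g) = \lim \kappa(g^n)/n$ yields a strict spectral gap, so $\psi_\Theta(\rho(\gamma))$ is $1$-proximal. If $\gamma$ is parabolic, standard hyperbolic geometry gives $d_\mathbb{D}(b_0, \gamma^n b_0) = 2\log n + O(1)$, so part (1) yields $\alpha(\kappa(\rho(\gamma^n))) = O(\log n)$, hence $\alpha(\lambda(\rho(\gamma))) = 0$ for every $\alpha \in \Theta$: every eigenvalue of $\psi_\Theta(\rho(\gamma))$ has modulus one, so its semisimple part is elliptic. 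Nontriviality of the unipotent part follows because $\alpha(\kappa(\rho(\gamma^n))) \to \infty$ forces $\{\psi_\Theta(\rho(\gamma^n))\}$ to be unbounded in $\mathsf{PGL}(V_\Theta)$, which would be impossible for a purely elliptic element.

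The main obstacle is the lower Cartan estimate in part (1). The flag-projection argument and the upper bound are essentially formal, but the lower bound relies on the full relative Anosov framework of \cite{CaZhZi2}, which provides the necessary control over cusp asymptotics for geometrically finite $\Gamma$. A secondary subtlety, crucial for part (2), is the structural input that the second-highest weight of the adapted representation $\psi_\Theta$ differs from $\lambda_\Theta$ by a root in $\Theta$; this is what converts linear Cartan growth into a strict spectral gap, and in parallel converts the logarithmic Cartan bound for parabolic elements into the elliptic property of the Jordan semisimple part.
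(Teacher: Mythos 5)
The paper itself gives no proof of Proposition~\ref{anosovgap}: it is imported wholesale, with the pointer ``For details, see \cite[Theorem 1.1, Corollary 4.2, Appendix B]{CaZhZi2}.'' So there is no internal argument to compare your proposal against; the comparison is between your attempt and the cited machinery. Your proposal contains two genuine gaps, both traceable to glossing over cusp behavior, which is precisely what makes the proposition nontrivial and why the paper defers to \cite{CaZhZi2}.

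First, the upper bound in part~(1) does not follow from Milnor--\v{S}varc. That theorem requires a cocompact action; a geometrically finite $\Gamma$ with cusps does not act cocompactly on $\mathbb{D}$, and the orbit map $\gamma\mapsto\gamma(b_0)$ is coarsely Lipschitz from the word metric but \emph{not} a quasi-isometry. Concretely, for a parabolic $\gamma$ one has $d_{\mathbb{D}}(b_0,\gamma^n(b_0)) \asymp \log n$ while $|\gamma^n|_S \asymp n$. Your argument proves $\alpha(\kappa(\rho(\gamma))) = O(|\gamma|_S)$, which is strictly weaker than the required $\alpha(\kappa(\rho(\gamma))) = O(d_{\mathbb{D}}(b_0,\gamma(b_0)))$; at a cusp, the needed bound is logarithmic in $n$ and yours is only linear. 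Establishing the sharp upper Cartan bound along peripheral subgroups is exactly the content of the relative Anosov framework you would need to invoke; it cannot be bypassed with generic coarse geometry. (Notice also the hidden circularity: the sharp upper bound for parabolic $\gamma^n$ is essentially equivalent to the weak unipotence you try to deduce from it in part~(2).)

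Second, the deduction of weak unipotence is incorrect as written. From $\alpha(\kappa(\rho(\gamma^n))) = O(\log n)$ you correctly conclude $\alpha(\lambda(\rho(\gamma))) = 0$ for every $\alpha\in\Theta$, but this does \emph{not} imply that every eigenvalue of $\psi_\Theta(\rho(\gamma))$ has modulus one. The weights of $\psi_\Theta$ descend from $\lambda_\Theta$ by integer combinations of \emph{all} simple roots, including those in $\Delta-\Theta$, and $\lambda(\rho(\gamma))$ may have nonzero components along $\Delta-\Theta$. A concrete counterexample to the implication: $G=\mathsf{SL}(3,\mathbb{R})$, $\Theta=\{\alpha_1\}$, and a semisimple element with Jordan projection $\lambda = \mathrm{diag}(t,t,-2t)$, $t>0$. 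Then $\alpha_1(\lambda)=0$, yet $\psi_{\alpha_1}$ (the standard representation) has modulus spectrum $\{e^{t},e^{t},e^{-2t}\}$, which are not all equal even after projectivization. What is actually needed is the stronger assertion that for $\rho$ relatively $\Theta$-Anosov the full Jordan projection $\lambda(\rho(\gamma))$ vanishes on parabolics, or equivalently that $\rho$ restricted to a peripheral subgroup is controlled in the precise sense of the cusp conditions of \cite{CaZhZi2}. That is an extra input beyond part~(1) restricted to $\alpha\in\Theta$, and it is the substance of the citation. Your argument for $1$-proximality of hyperbolic images is essentially fine, modulo the (true but unproved) structural assertion that the second-highest weight of $\psi_\Theta$ differs from $\lambda_\Theta$ by a root in $\Theta$, which mirrors the paper's Lemma~\ref{lem4}.
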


\vspace{1mm}

{\color{blue}{
Finally, we state a partial coarse additive estimate for simple roots. The following cited lemma in Canary--Zhang--Zimmer~\cite{CaZhZi} is formulated for transverse representations into $\mathsf{PGL}(d,\mathbb{R})$. But for any $\alpha\in \Theta$, by applying the adapted representation $\psi_{\{\alpha\},\omega_{\alpha}}$ in Lemma \ref{lem4} and using Lemma~\ref{lem4}~(2) together with Proposition~\ref{linearize}, one can extend the statement to $\Theta$-transverse representations into general Lie groups in our setting.

For any $x,y\in \mathbb{D}$, we use $[x,y]$ to denote the geodesic segment connecting them.
\begin{lemma}[{\cite[Lemma~6.6]{CaZhZi}}]\label{partialcoarseadditive}
Let $\Gamma \subset \mathsf{PSL}(2,\mathbb{R})$ be a discrete subgroup. Suppose $\rho:\Gamma \to G$ is an $\alpha$-transverse representation. Then, for any $b_0 \in \mathbb{D}$ and $r>0$, there exists $C>0$ such that if $\gamma,\eta \in \Gamma$ and
\[
d_{\mathbb{D}}\big(\gamma(b_0),[b_0,\eta(b_0)]\big) \le r,
\]
then
\[
\alpha\!\big(\kappa(\rho(\eta))\big)
\;\ge\;
\alpha\!\big(\kappa(\rho(\gamma))\big)
\;+\;
\alpha\!\big(\kappa(\rho(\gamma^{-1}\eta))\big)
\;-\; C.
\]
\end{lemma}
}}

\vspace{3mm}

\section{$\Theta$-positivity}\label{sec3}

We recall some basic notation and theorems on $\Theta$-positivity, following Guichard and Wienhard~\cite{GW2}. For full details and illustrative examples, we refer readers to their original paper. Throughout this section, we make the same assumptions on the real \refchangenew{108,119}{\changednew{simple}} Lie group $G$ as at the beginning of Section~\ref{ssec1}: namely, $G$ has finitely many components, its identity component $G^{\circ}$ has finite center, and if $\mathfrak{g}$ is the Lie algebra of $G$ with adjoint representation $\psi:G \to Aut(\mathfrak{g})$, then $\psi(G) \subset Aut_1(\mathfrak{g})$ (see Section~\ref{assumption}).

\changed{Although our discussion focuses on simple Lie groups $G$, according to the original definition by Guichard and Wienhard~\cite{GW2}, groups admitting $\Theta$-positive structures may also be semisimple. However, all the results in this paper extend directly to part of the semisimple case, for example, provided that $G$ is a direct product of simple Lie groups that admit $\Theta$-positive structures.}

\subsection{Definitions and first properties}\label{ssec4}
\subsubsection{Convex cones}
Recall that a \emph{cone} in \refchange{85}{{\color{blue}real}} vector space $V$ is a set such that if $v$ is in it, then $\mathbb{R}^{> 0}v$ is also in it. A \emph{convex cone} $\Omega$ is a cone which satisfies $v,w\in \Omega\implies  v+w\in \Omega$. \refchange{86}{\changed{We call $\Omega$ \emph{acute} if $\overline{\Omega}$ does not contain any non-trivial \refchange{88}{vector subspace of $V$.}}}

We prove some lemmas about convex cones for future use. Let $(V,\|\|)$ be a finite dimensional normed real vector space and $\Omega\subset V$ be an open acute convex cone. Denote $Aut(\Omega)\subset \mathsf{GL}(V)$ the automorphism group preserving $\Omega$. Let $\pi:V-\{0\}\to \mathbb{P}(V)$ be the projection. \refchange{89}{\changed{Let $K'\subset \Omega$ be a sub-cone and $\tilde{K}\subset \Omega$ be a subset}}, we say $K'$ is a \emph{projectively compact sub-cone} if $\pi(K')$ is compact. We say $\tilde{K}\subset \Omega$ is \emph{bounded} if $\tilde{K}$ is bounded in $V$, and \emph{properly bounded} if $\overline{\tilde{K}}$ is compact in $\Omega$ (so it needs to be away from \refchange{90}{\changed{$\partial \Omega$ which includes $\{0\}$}}).

\refchange{103}{\changed{The first lemma's proof is adapted from Burger-Iozzi-Labourie-Wienhard~\cite[Lemma~8.10]{maximal}.}}

{\color{blue}{
\begin{lemma}\label{lem15}
    Let $\Omega\subset V$ be an open acute convex cone, and \refchange{104}{\changed{let $v_i\in \Omega, i = 1,\dots,k$.}}
    
    \begin{enumerate}
        \item There exists a constant $C_0>1$ such that
        \[
        \frac{\sum_{i = 1}^{k} \|v_i\|}{C_0}\le \left\| \sum_{i = 1}^{k} v_i\right\| \le C_0 \sum_{i = 1}^{k} \|v_i\|.
        \]
        
        \item Fix a compact subset $\tilde{K}\subset \Omega$. Then there exists a constant $C>1$ depending on $\Omega$ and $\tilde{K}$ with the following property: if $\sum_{i = 1}^{k} v_i \in \tilde{K}$ then
        \[
        \frac{1}{C}\le \sum_{i = 1}^{k} \|v_i\| \le C.
        \]
    \end{enumerate}
\end{lemma}

\begin{proof}
Let $S\subset V$ be the unit norm sphere. Since the cone is acute, the closed convex hull of $\overline{\Omega}\cap S$ does not contain the origin. By the Hahn-Banach theorem, we can pick a linear functional $l$ such that $\overline{l(\overline{\Omega}\cap S)}\subset (0,\infty)$, so there exists a constant $C'>1$ such that for any $v\in \changednew{\overline\Omega}$, we have:
\[
\frac{1}{C'} \|v\| \le l(v)\le C' \|v\|.
\]

(1) Since all $v_i \in \Omega$, their sum $V_S := \sum_{i=1}^k v_i$ is also in $\Omega$. Applying the above inequality to $V_S$ and to each $v_i$ gives:
\[
\|V_S\| \le C' l(V_S) = C' \sum_{i = 1}^{k} l(v_i) \le C' \sum_{i = 1}^{k} (C' \|v_i\|) = (C')^2 \sum_{i = 1}^{k} \|v_i\|.
\]
In the other direction,
\[
\sum_{i = 1}^{k} \|v_i\| \le \sum_{i = 1}^{k} C' l(v_i) = C' l(V_S) \le C' (C' \|V_S\|) = (C')^2 \|V_S\|.
\]
Setting $C_0 = (C')^2$ proves the first item.

(2) For the second item, by linearity and the first inequality, we have
\[
\frac{1}{C'}l\left(\sum v_i\right) = \frac{1}{C'} \sum l(v_i) \le \sum \|v_i\| \le C' \sum l(v_i) = C' l\left(\sum v_i\right).
\]
As $\sum v_i \in \tilde{K}$ and $\tilde{K}\subset \Omega$ is compact, $l(\sum v_i)$ is bounded from above and below by positive constants. By the inequality above, $\sum \|v_i\|$ is also bounded, which completes the proof.
\end{proof}
}}

\changed{
The application of the Hahn--Banach theorem in the proof of Lemma~\ref{lem15} shows that if 
$\Omega$ is an open acute convex cone, then its projectivization is contained in the affine 
chart $l^{-1}(\mathbb{R}^{>0})$. \refchange{87}{Moreover, the inclusion 
$\overline{l(\Omega \cap S)} \subset (0,\infty)$ implies that the projectivization in this 
affine chart is a bounded domain.}
}

\begin{lemma} \label{lem7}
     Let $\Omega\subset V$ be an open acute convex cone. Fix a compact subset $Q\subseteq Aut(\Omega)$, a projectively compact sub-cone $K'\subset \Omega$ and \refchange{91}{\changed{a basis}} $\{e_i\}$ for $V$. There exists a constant $C>1$ depending only on $Q$, $K'$ and $\{e_i\}$ with the following property: if $g\in Aut(\Omega)$ and $g = k_1 D k_2$ \refchange{92}{\changed{where $D\in Aut(\Omega)$ is a diagonal matrix}} in the basis $\{e_i\}$ and $k_1,k_2\in Q$, then for any $v\in K'$
     $$
     \frac{1}{C} \lambda_{D} \|v\| \le \|g v \|\le C \lambda_{D} \|v\|
     $$
     where $\lambda_{D}$ is the maximal modulus of entries in $D$.
\end{lemma}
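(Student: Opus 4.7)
The plan is to handle the upper and lower bounds separately, with the upper bound routine and the lower bound relying on a Perron--Frobenius input on the dual cone. For the upper bound, compactness of $Q$ gives uniform bounds on $\|k_1\|_{op}$ and $\|k_2\|_{op}$, and since $\Lambda$ is diagonal in the fixed basis $\{e_i\}$ its operator norm is comparable to $\lambda_{\Lambda}$ up to a basis-dependent constant; composing yields $\|gv\|\le C\lambda_{\Lambda}\|v\|$ with $C$ depending only on $Q$ and $\{e_i\}$.

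The key observation for the lower bound is that $\Lambda=k_1^{-1}gk_2^{-1}$ itself lies in $Aut(\Omega)$, because $Q\subseteq Aut(\Omega)$ and $Aut(\Omega)$ is a group. Setting $\tilde K := \overline{\bigcup_{k\in Q}k(K')} \subset \Omega$, this set remains a projectively compact sub-cone (being the continuous image of the compact set $Q\times\pi(K')$ under the action of $Aut(\Omega)$ on $\mathbb{P}(\Omega)$), and $k_2 v\in\tilde K$ for every $v\in K'$. Absorbing the $k_1,k_2$ factors via compactness of $Q$, the problem reduces to showing
\[
\|\Lambda w\|\ \ge\ c\,\lambda_{\Lambda}\,\|w\|
\]
for every diagonal $\Lambda \in Aut(\Omega)$ (in the basis $\{e_i\}$) and every $w\in\tilde K$, with $c>0$ depending only on $\tilde K$ and $\{e_i\}$.

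To produce such a uniform $c$, I would pass to the dual cone. Since $\Omega$ is open, acute, and properly convex, its dual $\Omega^*\subset V^*$ inherits the same properties, and $\Lambda^T\in Aut(\Omega^*)$ is diagonal in the dual basis with the same spectrum as $\Lambda$, so its spectral radius is $\lambda_{\Lambda}$. The Perron--Frobenius theorem applied to $\Lambda^T$ acting on the closed cone $\overline{\Omega^*}$ then produces an eigenvector $\phi\in\overline{\Omega^*}\setminus\{0\}$ with $\Lambda^T\phi=\lambda_{\Lambda}\phi$; I normalize so that $\|\phi\|_*=1$.

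The final geometric input is that for any $\phi\in\overline{\Omega^*}\setminus\{0\}$ and any $w\in\Omega$, one has $\phi(w)>0$, because $\{v\in\overline{\Omega}:\phi(v)=0\}$ is a proper face of $\overline{\Omega}$ which is avoided by its interior $\Omega$. By compactness of $(\overline{\Omega^*}\cap\{\|\phi\|_*=1\})\times\pi(\tilde K)$, the positive continuous function $\phi(w)/\|w\|$ is bounded below by some $c_0>0$ uniformly in $\phi$ and $w$. The lower bound then follows from
\[
\|\Lambda w\|\ \ge\ \phi(\Lambda w)\ =\ (\Lambda^T\phi)(w)\ =\ \lambda_{\Lambda}\,\phi(w)\ \ge\ \lambda_{\Lambda}\,c_0\,\|w\|,
\]
which combined with the earlier reductions proves the lemma. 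The main technical point requiring care is exactly this uniformity over the (non-compact) set of admissible diagonal $\Lambda$; Perron--Frobenius together with the compactness of the normalized dual cone supplies it without any explicit description of the diagonal subgroup of $Aut(\Omega)$.
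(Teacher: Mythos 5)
Your proof is correct, and it takes a genuinely different route from the paper's. The paper argues by contradiction via sequential compactness: given a violating sequence $g_n=k_{1,n}\Lambda_n k_{2,n}$, it splits $V$ into the leading spectral subspace of $\Lambda_n$ and its complement $V_2$, uses (without elaboration) that $V_2$ avoids the open cone $\Omega$, and shows $\lambda_{\Lambda_n}^{-1}g_n v_n$ subconverges to a non-zero vector, yielding the contradiction. Your argument is direct: after the standard reduction to a diagonal $\Lambda\in Aut(\Omega)$ acting on the enlarged projectively compact sub-cone $\tilde K$, you extract a Perron--Frobenius eigenfunctional $\phi\in\overline{\Omega^*}\setminus\{0\}$ of $\Lambda^T$ with eigenvalue equal to the spectral radius $\lambda_\Lambda$, and combine strict positivity of $\phi$ on $\Omega$ with compactness of the normalized dual cone times $\pi(\tilde K)$ to produce a uniform constant $c_0$. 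Both proofs rest on the same cone spectral theory --- indeed the paper's assertion that $V_2$ misses $\Omega$ is cleanest to see via exactly this Perron eigenfunctional, which annihilates $V_2$ and is strictly positive on $\Omega$ --- but your version makes that input explicit, avoids subsequence extraction, and is constructive up to the compactness minimum $c_0$, whereas the paper's is phrased purely in terms of limits and does not invoke Perron--Frobenius by name. Two small remarks: Vandergraft's cone Perron--Frobenius theorem delivers the \emph{positive} spectral radius $\lambda_\Lambda$ as the eigenvalue, which is precisely what you need even if $\Lambda$ has diagonal entries of mixed signs; and the closure in $\tilde K := \overline{\bigcup_{k\in Q}k(K')}$ is superfluous since $\pi\bigl(\bigcup_{k\in Q}k(K')\bigr)$ is already compact as the continuous image of $Q\times\pi(K')$.
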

\begin{proof}

\refchangenew{94}{\changednew{Let $\|\cdot\|_{O}$ denote the operator norm. Since $g = k_1 D k_2$, where $k_1$ and $k_2$ vary within a compact set, there exists a constant $C_0 > 0$ such that 
\[
\|g\|_O \le C_0 \|D\|_O = C_0 \lambda_D.
\]
Consequently, the inequality $\|gv\| \le C_0 \lambda_D \|v\|$ holds for any $v \in K'$.}} Thus, if the Lemma is false, then there exist \refchange{93}{\changed{sequences $g_n\in Aut(\Omega)$, $k_{1,n},k_{2,n}\in Q$, $D_n\in Aut(\Omega)$ diagonal}} such that $g_n=k_{1,n} D_nk_{2,n}$ and a sequence $v_n\in K'$ such that
\begin{equation}\label{collapseine}
\|g_n v_n\|<\frac{1}{n} \lambda_{D_n} \|v_n\|.
\end{equation}

Note that as $D_n$ preserves the open acute convex cone, all diagonal entries in $D_n$ are positive numbers. So eigenvalues of modulus $\lambda_{D_n}$ are just the eigenvalues $\lambda_{D_n}$.

Assume without loss of generality that $\forall n\in \mathbb{Z}^{>0}, \|v_n\| = 1$. By taking a subsequence, we may assume that $v_n\to v\in K'$,  $k_{1,n}\to k_1\in Q$ and $k_{2,n}\to k_2\in Q$.
\refchangenew{94}
{
\changednew{Let $d = \dim_{\mathbb{R}}(V)$. For every $n \in \mathbb{Z}^{>0}$ and $1 \le j \le d$, let $D_{n,j}$ denote the $j$-th eigenvalue of $D_n$, ordered by non-increasing modulus. After passing to a subsequence, we can assume that for every $1 \le j < d$, the limit
\[
\lim_{n \to \infty} \frac{D_{n,j}}{D_{n,j+1}}
\]
exists (which may be $+\infty$). Due to Inequality~\ref{collapseine}, for at least one $1 \le j < d$, we have
\[
\lim_{n \to \infty} \frac{D_{n,j}}{D_{n,j+1}} = \infty.
\]
Define $j' = \inf\{j \mid \lim_{n \to \infty} \frac{D_{n,j}}{D_{n,j+1}} = \infty\}$. Let $V_{1,n}$ be the span of the first $j'$ eigenvectors of $D_n$, and $V_{2,n}$ be the span of the remaining $d-j'$ eigenvectors of $D_n$. Since $D_n$ is diagonal, $V_{1,n}$ and $V_{2,n}$ vary within a finite set, so after passing to a further subsequence, we can assume that $V_{1,n} = V_1$ and $V_{2,n} = V_2$ are constant linear subspaces.}
}

{\color{blue}\changednew{Note that the above construction implies that $\lim_{n \to \infty} \lambda_{D_n}^{-1} D_n|_{V_1}$ converges to an invertible linear transformation, while $\lim_{n \to \infty} \lambda_{D_n}^{-1} D_n|_{V_2}$ converges to $0$.} So we have for any $v$ transverse to $V_2$ and any fixed $n$, the projectivizations of $(D_n)^{k} v$ will eventually converge to some point in $\mathbb{P}(V_1)\subset \mathbb{P}(V)$ as $k\to \infty$. Similarly, for any $v$ transverse to $V_1$ and any fixed $n$, the projectivizations of $(D_n)^{-k} v$ will eventually \refchange{96}{ be contained in arbitrarily small neighbourhoods of $\mathbb{P}(V_2)\subset \mathbb{P}(V)$ as $k\to \infty$.}

\refchange{95}{The paragraph after the proof of Lemma~\ref{lem15} shows that after projectivization, $\Omega$ is a bounded domain in some affine chart, so the projectivization of $\partial{\Omega}$ is a non-empty compact set.} Moreover, for any $n$, we have $D_n(\Omega) = \Omega$ and hence $D_n(\partial \Omega) = \partial \Omega$.

We show that $V_1 \cap \Omega = \emptyset$, i.e., $\mathbb{P}(V_1)\cap \pi(\Omega) = \emptyset$. Suppose not,  the attracting property above implies that for any fixed $n$ and any $v \in \partial \Omega - V_2$, the projectivization of $(D_n)^k v$ converges to some point in $\mathbb{P}(V_1)$ as $k \to \infty$. Moreover, since $\Omega$ is open and acute, there exists some $v \in \partial \Omega - V_2$ such that for any fixed $n$, $\lim_{k\to \infty} \pi \big((D_n)^k(v)\big)\in \mathbb{P}(V_1)\cap \pi(\Omega) $, which contradicts $D_n(\partial \Omega) = \partial \Omega$. \refchange{97}{Hence we conclude that $\Omega \cap V_1 = \emptyset$.} On the other hand, the attracting property also shows that $(\partial \Omega-\{0\}) \cap V_1 \neq \emptyset$.

Next we show that $\Omega \cap V_2 = \emptyset$. Otherwise, pick $w_1 \in \partial \Omega \cap V_1$ and $w_2 \in \Omega \cap V_2$. Let $\overrightarrow{w_1w_2}^{\circ}$ be the open ray (excluding $w_1$) starting from $w_1$ and passing through $w_2$, and let $w_3 = \overrightarrow{w_1w_2}^{\circ} \cap \partial \Omega$ which is transverse to $V_2$. For each fixed $n$, the projectivization of $(D_n)^k w_3$ converges to a point in $\mathbb{P}(V_1)\subset \mathbb{P}(V)$ as $k \to \infty$, so the sequence of projective segments $(D_n)^k[w_1,w_3]$ converges to a projective segment in $\mathbb{P}(V_1)$. However, the projectivization of $(D_n)^k w_2$ lies in $\mathbb{P}(V_2)$. Since $V_1 \oplus V_2$ is a direct sum and $(D_n)^k w_2$ lies in the interior of $(D_n)^k[w_1,w_3]$, this yields a contradiction. \refchange{97}{Thus $\Omega \cap V_2 = \emptyset$.}
}

\refchange{99}{\changed{Write $k_{2,n} v_n$ as direct decompositions $k_{2,n} v_n = v_{1,n}+v_{2,n}$ where $v_{i,n}\in V_i, i = 1,2$.}} \refchange[\baselineskip]{98}{\changed{Recall that we have assumed $k_{2,n}\to k_2\in Q\subset Aut(\Omega)$ and $v_n\to v\in K'\subset \Omega$}}, \refchange[\baselineskip]{100}{\changed{so $v_{1,n},v_{2,n}$ also converge in $V_1$ and $V_2$.}} As $V_1\cap \Omega =V_2\cap \Omega = \emptyset$, the limits of $v_{1,n}$ and $v_{2,n}$ are non-zero.

$\|g_n v_n\|<\frac{1}{n} \lambda_{D_n} \|v_n\|$ implies that
\begin{equation}\label{collapseine2}
\| k_{1,n} \refchangenew{101,102}{\changednew{(\lambda_{D_n}^{-1} D_n)}}(v_{1,n} +v_{2,n}) \|< \frac{1}{n}.
\end{equation}
\changednew{Note that $(\lambda_{D_n}^{-1} D_n) v_{1,n}$ converges to a non-zero vector $v_1'$ in $V_1$, while $(\lambda_{D_n}^{-1} D_n) v_{2,n}$ converges to zero. Since $k_{1,n}$ also converges, the left-hand side of Inequality~\ref{collapseine2} converges to a non-zero value, which is a contradiction.}
\end{proof}

\subsubsection{$\Theta$-positive structure}

\refchange{105}{\changed{Recall at the beginning of Section \ref{sec3}, we assume that $G$ is a \refchangenew{108,119}{\changednew{simple}} Lie group $G$ with finitely many components with identity component $G^{\circ}$, and the center of $G^{\circ}$ is finite. Moreover if $\mathfrak{g}$ is the Lie algebra and $\psi:G\to Aut(\mathfrak{g})$ is the adjoint representation, then we assume that $\psi(G)\subset Aut_1(\mathfrak{g})$ (see Section \ref{assumption}).}}

\begin{definition}\label{def4}
    Let $G$ be a real \refchangenew{108,119}{\changednew{simple}} Lie group as above and $\Theta\subset\Delta$ be a non-empty subset. We say $G$ admits $\Theta$-positive structure if for any $\beta\in\Theta$, there exists \refchange{106}{\changed{an}} $Ad_{L_{\Theta}^{\circ}}-$invariant non-empty \refchange[\baselineskip]{107}{\changed{open acute convex cone}} $\mathring{c}_{\beta}\subset \mathfrak{u}_{\beta}$, and denote its closure as $c_{\beta}$.
\end{definition}

\begin{remark}
    Equivalently, one can define $\Theta$-positive structure by requiring that for any $\beta\in\Theta$, there exists \refchange{106}{\changed{an}} $Ad_{L_{\Theta}^{\circ}}-$invariant non-empty open acute convex cone $\mathring{c}_{\beta}^{opp}\subset \mathfrak{u}_{\beta}^{opp}$, and denote its closure as $c_{\beta}^{opp}$.
    
     Although the definition requires us to make a choice between $c_{\beta}$ and $-c_{\beta}$ for each $\beta\in\Theta$, Guichard-Wienhard~\cite[Proposition 5.2]{GW2} shows that the choices are unique up to the action of $Aut_1(\mathfrak{g})$.
\end{remark}

Guichard-Wienhard~\cite{GW2} classified all the possible $(G,\Theta)$ that admit $\Theta$-positive structures.

\begin{theorem}[\protect{\cite[Theorem 3.4]{GW2}}]\label{thm7} 

Let $G$ be a simple real Lie group and $\Theta\subset \Delta$ be non-empty. $G$ admits $\Theta$-positive structure if and only if $(G,\Theta)$ falls into one of the four cases:
\begin{enumerate}
    \item $G$ is a split real form and $\Theta = \Delta$.
    \item $G$ is of Hermitian tube type of real rank $r$ and $\Theta = \{\alpha_{r}\}$, where $\alpha_{r}$ is the long simple root.
    \item $G$ is locally isomorphic to $\mathsf{SO}(p,q)$, \refchange{109}{\changed{$2<p<q$}}, and $\Theta = \{\alpha_1,...\refchange{110}{\changed{,}}\alpha_{p-1}\}$, where \refchange[\baselineskip]{111}{\refchangenew{111}{\changednew{$\alpha_i$'s}}} are the long simple roots.
    \item $G$ is locally isomorphic to the real forms of $\mathsf{F}_{4}, \mathsf{E}_6,\mathsf{E}_7$ \refchange[\baselineskip]{112}{\changed{or}} $\mathsf{E}_8$ whose restricted root system is of type $\mathsf{F}_{4}$, and $\Theta =\{\alpha_1,\alpha_2\}$ where $\alpha_1,\alpha_2$ are the two long simple roots.
\end{enumerate}
\end{theorem}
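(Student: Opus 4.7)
The plan is to follow the strategy of Guichard and Wienhard: reduce the existence of invariant convex cones to a classical representation-theoretic problem, and then complete a case-by-case analysis over simple real Lie groups.

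First I would use Theorem \ref{thm6}, which tells us that for each $\beta \in \Theta$ the subspace $\mathfrak{u}_\beta$ is an \emph{irreducible} representation of $L_\Theta^{\circ}$ under the adjoint action. Thus the existence of an $Ad_{L_\Theta^\circ}$-invariant open acute properly convex cone in $\mathfrak{u}_\beta$ becomes a constraint on which irreducible representations of a reductive Lie group admit an invariant proper open convex cone. The classical theory of Vinberg on homogeneous convex cones implies that such a representation must carry a Euclidean (formally real) Jordan algebra structure whose automorphism group is contained (up to a central factor) in $L_\Theta^\circ$; the symmetric cone of this Jordan algebra then provides the required $\mathring{c}_\beta$. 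One direction of the equivalence is therefore constructive—given the classification of Euclidean Jordan algebras, one can write down $\mathring{c}_\beta$ explicitly—while the other direction reduces the problem to a purely structural question.

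Next I would translate this constraint into combinatorial data on the Dynkin/Satake diagram of $G$. The Levi $L_\Theta^\circ$ is determined by $\Delta - \Theta$ together with $\mathfrak{t}_\Theta$, and the formula $\mathfrak{u}_\beta = \bigoplus_{\alpha - \beta \in \mathrm{Span}(\Delta - \Theta)} \mathfrak{g}_\alpha$ shows exactly which roots of $G$ contribute to each $\mathfrak{u}_\beta$. Demanding a Euclidean Jordan structure pins down the type of the semisimple part $S_\Theta$, forces $\Theta$ to sit in a very specific position in the Satake diagram, and constrains the multiplicities of the restricted roots. I would then run through the classification of simple real Lie algebras, Satake diagram by Satake diagram, and eliminate every pair $(G,\Theta)$ that fails this test. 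The four surviving families should be verified by exhibiting the Jordan algebra realization of each $\mathfrak{u}_\beta$: real symmetric matrices in the split case (recovering Lusztig's total positivity), Hermitian Jordan algebras for Hermitian tube-type groups, the spin factor giving the Lorentz cone for $\mathsf{SO}(p,q)$, and the exceptional Jordan algebras for the $\mathsf{F}_4$-type cases.

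The main obstacle is the middle step, particularly the verification that the $\mathsf{F}_4$-type case actually arises and that no other pairs slip through. Matching the abstract Jordan-algebra criterion uniformly against the combinatorics of every simple restricted root system requires careful bookkeeping of the Satake involution, of long versus short restricted roots, and of root multiplicities; the exceptional case in particular depends on a nontrivial interaction between triality phenomena and the structure of $\mathfrak{u}_{\alpha_1} \oplus \mathfrak{u}_{\alpha_2}$. A secondary technical point, already used implicitly in the statement, is that once the cones exist, their sign is essentially unique up to the action of $Aut_1(\mathfrak{g})$—this uses the $Ad_{L_\Theta^\circ}$-irreducibility of $\mathfrak{u}_\beta$ to show that the only invariant acute properly convex cones are $\mathring{c}_\beta$ and $-\mathring{c}_\beta$.
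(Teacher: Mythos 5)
The paper does not prove this theorem; it is imported directly as a citation to \cite[Theorem 3.4]{GW2}, so there is no internal argument to compare against. Your sketch captures the broad strategy underlying Guichard--Wienhard's proof---irreducibility of each $\mathfrak{u}_\beta$ under $L_\Theta^\circ$ (Theorem \ref{thm6}), reduction to invariant convex cones in irreducible representations, the Jordan algebra / symmetric cone connection, and a case-by-case Satake diagram analysis---and the Jordan-algebra identifications you list in the four families are correct.

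The middle step is stated too loosely, however. Vinberg's theory of \emph{homogeneous} convex cones classifies them via $T$-algebras (clans) and does not by itself produce a Euclidean Jordan algebra; that structure appears only for \emph{symmetric} cones, i.e.\ cones which are self-dual in addition to homogeneous, via the Koecher--Vinberg theorem. An $Ad_{L_\Theta^\circ}$-invariant open acute properly convex cone is a priori only homogeneous (Theorem \ref{thm10}(1) gives transitivity of the $L_\Theta^\circ$-action), not self-dual, so your reduction has a gap: you must argue that the Cartan involution of $L_\Theta^\circ$, or some other structural feature of the representation $\mathfrak{u}_\beta$, forces self-duality before the Jordan-algebra classification can be invoked. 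Relatedly, the constraint as you state it would not immediately preclude configurations where several elements of $\Theta$ are adjacent to $\Delta-\Theta$ in the Dynkin diagram, nor those where $\dim\mathfrak{u}_\beta>1$ for some non-boundary $\beta\in\Theta$; closing these out---which is what ultimately produces the single boundary root $\alpha_\Theta$ and the Hermitian-tube-type condition on $S_{\Theta-\alpha_\Theta}$ recorded in Theorem \ref{thm8}---is where the bulk of the work in the classification lies, and the sketch does not indicate how that elimination is carried out.
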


If we are in the first case, that is $\Delta = \Theta$, then $\forall \alpha \in \Delta, \mathfrak{u}_{\alpha} = \mathfrak{g}_{\alpha}$ which is one dimensional, and $c_{\alpha}$ is a copy of $\mathbb{R}^{>0}$ if identify $\mathfrak{g}_{\alpha}$ with $\mathbb{R}$. If $\Delta \not = \Theta$, then Theorem \ref{thm7} shows that $\Theta\cup \refchange{113}{\changed{(\Delta-\Theta)}}$ divides the Dynkin diagram of $G$ into two connected components. Let $\alpha_{\Theta}\in\Theta$ be the unique root connected to $(\Delta-\Theta)$ and we call it the \emph{boundary root}. \refchange{114}{\changed{To be brief we let $\Theta -\alpha_{\Theta}$ denote $\Theta -\{\alpha_{\Theta}\}$}}, and for any $\beta \in \Theta -\alpha_{\Theta}$, $\mathfrak{u}_{\beta}$ is still one dimensional so we call these non-boundary roots \emph{real type}, and the boundary root $\alpha_{\Theta}$ is of \emph{Hermitian tube type} in the following sense:

\begin{theorem}[\protect{\cite[Proposition 3.8 and \changednew{Section 5.1}]{GW2}}]\label{thm8} 
Let $G$ be a simple Lie group with $\Theta$-positive structure, and $\alpha_{\Theta}$ be the boundary root, then:
\begin{enumerate}
    \item $S_{\Theta-\alpha_{\Theta}}$ is of Hermitian tube type. \refchangenew{35}{\changednew{Moreover, if $G$ is not locallly isomorphic to $\mathsf{SL}(2,\mathbb{R})$, then the real rank $r$ of $S_{\Theta-\alpha_{\Theta}}$ is greater than $1$.}}
    \item The maximal parabolic subgroup of $S_{\Theta-\alpha_{\Theta}}$ associated to $\alpha_{\Theta}$ is $P_{\Theta}\cap S_{\Theta-\alpha_{\Theta}}$, and the Levi subgroup is $L_{\Theta}\cap S_{\Theta-\alpha_{\Theta}}$.
    \item $\mathfrak{u}_{\alpha_{\Theta}}$ is the nilpotent subalgebra of $P_{\Theta}\cap S_{\Theta-\alpha_{\Theta}}$.
\end{enumerate} 
\end{theorem}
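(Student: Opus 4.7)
The plan is to proceed case-by-case via the classification in Theorem \ref{thm7}, since all three assertions are structural facts about the subgroup $S_{\Theta-\alpha_\Theta}$ associated to the simple root set $\Delta-(\Theta-\alpha_\Theta)=(\Delta-\Theta)\cup\{\alpha_\Theta\}$. From Subsection \ref{sssec2}, the Dynkin diagram of $S_{\Theta-\alpha_\Theta}$ is the subdiagram of the Dynkin diagram of $G$ induced by these vertices, and its positive simple roots are the restrictions of them to $\mathfrak{a}_{\Theta-\alpha_\Theta}$. Because $\alpha_\Theta$ is by definition the simple root of $\Theta$ adjacent to the $\Delta-\Theta$ component, this subdiagram is connected and $\alpha_\Theta$ occupies one of its end vertices.

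First I would dispatch (1) by traversing the non-trivial cases of Theorem \ref{thm7}. When $\Theta=\{\alpha_r\}$ there is nothing to prove, since $\Theta-\alpha_\Theta=\emptyset$ makes $S_{\Theta-\alpha_\Theta}$ locally equal to $G$ itself, which is Hermitian tube type by hypothesis. For $G$ locally isomorphic to $\mathsf{SO}(p,q)$ with $\Theta=\{\alpha_1,\dots,\alpha_{p-1}\}$, the subdiagram $(\Delta-\Theta)\cup\{\alpha_{p-1}\}$ read off from the $B_p$-type restricted root diagram identifies $S_{\Theta-\alpha_\Theta}$ as a Hermitian tube type real form (such as $\mathsf{Sp}$ or $\mathsf{SO}^{*}$), with $\alpha_\Theta$ in the role of the distinguished long simple root at the tube end. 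The exceptional case whose restricted root system is $F_4$ is dispatched analogously by explicit inspection of the two long simple roots.

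Next, for (2) and (3), I would use the Levi decomposition $P_\Theta=L_\Theta U_\Theta$ from Proposition \ref{Levi} together with the fact that $S_{\Theta-\alpha_\Theta}$ is generated by $\exp\mathfrak{a}_{\Theta-\alpha_\Theta}$ and the root groups $\exp\mathfrak{g}_\beta$ for roots $\beta$ in $\mathrm{Span}((\Delta-\Theta)\cup\{\alpha_\Theta\})\cap \Sigma$. Intersecting with $P_\Theta$ retains the Cartan piece, the root groups for $\beta\in\mathrm{Span}(\Delta-\Theta)\cap\Sigma$ (contributing to $L_\Theta\cap S_{\Theta-\alpha_\Theta}$), and the root groups for positive roots with positive coefficient on $\alpha_\Theta$ (contributing to $U_\Theta\cap S_{\Theta-\alpha_\Theta}$). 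This is precisely the standard maximal parabolic of $S_{\Theta-\alpha_\Theta}$ associated to the simple root $\alpha_\Theta$, which proves (2). For (3), the nilpotent radical of this parabolic is $\bigoplus\mathfrak{g}_\alpha$ over positive roots of $S_{\Theta-\alpha_\Theta}$ with positive coefficient on $\alpha_\Theta$; because the Hermitian tube type structure established in (1) forces all such roots to have coefficient on $\alpha_\Theta$ exactly equal to $1$, this sum matches the defining formula $\mathfrak{u}_{\alpha_\Theta}=\bigoplus_{\alpha\in\Sigma,\ \alpha-\alpha_\Theta\in\mathrm{Span}(\Delta-\Theta)} \mathfrak{g}_\alpha$ from Subsection \ref{sssec2}.

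The main obstacle will be the case-by-case verification of (1), where one must match each subdiagram $(\Delta-\Theta)\cup\{\alpha_\Theta\}$, with the multiplicity data coming from the appropriate real form, against the list of Hermitian tube type simple Lie groups and confirm that $\alpha_\Theta$ lies at the tube end. The $F_4$-type case in particular demands explicit inspection of the restricted root diagrams of the relevant real forms of $F_4, E_6, E_7, E_8$. Once (1) is in place, the coefficient-$1$ phenomenon supplied by the Hermitian tube type structure is exactly what aligns the root-theoretic definition of $\mathfrak{u}_{\alpha_\Theta}$ with the nilpotent radical in (3), and the remainder is straightforward bookkeeping with Proposition \ref{Levi} and the root-space formulas of Subsection \ref{sssec2}.
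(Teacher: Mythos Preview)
The paper does not give its own proof of this statement: Theorem \ref{thm8} is quoted verbatim from \cite[Proposition 3.8]{GW2} with no argument supplied, so there is nothing in the paper to compare your proposal against. Your outline is a plausible reconstruction of the case-by-case verification one finds in the Guichard--Wienhard source, and the overall architecture (identify the subdiagram $(\Delta-\Theta)\cup\{\alpha_\Theta\}$, check it against the Hermitian tube type list, then read off the parabolic and nilpotent radical via Proposition \ref{Levi} and the root-space formulas of Subsection \ref{sssec2}) is sound.

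One small correction: in the $\mathsf{SO}(p,q)$ case the subdiagram on $\{\alpha_{p-1},\alpha_p\}$ together with the relevant root multiplicities identifies $S_{\Theta-\alpha_\Theta}$ with a group locally isomorphic to $\mathsf{SO}(2,q-p+2)$, not $\mathsf{Sp}$ or $\mathsf{SO}^{*}$ as your parenthetical suggests; this is still Hermitian of tube type, so the conclusion is unaffected. Otherwise your sketch is in line with how the cited reference handles it.
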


\subsection{The positive semigroup}\label{ssec5}
We recall \refchange{115}{\changed{a}} parameterization of the $\Theta$-positive semigroup following Guichard-Wienhard~\refchange[\baselineskip]{116}{\changed{\cite[Section 4.3]{GW3} and \cite[Sections 8,9,10]{GW2}.}} 

\subsubsection{$\Theta$-Weyl Group $W(\Theta)$} \label{sssec13}
For every $\beta\in \Delta$, let $s_{\beta}$ be the reflection of $\mathfrak{a}$ about the kernel of the simple root $\beta$.  Let \refchange{117}{\changed{$W=\langle s_{\beta}|\beta\in \Delta\rangle$}} be the \emph{Weyl group} of $G$. \refchangenew{Newchange 1}{\changednew{For any nonempty $\Theta\subset \Delta$, let $W_{\Theta}$ denote the subgroup generated by $\{s_{\beta} \mid \beta\in \Theta\}$. Note that $W_{\Delta-\Theta}$ can be identified with the Weyl group of $S_{\Theta}$, where $S_{\Theta}$ is the semisimple part of $L^{\circ}_{\Theta}$.}}

 Suppose $(G,\Theta)$ has $\Theta$-positive structure. Define \changed{$\sigma_{\alpha_{\Theta}} = \omega_{\{\alpha_{\Theta}\}\cup (\Delta-\Theta)}\cdot \omega_{(\Delta-\Theta)}^{-1}$ where $\omega_{\{\alpha_{\Theta}\}\cup (\Delta-\Theta)}$ and  $\omega_{(\Delta-\Theta)}$} are the longest elements in \refchange{118}{\changed{$W_{\{\alpha_{\Theta}\}\cup (\Delta-\Theta)}$ and $W_{(\Delta-\Theta)}$}}. For the remaining $\beta\in \Theta - \alpha_{\Theta}$, define $\sigma_{\beta} = s_{\beta}$. Define $W(\Theta)$ to be the subgroup of $W$ generated by $\{\sigma_{\beta}|\beta\in \Theta\}$. Guichard-Wienhard~\cite[Corollary 8.15]{GW2} shows that we have the following list of $W(\Theta)$ corresponding to the classification of $(G,\Theta)$:

\begin{enumerate}
    \item If $(G,\Theta)$ is in split real case, then $W(\Theta) \cong  W$.
    \item If $(G,\Theta)$ is in Hermitian Tube type case, then $W(\Theta) \cong W_{\mathsf{A}_1}$.
    \item If $(G,\Theta)$ is in $\mathsf{SO}(p,q)$ case, then $W(\Theta) \cong  W_{\mathsf{B}_{p-1}}$.
    \item If $(G,\Theta)$ is \refchange{120}{\changed{locally isomorphic to the real forms of $\mathsf{F}_{4}, \mathsf{E}_6,\mathsf{E}_7$ or $\mathsf{E}_8$ whose restricted root system is of type $\mathsf{F}_{4}$ and $\Delta \not = \Theta$}}, then $W(\Theta) \cong W_{\mathsf{G}_2}$.
\end{enumerate}
Where $W_{\mathsf{A}_1}$, $W_{\mathsf{B}_{p-1}}$, $W_{\mathsf{G}_2}$ are the Weyl groups of the root systems $\mathsf{A}_1$,  $\mathsf{B}_{p-1}$ and $\mathsf{G}_{2}$ \refchange{121}{\changed{respectively}}. Under this identification, let \refchange{122}{\changed{$\omega_{\Theta}^{0}$}} be the longest element in $W(\Theta)$.

\subsubsection{$U_{\Theta}^{\ge 0}$, $U_{\Theta}^{>0}$ and Parameterization} \label{sssec14}
Now we define the non-negative semigroup $U_{\Theta}^{\ge 0}$ and positive semigroup $U_{\Theta}^{>0}$.

\begin{definition}\label{def7}
    Let $U_{\Theta}^{\ge 0}$ be the semigroup generated by \refchange{123}{\changed{$\{\exp v\mid v\in c_{\beta},~\beta\in \Theta\}$}}, and $U_{\Theta}^{>0}$ be the interior of $U_{\Theta}^{\ge 0}$ (with respect to the topology of $U_{\Theta}$).

    \refchange{124}{\changed{Similarly, let $U_{\Theta}^{opp,\ge 0}$ be the semigroup generated by $\{\exp v\mid v\in c_{\beta}^{opp},~\beta\in \Theta\}$, and $U_{\Theta}^{opp,>0}$ be the interior of $U_{\Theta}^{opp,\ge 0}$ (with respect to the topology of $U_{\Theta}^{opp}$).}}
\end{definition}
\changed{It should be noted that, before defining $U_{\Theta}^{\ge 0}$ and $U_{\Theta}^{opp,\ge 0}$, there is an implicit choice between $c_{\beta}$ and $-c_{\beta}$ (since both are preserved by $L_{\Theta}^{\circ}$). Different choices lead to different identifications of the nonnegative semigroups, but all are conjugate under $Aut_1(\mathfrak{g})$ (see Guichard-Wienhard~\cite[Corollary~5.3]{GW2}).} We can check that $U_{\Theta}^{>0},U_{\Theta}^{opp,>0}$ are \refchange{123}{\changed{semigroups without identity}} too. \changed{They also depend on the choice between $c_{\beta}$ and $-c_{\beta}$. }

Label the simple roots in $\Theta$ by $\beta_i$ for $i = 1,\dots,|\Theta|$. Let $l$ be the smallest number such that there exists a reduced expression $\gamma = (i_1,\dots\refchange{125}{\changed{,}} i_l)$ for $\omega_{\Theta}^{0}$ satisfying
$$
\omega_{\Theta}^{0} = \sigma_{\beta_{i_1}}\refchange[\baselineskip]{126}{\changed{\cdots}}\sigma_{\beta_{i_l}}.
$$
Define 
$$
F_{\gamma}: \refchange{127}{\changed{\mathring{c}_{\beta_{i_1}}\times \cdots\times \mathring{c}_{\beta_{i_l}}}} \to U_{\Theta}^{\ge 0}
$$

$$
(v_{i_1},\cdots, v_{i_l}) \mapsto \exp v_{i_1}\cdots\exp v_{i_l}
$$
and similarly define
$$
F_{\gamma}^{opp}: \mathring{c}_{\beta_{i_1}}^{opp}\times \cdots\times \mathring{c}_{\beta_{i_l}}^{opp} \to U_{\Theta}^{opp,\ge 0}
$$
$$
(v_{i_1},..., v_{i_l}) \mapsto \exp v_{i_1}\cdots\exp v_{i_l}.
$$

We have:
\begin{theorem}{\refchange{128}{\changed{\cite[Theorem 10.1]{GW2}.}}}\label{thm9}

For every reduced expression $\gamma$, $F_{\gamma}$ is a diffeomorphism from $\mathring{c}_{\beta_{i_1}}\times \cdots \times \mathring{c}_{\beta_{i_l}}$ to $U_{\Theta}^{>0}$. Similarly $F_{\gamma}^{opp}$ is a diffeomorphism from $\mathring{c}_{\beta_{i_1}}^{opp}\times \cdots\times \mathring{c}_{\beta_{i_l}}^{opp}$ to $U_{\Theta}^{opp,> 0}$.   
\end{theorem}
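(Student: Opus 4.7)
Since this theorem is quoted directly from Guichard--Wienhard \cite{GW2}, the plan is to outline the key steps that such a proof must accomplish. The map $F_\gamma$ is smooth by construction, so it suffices to establish: (i) a dimension count equating the domain to $\dim U_\Theta$; (ii) injectivity; (iii) image in the interior $U_\Theta^{>0}$; and (iv) surjectivity onto $U_\Theta^{>0}$.

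For the dimension count, I would argue case by case using the classification in Theorem \ref{thm7}. In each case, the length $l$ of $\omega_\Theta^0$ in $W(\Theta)$ together with the multiplicities of the weight spaces $\mathfrak{u}_{\beta_{i_j}}$ visited along the reduced expression should exhaust the $\mathfrak{t}_\Theta$-weight decomposition of $\mathfrak{u}_\Theta$ from Theorem \ref{thm6}. Equivalently, the weights in $P(\mathfrak{u}_\Theta,\mathfrak{t}_\Theta)$ must correspond bijectively to the letters of any reduced word for $\omega_\Theta^0$, weighted by their multiplicities; one checks this explicitly for split, Hermitian tube, $\mathsf{SO}(p,q)$, and $\mathsf{F}_4$ type.

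For injectivity and the local diffeomorphism property, I would proceed by induction on $l$. Given two parameterizations
\[
\exp v_{i_1}\cdots \exp v_{i_l} \;=\; \exp v'_{i_1}\cdots \exp v'_{i_l},
\]
one extracts $v_{i_1}$ by projecting onto the appropriate $L_\Theta^\circ$-submodule of $\mathfrak{u}_\Theta$ afforded by Theorem \ref{thm6}(1) and then iterates. The key algebraic ingredient---and the main obstacle---is the positive analogue of Lusztig's braid relations: for every elementary braid move in $W(\Theta)$, one must produce a smooth change-of-variables between the cones on either side of the move, preserving $Ad_{L_\Theta^\circ}$-invariance. In the non-split cases, where the cone $c_{\alpha_\Theta}$ is higher-dimensional, these become nontrivial identities in the Jordan algebra associated to the Hermitian tube subgroup $S_{\Theta-\alpha_\Theta}$ (Theorem \ref{thm8}); verifying them, especially for the mixed commutation of $\sigma_{\alpha_\Theta}$ with an adjacent real-type reflection $\sigma_\beta$, is the technical heart of the argument.

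Once the braid identities are in hand, independence of the reduced expression follows from the connectedness of the graph of reduced expressions under braid moves, and hence $F_\gamma$ is a local diffeomorphism onto an open subset of $U_\Theta^{\ge 0}$ that is independent of $\gamma$. By the dimension count this open subset is full-dimensional, so it coincides with the interior $U_\Theta^{>0}$; moreover every generator $\exp v$ with $v\in c_\beta$ is approximated by positive products in the image, so no boundary points are missed. Finally, the argument for $F_\gamma^{opp}$ is verbatim, replacing each cone $\mathring{c}_\beta$ by $\mathring{c}_\beta^{opp}\subset \mathfrak{u}_\beta^{opp}$ and using the opposite nilpotent decomposition, since the opposite structure is obtained by applying a Cartan involution and the classification is symmetric in this sense.
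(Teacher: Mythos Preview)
The paper does not prove this theorem at all; it is imported verbatim from \cite[Theorem 10.1]{GW2} with no argument supplied. Your proposal correctly flags this and offers a plausible high-level sketch of what the Guichard--Wienhard proof must contain, with the positive braid identities for $W(\Theta)$ rightly identified as the technical core.

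Since there is nothing in the present paper to compare against, let me only note two places where your outline would need tightening if it were to stand on its own. First, the surjectivity step (iv) as written is circular: saying that generators $\exp v$ with $v\in c_\beta$ are ``approximated by positive products in the image'' does not by itself show that every interior point of $U_\Theta^{\ge 0}$ is hit; one actually needs that the image of $F_\gamma$ is both open (from the local diffeomorphism property) and closed in $U_\Theta^{>0}$ (or an explicit inverse construction), and the latter is where the braid relations do real work. Second, the injectivity sketch via ``projecting onto the appropriate $L_\Theta^\circ$-submodule'' is too vague in the non-split cases: the factors $\mathfrak{u}_{\beta_{i_j}}$ for repeated letters $\beta_{i_j}$ are the same submodule, so a naive projection cannot separate them; the actual argument in \cite{GW2} proceeds via an inductive structure on subexpressions and the explicit Jordan-algebraic form of the braid moves. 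These are refinements rather than fatal gaps, and your outline is already more than the paper itself provides.
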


Fix a reduced expression $\gamma$ for $\omega_{\Theta}^{0}$ and define $\mathring{c}_{\gamma} := \mathring{c}_{\beta_{i_1}}\times \cdots \times \mathring{c}_{\beta_{i_l}}$. For any $x\in U_{\Theta}^{>0}$, \refchange{129}{\changed{denote by}} $v^{\gamma} _{x} = F_{\gamma}^{-1}(x)\in \mathring{c}_{\gamma}$ the \emph{cone coordinate} of $x$. In the remaining discussion, we will always assume that a reduced expression has been fixed implicitly, unless stated otherwise.
When there is no ambiguity, we omit the $\gamma$ and just call $v_x\in \mathring{c}_{\gamma}$ the cone coordinate of $x$. For any $v\in \mathring{c}_{\gamma}$, $i_k\in \gamma$, denote by $v_{i_k}$ the $\mathring{c}_{\beta_{i_k}}$ factor. Similarly define $\mathring{c}_{\gamma}^{opp} := \mathring{c}_{\beta_{i_1}}^{opp}\times \cdots\times \mathring{c}_{\beta_{i_l}}^{opp}$. For any $x\in U_{\Theta}^{opp,>0}$, denote $v_x^{opp} = {F_{\gamma}^{opp}}^{-1}(x)$ and denote $v_{i_k}^{opp}$ the $\mathring{c}_{\beta_{i_k}}^{opp}$ coordinate.

Note that $\mathring{c}_{\gamma}$ and $\mathring{c}_{\gamma}^{opp}$ are both $Ad_{L_{\Theta}^{\circ}}$-invariant, and we can verify that for any $g\in L_{\Theta}^{\circ}$, $x\in U_{\Theta}^{>0}$, $x'\in U_{\Theta}^{>0, opp}$, \refchange{130}{\changed{we have}} $v_{gxg^{-1}} = Ad_g (v_x)$ and $v_{gx'g^{-1}}^{opp} = Ad_g(v_{x'}^{opp})$.

\subsubsection{Tangent Cone} \label{sssec15}

Define the \emph{open tangent cone} of $U_{\Theta}^{>0}$ by $T\mathring{c} := \refchange{131}{\changed{\bigoplus}}_{\beta \in \Theta} \mathring{c}_{\beta}\subset \mathfrak{u}_{\Theta}$ and let $Tc$ be its closure in $\mathfrak{u}_{\Theta}$. Similarly define $T\mathring{c}^{opp},Tc^{opp}\subset \mathfrak{u}_{\Theta}^{opp}$. Although $U_{\Theta}^{>0}$ is not a smooth sub-manifold of $G$, we show $T\mathring{c}$ still serves as a linear approximation to $U_{\Theta}^{>0}$ near $id$.

 For any $\beta\in \Theta$, define $\pi_{\beta}:\mathring{c}_{\gamma}\to \mathring{c}_{\beta}\subset \mathfrak{u}_{\beta}$ by
    $$
    v  \refchange{133}{ \changed{\mapsto} } \sum_{k, \beta_{i_k} = \beta} v_{i_k}
    $$
\refchange{135}{\changed{where $v_{i_k}$ is regarded as an element of $\mathring{c}_{\beta}\subset \mathfrak{u}_{\beta}$, and $\sum$ denotes addition in $\mathfrak{u}_{\beta}$.}}

\refchange{132}{\changed{One can show that for any reduced expression $\gamma$ and any $\beta \in \Theta$, the root $\beta$ occurs in $\gamma$. Otherwise, if some $\beta \in \Theta$ does not occur in $\gamma$, then Theorem~\ref{thm9} implies that $U_{\Theta}^{>0}\,\mathfrak{p}_{\beta}^{opp} = \mathfrak{p}_{\beta}^{opp}$. However, $U_{\Theta}^{>0}\,\mathfrak{p}_{\beta}^{opp}$ must be an open set in \refchangenew{132}{\changednew{$\mathcal{F}_{\beta}^{opp}$}} (as $U_{\Theta}^{>0}\,\mathfrak{p}_{\Theta}^{opp}$ is an open set in $\mathcal{F}_{\Theta}^{opp}$}, see Guichard-Wienhard~\cite[Proposition~13.1]{GW2}), which is a contradiction. Hence $\pi_{\beta}$ is always a nonzero map.}}

Also define \refchange{134}{\changed{$\pi_T: \mathring{c}_{\gamma}\to T\mathring{c}$}} by
    $$
     v\mapsto \sum_{\beta\in\Theta} \pi_{\beta}(v)
    $$
\refchange{136}{\changed{where $\pi_{\beta}(v) \in \mathring{c}_{\beta}\subset \mathfrak{u}_{\beta}\subset \mathfrak{u}_{\Theta}$, and $\sum$ denotes addition in $\mathfrak{u}_{\Theta}$.}} Similarly define $\pi_{\beta}^{opp}: \mathring{c}_{\gamma}^{opp}\to \mathring{c}_{\beta}^{opp}\subset \mathfrak{u}_{\beta}^{opp}$ and $\pi_T^{opp}: \mathring{c}_{\gamma}^{opp}\to T\mathring{c}^{opp}$.

Recall that $\mathfrak{u}_{\Theta}$ is a graded nilpotent Lie algebra with the grade-one factor $\sum_{\beta\in \Theta} \mathfrak{u}_{\beta}$. Since $\exp:\mathfrak{u}_{\Theta}\to U_{\Theta}$ is a diffeomorphism, we can define its inverse $\log: U_{\Theta}\to \mathfrak{u}_{\Theta}$ globally. 

\refchange{137}{\changed{Here are some properties of the tangent cone.}}

\begin{theorem}{\refchange[\baselineskip]{137}{\changed{\cite[Proposition 5.1, \refchange{139}{Section~10.8}: Lemma 10.21,  Proposition 10.22, Theorem 10.24]{GW2}.}}}\label{thm10}
    
    Fix norms on $\mathfrak{u}_{\Theta}$ and $\mathring{c}_{\gamma}$, then
    \begin{enumerate}
        \item The diagonal action of $L_{\Theta}^{\circ}$ acts transitively and properly on $T\mathring{c}$, and the stabilizer of any point in $Tc - T\mathring{c}$ is non-compact.
        
        \item For each compact subset $S\subset G$, there is a constant $C$  depending only on $S$, such that for any $x\in S\cap U_{\Theta}^{>0} $ and any $\beta \in \Theta$, we have
        \begin{enumerate}
            \item \changed{$\|\tilde{\pi}_{\beta}(\log x) - \pi_\beta(v_x) \|\le C \|v_x\|\cdot \| \pi_{\beta}(v_x) \|$ where $\tilde{\pi}_{\beta}:\mathfrak{u}_{\Theta}\to \mathfrak{u}_{\{\beta\}}$ is the linear projection.}
            \item $\|\log x - \pi_T(v_x) \|\le C \| v_x \|^2$.
        \end{enumerate}

        \item If $\gamma: [0,1]\to U_{\Theta}^{\ge 0}\subset G$ is a $C^1$ map and $\gamma(0) = id$, then $\gamma'(0)\in Tc$.

        \item For any $X\in T\mathring{c}$, $\exp(X)\in U_{\Theta}^{>0}$.

    \end{enumerate}
    Similar results hold for $U_{\Theta}^{opp,>0}$, $T\mathring{c}^{opp}$ and $Tc^{opp}$.
\end{theorem}

{\color{blue}{Although only item~$(b)$ was stated in the reference for Theorem~\ref{thm10}~(2), the degree argument in the reference indeed implies the more refined item~$(a)$. We explain it briefly here:

For any $x\in U_{\Theta}^{>0}$ and any $\beta\in \Theta$, the Baker–Campbell–Hausdorff formula shows that
\begin{align*}
\log x &= \sum_{1\le k\le l} (v_x)_{i_k}+\sum \text{iterated Lie brackets of } (v_{x})_{i_k} \\
&= \sum_{\substack{1\le k\le l \\ \beta_{i_k} = \beta}} (v_x)_{i_k}
   + \sum_{\substack{1\le k\le l \\ \beta_{i_k} \not = \beta}} (v_x)_{i_k} \\
&\quad + \sum \text{all the iterated Lie brackets of } (v_{x})_{i_k} \text{ involving some $(v_{x})_{i_k}, \beta_{i_k} = \beta$} \\
&\quad + \sum \text{all the iterated Lie brackets of } (v_{x})_{i_k} \text{ involving none of $(v_{x})_{i_k}, \beta_{i_k} = \beta$}.
\end{align*}

From Theorem~\ref{thm6}, $\mathfrak{u}_{\Theta}$ is a graded Lie algebra whose grade-one components are $\mathfrak{u}_{\beta},\beta\in \Theta$. We can check that
\[
\pi_{\beta}(v_x) = \sum_{\substack{1\le k\le l \\ \beta_{i_k} = \beta}} (v_x)_{i_k}
\]
and
\[
\tilde{\pi}_{\beta}(\log x) - \pi_\beta(v_x) = \sum \text{all the iterated Lie brackets of } (v_{x})_{i_k} \text{ involving some $(v_{x})_{i_k}, \beta_{i_k} = \beta$}.
\]

Let $C(x) = \max_{1\le k \le l} \|v_{i_k}\|$. Note that the Lie bracket is bilinear, which means there exists a constant $C'$ such that $\|[X,Y]\| \le C' \|X\| \|Y\|$ for any $X,Y\in \mathfrak{u}_{\Theta}$. Also, note that since $\mathfrak{u}_{\Theta}$ is nilpotent, terms involving sufficiently many Lie brackets vanish. Then, as long as $x$ varies in a compact subset $S$, all the components of $v_x$ are uniformly bounded; and also note that all the Lie brackets involve some $(v_{x})_{i_k}, \beta_{i_k} = \beta$, so there exists a constant $C_0$ (depending on $S$) such that
\[
\| \tilde{\pi}_{\beta}(\log x) - \pi_\beta(v_x)\|\le C_0 C(x)(\sum_{1\le k\le l,\beta_{i_k} =\beta}\left\| v_{i_k}\right\|).
\]
Note that there exists a constant $C_1$ such that $C(x) \le C_1 \| v_x\|$ always holds. And since in the above summation, $v_{i_k}\in \mathring{c}_{\beta}$ for all relevant $k$, Lemma~\ref{lem15} shows that there exists $C_2$ such that
\[
\sum_{1\le k\le l,\beta_{i_k} =\beta}\left\| v_{i_k}\right\|\le C_2 \left\| \sum_{1\le k\le l,\beta_{i_k} = \beta} v_{i_k}\right\| = C_2 \| \pi_{\beta}(v_x)\|.
\]
We have thus arrived at the desired inequality.
}}

\vspace{3mm}

\changednew{We continue to view $\mathfrak{u}_{\Theta}$ as a graded Lie algebra, as in Theorem~\ref{thm6}, and prove the following lemma:}

\begin{lemma}[Formal Group Law]\label{lem8} 
If $x,y \in U_{\Theta}^{>0}$, then we have $\pi_{T}(v_{xy}) = \pi_{T}(v_x) + \pi_{T}(v_y)$. 

Moreover, for all $\beta \in \Theta$, we have
\[
\pi_{\beta}(v_{xy}) = \pi_{\beta}(v_x) + \pi_{\beta}(v_y).
\]

Analogous statements hold for $\pi_T^{opp}$ and $\pi_{\beta}^{opp}$.
\end{lemma}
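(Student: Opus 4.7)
The plan is to reduce the lemma to the single key fact stated just before it: for every $z \in U_\Theta^{>0}$, the vector $\pi_1(v_z) \in \sum_{\beta \in \Theta}\mathfrak{u}_\beta$ equals the grade-one component of $\log z$ in the graded nilpotent Lie algebra $\mathfrak{u}_\Theta$. Once that identification is in hand, the additivity of $\pi_1$ under the group law becomes a completely formal statement about the Baker--Campbell--Hausdorff series, which we extract via the graded structure.

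More precisely, I would proceed as follows. Write $\log x = X_1 + X_{\ge 2}$ and $\log y = Y_1 + Y_{\ge 2}$ where the subscripts denote the grade decomposition of $\mathfrak{u}_\Theta$; by the observation recalled above we have $X_1 = \pi_1(v_x)$ and $Y_1 = \pi_1(v_y)$. The BCH formula gives
\[
\log(xy) \;=\; \log x + \log y \;+\; \tfrac{1}{2}[\log x,\log y] \;+\; \cdots,
\]
where every term after the first is an iterated Lie bracket. Since Theorem \ref{thm6}(2) tells us that $[\mathfrak{u}_\lambda,\mathfrak{u}_{\lambda'}] \subset \mathfrak{u}_{\lambda+\lambda'}$, each such bracket lies in grade at least $2$, and hence contributes nothing to the grade-one component. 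Therefore the grade-one part of $\log(xy)$ is simply $X_1 + Y_1$. Applying the identification $\pi_1(v_{xy}) = $ grade-one part of $\log(xy)$ once more yields $\pi_1(v_{xy}) = \pi_1(v_x) + \pi_1(v_y)$.

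For the second assertion, recall that $\pi_1(v) = \sum_{\beta\in\Theta}\pi_\beta(v)$ with $\pi_\beta(v)\in \mathfrak{u}_\beta$, and that the grade-one part $\sum_{\beta\in\Theta}\mathfrak{u}_\beta$ is an internal direct sum. Consequently, $\pi_\beta$ is nothing but the composition of $\pi_1$ with the projection onto the $\mathfrak{u}_\beta$ summand, so additivity of $\pi_1$ under multiplication immediately implies additivity of each $\pi_\beta$. The opposite statement for $\pi_1^{opp}$ and $\pi_\beta^{opp}$ is proved by the identical argument, replacing $\mathfrak{u}_\Theta$ with $\mathfrak{u}_\Theta^{opp}$ (which is graded nilpotent in the same way) and $F_\gamma$ with $F_\gamma^{opp}$.

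The only step that is not entirely mechanical is justifying the boxed observation "$\pi_1(v_z)$ = grade-one part of $\log z$" rigorously; the excerpt sketches this via iterated application of BCH to the factorization $z = \exp v_{i_1}\cdots\exp v_{i_l}$ with $v_{i_k}\in\mathfrak{u}_{\beta_{i_k}}$ grade-one. This is essentially the same grading argument as above (each nested bracket lives in grade $\ge 2$), so no additional difficulty arises. I do not expect any genuine obstacle: the lemma is a formal consequence of the graded structure of $\mathfrak{u}_\Theta$ together with the parameterization theorem \ref{thm9}.
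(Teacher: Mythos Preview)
Your proposal is correct and follows essentially the same approach as the paper: the paper's argument (given in the paragraph immediately preceding the lemma) establishes via BCH that $\pi_1(v_z)$ is precisely the grade-one component of $\log z$, and then the additivity under multiplication is the same formal consequence of the graded structure that you spell out. Your treatment of the $\pi_\beta$ part via the direct sum projection and of the opposite case is also in line with what the paper intends.
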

\begin{proof}
Still using the Baker–Campbell–Hausdorff formula, we can see that for any $X,Y\in \mathfrak{u}_{\Theta}$, $\refchange{140}{\changed{\log (\exp X \exp Y)}} = (X+Y) +e(X,Y)$, where $e(X,Y)$ consists of higher order Lie brackets. So if $X_1,Y_1$ are the grade one factors of $X,Y$ in $\mathfrak{u}_{\Theta}$, then $X_1+Y_1$ is the grade one factor of $\refchange{140}{\changed{\log (\exp X \exp Y)}} $ in $\mathfrak{u}_{\Theta}$ (which is denoted by $(\log \exp X \exp Y)_1$). 

\refchange[\baselineskip]{141}{\changed{Applying $\big(\log (\exp X \exp Y)\big)_1 = X_1+Y_1$ iteratively, we see that for any $x \in U_{\Theta}^{>0}$, $\pi_T(v_x)$ is the grade-one component of $\log x$, and we have proved that $\pi_{T}(v_{xy}) = \pi_{T}(v_x) + \pi_{T}(v_y)$.}}

\refchangenew{142}{\changednew{The ``moreover'' statement follows from the observations that $\pi_T(v_x) \in T\mathring{c} := \bigoplus_{\beta \in \Theta} \mathring{c}_{\beta}$ and that $\pi_{\beta}(v_x)$ is the $\mathring{c}_{\beta}$-component of $\pi_{T}(v_x)$.}}

\end{proof}

\subsubsection{Positive maps} \label{sssec16}
The classification of $\Theta$-positive structures shows that $\Theta\subset \Delta$ is symmetric under any possible involutions of the Dynkin diagram. As a result, if $\dot{\omega}_0\in Inn(\mathfrak{g})$ is the lifting of the longest element $\omega_0$ in the Weyl group $W$, then $\mathfrak{p}_{\Theta}$ is conjugated to $\mathfrak{p}_{\Theta}^{opp}$ under $\dot{\omega}_0$. So we can identify $\mathcal{F}_{\Theta}$ with $\mathcal{F}_{\Theta}^{opp}$.

\refchange{143}{\changed{
The original definition of positive tuples in Guichard–Wienhard~\cite[Definition~13.23]{GW2} is stated using the language of diamonds. We adopt an equivalent definition based on their lemma.}}

\changed{
\begin{definition}{\refchange{143}{\cite[Lemma 13.25]{GW2}}}\label{def9}
 \refchange{144, 149}{An $n$-tuple} $(f_1,f_2,...,f_n)$ in $\mathcal{F}_{\Theta}$ is \emph{positive} if there exists $g\in Aut_1(\mathfrak{g}), u_1,u_2,...u_{n-2}\in U_{\Theta}^{>0}$, such that: 
$$
(f_1,...,f_n) = g (\mathfrak{p}_{\Theta},u_1u_2\cdots u_{n-2}\mathfrak{p}_{\Theta}^{opp}\refchange{145}{,...,}u_1u_2\mathfrak{p}_{\Theta}^{opp}, u_1\mathfrak{p}_{\Theta}^{opp}, \mathfrak{p}_{\Theta}^{opp}).
$$
\end{definition}
}

\begin{remark} \label{rem12}
\refchange{146}{\changed{When $G = \mathsf{PSL}(d,\mathbb{R})$ and $\Theta = \Delta$, the semigroup $U_{\Delta}^{>0}$ coincides with the set of totally positive upper triangular matrices, and there has been a substantial body of work on this topic since the early 20th century.}}

    More specially, when $G = \mathsf{PSL}(2,\mathbb{R})$, $\mathcal{F}_{\Delta}$ is $S^1$, and \refchange{144}{\changed{an $n-$tuple}} in $\mathcal{F}_{\Delta}$ is positive if and only if it is cyclically ordered along $S^1$. As a result, any pairwise distinct triple in $S^1$ is positive.
\end{remark}

Guichard-Wienhard~\cite{GW2} proved some useful properties about positive tuples, we list some of them here for future use.

\begin{theorem}{\refchange{147}{\changed{\cite[Proposition 13.1, Proposition 13.15, Proposition 13.19, Lemma 13.24]{GW2}.}}}\label{thm11}
    Suppose $G$ has $\Theta$-positive structure, then

    \begin{enumerate}
        \item Positive $n-$tuples in $\mathcal{F}_{\Theta}$ are invariant under action of $Aut_1(\mathfrak{g})$.
        \item Positive $n-$tuples are invariant under the action of $D_{2n}$, i.e., invariant under cyclic permutations and under the involution $i\leftrightarrow n-i+1$. As a result, positive triples in $\mathcal{F}_{\Theta}$ are invariant under permutations of $S_3$, and positive quadruples in $\mathcal{F}_{\Theta}$ are invariant under permutations of \refchange{148}{\changed{$D_4=\langle(1,2,3,4),(1,4)(2,3)\rangle\subset S_4$.}}
        \item The group $G$ (so is $Aut_1(\mathfrak{g})$) acts properly on the space of positive triples in $\mathcal{F}_{\Theta}$. 
        \item $(f_1,f_2,f_3)$ is positive in $\mathcal{F}_{\Theta}$ iff it is in the $Aut_1(\mathfrak{g})$ orbit of $(\mathfrak{p}_{\Theta},U_{\Theta}^{>0} \mathfrak{p}_{\Theta}^{opp},\mathfrak{p}_{\Theta}^{opp})$, iff it is in the $Aut_1(\mathfrak{g})$ orbit of $(\mathfrak{p}_{\Theta},(U_{\Theta}^{>0})^{-1} \mathfrak{p}_{\Theta}^{opp},\mathfrak{p}_{\Theta}^{opp})$. 
        \item $(f_1,f_2,f_3,f_4)$ is positive in $\mathcal{F}_{\Theta}$ iff there exists $g\in Aut_1(\mathfrak{g}), u,v\in U_{\Theta}^{>0}$, such that $(f_1,f_2,f_3, f_4) = g(\mathfrak{p}_{\Theta},u\mathfrak{p}_{\Theta}^{opp},\mathfrak{p}_{\Theta}^{opp},v^{-1}\mathfrak{p}_{\Theta}^{opp})$, iff there exists $g\in Aut_1(\mathfrak{g}), u,v\in U_{\Theta}^{opp,>0}$, such that \refchange{149}{\changed{$(f_1,f_2,f_3, f_4) = g(\mathfrak{p}_{\Theta},u\mathfrak{p}_{\Theta},uv\mathfrak{p}_{\Theta},\mathfrak{p}_{\Theta}^{opp})$.}}
        \item The set of positive triples in $\mathcal{F}_{\Theta}$ is a union of connected components in the space of pairwise transverse triples, and thus open in $(\mathcal{F}_{\Theta})^3$.
        \item Let $\mathcal{TF}\subset \mathcal{F}_{\Theta}$ be the subset of flags simultaneously transverse to $\mathfrak{p}_{\Theta}$ and $\mathfrak{p}_{\Theta}^{opp}$. The map $U_{\Theta}^{>0}\to \mathcal{F}_{\Theta}$, \refchange{150}{\changed{$u\mapsto u\mathfrak{p}_{\Theta}^{opp}$}} is a diffeomorphism onto a connected component of $\mathcal{TF}$. Similarly $U_{\Theta}^{opp,>0}\to \mathcal{F}_{\Theta}$, \refchange[\baselineskip]{150}{\changed{$u\mapsto u\mathfrak{p}_{\Theta}$}} is also a diffeomorphism onto a connected component of $\mathcal{TF}$ and $U_{\Theta}^{>0}\mathfrak{p}_{\Theta}^{opp} = U_{\Theta}^{opp,>0}\mathfrak{p}_{\Theta}$.
    \end{enumerate}
\end{theorem}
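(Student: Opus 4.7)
The plan is to prove the seven items of Theorem \ref{thm11} in a specific order that allows each to bootstrap the next: I would start with the easy bookkeeping (1), then the diffeomorphism (7), then use these to get the normal-form description of triples and quadruples (4), (5), deduce properness and openness (3), (6), and leave the cyclic/dihedral symmetry (2) for last as the hard part. Throughout I would exploit that $\Theta$-positivity is defined using the data $(c_{\beta})_{\beta\in\Theta}$, which by construction is preserved under $Aut_1(\mathfrak{g})$, and that $U_{\Theta}^{>0}$ and $U_{\Theta}^{opp,>0}$ are semigroups parameterized by Theorem \ref{thm9}.

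Item (1) is immediate from Definition \ref{def9}: if $(f_1,\dots,f_n)=g\cdot \mathcal{C}$ with $g\in Aut_1(\mathfrak{g})$ and $\mathcal{C}$ the standard configuration, then for any $h\in Aut_1(\mathfrak{g})$ we have $h(f_1,\dots,f_n)=(hg)\cdot\mathcal{C}$. For (7), I note that the Bruhat decomposition $G=\bigsqcup U_{\Theta}w P_{\Theta}^{opp}$ shows that the big cell $U_{\Theta}\mathfrak{p}_{\Theta}^{opp}\subset \mathcal{F}_{\Theta}$ is the open subset of flags transverse to $\mathfrak{p}_{\Theta}^{opp}$, with the orbit map $U_{\Theta}\to U_{\Theta}\mathfrak{p}_{\Theta}^{opp}$ a diffeomorphism because $U_{\Theta}\cap P_{\Theta}^{opp}=\{e\}$ by Proposition \ref{Levi}. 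Restricting to the open semigroup $U_{\Theta}^{>0}\subset U_{\Theta}$ and combining with Theorem \ref{thm9} gives the claim; the $U_{\Theta}^{>0}\mathfrak{p}_{\Theta}^{opp}=U_{\Theta}^{opp,>0}\mathfrak{p}_{\Theta}$ identification then becomes an instance of item (4) applied to a triple containing $\mathfrak{p}_{\Theta}$ and $\mathfrak{p}_{\Theta}^{opp}$.

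To get (4), I would observe that $Aut_1(\mathfrak{g})$ acts transitively on pairs of transverse flags (again Bruhat), so any positive triple $(f_1,f_2,f_3)$ can be normalized with $f_1=\mathfrak{p}_{\Theta}$, $f_3=\mathfrak{p}_{\Theta}^{opp}$; then the definition forces the middle flag to lie in $U_{\Theta}^{>0}\mathfrak{p}_{\Theta}^{opp}$, giving the desired orbit form. The second equivalence in (4) uses (7) to rewrite this orbit. Item (5) follows by normalizing three of the four flags to $(\mathfrak{p}_{\Theta},u\mathfrak{p}_{\Theta}^{opp},\mathfrak{p}_{\Theta}^{opp})$ via (4) and unpacking what Definition \ref{def9} forces on the fourth flag through another positive semigroup element. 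Properness in (3) reduces, using (4), to showing that the stabilizer in $Aut_1(\mathfrak{g})$ of a standard positive triple is compact: it is contained in $L_{\Theta}$ and must fix a point of $T\mathring{c}$ up to the formal-group-law argument, so compactness follows from Theorem \ref{thm10}(1). Openness (6) is then a direct consequence of (4) together with the openness of $U_{\Theta}^{>0}$ in $U_{\Theta}$.

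The main obstacle is (2), especially cyclic invariance, since the definition is blatantly asymmetric: $f_1$ and $f_n$ play distinguished roles. The involution $i\leftrightarrow n-i+1$ I would handle by applying the representative $\dot{\omega}_{0}$ of the longest Weyl element, which lies in $Aut_1(\mathfrak{g})$ by the symmetry of $\Theta$ under Dynkin involutions, swaps $\mathfrak{p}_{\Theta}$ with $\mathfrak{p}_{\Theta}^{opp}$, and intertwines $U_{\Theta}^{>0}$ with $U_{\Theta}^{opp,>0}$; combined with the reversal of the product $u_1u_2\cdots u_{n-2}$, this produces the required symmetry up to rewriting via (7). Cyclic invariance is the deep part: one has to show that if $(\mathfrak{p}_{\Theta}^{opp},u_1\mathfrak{p}_{\Theta}^{opp},\dots,\mathfrak{p}_{\Theta})$ is positive then the shift $(\mathfrak{p}_{\Theta},\mathfrak{p}_{\Theta}^{opp},u_1\mathfrak{p}_{\Theta}^{opp},\dots)$ is still positive. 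My strategy would be to construct, from a reduced expression for $w_{\Theta}^{0}$ and the parameterization in Theorem \ref{thm9}, an element of $Aut_1(\mathfrak{g})$ (a ``positive lift'' of the cyclic shift) sending the first normalized configuration to the shifted one; the key algebraic input should be a positivity-preserving braid relation in $U_{\Theta}^{>0}$, which conceptually mirrors the $\mathsf{PSL}(2,\mathbb{R})$ situation of Remark \ref{rem12} where cyclic ordering is manifestly invariant. I expect this braid/flip identity, rather than any of the linear-algebra steps, to be the real work.
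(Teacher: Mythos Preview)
The paper does not prove Theorem \ref{thm11}: it is stated as a compilation of results from Guichard--Wienhard and the reader is explicitly referred to \cite[Section 13]{GW2} for full details. So there is no ``paper's own proof'' to compare against; your outline is an attempted reconstruction of an external result that the paper treats as a black box.

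As to the outline itself, two remarks. First, there is a circularity in your ordering of (4) and (7): you prove (7) but defer the identity $U_{\Theta}^{>0}\mathfrak{p}_{\Theta}^{opp}=U_{\Theta}^{opp,>0}\mathfrak{p}_{\Theta}$ to (4), then in (4) you invoke (7) for the second equivalence. More seriously, your argument for the first equivalence in (4) is not quite right: Definition \ref{def9} puts $f_1$ at $\mathfrak{p}_{\Theta}^{opp}$ and $f_n$ at $\mathfrak{p}_{\Theta}$, whereas (4) puts $f_1$ at $\mathfrak{p}_{\Theta}$ and $f_3$ at $\mathfrak{p}_{\Theta}^{opp}$, so normalizing $(f_1,f_3)$ does not immediately force $f_2\in U_{\Theta}^{>0}\mathfrak{p}_{\Theta}^{opp}$---you still need to understand how $\dot\omega_0$ acts on $U_{\Theta}^{>0}\mathfrak{p}_{\Theta}^{opp}$, which is already part of the content of (2). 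Second, for (3) you only argue that point stabilizers are compact; properness of the action is a stronger statement and needs an additional argument (e.g.\ that the quotient is Hausdorff, or a direct estimate). Your identification of (2), and specifically cyclic invariance, as the substantive part is correct, and the braid-relation heuristic is indeed how Guichard--Wienhard proceed, but the actual verification in \cite{GW2} is considerably more involved than your sketch suggests.
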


\refchange{147}{\changed{More precisely, item~(1) follows directly from our Definition~\ref{def9}. Item~(2) is precisely \cite[Lemma~13.24, Proposition~13.15~(3) and Proposition~13.19~(8)]{GW2}. Item~(3) is precisely \cite[Proposition~13.15~(7)]{GW2}. Item~(4) is precisely \cite[Proposition~13.15~(4),(5)]{GW2}. Item~(5) is precisely \cite[Proposition~13.19~(2),(5)]{GW2}. Item~(6) is precisely \cite[Proposition~13.15~(6)]{GW2}. Item~(7) is precisely \cite[Proposition~13.1]{GW2}.
}}

\vspace{3mm}
Now let $\Lambda\subset S^1$ be a subset of $S^1$, we say a map from $\Lambda$ to $\mathcal{F}_{\Theta}$ is \emph{positive}, if it sends every positive $n-$tuple in $\Lambda$ to a positive $n-$tuple in $\mathcal{F}_{\Theta}$.  If $|\Lambda|\ge 4$, then a map from $\Lambda$ to $\mathcal{F}_{\Theta}$ is positive if and only if it sends positive quadruples to positive quadruples (see \cite[Lemma 13.30]{GW2}).

We provide a convergence lemma on parameterizing positive maps:

\begin{lemma}\label{posconvergence}
\refchange{151}{\changed{Let $\Lambda\subset S^1$ be a closed subset. Let $a_n,b_n\in \Lambda$ be converging sequences such that $a_n\to a,b_n\to b, a\not = b$. Pick a closed arc in $S^1$ joining $a$ and $b$ and denote it by $[a,b]_{arc}$ (with interior $(a,b)_{arc}$). \refchange{152}{\changed{Let $E\subset (a,b)_{arc}$ be a closed subset and let $\xi:\Lambda\to \mathcal{F}_{\Theta}$ be a continuous positive map.}}}}

There exists \refchange{153}{\changed{a sequence $g_n\in Aut_1(\mathfrak{g})$ which converges to $g\in Aut_1(\mathfrak{g})$}} and a sequence of continuous maps $x_n:E\to U_{\Theta}^{>0}$ such that for $n\gg1$ and any $p\in E$
$$
\xi(a_n,p,b_n) = g_n(\mathfrak{p}_{\Theta}^{opp}, x_n(p) \mathfrak{p}_{\Theta}^{opp},\mathfrak{p}_{\Theta}).
$$
Moreover, $x_n$ has the following properties: There exists a continuous map $x_{\infty}:E\to U_{\Theta}^{>0}$ such that $x_n\to x_{\infty}$ uniformly on $E$, and for any $p,q\in E$ such that $(a,p,q,b)$ is positive and any $k\in \refchange{154}{\changed{\mathbb{Z}^{>0}}}\cup \{\infty\}$, we have $x_k(p)^{-1}x_k(q)\in U_{\Theta}^{>0}$.
\end{lemma}

\begin{proof}
  Pick $p_0\in E$. For $n\refchange{155}{\changed{\gg}}1$, $(a_n,p_0,b_n)$ is a distinct triple. Note that $\xi(a_n),\xi(b_n)$ converges to the transverse pair $\xi(a),\xi(b)\in (\mathcal{F}_{\Theta})^2$, so there exists a a sequence $g_n\in Aut_1(\mathfrak{g})$ which converges to $g\in Aut_1(\mathfrak{g})$ such that $\xi(a_n,b_n) =g_n(\mathfrak{p}_{\Theta}^{opp},\mathfrak{p}_{\Theta})$ and $\xi(p_0)\in g_n U_{\Theta}^{>0}\mathfrak{p}_{\Theta}^{opp}$. \refchange{156}{\changed{It should be noted that the sequence $g_n \in Aut_1(\mathfrak{g})$ and the limit $g \in Aut_1(\mathfrak{g})$ may not be unique, and we are making a choice.}}
\changed{As $\xi(p_0)$ lies in the connected component $g_n U_{\Theta}^{>0}\mathfrak{p}_{\Theta}^{opp}$ of $g_n \mathcal{TF}$, as mentioned in Theorem~\ref{thm11}\,(7), it follows from \cite[Definition~13.2\,(4) and Definition~13.23]{GW2} that $\xi^{\alpha}(E)$ also lies in the same connected component}
, so for each $n$, there exists a unique continuous $x_n: E\to U_{\Theta}^{>0}$ such that for any $p\in E$,
    $$
    \xi(a_n,p,b_n) = g_n(\mathfrak{p}_{\Theta}^{opp},x_n(p)\mathfrak{p}_{\Theta}^{opp}, \mathfrak{p}_{\Theta}).
    $$
    Namely for any $p\in E$,
    $$
   g_n^{-1} \xi(p) =x_n(p)\mathfrak{p}_{\Theta}^{opp}.
    $$
As $a_n \to a$, $b_n \to b$, $g_n \to g$, and $E \subset (a,b)_{\mathrm{arc}}$ is closed, we have that for $n \gg 1$, the set $g_n^{-1}\xi(E)$ is contained in a compact subset of $\mathcal{TF}$. Hence $x_n \to x_{\infty}$ uniformly on $E$, where $x_{\infty} : E \to U_{\Theta}^{>0}$ is defined by the property that for any $p \in E$,
\[
g^{-1}\xi(p) = x_{\infty}(p)\,\mathfrak{p}_{\Theta}^{opp}.
\]
Moreover, since $\xi$ is positive, for any $p,q \in E$ such that $(a,p,q,b)$ is positive and any $k \in \mathbb{Z}^{>0} \cup \{\infty\}$, we have
\[
x_k(p)^{-1} x_k(q) \in U_{\Theta}^{>0}.
\]

\end{proof}

By Theorem~\ref{thm11}~(2) \refchange{157}{\changed{and~(7), we obtain the following equivalent statements of Lemma~\ref{posconvergence}, whose proofs are obtained by directly repeating the proof of Lemma~\ref{posconvergence}:}}

\begin{corollary}\label{posconv2}
Under the same assumption of Lemma \ref{posconvergence}:
\begin{enumerate}
    \item There exists a sequence $g_n\in Aut_1(\mathfrak{g})$ which converges to $g\in Aut_1(\mathfrak{g})$ and a sequence of continuous maps $x_n:E\to U_{\Theta}^{>0}$ such that for any $p\in E$
$$
\xi(a_n,p,b_n) = g_n(\mathfrak{p}_{\Theta}, x_n(p) \mathfrak{p}_{\Theta}^{opp},\mathfrak{p}_{\Theta}^{opp}).
$$
Moreover, $x_n$ has the following properties: There exists a continuous map $x_{\infty}:E\to U_{\Theta}^{>0}$ such that $x_n\to x_{\infty}$ uniformly on $E$, and for any $p,q\in E$ such that $(a,p,q,b)$ is positive and any $k\in \mathbb{Z}^{>0}\cup \{\infty\}$, we have $x_k(q)^{-1}x_k(p)\in U_{\Theta}^{>0}$. 
 
    \item There exists a sequence $g_n\in Aut_1(\mathfrak{g})$ which converges to $g\in Aut_1(\mathfrak{g})$ and a sequence of continuous maps $x_n:E\to U_{\Theta}^{opp,>0}$ such that for any $p\in E$
$$
\xi(a_n,p,b_n) = g_n(\mathfrak{p}_{\Theta}^{opp}, x_n(p) \mathfrak{p}_{\Theta},\mathfrak{p}_{\Theta}).
$$
Moreover, $x_n$ has the following properties: There exists a continuous map $x_{\infty}:E\to U_{\Theta}^{opp,>0}$ such that $x_n\to x_{\infty}$ uniformly on $E$, and for any $p,q\in E$ such that $(a,p,q,b)$ is positive and any $k\in \mathbb{Z}^{>0}\cup \{\infty\}$, we have $x_k(q)^{-1}x_k(p)\in U_{\Theta}^{opp, >0}$.

    \item There exists a sequence $g_n\in Aut_1(\mathfrak{g})$ which converges to $g\in Aut_1(\mathfrak{g})$ and a sequence of continuous maps $x_n:E\to U_{\Theta}^{opp,>0}$ such that for any $p\in E$
$$
\xi(a_n,p,b_n) = g_n(\mathfrak{p}_{\Theta}, x_n(p) \mathfrak{p}_{\Theta},\mathfrak{p}_{\Theta}^{opp}).
$$
Moreover, $x_n$ has the following properties: There exists a continuous map $x_{\infty}:E\to U_{\Theta}^{opp,>0}$ such that $x_n\to x_{\infty}$ uniformly on $E$, and for any $p,q\in E$ such that $(a,p,q,b)$ is positive and any $k\in \mathbb{Z}^{>0}\cup \{\infty\}$, we have $x_k(p)^{-1}x_k(q)\in U_{\Theta}^{opp, >0}$.

\end{enumerate}

\end{corollary}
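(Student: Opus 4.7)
The three statements of Corollary \ref{posconv2} are reformulations of Lemma \ref{posconvergence} obtained by applying the symmetries of $\Theta$-positive configurations collected in Theorem \ref{thm11}. Accordingly, my plan is to derive each variant by composing Lemma \ref{posconvergence} with an appropriate symmetry, then track how the positivity property on the parameterizing maps transforms.

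For variant (1), I would apply Lemma \ref{posconvergence} with the roles of the two boundary sequences swapped: take $b_n$ as the new ``$a$-sequence,'' $a_n$ as the new ``$b$-sequence,'' and work with the arc from $b$ to $a$ obtained by reversing the orientation of $[a,b]$; the closed set $E$ sits inside the open reversed arc. Since positive triples are $S_3$-invariant by Theorem \ref{thm11}(2), $\xi(b_n,p,a_n)$ remains a positive triple, so the lemma produces $g_n \to g$ in $Aut_1(\mathfrak{g})$ and a continuous $x_n : E \to U_\Theta^{>0}$ with $\xi(b_n,p,a_n) = g_n(\mathfrak{p}_\Theta^{opp}, x_n(p)\mathfrak{p}_\Theta^{opp}, \mathfrak{p}_\Theta)$. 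Reading this triple in the original order directly gives the form $\xi(a_n,p,b_n) = g_n(\mathfrak{p}_\Theta, x_n(p)\mathfrak{p}_\Theta^{opp}, \mathfrak{p}_\Theta^{opp})$ required by (1). The positivity property of the lemma states $x_k(p)^{-1}x_k(q) \in U_\Theta^{>0}$ whenever the 4-tuple is positive with respect to the arc direction used. Reversing the orientation swaps the cyclic order of $p$ and $q$, so ``$(a,p,q,b)$ positive'' in the original arc corresponds, after the swap, to the hypothesis needed to conclude $x_k(q)^{-1}x_k(p) \in U_\Theta^{>0}$, as asserted.

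For variants (2) and (3), the key input is Theorem \ref{thm11}(7), which identifies $U_\Theta^{>0}\mathfrak{p}_\Theta^{opp} = U_\Theta^{opp,>0}\mathfrak{p}_\Theta$ as a single connected component of $\mathcal{TF}$ and exhibits both $u \mapsto u\mathfrak{p}_\Theta^{opp}$ and $u \mapsto u\mathfrak{p}_\Theta$ as diffeomorphisms onto this component. Hence there is a homeomorphism $\phi: U_\Theta^{>0} \to U_\Theta^{opp,>0}$ with $x\mathfrak{p}_\Theta^{opp} = \phi(x)\mathfrak{p}_\Theta$. Postcomposing the $x_n$ produced by Lemma \ref{posconvergence} (for variant (2)) or by variant (1) above (for variant (3)) with $\phi$ yields the required maps $y_n : E \to U_\Theta^{opp,>0}$, together with the uniform limit $y_\infty := \phi \circ x_\infty$ and the continuous dependence on $p$.

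The step requiring the most care will be verifying the semigroup property $y_k(q)^{-1}y_k(p) \in U_\Theta^{opp,>0}$ (resp.\ $y_k(p)^{-1}y_k(q) \in U_\Theta^{opp,>0}$). The homeomorphism $\phi$ alone does not encode this, since $\phi$ is not a group homomorphism between $U_\Theta^{>0}$ and $U_\Theta^{opp,>0}$. The plan is to use the second normal form of Theorem \ref{thm11}(5): a positive quadruple $(f_1,f_2,f_3,f_4)$ can be written $g(\mathfrak{p}_\Theta^{opp},uv\mathfrak{p}_\Theta,u\mathfrak{p}_\Theta,\mathfrak{p}_\Theta)$ with $u,v \in U_\Theta^{opp,>0}$. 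Applying this directly to $\xi(a_n,p,q,b_n)$ (or to its reverse, via Theorem \ref{thm11}(2)) gives a normal-form expression in $U_\Theta^{opp,>0}$ for the pair of middle flags; by the uniqueness clause of Theorem \ref{thm9} together with Theorem \ref{thm11}(7), these normal-form factors must coincide with $\phi(x_n(p))$ and $\phi(x_n(q))$, and the quotient $y_k(q)^{-1}y_k(p)$ (or $y_k(p)^{-1}y_k(q)$, depending on the chosen orientation) is exactly the remaining factor in $U_\Theta^{opp,>0}$. The principal bookkeeping task is matching the various orientation conventions so that the asymmetry between $p$ and $q$ in each of the three variants appears with the correct sign.
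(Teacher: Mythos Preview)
Your approach is correct and matches the paper's: the paper states the corollary as a direct consequence of Theorem \ref{thm11}(2), and your plan spells out precisely how the dihedral symmetries (together with parts (5) and (7) of Theorem \ref{thm11}) convert Lemma \ref{posconvergence} into each variant. Your derivation of variant (1) by swapping $a_n\leftrightarrow b_n$ is exactly the intended argument, and for variants (2) and (3) your use of the diffeomorphism $U_\Theta^{>0}\mathfrak{p}_\Theta^{opp}=U_\Theta^{opp,>0}\mathfrak{p}_\Theta$ from Theorem \ref{thm11}(7), combined with the $U_\Theta^{opp,>0}$ normal form of Theorem \ref{thm11}(5), is the natural way to complete the proof.

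One minor point worth recording when you write up the semigroup step for (2) and (3): after invoking Theorem \ref{thm11}(5) you will compare your fixed $g_n$ with the $g'$ coming from the normal form, and these differ by some $h\in L_\Theta^1$ which need not lie in $L_\Theta^\circ$. The conclusion $y_k(q)^{-1}y_k(p)=Ad_h(v)\in U_\Theta^{opp,>0}$ then requires knowing that $Ad_h$ preserves $U_\Theta^{opp,>0}$. This follows because $y_k(q)=Ad_h(u)$ already lies in $U_\Theta^{opp,>0}$ by construction, and by Theorem \ref{thm11}(7) the sets $Ad_h(U_\Theta^{opp,>0})\mathfrak{p}_\Theta$ are connected components of $\mathcal{TF}$, hence either equal to or disjoint from $U_\Theta^{opp,>0}\mathfrak{p}_\Theta$. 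This is implicit in the paper's treatment of Lemma \ref{posconvergence} as well, so you are not adding any new difficulty.
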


Next we prove the rectifiability of positive maps:

Suppose $\Lambda\subset S^1$ is a closed subset and $\xi:\Lambda\to (X,d)$ is a map from $\Lambda$ to a metric space $(X,d)$. We say $\xi$ is \emph{rectifiable} if there exists a constant $C$ such that for any $n\in \mathbb{Z}^{>0}$ and any positive tuple \refchange{158}{\changed{$(x_1,...,x_n)$}} in $\Lambda$,
$$
\sum_{i = 1}^{n-1}d(\xi(x_i),\xi(x_{i+1}))<C.
$$

\changed{When $\Lambda = S^1$ and $\xi$ is rectifiable, $\xi(S^1)$ is called a \emph{rectifiable curve} (see Stein-Shakarchi~\cite[Chapter 3: Section 3.1]{steinreal}), and it has non-zero and finite $1$-dimensional Hausdorff measure with respect to $d$ (see \cite[Chapter 7: Theorem 2.4]{steinreal}).}

\begin{proposition}\label{prop3}
Suppose $\Lambda\subset S^1$ is a closed subset and $\xi^{\Theta}:\Lambda \to \mathcal{F}_{\Theta}$ is a continuous positive map. Then $\xi^{\Theta}$ is rectifiable with respect to any Riemannian metric $d_{\Theta}$ on $\mathcal{F}_{\Theta}$.
\end{proposition}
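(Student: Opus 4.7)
The plan is to localize the estimate to a cover of $\Lambda$ by four closed arcs, parameterize $\xi^\Theta$ on each arc by an element of $U_\Theta^{>0}$ using Lemma~\ref{posconvergence}, and then use the formal group law together with the acuteness of the tangent cone to sum. The hardest part will be the local comparison between $d_\Theta$ and the norm of the grade-one cone coordinate near the identity.

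If $\Lambda$ is finite the bound is immediate, so I assume $\Lambda$ is infinite and pick four cyclically ordered points $q_1,q_2,q_3,q_4\in\Lambda$ (indices mod $4$), so that each closed arc $[q_j,q_{j+1}]$ is compactly contained in the open arc from $q_{j-1}$ to $q_{j+2}$ passing through $q_j,q_{j+1}$. Applying Lemma~\ref{posconvergence} with constant sequences $a_n\equiv q_{j-1}$, $b_n\equiv q_{j+2}$ and $E_j:=[q_j,q_{j+1}]\cap\Lambda$ produces $g_j\in Aut_1(\mathfrak{g})$ and a continuous map $x_j:E_j\to U_\Theta^{>0}$ with $\xi^\Theta(p)=g_j\cdot x_j(p)\mathfrak{p}_\Theta^{opp}$ and $x_j(p)^{-1}x_j(q)\in U_\Theta^{>0}$ whenever $p,q\in E_j$ are positively ordered. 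In particular $K_j:=x_j(E_j)$ is a compact subset of $U_\Theta^{>0}$ fixed independently of any positive tuple. A positive tuple $(y_1,\ldots,y_n)$ in $\Lambda$, being cyclically ordered in $S^1$, splits into at most four consecutive sub-tuples $T_j\subset E_j$ joined by at most three transition edges, each bounded by $\mathrm{diam}(\mathcal{F}_\Theta)$, so it suffices to bound the internal sum of each $T_j$ uniformly.

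For $T_j=(y_{j,1},\ldots,y_{j,m_j})$ I set $u_i:=x_j(y_{j,i})^{-1}x_j(y_{j,i+1})\in U_\Theta^{>0}$, so that $u_1u_2\cdots u_{m_j-1}=x_j(y_{j,1})^{-1}x_j(y_{j,m_j})$ lies in the fixed compact set $K_j^{-1}K_j$; by continuity of $F_\gamma^{-1}$ its cone coordinate has bounded $\pi_1$-projection. The formal group law (Lemma~\ref{lem8}) gives $\sum_i \pi_\beta(v_{u_i})=\pi_\beta(v_{u_1\cdots u_{m_j-1}})$ factorwise, and since each $\mathring{c}_\beta$ is acute the linear-functional argument in the proof of Lemma~\ref{lem15} yields $\sum_i\|\pi_1(v_{u_i})\|\leq C_1$ with $C_1$ independent of the tuple.

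It remains to convert this norm bound into a distance bound. Writing $\xi^\Theta(y_{j,i+1})=h_i u_i\mathfrak{p}_\Theta^{opp}$ with $h_i:=g_j\cdot x_j(y_{j,i})\in g_jK_j$ compact, the uniform bilipschitz behavior of $h_i$ on $\mathcal{F}_\Theta$ gives $d_\Theta(\xi^\Theta(y_{j,i}),\xi^\Theta(y_{j,i+1}))\leq C_2\, d_\Theta(\mathfrak{p}_\Theta^{opp},u_i\mathfrak{p}_\Theta^{opp})$. The main technical step is the local comparison $d_\Theta(\mathfrak{p}_\Theta^{opp},u\mathfrak{p}_\Theta^{opp})\leq C_3\|\pi_1(v_u)\|$ for $u$ near identity in $U_\Theta^{>0}$, which will follow from smoothness of the orbit map, Theorem~\ref{thm10}(2) giving $\|\log u-\pi_1(v_u)\|\leq C\|v_u\|^2$, and a factorwise acute-cone estimate bounding $\|v_u\|$ by $\|\pi_1(v_u)\|$. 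Once this is established for $\|\pi_1(v_u)\|<\epsilon$, I split the sum over $i$ at the threshold $\epsilon$: at most $C_1/\epsilon$ indices exceed it and together contribute at most $(C_1/\epsilon)\mathrm{diam}(\mathcal{F}_\Theta)$, while the remaining indices contribute at most $C_2C_3\sum_i\|\pi_1(v_{u_i})\|\leq C_2C_3C_1$. Summing over $j=1,\ldots,4$ and adding the transition edges yields a uniform rectifiability constant, completing the proof.
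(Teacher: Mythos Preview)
Your proposal is correct and follows essentially the same route as the paper: parameterize via Lemma~\ref{posconvergence}, use the formal group law (Lemma~\ref{lem8}) plus acuteness (as in Lemma~\ref{lem15}) to bound $\sum_i\|\pi_1(v_{u_i})\|$, and convert to $d_\Theta$-distances via Theorem~\ref{thm10}(2) and smoothness of the orbit map. The only differences are organizational---the paper localizes to a single sub-interval with an auxiliary exterior point $c$ and argues directly that the increments are uniformly bounded, whereas you cover by four arcs and add a threshold split; the latter is in fact unnecessary since your $u_i$ already lie in the fixed compact set $K_j^{-1}K_j$ (so the paper's direct estimate $\|\log u_i\|\le C\|\pi_1(v_{u_i})\|$ applies globally), but it does no harm.
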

\begin{proof}
\refchange{159}{\changed{The proof is a generalization of Burger-Iozzi-Labourie-Wienhard~\cite[Lemma~8.10]{maximal}, which established the rectifiability of positive maps for maximal representations. Here we extend the argument to the general $\Theta$-positive setting.}} One can assume $\Lambda$ to be infinite. 

\refchange{160, 161}{\changed{Since $\Lambda$ is closed, it is compact. Therefore, it suffices to prove rectifiability on small subintervals. In other words, it is enough to show:}}

Let $I\subset S^1$ be a \refchange{163}{\changed{closed sub-interval}} such that \refchange[2\baselineskip]{161}{\changed{$\Lambda\cap I\not = \emptyset$}} and $\Lambda - I$ has at least three points. Pick three distinct points $a,b,c\in \Lambda-I$ such that $b$ and $I$ do not lie in the same component of $S^1-\{a,c\}$. Let \refchange[\baselineskip]{162}{\changed{$\{x_1,x_2,...,x_n\}\subset \Lambda \cap I$ such that $(a,x_1,x_2,...,x_n,c,b)$ is positive}}. For fixed $I$, there exists a constant $C$ independent of $\{x_i\}$, such that
$$
\sum_{i = 1}^{n-1}d_{\Theta}(\xi^{\Theta}(x_i),\xi^{\Theta}(x_{i+1}))<C.
$$

According to Lemma \ref{posconvergence}, there exists $g\in Aut_1(\mathfrak{g})$ and a map $u: (I\cap \Lambda)\cup \{c\}\to  U_{\Theta}^{>0}$ such that for any $x\in (I\cap \Lambda)\cup\{c\}$
$$
\xi^{\Theta}(a,x,b) = g(\mathfrak{p}_{\Theta}^{opp}, u(x) \mathfrak{p}_{\Theta}^{opp}, \mathfrak{p}_{\Theta}).
$$
And for any $x,y\in (I\cap \Lambda)\cup\{c\}$ such that $(a,x,y,b)$ is positive, we have $u(x)^{-1}u(y)\in U_{\Theta}^{>0}$. \refchange{164}{\changed{Recall that $v_{u(x)}$ denotes the cone coordinate (with respect to an implicitly chosen reduced expression) defined after Theorem~\ref{thm9}, and that $\pi_{T}$ is the projection to $T\mathring{c}$ defined before Theorem~\ref{thm10}.}} So for any $x\in \Lambda\cap I$, we have $\pi_T(v_{u(c)}) = \pi_T(v_{u(x)})+\pi_T(v_{u^{-1}(x)u(c)})$. As \refchange{165}{\changed{$T\mathring{c}$}} is acute, for fixed $I,a,b,c$, $\pi_T(v_{u(x)})$ is uniformly bounded independently of $x$.

\refchange{167,168}{\changed{As $(I \cap \Lambda)\cup \{c\}$ is compact, its image $u((I \cap \Lambda)\cup \{c\}) \subset U_{\Theta}^{>0}$ is also compact. Hence, for any $x \in (I \cap \Lambda)\cup \{c\}$, the element $u(x)$ is uniformly bounded in the Lie group $G$.}}

Denote $x_{n+1} = c$.  For \refchange{166}{\changed{$i = 1,...,n$}}, let $d_i = u(x_{i})^{-1}u(x_{i+1})\in U_{\Theta}^{>0}$ which are also uniformly bounded elements in $G$. As the metric $d_{\Theta}$ on $\mathcal{F}_{\Theta}$ is Riemannian, for any norm on $\mathfrak{u}_{\Theta}$, there exists constants $C_1,C_2$, such that
$$
\sum_{i = 1}^{n-1}d_{\Theta}(\xi^{\Theta}(x_i),\xi^{\Theta}(x_{i+1}))<\sum_{i = 1}^{n}d_{\Theta}(\xi^{\Theta}(x_i),\xi^{\Theta}(x_{i+1}))
$$
$$
\le C_1 \sum_{i = 1}^{n}d_{\Theta}(\mathfrak{p}_{\Theta}^{opp},d_i\mathfrak{p}_{\Theta}^{opp}) \le C_2 \sum_{i = 1}^{n} \|\log(d_i) \|.
$$
(The $G$ action on $\mathcal{F}_{\Theta}$ is smooth thus bi-Lipsthitz. As $u(x_i)$ are uniformly bounded elements in $G$ independent of $x_i$, their actions are uniformly bi-Lipschitz so we can get the inequality of $C_1$. The inequality of $C_2$ is from the definition of Riemannian metric.)

From Theorem \ref{thm10} (2), there exists a constant $C_3$

$$
\|\log(d_i)-\pi_T(v_{d_i}) \|\le  C_3\| v_{d_i}\|^2.
$$

\changednew{Lemma~\ref{lem15}~$(1)$ implies that there exists a constant $C'$ such that for any $v\in \mathring{c}_{\gamma}$, we have $\|v\|\le C'\|\pi_T(v)\|$.} Since $d_i$ are all bounded, $v_{d_i}$ are bounded too, so there exists a constant $C_4$ such that $\|v_{d_i}\|^2\le  C_4\|\pi_T(v_{d_i})\|$. Thus there exists a constant $C_5$ such that
$$
\|\log(d_i)\|\le  C_5\|\pi_T( v_{d_i})\|.
$$

By the formal group law (Lemma~\ref{lem8}), \refchange{169}{\changed{$\sum_{i = 1}^{n} \pi_T(v_{d_i}) = \pi_T(v_{u(x_1)^{-1}u(c)})$. Since $\Lambda \cap I$ is closed, let $\tilde{x}$ be the endpoint of $\Lambda \cap I$ such that for any $\hat{x}\ne \tilde{x} \in \Lambda \cap I$, $(a,\hat{x},\tilde{x},c)$ is positive. Again, by the formal group law (Lemma~\ref{lem8}), $\pi_T(v_{u(x_1)^{-1}u(c)}) = \pi_{T}(v_{u(x_1)^{-1}u(\tilde{x})})+\pi_{T}(v_{u(\tilde{x})^{-1}u(c)})$, which is uniformly properly bounded (properly bounded was defined in the discussion preceding Lemma~\ref{lem15}).}}
From Lemma \ref{lem15} we see that \refchange{170}{\changed{$\sum_{i = 1}^{n} \|\pi_T(v_{d_i})\|$}} is uniformly bounded, so the map is rectifiable.
\end{proof}

\begin{remark}\label{rem16}
Note that there is a smooth projection from $\mathcal{F}_{\Theta}$ to $\mathcal{F}_{\alpha}$ for any $\alpha\in \Theta$. Composing the positive map $\xi^{\Theta}$ by the projection we get $\xi^{\alpha}$, which is still rectifiable.
\end{remark}

\subsection{$\Theta$-positive Representations}

Now we define $\Theta$-positive representations. Let $\Gamma\subset \mathsf{PSL}(2,\mathbb{R})$ be a non-elementary discrete subgroup and $G$ be a real simple Lie group with $\Theta$-positive structure.

\begin{definition}\label{def11}
    A representation $\rho: \Gamma\to G$ is called \emph{$\Theta$-positive} if there is a continuous, positive and $\rho-$equivariant map $\xi^{\Theta}:\Lambda(\Gamma)\to \mathcal{F}_{\Theta}$.
\end{definition}

\begin{proposition}\label{thm17}
     $\Theta$-positive representations are $\Theta$-transverse. As a result, when $\Gamma$ is geometrically finite, $\Theta$-positive representations are relatively $\Theta$-Anosov. 
     
\refchange{171}{\changed{Furthermore, the positive map coincides with the limit map for $\Theta$-transverse representations, and is therefore unique (see the paragraph before Proposition~\ref{cartanproperty}).}}

\end{proposition}

\begin{proof}
Recall that we fixed a Riemannian metric $d_{\Theta}$ on $\mathcal{F}_{\Theta}$. And recall that the classification of $\Theta$-positive structures shows that $\Theta\subset \Delta$ is symmetric under any possible involutions of the Dynkin diagram. If $\dot{\omega}_0\in Inn(\mathfrak{g})$ is the lifting of the longest element $\omega_0$ in the Weyl group $W$, then $\dot{\omega}_0 \mathfrak{p}_{\Theta} = \mathfrak{p}_{\Theta}^{opp}$, so $\mathcal{F}_{\Theta}$ can be identified with $\mathcal{F}_{\Theta}^{opp}$, and we can tautologically define $\xi^{\Theta,opp}:\Lambda(\Gamma)\to \mathcal{F}_{\Theta}^{opp}$.

Positivity implies that the maps $\xi^{\Theta}$ and $\xi_{\Theta}^{opp}$ are transverse. From Definition \ref{Anosov}, we only need to prove $\xi^{\Theta}$ is strongly dynamics preserving. \refchange{172}{\changed{From Proposition \ref{cartanproperty}~(3) (see also Canary-Zhang-Zimmer \cite[Proposition 2.6]{CaZhZi3})}} it suffices to verify the strongly dynamics preserving property for some open subset, that is we only need to prove:

\emph{
\refchange{173}{\changed{Let $b_0\in \mathbb{D}$ be a fixed base point and $\gamma_n\in \Gamma$ be a sequence.}} If $\gamma_n(b_0)\to  x,\gamma_n^{-1}(b_0)\to y$ for $x,y\in\Lambda(\Gamma)$, then there exists a \refchange{174}{\changed{non-empty}} open subset $\mathcal{O}\subset \mathcal{F}_{\Theta}$ whose elements are all transverse to $\xi^{\Theta,opp}(y)$, and for any element $F\in \mathcal{O}$, we have $\rho(\gamma_n) F\to \xi^{\Theta}(x)$.
}

First suppose $x\not =y$, \refchange{175}{\changed{then from Lemma \ref{lem3}}} $\gamma_n$ are eventually hyperbolic and $\gamma_n^{+}\to x,\gamma_n^{-}\to y$. As $\Lambda(\Gamma)$ is a perfect set, we can pick $a,\refchangenew{177}{\changednew{z}},b\in \Lambda(\Gamma)$, such that $(y,b,\changednew{z},a,x)$ is positive. Thus, for sufficiently large $n$, the tuples $(\gamma_n^{-}, b,\changednew{z},a,\gamma_n^{+})$ and $(\gamma_n^{-},\gamma_n(b),\changednew{\gamma_n(z)},\gamma_n(a),\gamma_n^{+})$ are positive, and we have $\gamma_n(a),\gamma_n(b), \changednew{\gamma_n(z)}\to x$.

Let 
$$
\mathcal{O}_n := \{F\in \mathcal{F}_{\Theta}|(\xi^{\Theta}(\gamma_n^{-}),\xi^{\Theta}(b),F,\xi^{\Theta}(a),\xi^{\Theta}(\gamma_n^{+})) \text{ is positive}\}.
$$ 
If $n\gg 1$, then $\mathcal{O}_n$ is an open subset containing $\xi^{\Theta}(z)$.

\refchangenew{178}{\changednew{
For any transverse flags $F_1,F_2\in \mathcal{F}_{\Theta}$, let $\mathcal{TF}_{F_1,F_2}\subset \mathcal{F}_{\Theta}$ denote the space of flags simultaneously transverse to $F_1$ and $F_2$. By Theorem~\ref{thm11}~(7), we can verify that $\mathcal{TF}_{F_1,F_2}$ is an open subset with finitely many connected components, and each connected component can be written as $gU_{\Theta}^{>0} \mathfrak{p}_{\Theta}^{opp}$ for some $g\in Aut_1(\mathfrak{g})$ (in Guichard-Wienhard~\cite[Definition~13.2 and Corollary~13.6]{GW2}, each connected component is called a \emph{diamond} extremities at $F_1$ and $F_2$). Furthermore, if $(F_1, f_1,\dots,f_n, F_2)$ is a positive $(n+2)$-tuple, then $f_1,\dots,f_n$ are contained in the same connected component of $\mathcal{TF}_{F_1,F_2}$.

By Guichard-Wienhard~\cite[Proposition~13.26]{GW2},
\[
(\xi^{\Theta}(\gamma_n^{-}),\xi^{\Theta}(b),F,\xi^{\Theta}(a),\xi^{\Theta}(\gamma_n^{+}))
\]
is positive if and only if both of
\[
(\xi^{\Theta}(\gamma_n^{-}),\xi^{\Theta}(b),F,\xi^{\Theta}(\gamma_n^{+})) \quad \text{and} \quad (\xi^{\Theta}(\gamma_n^{-}),F,\xi^{\Theta}(a),\xi^{\Theta}(\gamma_n^{+}))
\]
are positive. Using \cite[Definition~13.2 and Corollary~13.6]{GW2}, for $n\gg1$, let $\mathcal{A}_n$, $\mathcal{B}_{n}$, and $\mathcal{C}_n$ denote the unique connected components of $\mathcal{TF}_{\xi^{\Theta}(\gamma_n^-),\xi^{\Theta}(a)}$, $\mathcal{TF}_{\xi^{\Theta}(b), \xi^{\Theta}(\gamma_n^+)}$, and $\mathcal{TF}_{\xi^{\Theta}(\gamma_n^-), \xi^{\Theta}(\gamma_n^+)}$ respectively, determined by:
\begin{enumerate}
    \item $\xi^{\Theta}(a),\xi^{\Theta}(b)\in \mathcal{C}_n$.
    \item $\mathcal{A}_n\subset \mathcal{C}_n$ and $\mathcal{B}_n\subset \mathcal{C}_n$.
\end{enumerate}
It can be further verified that $\xi^{\Theta}(a)\in \mathcal{B}_n$ and $\xi^{\Theta}(b)\in \mathcal{A}_n$.

By \cite[Proposition~13.19~(4)]{GW2}, we see that
\[
F\in \mathcal{A}_n \Leftrightarrow (\xi^{\Theta}(\gamma_n^{-}),F,\xi^{\Theta}(a),\xi^{\Theta}(\gamma_n^{+})) \text{ is positive}
\]
and similarly
\[
F\in \mathcal{B}_n \Leftrightarrow (\xi^{\Theta}(\gamma_n^{-}),\xi^{\Theta}(b),F,\xi^{\Theta}(\gamma_n^{+})) \text{ is positive}.
\]
Consequently, $\mathcal{O}_n = \mathcal{A}_n\cap \mathcal{B}_n$, which is non-empty for $n\gg 1$. On the other hand, applying the ``only if'' direction of \cite[Proposition~13.19~(4)]{GW2} to the quadruples
\[
(\xi^{\Theta}(b),F,\xi^{\Theta}(a),\xi^{\Theta}(\gamma_n^{+})) \quad \text{and} \quad (\xi^{\Theta}(\gamma_n^{-}),\xi^{\Theta}(b),F,\xi^{\Theta}(a)),
\]
we see that they are both positive if and only if $\mathcal{C}^+_n = \mathcal{C}^-_n$ and $F$ lies in this set, where $\mathcal{C}^+_n$ and $\mathcal{C}^-_n$ denote the unique connected components of $\mathcal{TF}_{\xi^{\Theta}(a),\xi^{\Theta}(b)}$ contained in $\mathcal{B}_n$ and $\mathcal{A}_n$, respectively. Note that for $n\gg 1$, $\xi^{\Theta}(z)$ lies in both $\mathcal{C}^+_n$ and $\mathcal{C}^-_n$. Since the connected components $\mathcal{C}^+_n$ and $\mathcal{C}^-_n$ intersect, they must coincide; specifically, they are equal to the connected component of $\mathcal{TF}_{\xi^{\Theta}(a),\xi^{\Theta}(b)}$ containing $\xi^{\Theta}(z)$. Denoting this component by $\mathcal{O}$, we see that $\mathcal{O}$ is contained in both $\mathcal{A}_n$ and $\mathcal{B}_n$; therefore, we conclude that for $n\gg 1$, $\mathcal{O}\subset \mathcal{A}_n\cap \mathcal{B}_n = \mathcal{O}_n$.
}}

Let 
$$
\mathcal{O}'_n :=\rho(\gamma_n)\mathcal{O}_n = \{F'\in \mathcal{F}_{\Theta}|(\xi^{\Theta}(\gamma_n^{-}),\xi^{\Theta}(\gamma_{n}b),F',\xi^{\Theta}(\gamma_na),\xi^{\Theta}(\gamma_n^{+})) \text{ is positive}\}
$$ 
and we can see that
$$
\changednew{\rho(\gamma_n)\mathcal{O} \subset  \mathcal{O}'_n.}
$$

It suffices to show the Hausdorff limit of $\mathcal{O}'_n$ \refchange{188}{\changed{is the single point $\xi^{\Theta}(x)$.}} If not, then there exists a sequence $F_n\in \mathcal{O}'_n$ such that
$$
(\xi^{\Theta}(\gamma_n^{-}),\xi^{\Theta}(\gamma_{n}b),F_n,\xi^{\Theta}(\gamma_na),\xi^{\Theta}(\gamma_n^{+}))
$$
is positive and for some $\epsilon>0$, 
\changed{
$$
d_{\Theta}(\xi^{\Theta}(x),F_n)>\epsilon.
$$
}
Let $c\in \Lambda(\Gamma)$ such that $(\gamma_n^{-},c,\gamma_n(b),\gamma_n(a),\gamma_n^{+})$ is positive for $n$ large enough, so by \cite[Proposition 13.26]{GW2} we have
$$
(\xi^{\Theta}(\gamma_n^{-}),\xi^{\Theta}(c),\xi^{\Theta}(\gamma_{n}b),F_n,\xi^{\Theta}(\gamma_na),\xi^{\Theta}(\gamma_n^{+}))
$$
is positive for $n$ large enough. Note that the positive triple \refchange{179}{\changed{$(\gamma_n^{-},c,\gamma_n^{+})$}} converges to the positive triple \refchange{180}{\changed{$(y,c,x)$}}. \refchange{181}{\changed{Repeating the proof of Lemma \ref{posconvergence}}}, we can find a sequence $g_n\in Aut_1(\mathfrak{g})$ converging in $Aut_1(\mathfrak{g})$,  \refchange{182}{\changed{sequences $x_{i,n}\in U_{\Theta}^{>0}, i = 1,2,3,4$}} converging in $U_{\Theta}^{\ge 0}$ such that 
$$
(\xi^{\Theta}(\gamma_n^{-}),\xi^{\Theta}(c),\xi^{\Theta}(\gamma_{n}b),F_n,\xi^{\Theta}(\gamma_na),\xi^{\Theta}(\gamma_n^{+}))
$$
$$
=g_n(\mathfrak{p}_{\Theta},x_{1,n}x_{2,n}x_{3,n}x_{4,n}\mathfrak{p}_{\Theta}^{opp},x_{1,n}x_{2,n}x_{3,n}\mathfrak{p}_{\Theta}^{opp},x_{1,n}x_{2,n}\mathfrak{p}_{\Theta}^{opp},x_{1,n}\mathfrak{p}_{\Theta}^{opp}, \mathfrak{p}_{\Theta}^{opp})
$$
where $x_{1,n}x_{2,n}x_{3,n}x_{4,n}$ converges in $U_{\Theta}^{>0}$. \refchange{183}{\changed{It should be noted that both $\gamma_n a$ and $\gamma_n b$ converge to $x$, so we cannot claim the convergence of $x_{1,n}, x_{2,n}, x_{3,n}$ in $U_{\Theta}^{>0}$, but only in $U_{\Theta}^{\ge 0}$.}}

\changed{As $\xi^{\Theta}(\gamma_n b)\to \xi^{\Theta}(x)$} and $g_n$ converges, we see that $x_{1,n}x_{2,n}x_{3,n}$ converges to $id$, so the cone coordinate $v_{x_{1,n}x_{2,n}x_{3,n}}\to 0$. From formal group law (Lemma \ref{lem8}), we have $ \pi_T(v_{x_{1,n}})+\pi_T(v_{x_{2,n}})+\pi_T(v_{x_{3,n}})=\pi_T(v_{x_{1,n}x_{2,n}x_{3,n}})\to 0$. \refchange{186}{\changed{Here $\pi_T$ takes values in the tangent cone $T\mathring{c}$ introduced in Section~\ref{sssec15}. Since $T\mathring{c}$ is acute, fix any norm on $\mathfrak{u}_{\Theta}$. Then there exist a functional $l$ and a constant $C>1$ such that for any $v \in T\mathring{c}$, $\tfrac{1}{C}\|v\|\le l(v)\le C\|v\|$ (see the proof of Lemma~\ref{lem15}). Hence, $\pi_T(v_{x_{1,n}x_{2,n}x_{3,n}})\to 0$ implies that each of $\pi_T(v_{x_{1,n}})$, $\pi_T(v_{x_{2,n}})$, and $\pi_T(v_{x_{3,n}})$ converges to $0$.}} 

\refchange{187}{\changed{Applying the above functional $l$ again, we have all of $v_{x_{1,n}}$, $v_{x_{2,n}}$, and $v_{x_{3,n}}$ converge to $0$, so all of $x_{1,n}$, $x_{2,n}$ and $x_{3,n}$ converge to $id$. So all of $\xi^{\Theta}(\gamma_n a)$, $\xi^{\Theta}(\gamma_n b)$ and $F_n$ converge to $\xi^{\Theta}(x)$,}}  contradicting the inequality \changed{$d_{\Theta}(\xi^{\Theta}(x),F_n)>\epsilon$.}

Now suppose $x = y$. Since $\Gamma$ is non-elementary, we may pick $\gamma\in \Gamma$ such that $z = \gamma^{-1}(x)\not = x$, then $\gamma_n\gamma(b_0)\to x$, $(\gamma_n\gamma)^{-1}(b_0)\to z$. By the first case, we see that $\rho(\gamma_n\gamma)(F)\to \xi^{\Theta}(x)$ for all $F$ transverse to $\xi^{\Theta,opp}(z)$. Furthermore, $F$ is transverse to $\xi^{\Theta,opp}(z)$ iff $\rho(\gamma) F$ is transverse to $\xi^{\Theta,opp}(x)$, so we conclude the proof.
\end{proof}

\section{Regular distortion property}\label{sec4}

In this \refchange{189}{\changed{section}} we show that the limit maps of $\Theta$-positive representations have the \emph{regular distortion property}. We let $\Gamma \subset \mathsf{PSL}(2,\mathbb{R})$ be a \refchange[\baselineskip]{190}{\changed{non-elementary}} discrete subgroup, and as before, \refchange[\baselineskip]{191}{\changed{we assume that $G$ is a real simple Lie group with finitely many components, whose identity component $G^{\circ}$ has finite center. Moreover, if $\mathfrak{g}$ denotes the Lie algebra of $G$ with adjoint representation $\psi:G \to Aut(\mathfrak{g})$, then as before, we also assume $\psi(G) \subset Aut_1(\mathfrak{g})$ (see Section~\ref{assumption}).}}

\refchange{192}{\changed{We fix $b_0 \in \mathbb{D}$. For any $\gamma \in \Gamma$, choose $\omega_{\gamma} \in \Lambda(\Gamma)$ such that the Euclidean distance \refchangenew{192}{\changednew{$d_{\mathbb{R}^2}(\gamma^{-1}(b_0), \omega_{\gamma})$}} between $\gamma^{-1}(b_0)$ and $\omega_{\gamma}$ equals the Euclidean distance \refchangenew{192}{\changednew{$d_{\mathbb{R}^2}(\gamma^{-1}(b_0), \Lambda(\Gamma))$}} from $\gamma^{-1}(b_0)$ to $\Lambda(\Gamma)$ (here “Euclidean distance” \refchangenew{192}{\changednew{$d_{\mathbb{R}^2}(-,-)$}} means that we view $\mathbb{D}$ as the unit disc in $\mathbb{R}^2$ \refchangenew{192}{\changednew{with $b_0$ identified as the origin of $\mathbb{R}^2$}}). Similarly, let $\alpha_{\gamma} \in \Lambda(\Gamma)$ be a point that realizes the Euclidean distance between $\gamma(b_0)$ and $\Lambda(\Gamma)$. We refer to $\omega_{\gamma}$ and $\alpha_{\gamma}$ as the \emph{coarse repeller} and the \emph{coarse attractor}, respectively.} \refchangenew{192}{\changednew{It is worth noting that they are not unique and we are making choices.}} }

Let $\Theta$ be a subset of positive simple roots. For each $\alpha \in \Theta$, fix a Riemannian metric $d_{\alpha}$ on $\mathcal{F}_{\alpha}$ and also fix a Riemannian metric $d_{\Theta}$ on $\mathcal{F}_{\Theta}$. Let $d_{S^1}$ be the angular metric from the origin on $S^1$, where the diameter of $S^1$ is $\pi$. We also fix the norms on $\mathfrak{u}_{\Theta}$ and $\mathring{c}_{\gamma}$ and denote them all by $\|\|$.

\begin{definition}\label{def13}
\begin{enumerate}
\item Let $(x,y,z)$ be a triple in $S^1$. Given any $0<d<\frac{2\pi}{3}$, we say $(x,y,z)$ is \emph{$d$-bounded} if $d_{S^1}(x,y)$, $ d_{S^1}(x,z)$, $ d_{S^1}(y,z)\refchange{213}{\changed{\ge}} d$.
\item A $\Theta$-transverse representation $\rho:\Gamma\to G$ has the \emph{regular distortion property} if for any $0<d<\frac{2\pi}{3}$, there is a constant $C>1$ depending only on $d$, such that for any $\alpha\in \Theta$ and any $\gamma\in\Gamma$, \refchange{193}{\changed{if $p,q\in \Lambda(\Gamma)$ and $(\omega_{\gamma},p,q)$ is $d-$bounded, then}}
$$
C e^{-\alpha(\kappa(\rho(\gamma)))}\ge d_{\alpha}(\rho(\gamma)\xi^{\alpha}(p),\rho(\gamma)\xi^{\alpha}(q))\ge \frac{1}{C} e^{-\alpha(\kappa(\rho(\gamma)))}
$$
where $\xi^{\alpha}$ the the limit map from $\Lambda(\Gamma)$ to $\mathcal{F}_{\alpha}$.
\end{enumerate}
\end{definition}

\begin{theorem}\label{thm12}
    Assume that $G$ has $\Theta$-positive structure. If $\rho$ is $\Theta$-positive, then $\rho$ has the regular distortion property.
\end{theorem}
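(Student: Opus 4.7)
The plan is to argue by contradiction combined with a compactness extraction, reducing to an analysis of how the Cartan factor $a_n=\exp\kappa(\rho(\gamma_n))$ acts on $\mathcal{F}_\alpha$ through the adapted representation. Positivity enters to guarantee that the limiting configuration is a bounded, genuinely positive triple, which yields uniform non-degeneracy of the relevant flags.

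Suppose the claim fails. Then there exist $d>0$, sequences $\gamma_n\in\Gamma$, and $p_n,q_n\in\Lambda(\Gamma)$ with $(\gamma_n^-,p_n,q_n)$ uniformly $d$-bounded, while
$$R_n := e^{\alpha(\kappa(\rho(\gamma_n)))}\,d_\alpha\bigl(\rho(\gamma_n)\xi^\alpha(p_n),\,\rho(\gamma_n)\xi^\alpha(q_n)\bigr)$$
tends to $0$ or to $\infty$. One may assume $\gamma_n\to\infty$ in $\Gamma$. Pass to subsequences so that $\gamma_n b_0\to x$, $\gamma_n^{-1}b_0\to y'$, $\gamma_n^-\to y$, $p_n\to p$, $q_n\to q$, with $(y,p,q)$ distinct; Lemma \ref{lem3} gives $y=y'$. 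After relabeling (allowed by the $S_3$-invariance of positive triples, Theorem \ref{thm11}(2)), $(y,p,q)$ is positively oriented in $S^1$, hence $(\xi^\Theta(y),\xi^\Theta(p),\xi^\Theta(q))$ is a positive triple in $\mathcal{F}_\Theta$. Write the Cartan decompositions $\rho(\gamma_n)=k^1_n a_n k^2_n$ with $a_n=\exp\kappa(\rho(\gamma_n))$ and extract $k^1_n\to k^1$, $k^2_n\to k^2$ in $K$. Proposition \ref{cartanproperty} applied to $\gamma_n$ gives $k^1\mathfrak{p}_\alpha=\xi^\alpha(x)$; applied to $\gamma_n^{-1}$, after twisting $a_n^{-1}$ into the positive chamber by a representative of the longest Weyl element $w_0$ (which interchanges $\mathfrak{p}_\Theta$ and $\mathfrak{p}_\Theta^{opp}$), it yields $k^2\xi^\alpha(y)=\mathfrak{p}_\alpha^{opp}$.

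Since $K\subset Aut_1(\mathfrak{g})$ and $Aut_1(\mathfrak{g})$ preserves positivity (Theorem \ref{thm11}(1)), the translated triple $(\mathfrak{p}_\Theta^{opp},k^2\xi^\Theta(p),k^2\xi^\Theta(q))$ remains positive; the properness of $G$ on positive triples (Theorem \ref{thm11}(3)), together with the compactness of the set of $d$-bounded triples in $\Lambda(\Gamma)$ and the continuity of $\xi^\Theta$, forces $k^2\xi^\alpha(p)$ and $k^2\xi^\alpha(q)$ to lie in a fixed compact subset of $\mathcal{F}_\alpha$ transverse to $\mathfrak{p}_\alpha^{opp}$, with a lower bound $d_\alpha(k^2\xi^\alpha(p),k^2\xi^\alpha(q))\ge c(d)>0$. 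Now pass to the adapted representation $\psi_\alpha:G\to\mathsf{PGL}(V_\alpha)$, under which Lemma \ref{lem4} identifies $\alpha(\kappa(\rho(\gamma_n)))=\log(\sigma_1/\sigma_2)$, and $\mathcal{F}_\alpha\hookrightarrow\mathbb{P}(V_\alpha)$ is tangent at $[\eta_\alpha]$ precisely to the weight space $V_{\lambda_\alpha-\alpha}$, i.e.\ the $\sigma_2$-eigenspace of $\psi_\alpha(a_n)$. Lemma \ref{lem7} applied to a properly convex cone $\Omega\subset V_\alpha$ containing lifts of $k^2_n\xi^\alpha(p_n),k^2_n\xi^\alpha(q_n)$ and disjoint from the hyperplane of $\mathfrak{p}_\alpha^{opp}$ yields $\|\psi_\alpha(a_n)v\|\asymp\sigma_1\|v\|$ on $\Omega$ with constants uniform in $n$. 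A direct computation in the affine chart obtained by dividing out the $\sigma_1$-eigencomponent then gives
$$d_\alpha\bigl(\psi_\alpha(a_n)k^2_n\xi^\alpha(p_n),\psi_\alpha(a_n)k^2_n\xi^\alpha(q_n)\bigr) \;\asymp\; (\sigma_2/\sigma_1)\cdot d_\alpha(k^2\xi^\alpha(p),k^2\xi^\alpha(q)).$$
Combined with the uniformly bi-Lipschitz action of $k^1_n\in K$, this forces $R_n\asymp 1$, the desired contradiction.

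The main obstacle is the joint handling of the two Cartan factors: relating Proposition \ref{cartanproperty} for $\gamma_n$ and for $\gamma_n^{-1}$ through the Weyl-group twist to place $k^2\xi^\alpha(y)$ precisely at $\mathfrak{p}_\alpha^{opp}$, and then converting the abstract properness of $G$ on positive triples in $\mathcal{F}_\Theta$ into a concrete uniform lower bound for distances in $\mathcal{F}_\alpha$ after the projection $\pi^\Theta_\alpha$. The boundary root $\alpha_\Theta$ presents the greatest worry, since $\mathfrak{u}_{\alpha_\Theta}$ is higher-dimensional and $\pi^\Theta_{\alpha_\Theta}$ could a priori collapse distances under the projection from $\mathcal{F}_\Theta$ to $\mathcal{F}_{\alpha_\Theta}$; the positive-triple and positive-quadruple characterizations in Theorem \ref{thm11}(4)--(5), together with the convergence Lemma \ref{posconvergence}, rule this out by exhibiting $k^2\xi^\Theta(p),k^2\xi^\Theta(q)$ as images of a non-degenerate element of $U_\Theta^{>0}$ whose $\mathfrak{u}_{\alpha_\Theta}$-component remains in the open cone $\mathring{c}_{\alpha_\Theta}$ by the formal group law (Lemma \ref{lem8}) and Lemma \ref{lem15}.
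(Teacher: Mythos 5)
Your setup — arguing by contradiction, passing to a subsequence where $(\gamma_n^-,p_n,q_n)$ converges to a distinct triple, extracting limits of the Cartan factors $k^1_n,k^2_n$, and applying the Cartan property to pin down $k^1\mathfrak{p}_\alpha=\xi^\alpha(x)$ and $k^2\xi^\Theta(y)=\mathfrak{p}_\Theta^{opp}$ — matches the paper's opening moves and is sound. The departure is in the middle: the paper conjugates via Lemma~\ref{l2} into the positive unipotent $U_\Theta^{opp,>0}$, passes to cone coordinates via the tangent-cone approximation (Theorem~\ref{thm10}(2)) and the formal group law (Lemma~\ref{lem8}), and then applies Lemma~\ref{lem7} to the $Ad_{L_\Theta^\circ}$-invariant cone $\mathring{c}_\alpha^{opp}\subset\mathfrak{u}_\alpha^{opp}$, whereas you propose to pass to the adapted representation $\psi_\alpha:G\to\mathsf{PGL}(V_\alpha)$ and argue entirely in $\mathbb{P}(V_\alpha)$.

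This detour contains a genuine gap. Lemma~\ref{lem7} controls the \emph{norm} of $\psi_\alpha(a_n)v$ for $v$ in a projectively compact sub-cone of an $Aut(\Omega)$-invariant cone $\Omega$, i.e.\ it says $\|\psi_\alpha(a_n)v\|\asymp\sigma_1\|v\|$. It does not, by itself, control the \emph{projective} distance between two nearby images $[\psi_\alpha(a_n)v_n]$ and $[\psi_\alpha(a_n)w_n]$: that distance is governed by the component of $v_n-w_n$ transverse to the dominant eigenspace, and it contracts by $\sigma_i/\sigma_1$ for whatever $\sigma_i$-eigenspace that transverse component sits in. Your claim that ``$\mathcal{F}_\alpha\hookrightarrow\mathbb{P}(V_\alpha)$ is tangent at $[\eta_\alpha]$ precisely to the weight space $V_{\lambda_\alpha-\alpha}$'' is not correct in general: the tangent image is $\{d\psi_\alpha(X)\eta_\alpha\bmod\mathbb{R}\eta_\alpha: X\in\mathfrak{u}_{\{\alpha\}}^{opp}\}$, which carries all weights $\lambda_\alpha+\beta$, $\beta\in\Sigma_\alpha^-$, and when $\Delta\neq\{\alpha\}$ these go strictly below $\lambda_\alpha-\alpha$. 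So the stated ``direct computation in the affine chart'' yielding $d_\alpha(\cdots)\asymp(\sigma_2/\sigma_1)d_\alpha(\cdots)$ presupposes the lower bound rather than proving it — exactly the nonvanishing of the top subdominant component, which is the entire content of the theorem. Moreover, to invoke Lemma~\ref{lem7} you would need an acute properly convex $\Omega\subset V_\alpha$ with $\psi_\alpha(a_n)\in Aut(\Omega)$; neither the choice of $\Omega$ nor its invariance under $\psi_\alpha(\exp\mathfrak{a}^+)$ is established. In the paper's version this is automatic because $\Omega=\mathring{c}_\alpha^{opp}$ is $Ad_{L_\Theta^\circ}$-invariant by definition of the $\Theta$-positive structure.

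Your final paragraph acknowledges the load-bearing step and sketches a fix using Theorem~\ref{thm11}(4)--(5), Lemma~\ref{posconvergence}, the formal group law (Lemma~\ref{lem8}), and Lemma~\ref{lem15}. That program — parameterize the positive tuple by $U_\Theta^{opp,>0}$, project to the $\mathfrak{u}_\alpha^{opp}$-coordinate of the cone, and observe that the coordinate stays in the open cone $\mathring{c}_\alpha^{opp}$ so its $\mathfrak{g}_{-\alpha}$-component is nonzero — is precisely what the paper does. If carried out, it would make the adapted-representation detour redundant and reduce your argument to the paper's. As written, the proposal has the correct skeleton but omits the one computation that actually uses positivity to force the lower bound, and that computation is nontrivial.
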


\subsection{Proof of Theorem \ref{thm12} }\label{ssec7}
The proof is by contradiction and the idea is: if $x\in U_{\Theta}^{opp,>0}$ is near the identity, then the segment joining $\mathfrak{p}_{\Theta}, x \mathfrak{p}_{\Theta}$ on the limit set can be approximated by  $\pi_T^{opp}(v_x^{opp})$ in the tangent cone. Since the cone is preserved by \refchange{194}{\changed{$\psi( \exp \mathfrak{a})$}}, Lemma \ref{lem7} shows that we can use the maximal modulus of eigenvalues to estimate the distortion, which turns out to be the simple roots.

We recall some properties of $Aut_1(\mathfrak{g})$ first. Let $G_0,G_1,...,G_k$ be the connected components of $Aut_1(\mathfrak{g})$ where $G_0= Inn(\mathfrak{g})$ is the identity component. Let $\psi:G\to Aut(\mathfrak{g})$ be the adjoint representation, and recall that we have assumed $\psi(G)\subset Aut_1(\mathfrak{g})$.

Lemma \ref{l1} shows that $Aut_1(\mathfrak{g})$ acts on $\mathcal{F}_{\Theta}$ and $\mathcal{F}_{\Theta}^{opp}$ and preserves transverse pairs. Let $P_{\Theta}^{1}=\mathrm{Stab}_{Aut_1(\mathfrak{g})}(\mathfrak{p}_{\Theta})$, $P_{\Theta}^{1,opp}=\mathrm{Stab}_{Aut_1(\mathfrak{g})}(\mathfrak{p}_{\Theta}^{opp})$ and $L_{\Theta}^{1}=\mathrm{Stab}_{Aut_1(\mathfrak{g})}(\mathfrak{p}_{\Theta})\cap \mathrm{Stab}_{Aut_1(\mathfrak{g})}(\mathfrak{p}_{\Theta}^{opp})$.  For each connected component $G_i\subset Aut_1(\mathfrak{g})$ we choose $\tilde{g}_i \in G_i$. Let $G^{\circ}$ be the identity component of $G$. As $\psi(G^{\circ})=Inn(\mathfrak{g})$ acts transitively on transverse pairs, one can further pick $g_i\in G^{\circ}$ such that $ \tilde{g_i}(\mathfrak{p}_{\Theta},\mathfrak{p}_{\Theta}^{opp}) = \psi(g_i)(\mathfrak{p}_{\Theta},\mathfrak{p}_{\Theta}^{opp})$. Denote $a_i = \psi(g_i^{-1}) \tilde{g_i}\in L_{\Theta}^{1}$, then $G_i = a_i G_0$, and
\refchange{195}{\changed{
$$
L_{\Theta}^{1} = \bigcup_{0\le i\le k} a_i\psi( L_{\Theta}\cap G^{\circ}),\quad 
P_{\Theta}^{1} = \bigcup_{0\le i\le k} a_i \psi(P_{\Theta}\cap G^{\circ}),\quad
P_{\Theta}^{1,opp} = \bigcup_{0\le i\le k} a_i \psi(P_{\Theta}^{opp}\cap G^{\circ}).
$$
}}

We now prove a technical lemma about simplifying the parametrization expression.
\begin{lemma}\label{l2}
    Let $(\gamma_n)\in \Gamma$ be an unbounded sequence. Assume that there exists distinct $a,b\in \Lambda(\Gamma)$ such that \changed{$\omega_{\gamma_n}\to a$}. Choose a closed arc $[a,b]_{arc}\subset S^1$ joining $a$ and $b$,  and let $(a,b)_{arc}$ denote its interior. Let $E\subset (a,b)_{arc}\cap \Lambda(\Gamma)$ be a closed subset. 

\changed{
Let $(g_n)$ be a sequence in $Aut_1(\mathfrak{g})$ and let $(x_n)$ be a sequence of maps $x_n:E\to U_{\Theta}^{opp,>0}$, satisfying the following properties from \refchange{201}{\changed{Corollary \ref{posconv2}~(2)}}:
    \begin{itemize}
        \item $g_n$ converges to $g\in Aut_1(\mathfrak{g})$.
        \item For any $n\in \mathbb{Z}^{>0}$ and any $p\in E$,
        \[
        \xi^{\Theta}(\changed{\omega_{\gamma_n}},p,b) =g_n(\mathfrak{p}_{\Theta}^{opp},x_n(p)\mathfrak{p}_{\Theta},\mathfrak{p}_{\Theta}). 
        \]
        \item $x_n$ converges uniformly to a continuous map $x_{\infty}:E\to U_{\Theta}^{opp,>0}$. And for any $n\in \mathbb{Z}^{>0}\cup \{\infty\}$, and any $p,q\in E$ such that $(a,p,q,b)$ is positive, \refchange{202}{\changed{we have}}
        \[
        x_n(q)^{-1}x_n(p)\in U_{\Theta}^{opp,>0}.
        \]
    \end{itemize}
}

    If $\rho(\gamma_n) = \mu_n l_n\nu_n$ is a Cartan decomposition of $\rho(\gamma_n)$, then for $n\gg1$, there exist $u_n^{-}\in U_{\Theta}^{opp}$, $l'_n\in L_{\Theta}^{1}$ and continuous maps $x'_n: E\to U_{\Theta}^{opp,>0}$ such that $u_n^{-},l'_n$ \refchange{196}{\changed{are}} bounded, $x'_n$ uniformly converges to $x_{\infty}:E\to U_{\Theta}^{opp,>0}$, and the following holds
    \begin{enumerate}
        \item  For any $p\in E$
                $$
                \rho(\gamma_n)\xi^{\Theta}(p) = \mu_n l_n u_n^{-} l_n'x'_n(p)\mathfrak{p}_{\Theta}.
                $$

        \item  Moreover, if  $p,q\in E$ and $(a,p,q,b)$ is positive, then for any $n\in \mathbb{Z}^{>0}$, \refchange{199}{\changed{we have}} $x'_n(q)^{-1}x'_n(p)\in U_{\Theta}^{opp,>0}$.

        \item   Furthermore, if $\nu_n$ converges in $K$, then $u_n^{-}$ converges in $U_{\Theta}^{opp}$ \refchange{200}{\changed{and}} $l'_n$ converges in $L_{\Theta}^{1}$.
    \end{enumerate}

\end{lemma}

\begin{proof}

    \changed{As $\Gamma$ is discrete and $\gamma_n$ is unbounded, $\gamma_n^{-1}(b_0)$ is also unbounded in $\mathbb{D}$. And $\omega_{\gamma_n}\to a$ implies that $\gamma_n^{-1}(b_0)\to a$.}

    Recall that $\Theta-$positive representations are $\Theta$-transverse (Proposition~\ref{thm17}). The Cartan property \refchange{203}{\changed{(Proposition \ref{cartanproperty})}} implies that $\nu_n^{-1} \mathfrak{p}_{\Theta}^{opp}\to \xi^{\Theta}(a)$. As $\omega_{\gamma_n}\to a$, we also have $ g_n \mathfrak{p}_{\Theta}^{opp}\to \xi^{\Theta}(a) $. So $\big(\psi(\nu_n) g_n \big)\mathfrak{p}_{\Theta}^{opp}\to \mathfrak{p}_{\Theta}^{opp}$.  As $g_n$ converges and $\nu_n$ is bounded, we can see that there exists a compact subset $K'\subset P_{\Theta}^{1,opp}$, such that any converging subsequence of $\psi(\nu_{n_k}) g_{n_k}$ will converge to some $k'\in K'$. So for any open neighborhood $U'$ of $K'$ in $Aut_1(\mathfrak{g})$, there exists $N_{U'}\in \mathbb{Z}^{>0}$ such that if $n>N_{U'}$, then $\psi(\nu_n) g_n\in U'$.

Now we construct the sequences $u_n^-$ and $l'_n$. Note that $P_{\Theta}^{1,opp}P_{\Theta}^{1}$ is an open subset of $Aut_1(\mathfrak{g})$ (it is the union of the coset of the Bruhat cell, i.e., \refchange{205}{\changed{the union of  $a_{i}a_j\psi(P_{\Theta}^{opp}P_{\Theta}\cap G^{\circ}),0\le i,j\le k$}}). So for $n\gg1$, $\psi(\nu_n)g_n\in P_{\Theta}^{1,opp}P_{\Theta}^{1}$. Applying the usual Levi decomposition (Proposition \ref{Levi}) to these cosets, we can find a sequence $(u_n^{-},l'_n, u_n^{+})\in U_{\Theta}^{opp}\times L_{\Theta}^{1}\times U_{\Theta}$ such that $\psi(\nu_n)g_n = \psi(u^{-}_n) l'_n \psi(u^{+}_n)$, $u_n^{+}\to id$, and $u_n^-, l'_n$ are bounded sequences. Note that if $\nu_n$ converges, then clearly $u_n^{-},l'_n$ can be chosen to converge too. This proves item~(3) in the statement of the lemma.

Now we have for any $p\in E$,
 $$
 \rho(\gamma_n)\xi^{\Theta}(p)= \mu_n l_n u_n^{-} l_n' u_n^{+}x_n(p) \mathfrak{p}_{\Theta}.
 $$
\refchange{206}{\changed{We omit the notation $\psi$ here, since in Section~\ref{assumption} we noted that $\psi(g)\mathfrak{p}_{\Theta}$ may be abbreviated as $g\mathfrak{p}_{\Theta}$ whenever $\psi(G) \subset Aut_1(\mathfrak{g})$.}} \refchange{209}{\changed{It should be noted that although $u_n^+ \to id$, we can not directly claim that $u_n^+ x_n(p)\in U_{\Theta}^{opp,>0}$ because $u_n^+\not \in U_{\Theta}^{opp}$. To handle this, we reverse the coordinate so as to absorb $u_n^+$, and thus get the desired $x'_n$, as follows:}}

Theorem \ref{thm11} (7) implies that we can \refchange{208}{\changed{change the coordinates from $U_{\Theta}^{opp,>0}\mathfrak{p}_{\Theta}$ to $U_{\Theta}^{>0}\mathfrak{p}_{\Theta}^{opp}$ to parameterize the positive configurations.}} That is, we can find a sequence of continuous maps $z_n:E\to U_{\Theta}^{>0}$ which uniformly converges to a continuous map $z_{\infty}:E\to U_{\Theta}^{>0}$, such that for any $p\in E$ and any $k\in \mathbb{Z}^{>0}\cup \{\infty\}$, we have
$$
x_n(p) \mathfrak{p}_{\Theta} = z_n(p)\mathfrak{p}_{\Theta}^{opp}.
$$
Since $p\mapsto x_n(p) \mathfrak{p}_{\Theta}$ is a positive map on $E$, so is $p\mapsto z_n(p)\mathfrak{p}_{\Theta}^{opp}$. So for any $n\in \mathbb{Z}^{>0}\cup \{\infty\}$ and any $p,q\in E$ such that $(a,p,q,b)$ is positive, \refchange{207}{\changed{we have}}
$$
z_n(p)^{-1}z_n(q)\in U_{\Theta}^{>0}.
$$

Now for any $p\in E$, we have
$$
u_n^{+}x_n(p)\mathfrak{p}_{\Theta}=u_n^{+}z_n(p)\mathfrak{p}_{\Theta}^{opp}.
$$
Note that $E\subset (a,b)_{arc}$ is a closed (thus compact) subset, which together with $z_n\to z_{\infty}$ implies that $\cup_{n \in \mathbb{Z}} z_n(E)$ is contained in a compact subset of $U_{\Theta}^{>0}$. Also note that $U_{\Theta}^{>0}$ is an open subset of $U_{\Theta}$, so $u_n^{+}\to id$ implies that for $n$ large enough, $u_n^{+} z_n(p)\in U_{\Theta}^{>0}$ holds for any $p\in E$. \refchange{209}{\changed{Again apply the change of coordinates between $U_{\Theta}^{opp,>0}\mathfrak{p}_{\Theta}$ and $U_{\Theta}^{>0}\mathfrak{p}_{\Theta}^{opp}$ as above,}} we uniquely determine $x'_n:E\to U_{\Theta}^{opp,>0}$ by requiring 
$$
x'_n(-) \mathfrak{p}_{\Theta} =u_n^{+}z_n(-)\mathfrak{p}_{\Theta}^{opp}.
$$
As a result, we have for any $p\in E$,
$$
\rho(\gamma_n)\xi^{\Theta}(p) = \mu_n l_n u_n^{-} l_n'x'_n(p)\mathfrak{p}_{\Theta}
$$
as required in item~(1) of the statement.

\refchangenew{198}{\changednew{Since $u_n^+\to id$ and $z_n$ converges to $z_{\infty}$ uniformly, the map $p\mapsto x'_n(p)\mathfrak{p}_{\Theta} =  u_n^{+}z_n(p)\mathfrak{p}_{\Theta}^{opp},p\in E$ converges to the map $p\mapsto  z_{\infty}(p)\mathfrak{p}_{\Theta}^{opp}, p\in E$ uniformly, which is exactly \(p\mapsto x_{\infty}(p) \mathfrak{p}_{\Theta}, p \in E \), thus $x'_n$ converges to $x_{\infty}$ uniformly.}} On the other hand, the map $p\mapsto u_n^+z_n(p)\mathfrak{p}_{\Theta}^{opp}$ is a positive map on $E$, so is $p\mapsto x'_n(p)\mathfrak{p}_{\Theta}$, which implies item~(2) in the statement.
\end{proof}

\begin{proof}[Proof of Theorem \ref{thm12}]
    If the theorem is false, then there exists a $d>0$, $\alpha\in \Theta$ and sequences $p_n,q_n\in \Lambda(\Gamma),\gamma_n\in \Gamma$ such that the triple $(\changed{\omega_{\gamma_n}},p_n,q_n)$ is $d-$bounded and either
    $$
    d_{\alpha}(\rho(\gamma_n)\xi^{\alpha}(p_n),\rho(\gamma_n)\xi^{\alpha}(q_n))\le \frac{1}{n} e^{-\alpha(\kappa(\rho(\gamma_n)))}
    $$
    or
    $$
    d_{\alpha}(\rho(\gamma_n)\xi^{\alpha}(p_n),\rho(\gamma_n)\xi^{\alpha}(q_n))\ge n e^{-\alpha(\kappa(\refchange{212}{\changed{\rho(\gamma_n)}}))}
    $$
    holds for all $n$.

   Take a subsequence so we can assume that $(\changed{\omega_{\gamma_n}},p_n,q_n)$ converges to $(a,p,q)$ which is still $d-$bounded.  And after taking a subsequence, we assume $\mu_n\to \mu,\nu_n\to \nu$ where $\rho(\gamma_n) = \mu_nl_n \nu_n$ is a Cartan decomposition. 

   We first note that $\gamma_n$ is unbounded. Otherwise,  $d_{\alpha}(\rho(\gamma_n)\xi^{\alpha}(p_n),\rho(\gamma_n)\xi^{\alpha}(q_n))\ge n e^{-\alpha(\kappa(\refchange{215(a)}{\changed{\rho(\gamma_n)}}))}$ can not hold for infinitely many $n$ as left hand side is bounded and right hand side diverges.  Similarly $d_{\alpha}(\rho(\gamma_n)\xi^{\alpha}(p_n),\rho(\gamma_n)\xi^{\alpha}(q_n))\le \frac{1}{n} e^{-\alpha(\kappa(\rho(\gamma_n)))}$ can not hold for infinitely many $n$ as left hand side is bounded away from $0$ but right hand side converges to $0$. \changed{So $\gamma_n^{-1}(b_0)$ is also unbounded in $\mathbb{D}$. Now $\omega_{\gamma_n}\to a$ implies that $\gamma_n^{-1}(b_0)\to a$ too.}

   \refchange{215(b), 215(c)}{\changed{Pick $b\in \Lambda(\Gamma)$ such that $(a,p,q,b)$ is positive. By Lemma \ref{l2}, there exist}} converging sequences $u_n^{-}\in U_{\Theta}^{opp},l'_n\in L_{\Theta}^{1},x'_n,y'_n\in U_{\Theta}^{opp,>0}$ where $x_n'\to x'\in U_{\Theta}^{opp,>0}, y'_n\to y'\in U_{\Theta}^{opp,>0}$ such that
    $$
 \rho(\gamma_n)(\xi^{\Theta}(p_n), \xi^{\Theta}(q_n)) = \mu_n l_n u_n^{-} l_n'(x'_ny'_n\mathfrak{p}_{\Theta},x'_n \mathfrak{p}_{\Theta}).
    $$
We first suppose $l'_n=\psi(l''_n)$ for some \refchange{215(d)}{\changed{$l''_n\in L_{\Theta}^{\circ}$}} so that $l'_n$ preserves $U_{\Theta}^{opp,>0}$. 

As $l_n'$ converges (thus bounded), \refchange{215(e)}{\changed{define $x''_n = \mathsf{c}_{l''_n} (x'_n), y''_n = \mathsf{c}_{l''_n} (y'_n)$ (recall that for any $g\in G$, $\mathsf{c}_g(\cdot)$ denotes the $g$-conjugate action on $G$), so $x''_n$ and $y''_n$ still converge in $U_{\Theta}^{opp,>0}$.}} So we have 
$$
\rho(\gamma_n)(\xi^{\Theta}(p_n), \xi^{\Theta}(q_n)) = \mu_n l_n u_n^{-} (x''_n y''_n \mathfrak{p}_{\Theta},x''_n \mathfrak{p}_{\Theta})
$$
$$
=\mu_n \mathsf{c}_{l_n}(u_n^{-}) (\mathsf{c}_{l_n}(x''_n) \mathsf{c}_{l_n}(y''_n)\mathfrak{p}_{\Theta}, \mathsf{c}_{l_n}(x''_n)\mathfrak{p}_{\Theta}).
$$
By Proposition \ref{thm17} $\rho$ is $\Theta$-transverse. Since $\gamma_n$ is unbounded and $\gamma_n^{-1}(b_0)\to a$, by Proposition \ref{cartanproperty}~(2) we have \refchange{215(f)}{\changed{$\min_{\alpha\in \Theta}\alpha (\kappa(l_n)) = \min_{\alpha\in \Theta}\alpha (\kappa(\rho(\gamma_n)))  \to \infty$. So from the definition of $\mathfrak{u}_{\Theta}^{opp}$, we have for any $v\in \mathfrak{u}_{\Theta}^{opp}$, $Ad_{l_n} (v)\to 0$.}} As $U_{\Theta}^{opp} = \exp \mathfrak{u}_{\Theta}^{opp}$, the convergence of $u_n^{-}, x''_n, y''_n$ in $U_{\Theta}^{opp}$ imply that $\mathsf{c}_{l_n}(u_n^{-}), \mathsf{c}_{l_n}(x''_n), \mathsf{c}_{l_n}(y''_n)$ converge to id. As there is an equivariant projection $\pi^{\Theta}_{\alpha}$ from $\mathcal{F}_{\Theta}$ to $\mathcal{F}_{\alpha}$, we see that
$$
(\rho(\gamma_n)\xi^{\alpha}(p_n), \rho(\gamma_n)\xi^{\alpha}(q_n)) = \mu_n \mathsf{c}_{l_n}(u_n^{-}) (\mathsf{c}_{l_n}(x''_n) \mathsf{c}_{l_n}(y''_n)\mathfrak{p}_{\alpha}, \mathsf{c}_{l_n}(x''_n)\mathfrak{p}_{\alpha})
$$

Recall that $\mathfrak{u}_{\{\alpha\}}^{opp}$ is the nilpotent subalgebra of the parabolic subgroup $P_{\alpha}^{opp}$. The exponential map
$$
\exp:\mathfrak{u}_{\{\alpha\}}^{opp}\to \mathcal{F}_{\alpha}
$$
$$
X\mapsto (\exp X) \mathfrak{p}_{\alpha}
$$
is a diffeomorphism onto an open set $\tilde{U}$ containing \refchange{217}{\changed{$\mathfrak{p}_{\alpha}$}}. So the Riemannian norm $\|\cdot \|$ on $\mathfrak{u}_{\{\alpha\}}^{opp}\cong T_{\mathfrak{p}_{\alpha}} \mathcal{F}_{\alpha}$ defines a Riemannian metric $d'_{\alpha}$ on $\tilde{U}$. \refchange{218}{\changed{Pick a sufficiently small precompact neighborhood $U \subsetneq \tilde{U}$ of $\mathfrak{p}_{\alpha}$}}, we may assume $d'_{\alpha}$ and $d_{\alpha}$ are bi-Lipschitz equivalent on $U$. \changed{As a result of $(\rho(\gamma_n)\xi^{\alpha}(p_n), \rho(\gamma_n)\xi^{\alpha}(q_n)) = \mu_n \mathsf{c}_{l_n}(u_n^{-}) (\mathsf{c}_{l_n}(x''_n) \mathsf{c}_{l_n}(y''_n)\mathfrak{p}_{\alpha}, \mathsf{c}_{l_n}(x''_n)\mathfrak{p}_{\alpha})$, $\mu_n\to \mu$, and $\mathsf{c}_{l_n}(u_n^{-}), \mathsf{c}_{l_n}(x''_n), \mathsf{c}_{l_n}(y''_n)$ converging to id, we have}
$$
d_{\alpha}(\rho(\gamma_n) \xi^{\alpha}(p_n), \rho(\gamma_n) \xi^{\alpha}(q_n))
$$
$$
\approx d_\alpha(\mathsf{c}_{l_n}(x''_ny''_n)\mathfrak{p}_{\alpha},\mathsf{c}_{l_n}(x''_n)\mathfrak{p}_{\alpha})
$$
$$
\approx d'_{\alpha}(\mathsf{c}_{l_n}(x''_ny''_n)\mathfrak{p}_{\alpha}, \mathsf{c}_{l_n}(x''_n)\mathfrak{p}_{\alpha})
$$
$$
= \|{\tilde{\pi}^{opp}_{\alpha}}(\log (\mathsf{c}_{l_n}(x''_ny''_n))- \log (\mathsf{c}_{l_n}(x''_n)))  \|.
$$
\changed{Here $\tilde{\pi}_{\alpha}^{opp}$ denotes the linear projection from $\mathfrak{u}_{\Theta}^{opp}$ to $\mathfrak{u}_{\{\alpha\}}^{opp}$ and the notation $A_n\approx B_n$ means there exists a constant $C>1$ such that $\frac{1}{C}B_n\le A_n\le C B_n$.  The last equality holds because the differential of the natural projection $\mathcal{F}_{\Theta}\to \mathcal{F}_{\alpha}$, $g\mathfrak{p}_{\Theta}\mapsto g\mathfrak{p}_{\alpha}$, at the point $\mathfrak{p}_{\Theta}$ is exactly $\tilde{\pi}^{opp}_{\alpha}$.} 

\refchange{220}{\changed{As $\mathsf{c}_{l_n}(x''_n), \mathsf{c}_{l_n}(y_n'')\to \mathrm{id}$, both $\|v^{opp}_{\mathsf{c}_{l_n}(x''_ny''_n)}\|$ and $\|v^{opp}_{\mathsf{c}_{l_n}(x_n'')}\|$ converge to $0$. Applying Theorem~\ref{thm10}~(2)~(a), we have
\[
\lim_{n \to \infty} \frac{\|\tilde{\pi}^{opp}_{\alpha}(\log (\mathsf{c}_{l_n}(x''_ny''_n))\|}{\|\pi_{\alpha}^{opp}(v^{opp}_{\mathsf{c}_{l_n}(x''_ny''_n)}))\|} = \lim_{n \to \infty} \frac{\|\tilde{\pi}^{opp}_{\alpha}(\log (\mathsf{c}_{l_n}(x''_n)))\|}{\|\pi_{\alpha}^{opp}(v^{opp}_{\mathsf{c}_{l_n}(x''_n)})\|}=1,
\]
}}
so for $n\gg 1$,
$$
\|{\tilde{\pi}^{opp}_{\alpha}}(\log (\mathsf{c}_{l_n}(x''_ny''_n))- \log (\mathsf{c}_{l_n}(x''_n)))  \|\approx \|\pi_{\alpha}^{opp}(v^{opp}_{\mathsf{c}_{l_n}(x''_ny''_n)})-\pi_{\alpha}^{opp}(v^{opp}_{\mathsf{c}_{l_n}(x''_n)})\|,
$$
and formal group law (Lemma \ref{lem8}) shows that
$$
\|\pi_{\alpha}^{opp}(v^{opp}_{\mathsf{c}_{l_n}(x''_ny''_n)})-\pi_{\alpha}^{opp}(v^{opp}_{\mathsf{c}_{l_n}(x''_n)})\| = \| \pi_{\alpha}^{opp}(v^{opp}_{\refchange{221}{\changed{\mathsf{c}_{l_n}}}(y_n'')}) \| = \| Ad_{l_n} \pi^{opp}_{\alpha}(v^{opp}_{y_n''})\|.
$$

Note that $y_n''$ converges in $U_{\Theta}^{opp,>0}$, so $\pi_{\alpha}^{opp}(v_{y''_n}^{opp})$ converges in $\mathring{c}_{\alpha}^{opp}$. Also recall that by our assumption $l_n \in \psi (L_{\Theta}^{\circ})$, $\mathring{c}_{\alpha}^{opp}$ is an open cone preserved by $Ad_{l_n}$ in the weight subspace
$$
\mathfrak{u}_{\alpha}^{opp} = \sum_{\beta\in -\alpha - \mathrm{Span}_{\mathbb{Z}_{\ge 0}}(\Delta-\Theta)} \mathfrak{g}_{\beta}.
$$
So the unique highest weight with respect to the adjoint action of the Cartan subalgebra $\mathfrak{a}$ is $-\alpha\in \mathfrak{a}^*$. So we see that the maximal modulus of the eigenvalues of $Ad_{l_n}$ is $\lambda_n:= e^{-\alpha(\kappa(l_n))} = e^{-\alpha(\kappa(\rho(\gamma_n)))} $.

Applying Lemma \ref{lem7} to the cone $\mathring{c}_{\alpha}^{opp}$ and the sequence $Ad_{l_n}$, there exists a $C>1$ such that
$$
\frac{1}{C} e^{-\alpha(\kappa(\rho(\gamma_n))))} \| \pi_{\alpha}^{opp}(v_{y''_n}^{opp}) \| \le \| Ad_{l_n} (\pi_{\alpha}^{opp}(v_{y''_n}^{opp}))\|\le C e^{-\alpha(\kappa(\rho(\gamma_n)))} \| \pi_{\alpha}^{opp}(v_{y''_n}^{opp}) \|.
$$
Using the convergence of $y''_n$ in $U_{\Theta}^{opp,>0}$ again, there exists a $C'>1$ such that
$$
\frac{1}{C'} e^{-\alpha(\kappa(\rho(\gamma_n)))}\le d_{\alpha}(\rho(\gamma_n)\xi^{\alpha}(p_n), \rho(\gamma_n)\xi^{\alpha}(q_n))\le C' e^{-\alpha(\kappa(\rho(\gamma_n)))}
$$
which is a contradiction. 

 \refchange{222}{\changed{Finally, we consider the general case where $l'_n\in L_{\Theta}^{1}-\psi(L_{\Theta}^{\circ})$ might happen.}} Guichard-Wienhard~\cite[Corollary 5.3]{GW2} claims that $L_{\Theta}^{1}$ preserves $\mathring{c}_{\alpha}^{opp}\cup - \mathring{c}_{\alpha}^{opp}$ for each $\alpha\in \Theta$. \refchange{223}{\changed{So although the conjugation of $l'_n$ may no longer preserve $U_{\Theta}^{opp,>0}$ (since there may exist $\alpha \in \Theta$ such that $Ad_{l'_n} \mathring{c}_{\alpha} = -\mathring{c}_{\alpha}$), it still conjugates $U_{\Theta}^{opp,>0}$ to another positive semigroup obtained by replacing some $\mathring{c}_{\alpha}$ with $-\mathring{c}_{\alpha}$ in Definition~\ref{def7}. Hence, the above discussion still applies. }}

\end{proof}

\subsection{Shadow Lemmas}\label{ssec6}
We define the notions of shadows.
\begin{definition}\label{def12}
Let $\mathbb{D}$ be the Kleinian hyperbolic disc with Hilbert metric $d_{\mathbb{D}}$. Given $b_0\in \overline{\mathbb{D}},z\in \mathbb{D}$, define the \emph{shadow} of radius $r$ centered at $z$ from $b_0$ as 
$$
\mathcal{O}_r(b_0,z):=\{x|x\in \partial \mathbb{D}, \changed{\overrightarrow{b_0x}}\cap \overline{B_{d_{\mathbb{D}}}(z,r)}\not = \emptyset \},
$$
where $\changed{\overrightarrow{b_0x}}$ denotes the geodesic (when $b_0\in \partial \mathbb{D}$) or the geodesic ray (when $b_0\in \mathbb{D}$) from $b_0$ passing through $x$, and $B_{d_{\mathbb{D}}}(z,r)$ denote the ball of radius $r$ centered at $z$ with respect to the metric $d_{\mathbb{D}}$. 
\end{definition}
Suppose $\Gamma\subset \mathsf{PSL}(2,\mathbb{R})$ is a discrete subgroup with limit set $\Lambda(\Gamma)$. Denote
$$
\hat{\mathcal{O}}_r(b_0,z) = \mathcal{O}_r(b_0,z)\cap \Lambda(\Gamma).
$$
The \emph{conical limit set} $\Lambda_c(\Gamma)$ is defined as
\[\Lambda_c(\Gamma):=\{x\in \Lambda(\Gamma)\mid\exists \refchange{224}{\changed{\text{ a sequence of }\gamma_n}}\in \Gamma \text{ and }R>0, \text{ s.t. }\gamma_n(b_0)\to x  \text{ and } d_{\mathbb{D}}(\gamma_n(b_0),\changed{\overrightarrow{b_0x}})\le R\}\] and note that $\Lambda_c(\Gamma)$ is independent of $b_0$. The \emph{$R-$uniformly conical limit set} with respect to $b_0\in\mathbb{D}$ is defined as
$$
\Lambda_{b_0,R}(\Gamma) = \{x\mid x\in \Lambda(\Gamma), \changed{\overrightarrow{b_0x}}\subset \Gamma(\overline{B_{d_{\mathbb{D}}}(b_0,R}))\}.
$$
Denote $\hat{\mathcal{O}}_{r,R}(b_0,z) = \mathcal{O}_r(b_0,z)\cap \Lambda_{b_0,R}(\Gamma)$.

For any subset $\Theta\subset \Delta$, in the cases where there is a map $\xi^{\Theta}: \Lambda(\Gamma)\to \mathcal{F}_{\Theta}$, define $\mathcal{O}^{\Theta}_r(b_0,z) := \xi^{\Theta}(\hat{\mathcal{O}}_r(b_0,z))$ and $\mathcal{O}^{\Theta}_{r,R}(b_0,z) := \xi^{\Theta}(\hat{\mathcal{O}}_{r,R}(b_0,z))$.

\vspace{3mm}

Assume $\Gamma$ is non-elementary. Fix \(b_{0}\in\mathbb{D}\) and \(R\ge0\). We say that \(r>0\) is a \emph{sufficiently large radius} with respect to \(b_0\) and \(R\) if there exists \(\epsilon>0\) such that for any \(\gamma\in \Gamma\) and \(z\in \mathbb{D}\), the conditions \(d_{\mathbb{D}}(b_0,z)>r\) and \(d_{\mathbb{D}}(z,\gamma(b_0))\le R\) imply that there exist \(x,y \in \mathcal{O}_{r}(\gamma^{-1}(b_0),\gamma^{-1}(z))\cap \Lambda(\Gamma)\) with \(d_{S^1}(x,y)>\epsilon\).

\begin{claim}
\refchangenew{225,227}{\changednew{For any \(R\) and \(b_0\) there exists a sufficiently large radius \(r\).}}       
\end{claim}

\begin{proof}
Suppose the required \(r\) does not exist. Then there exist a diverging sequence \(r_n\to\infty\), a sequence \(\gamma_n\in\Gamma\), and a sequence \(z_n\in\mathbb{D}\) such that \(d_{\mathbb{D}}(b_0,z_n)>r_n\), \(d_{\mathbb{D}}(z_n,\gamma_n(b_0)) \le R\), and the \(d_{S^1}\)-diameter of \(\refchangenew{225,227}{\changednew{\hat{\mathcal{O}}_{r_n}}}(\gamma_n^{-1}(b_0),\gamma_n^{-1}(z_n))\) is not greater than \(1/n\).

Since \(d_{\mathbb{D}}(b_0,\gamma_n(b_0))\ge d_{\mathbb{D}}(b_0,z_n)-d_{\mathbb{D}}(z_n,\gamma_n(b_0))\ge r_n-R\to\infty\), we may pass to a subsequence so that \(\gamma_n^{-1}(b_0)\to a\in\Lambda(\Gamma)\). Meanwhile, \(d_{\mathbb{D}}(\gamma_n^{-1}(z_n),b_0)=d_{\mathbb{D}}(z_n,\gamma_n(b_0))\le R\), so up to another subsequence we assume \(\gamma_n^{-1}(z_n)\to z\in\mathbb{D}\). But as \(r_n\to\infty\), the shadows \refchangenew{225,227}{\changednew{\(\hat{\mathcal{O}}_{r_n}(\gamma_n^{-1}(b_0),\gamma_n^{-1}(z_n))\) increase to \(\Lambda(\Gamma)-\{a\}\)}}. As \(|\Lambda(\Gamma)|\ge 3\), we get a contradiction.
\end{proof}

{\color{blue}
Let \(r\) be a sufficiently large radius for \(b_0,R\). For any
\(z\in \mathbb{D}\) and \(\gamma \in \Gamma\) such that \(d_{\mathbb{D}}(b_0,\gamma(b_0))>r\) and
\(d_{\mathbb{D}}(\gamma(b_0),z)\le R\), we identify \(\mathcal{O}_r(b_0,z)\) with a closed interval. Then
\(\mathcal{O}_r(b_0,z)\cap \Lambda(\Gamma)\) is a compact subset of $\mathcal{O}_r(b_0,z)$. With respect to the clockwise order on $S^1$, define the \emph{coarse end points} by
\[
x_{z,r}=\inf_{t\in \mathcal{O}_r(b_0,z)\cap \Lambda(\Gamma)} t,\qquad
y_{z,r}=\sup_{t\in \mathcal{O}_r(b_0,z)\cap \Lambda(\Gamma)} t.
\]
By compactness, both extrema are attained, so \(x_{z,r},y_{z,r}\in \Lambda(\Gamma)\cap\mathcal{O}_r(b_0,z)\).

\refchangenew{225,227}{\changednew{Moreover, note that for fixed $r$, \(d_{S^1}\big(\gamma^{-1}(x_{z,r}),\gamma^{-1}(y_{z,r})\big)>\epsilon'\) for some \(\epsilon'>0\) independent of $\gamma$ and $z$. If this were false, then similar to the proof of the existence of a sufficiently large radius, we assume there exist sequences $\gamma_n\in \Gamma$ and $z_n\in\mathbb{D}$ such that
\begin{enumerate}
    \item $d_{\mathbb{D}}(b_0,z_n)>r$ and $d_{\mathbb{D}}(z_n,\gamma_n(b_0))\le R$.
    \item $\gamma_n^{-1}(b_0)\to a\in \Lambda(\Gamma)$ and $\gamma_n^{-1}(z_n)\to z\in \mathbb{D}$.
    \item $d_{S^1}\big(\gamma_n^{-1}(x_{z_n,r}),\gamma_n^{-1}(y_{z_n,r})\big)<\frac{1}{n}$.
\end{enumerate}
By the definition of a sufficiently large radius, the shadows $\hat{\mathcal{O}}_{r}(\gamma_n^{-1}(b_0),\gamma_n^{-1}(z_n))$ contain two points whose distance is bounded from below by some $\epsilon>0$. Thus, there also exist two such points in $\hat{\mathcal{O}}_r(a,z)$, implying that the endpoints of $\hat{\mathcal{O}}_r(a,z)$ are distinct. However, $\gamma_n^{-1}(x_{z_n,r})$ and $\gamma_n^{-1}(y_{z_n,r})$ converge to these distinct endpoints; consequently, the distance between them is bounded from below, a contradiction.
}}

\vspace{3mm}

We prove the following useful proposition:
\begin{proposition}\label{prop4}
Let $\Gamma\subset \mathsf{PSL}(2,\mathbb{R})$ is a \changed{non-elementary} discrete subgroup and $\rho:\Gamma\to G$ is a $\Theta$-transverse representation with the regular distortion property. For any $b_0\in \mathbb{D}$, $r>0$ and $R\ge 0$ where \changed{$r$ is a sufficiently large radius for $b_0$ and $R$},
there exists a constant $C>1$ depending only on $b_0,r,R$ such that for any  $\alpha\in \Theta$, $\gamma\in\Gamma$, and $z\in \mathbb{D}$ with $d_{\mathbb{D}}(b_0,z)>r$ and $d_{\mathbb{D}}(\gamma(b_0),z)\le R$, \refchange{226}{\changed{we have}}
$$
\frac{1}{C} e^{-\alpha(\kappa(\rho(\gamma)))}\le d_{\alpha}(\xi^{\alpha}(x_{z,r}),\xi^{\alpha}(y_{z,r}))\le C e^{-\alpha(\kappa(\rho(\gamma)))}.
$$
\end{proposition}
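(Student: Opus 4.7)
The plan is to reduce the proposition to the regular distortion property (Theorem \ref{thm12}) by pulling back the shadow under $\gamma^{-1}$. Set $p := \gamma^{-1}(x_{z,r})$ and $q := \gamma^{-1}(y_{z,r})$; then $p, q$ are the endpoints of the shadow $\mathcal{O}_r(\gamma^{-1}(b_0), \gamma^{-1}(z))$, and the hypotheses translate into $d_{\mathbb{D}}(\gamma^{-1}(b_0), \gamma^{-1}(z)) > r$ and $d_{\mathbb{D}}(b_0, \gamma^{-1}(z)) < R$. Since $\xi^{\alpha}$ is $\rho$-equivariant,
\[
d_{\alpha}(\xi^{\alpha}(x_{z,r}), \xi^{\alpha}(y_{z,r})) = d_{\alpha}(\rho(\gamma)\xi^{\alpha}(p), \rho(\gamma)\xi^{\alpha}(q)),
\]
so Theorem \ref{thm12} delivers the desired estimate with constant $C(d)$ provided I can exhibit some $d = d(r,R) > 0$ such that the triple $(\gamma^-, p, q)$ is $d$-bounded for every admissible pair $(\gamma, z)$ with $d_{\mathbb{D}}(b_0, \gamma(b_0))$ sufficiently large.

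I would establish this uniform $d$-boundedness by a compactness–contradiction argument. If no such $d$ existed, pick a sequence of admissible pairs $(\gamma_n, z_n)$ with $\gamma_n \to \infty$ in $\Gamma$ and $\min\{d_{S^1}(\gamma_n^-, p_n), d_{S^1}(\gamma_n^-, q_n), d_{S^1}(p_n, q_n)\} \to 0$. After passing to subsequences, assume $\gamma_n(b_0) \to x^\infty$ and $\gamma_n^{-1}(b_0) \to y^\infty$ on $\partial\mathbb{D}$; Lemma \ref{lem3} then gives $\gamma_n^- \to y^\infty$. Since $d_{\mathbb{D}}(b_0, \gamma_n^{-1}(z_n)) = d_{\mathbb{D}}(\gamma_n(b_0), z_n) < R$, the points $\gamma_n^{-1}(z_n)$ stay in the compact $\overline{B(b_0, R)}$ and, after a further subsequence, converge to some $z^\infty$. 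The pulled-back shadows $\mathcal{O}_r(\gamma_n^{-1}(b_0), \gamma_n^{-1}(z_n))$ then converge to the shadow of $B(z^\infty, r)$ cast from the boundary point $y^\infty$, which is a closed arc of $\partial\mathbb{D}$ with strictly positive angular diameter and strictly positive angular distance from $y^\infty$ itself, contradicting the degeneration.

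For the remaining $\gamma$ with $d_{\mathbb{D}}(b_0, \gamma(b_0))$ bounded, which form a finite set by discreteness of $\Gamma$, I would handle each individually. For such a $\gamma$, the map $z \mapsto d_{\alpha}(\xi^{\alpha}(x_{z,r}), \xi^{\alpha}(y_{z,r}))$ is continuous on the compact region of admissible $z$ and strictly positive there (the shadow is nondegenerate whenever $d_{\mathbb{D}}(b_0, z) > r$ and $\xi^{\alpha}$ is injective on $\Lambda(\Gamma)$), while $e^{-\alpha(\kappa(\rho(\gamma)))}$ is a fixed positive constant; combining the resulting finitely many constants with the large-$\gamma$ constant yields the final $C = C(r, R)$. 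The hard part will be the uniform positive lower bounds in the compactness step — for both the angular diameter of the limit shadow and its angular distance from $y^\infty$ — which I would deduce from continuity and strict positivity of those quantities on the compact parameter space $\partial\mathbb{D} \times \overline{B(b_0, R)}$ of pairs $(y^\infty, z^\infty)$, the decisive dynamical input being the identification $\gamma_n^- \to y^\infty$ supplied by Lemma \ref{lem3}.
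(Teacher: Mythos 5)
Your proposal is correct and follows essentially the same strategy as the paper's own proof: pull back by $\gamma^{-1}$, exploit the compactness of $\overline{B(b_0,R)}$ together with Lemma \ref{lem3} to conclude that $\gamma_n^-$, the pulled-back shadow endpoints $p_n,q_n$, and their limits form a uniformly $d$-bounded triple, and then apply the regular distortion property. The only difference is organizational: the paper runs a single global contradiction in which the case of bounded $\gamma_n$ is ruled out directly (the two sides of the failed estimate cannot both stay comparable when $e^{-\alpha(\kappa(\rho(\gamma_n)))}$ is bounded away from $0$ and $\infty$), whereas you first establish uniform $d$-boundedness for $d_{\mathbb{D}}(b_0,\gamma b_0)$ large and then dispose of the finitely many small $\gamma$ by a separate continuity-on-a-compactum argument; both routes are valid and of comparable length.
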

\begin{proof}
If not, then there exists a sequence $\gamma_n\in \Gamma$, a sequence $z_n\in \mathbb{D}$, and $\alpha\in\Theta$ such that $d_{\mathbb{D}}(b_0, z_n)>r$, $d_{\mathbb{D}}(\gamma_n(b_0),z_n)\le R$, and one of the following holds for all $n$:
$$
d_{\alpha}(\xi^{\alpha}(x_{z_n,r}),\xi^{\alpha}(y_{z_n,r}))> n e^{-\alpha(\kappa(\rho(\gamma_n)))}
$$
$$
d_{\alpha}(\xi^{\alpha}(x_{z_n,r}),\xi^{\alpha}(y_{z_n,r}))<\frac{1}{n} e^{-\alpha(\kappa(\rho(\gamma_n)))}.
$$

Using the same argument as in the proof of Theorem \ref{thm12}, we can see that $\gamma_n$ is unbounded. Pick a subsequence such that \changed{$\omega_{\gamma_n}$} \refchange{228}{\changed{converges}} to a point $a\in S^1$ and \refchange{229}{\changed{from the definition of $\omega_{\gamma_n}$}} we see that $\gamma_n^{-1}(b_0)$ also converge to $a$. As $d_{\mathbb{D}}(\gamma_n^{-1}(z_n),b_0) = d_{\mathbb{D}}(z_n,\gamma_n(b_0))\le R$, \refchange{230}{\changed{$\gamma_n^{-1}(z_n)$ is a uniformly bounded sequence so by taking a subsequence we can assume that}} it converges to some point $z\in \mathbb{D}$.   \refchangenew{225,227}{\changednew{From the paragraph preceding Proposition~\ref{prop4}, the sequences $p_n = \gamma_n^{-1}(x_{z_n,r})$ and $q_n = \gamma_n^{-1}(y_{z_n,r})$ converge \changed{to distinct points} in the shadow $\mathcal{O}_r(a,z)$}}, so we see that for sufficiently large $n$, \changed{$\omega_{\gamma_n},p_n,q_n$} are $d-$bounded for some $d>0$. Since $\rho$ has the regular distortion property, there exists a constant $C>1$ such that:
$$
d_{\alpha}(\xi^{\alpha}(x_{z_n,r}),\xi^{\alpha}(y_{z_n,r}))
$$
$$
=d_{\alpha}(\rho(\gamma_n)\xi^{\alpha}(p_n),\rho(\gamma_n)\xi^{\alpha}(q_n))\in(\frac{1}{C} e^{-\alpha(\kappa(\rho(\gamma_n)))},C e^{-\alpha(\kappa(\rho(\gamma_n)))}),
$$
which is a contradiction.
\end{proof}

 For any $\alpha\in \Theta$, fix a Riemannian metric $d_{\alpha}$ on $\mathcal{F}_{\alpha}$. When $\Lambda(\Gamma) = S^1$ and $\rho:\Gamma\to G$ is a $\Theta$-positive representation, Proposition \ref{prop3} and Remark \ref{rem16} \refchange{231}{\changed{show}} that $\xi^\alpha(\Lambda(\Gamma))$ is a rectifiable curve, so we can define the arc length measure, \refchange{232}{\changed{or namely, the finite and non-zero $1$-dimensional Hausdorff measure $m_{\alpha}$ with respect to the Riemannian metric on $\mathcal{F_{\alpha}}$. (for more details about the properties of rectifiable curves, see for example, \cite[Chapter 3: Section 3.1 and Section 4: Chapter 7: Theorem 2.4]{steinreal})}}.

We have the following corollary:
\begin{corollary}\label{shadowradius}
Let $\Gamma\subset \mathsf{PSL}(2,\mathbb{R})$ be a \changed{non-elementary discrete subgroup}, $G$ be a simple Lie group with $\Theta$-positive structure and $\rho:\Gamma\to G$ be a $\Theta$-positive representation. Fix $b_0\in \mathbb{D}$. For any $r>0, R\ge 0$, there exists a constant $C>1$ such that the following hold:
\begin{enumerate}
    \item 
    If $\Lambda(\Gamma) = S^1$, then for any $\gamma \in \Gamma$ and $z\in \mathbb{D}$, whenever $d_{\mathbb{D}}(\gamma(b_0),z)\le R$, we have
    \[\frac{1}{C} e^{-\alpha(\kappa(\rho(\gamma)))}\le m_{\alpha}(\mathcal{O}_{r}^{\alpha}(b_0,z))\le C e^{-\alpha(\kappa(\rho(\gamma)))}.\]
    \item 
    Let $x = S^1\cap \changed{\overrightarrow{b_0z}}$. \changed{If $x\in \Lambda_{b_0, R}(\Gamma)$, then for any $\gamma \in \Gamma$ and $z\in \mathbb{D}$, whenever  $d_{\mathbb{D}}(\gamma(b_0),z)<r$, we have}
    \[B_{d_{\alpha}}(\xi^{\alpha}(x),\frac{1}{C} e^{-\alpha(\kappa(\rho(\gamma)))})\cap \xi^{\alpha}(\Lambda(\Gamma))\subset \mathcal{O}^{\alpha}_{r}(b_0,z)\subset B_{d_{\alpha}}(\xi^{\alpha}(x),C e^{-\alpha(\kappa(\rho(\gamma)))}). \]
\end{enumerate}
\end{corollary}

\begin{comment}
 \refchange{233}{\changed{A non-rigorous ``proof'' of item~(1) in this corollary is: the positivity of $\rho$ implies that for any $\alpha \in \Theta$, there is a family of open covering of $\xi^{\alpha}(S^1)$ in $\mathcal{F}_{\alpha}$ such that, in each open covering, the covered part of the curve is a connected arc, and for each covered arc one can find a coordinate chart such that the arc is monotonic in some acute convex cone. Applying techniques in Lemma \ref{lem15}, the estimates of the distance between the endpoints in Proposition~\ref{prop4} will imply the estimates in Corollary~\ref{shadowradius}~(1). }}   
\end{comment}

\refchange{234}{{\color{blue}{A proof of item~(1) is given as the proof of Lemma~\ref{dichotomy} \refchangenew{233, 235}{\changednew{(we will explain that $\Lambda(\Gamma)$ is $\alpha$-acute when defining $\alpha$-acute sets (see Definition~\ref{acutesubset} and the discussion following it), so the statement in Lemma~\ref{dichotomy} applies here),}} and a rigorous proof of item~(2) follows from Canary-Zhang-Zimmer~\cite[Proof Theorem 7.1]{CaZhZi} with only minor modifications. For completeness, we include a self-contained proof in Appendix~\ref{append1}.}}}

\begin{comment}
The inclusion
\[
\mathcal{O}^{\alpha}_{r}(b_0,z)\subset B_{d_{\alpha}}\big(\xi^{\alpha}(x),\, C e^{-\alpha(\kappa(\rho(\gamma)))}\big)
\]
is a consequence of \cite[Theorem~5.1]{CaZhZi}. Although the original statement is formulated for transverse representations into $\mathsf{PGL}(d,\mathbb{R})$, Proposition~\ref{linearize} allows us to linearize general transverse representations into that setting, with the flag manifold embedded equivariantly via the adapted representation.

For the reverse inclusion,
\[
B_{d_{\alpha}}\big(\xi^{\alpha}(x),\, \tfrac{1}{C} e^{-\alpha(\kappa(\rho(\gamma)))}\big)
\cap \xi^{\alpha}(\Lambda(\Gamma))
\subset \mathcal{O}^{\alpha}_{r}(b_0,z),
\]
the argument can be reproduced from the proof of \cite[Theorem~7.1]{CaZhZi}. Their theorem is stated for \((1,1,2)\)-hypertransverse representations, but the only place where hypertransversality is used is \cite[Lemma~7.6]{CaZhZi}, about the distortion lower-bound estimate; in our setting, Theorem~\ref{thm12} plays the same role as their Lemma~7.6 in the proof of \cite[Theorem~7.1]{CaZhZi}. The remainder of their proof uses only properties of transverse representations in $\mathsf{PGL}(d,\mathbb{R})$, which still apply to general transverse representations by Proposition~\ref{linearize}.
\end{comment}

\subsection{Finiteness of ergodic components}\label{sectionfinitenessofergodiccomponents}

{\color{red}
\begin{definition}\label{rankofcone}
Let $c$ be an open acute convex cone.  
The \emph{rank} $r(c)$ is defined as the infimum of all positive integers $N$ with the following property:  
whenever $v_{0}, v_1, \dots, v_{N} \in \overline{c} - \{0\}$ satisfy
\[
\sum_{i=0}^{N} v_i \in c,
\]
there exists an index $0 \le j \le N$ such that
\[
\sum_{\substack{0 \le i \le N \\ i \neq j}} v_i \in c.
\]
\end{definition}

If $c \subset \mathbb{R}^d$, then \(r(c) \le d\). This is a direct consequence of the conic version of Carathéodory’s theorem: any vector in the conic hull of a set in $\mathbb{R}^d$ can be expressed as a nonnegative linear combination of at most $d$ vectors from that set (see Barvinok~\cite[Theorem~2.3]{barvinokcourse}). Applied to $\sum_{i=0}^{N} v_i \in c$ with $v_i \in \overline{c} - \{0\}$, one can discard all but $d$ of the $v_i$ while staying in $c$; in particular, for $N \ge d$ there exists $0 \le j \le N$ such that $\sum_{i \ne j} v_i \in c$. Hence $r(c) \le d$.
}

\medskip

Note that for each $\alpha \in \Theta$, the measure $m_{\alpha}$ is quasi-invariant \refchange{235}{\changed{for $\rho(\Gamma)$}}. We obtain:

\begin{theorem}\label{ergodictheoreminpaper}
If $\Gamma \subset \mathsf{PSL}(2,\mathbb{R})$ is a lattice and $\rho: \Gamma \to G$ is a $\Theta$-positive representation, then for any $\alpha \in \Theta$, the group $\rho(\Gamma)$ {\color{red}acts on $(\xi^{\alpha}(\Lambda(\Gamma)),m_{\alpha})$ with at most $r(\mathring{c}_{\alpha}^{opp})$ ergodic components.}
\end{theorem}

{\color{red}The rest of this Subsection is devoted to prove Theorem \ref{ergodictheoreminpaper}}. The key step is proving a restricted version of Shadow Lemma ({\color{red}see Lemma \ref{dichotomy} below}), whose proof is quite similar to the proof of Theorem \ref{thm12}. \changednew{We first prove a fundamental lemma about the tangent vectors of a positive curve.

Let $I\subset S^1$ be a closed arc, and let $\xi^{\Theta}:I\to \mathcal{F}_{\Theta}$ be a continuous positive map. For any $\alpha\in \Theta$, define $\xi^{\alpha}:I\to \mathcal{F}_{\alpha}$ by $\xi^{\alpha} = \pi^{\Theta}_{\alpha}\circ \xi^{\Theta}$, where $\pi^{\Theta}_{\alpha}: \mathcal{F}_{\Theta}\to \mathcal{F}_{\alpha}$ denotes the equivariant projection. Proposition \ref{prop3} shows that $\xi^{\alpha}(I)$ is a rectifiable curve with respect to $d_{\alpha}$; hence, we can define the Lebesgue measure on it, which we still denote by $m_{\alpha}$. Let $L = m_{\alpha}(\xi^{\alpha}(I))$, and let $f:[0,L] \to I$ be the boundary-preserving arc-length parametrization with respect to the \refchangenew{245}{pullback metric} $(\xi^{\alpha})^*d_{\alpha}$, i.e.,
\[
m_{\alpha}\big( \xi^{\alpha}( f([0,t]) ) \big) = t, \qquad \forall\, t \in [0,L].
\]
We refer to Stein-Shakarchi~\cite[Chapter 3: Section 4 and Theorem~4.3]{steinreal} for details regarding arc-length parametrization. \refchangenew{248}{In particular, the reference establishes that the map $t \mapsto \xi^{\alpha}(f(t))$ is differentiable for almost every $t \in [0,L]$ and that its derivative has unit norm almost everywhere.} Consequently, the map $t \mapsto \xi^{\alpha}(f(t))$ is $1$-Lipschitz.

And we assume there exists $g\in Aut_1(\mathfrak{g})$ and $x: I\to U_{\Theta}^{opp,>0}$ such that
\begin{enumerate}
    \item For any $t\in [0,L]$, $\xi^{\Theta}(f(t)) = gx(f(t)) \mathfrak{p}_{\Theta}$.
    \item If $s_2>s_1$, then $x(f(s_1))^{-1}x(f(s_2))\in U_{\Theta}^{opp,>0} $.
\end{enumerate}

\begin{proposition}\label{propositionoftangentvectors}
    With the notation above, the map $t\mapsto \pi_{\alpha}^{opp}(v^{opp}_{x(f(t))})$ is bi-Lipschitz with respect to the norm $\|\cdot\|$ on $\mathfrak{u}_{\alpha}^{opp}$. Furthermore, the derivative $\frac{d}{dt}\big(\pi_{\alpha}^{opp}(v^{opp}_{x(f(t))})\big)$ exists and lies in $c_{\alpha}^{opp}-\{0\}$ almost everywhere. Moreover, there exists a constant $C'>1$ such that
    \[
        \frac{1}{C'}\le \left\|\frac{d}{dt}\pi_{\alpha}^{opp}\big(v^{opp}_{x(f(t))}\big)\right\|\le C'
    \]
    for almost every $t$.
\end{proposition}

\begin{proof}

First we write
\begin{equation}\label{beginningequation}
x(f(t+\Delta t)) = x(f(t)) \cdot \exp \bigg(\log\bigg(x(f(t))^{-1}\cdot x(f(t+\Delta t))\bigg)\bigg). 
\end{equation}

Recall that the tangent space of $\mathcal{F}_{\Theta}$ (resp. $\mathcal{F}_{\alpha}$) at $\mathfrak{p}_{\Theta}$ (resp. $\mathfrak{p}_{\alpha}$) can be identified with the opposite unipotent subalgebra $\mathfrak{u}_{\Theta}^{opp}$ (resp. $\mathfrak{u}_{\{\alpha\}}^{opp}$), and the exponential map serves as a local diffeomorphism. If we let $\tilde{\pi}^{opp}_{\alpha}$ denote the linear projection from $\mathfrak{u}_{\Theta}^{opp}$ to $\mathfrak{u}_{\{\alpha\}}^{opp}$, then it serves as the differential of the natural projection $\mathcal{F}_{\Theta}\to \mathcal{F}_{\alpha}$, $g\mathfrak{p}_{\Theta}\mapsto g\mathfrak{p}_{\alpha}$, at the point $\mathfrak{p}_{\Theta}$. 

To be brief, we denote
\[
\tilde{v}_{\alpha}(t,\Delta t) = \tilde{\pi}^{opp}_{\alpha}\bigg(\log\bigg( x(f(t))^{-1}\cdot x(f(t+\Delta t))\bigg)\bigg)
\]
and
\[
\tilde{r}_{\alpha}(t,\Delta t) =\log\bigg( x(f(t))^{-1}\cdot x(f(t+\Delta t))\bigg)-\tilde{v}_{\alpha}(t,\Delta t).
\]
Then for fixed $t$, $\lim_{\Delta t\to 0} \tilde{v}_{\alpha}(t,\Delta t) = \lim_{\Delta t\to 0} \tilde{r}_{\alpha}(t,\Delta t) = 0$. Applying both sides of Equation~(\ref{beginningequation}) to the point $\mathfrak{p}_{\alpha} \in \mathcal{F}_{\alpha}$, we obtain
\[
x(f(t+\Delta t)) \mathfrak{p}_{\alpha} = x(f(t)) \cdot \exp \bigg(\tilde{v}_{\alpha}(t,\Delta t)+\tilde{r}_{\alpha}(t,\Delta t)\bigg)\mathfrak{p}_{\alpha}.
\]
Since $\mathfrak{g} = \mathfrak{u}_{\{\alpha\}}^{opp}\oplus \mathfrak{p}_{\alpha}$, it follows that $\tilde{r}_{\alpha}(t,\Delta t)\in \mathfrak{p}_{\alpha}$; thus, $\exp ( \tilde{r}_{\alpha}(t,\Delta t))\mathfrak{p}_{\alpha} = \mathfrak{p}_{\alpha}$. From the Baker–Campbell–Hausdorff formula, we continue to get the equality with an infinitesimal error, i.e.,
 $$
x(f(t)) \cdot \exp \bigg(\tilde{v}_{\alpha}(t,\Delta t)+\tilde{r}_{\alpha}(t,\Delta t)\bigg)\mathfrak{p}_{\alpha} = x(f(t)) \cdot \exp \bigg(\tilde{v}_{\alpha}(t,\Delta t)+o(\tilde{v}_{\alpha}(t,\Delta t))\bigg)\mathfrak{p}_{\alpha}.
$$

On the other hand, note that $x(f(t))^{-1}\cdot x(f(t+\Delta t))\in U_{\Theta}^{opp,>0}$. Applying \refchange{249}{\changed{Theorem \ref{thm10}~(2)~(a)}} and the formal group law (Lemma \ref{lem8}), if we define
\[
    v_{\alpha}(t,\Delta t) = \pi_{\alpha}^{opp}\big(v^{opp}_{(x(f(t)))^{-1}x(f(t+\Delta t))}\big) = \pi_{\alpha}^{opp}\big(v^{opp}_{x(f(t+\Delta t))}\big)- \pi_{\alpha}^{opp}\big(v^{opp}_{x(f(t))}\big),
\]
then we improve the above equality with infinitesimal error to:
\begin{equation}\label{equationforapproximation}
\begin{split}
    x(f(t+\Delta t)) \mathfrak{p}_{\alpha} 
    &= x(f(t))\exp \bigg(v_{\alpha}(t,\Delta t)+o(v_{\alpha}(t,\Delta t))\bigg) \mathfrak{p}_{\alpha} \\
    &= x(f(t))\exp \bigg(\pi_{\alpha}^{opp}\big(v^{opp}_{x(f(t+\Delta t))}\big) \\
    &\qquad\qquad - \pi_{\alpha}^{opp}\big(v^{opp}_{x(f(t))}\big)+o(v_{\alpha}(t,\Delta t))\bigg)\mathfrak{p}_{\alpha}.
\end{split}
\end{equation}
\refchange[2\baselineskip]{251}{\changed{
Note that as $I$ is compact, $x(f(t))$ is bounded with respect to $t$, so there exists a constant $C>1$ such that for any $t$ and $|\Delta t|\le \min(t,L-t)$,
\[
\begin{aligned}
\frac{1+o(1)}{C}\,
\big\|
  \pi_{\alpha}^{opp}\big(v^{opp}_{x(f(t+\Delta t))}\big)
  -\pi_{\alpha}^{opp}\big(v^{opp}_{x(f(t))}\big)
\big\| \\
\le d_{\alpha}\big(x(f(t+\Delta t))\mathfrak{p}_{\alpha},\,x(f(t))\mathfrak{p}_{\alpha}\big) \\
\le C(1+o(1))\,
\big\|
  \pi_{\alpha}^{opp}\big(v^{opp}_{x(f(t+\Delta t))}\big)
  -\pi_{\alpha}^{opp}\big(v^{opp}_{x(f(t))}\big)
\big\|.
\end{aligned}
\]
Here $o(1)$ means a function $o(1)(t,\Delta t)$ such that $\sup_{t}\lim_{\Delta t\to 0} o(1)(t,\Delta t) = 0$.

Recall that $\xi^{\alpha}(f(t)) = g x(f(t))\mathfrak{p}_{\alpha}$. Since the action of $g$ on $\mathcal{F}_{\alpha}$ is bi-Lipschitz, there exists a constant $C'>1$ such that for any $t$ and $|\Delta t|\le \min (t,L-t)$,
\begin{equation}\label{equationforlip}
\begin{aligned}[b]
\frac{1+o(1)}{C'}\,
\big\|
  \pi_{\alpha}^{opp}\big(v^{opp}_{x(f(t+\Delta t))}\big)
  -\pi_{\alpha}^{opp}\big(v^{opp}_{x(f(t))}\big)
\big\| \\
\le d_{\alpha}\big(\xi^{\alpha}(f(t+\Delta t)),\,\xi^{\alpha}(f(t))\big) \\
\le C'(1+o(1))\,
\big\|
  \pi_{\alpha}^{opp}\big(v^{opp}_{x(f(t+\Delta t))}\big)
  -\pi_{\alpha}^{opp}\big(v^{opp}_{x(f(t))}\big)
\big\|.
\end{aligned}
\end{equation}

As the map $t\mapsto \xi^{\alpha}(f(t))$ is $1$-Lipschitz, we have the map $t\mapsto \pi_{\alpha}^{opp}\big(v^{opp}_{x(f(t))}\big)$ is Lipschitz, so the tangent vector $\frac{d}{dt}\pi_{\alpha}^{opp}\big(v^{opp}_{x(f(t))}\big)$ is defined almost everywhere. Moreover, as the parameterization $t\mapsto \xi^{\alpha}(f(t))$ has unit speed, from Inequality~(\ref{equationforlip}) we have for almost every $t$, 
\[
    \frac{1}{C'}\le \|\frac{d}{dt}\pi_{\alpha}^{opp}\big(v^{opp}_{x(f(t))}\big)\|\le C'.
\]
so for almost every $t$, $\frac{d}{dt}\pi_{\alpha}^{opp}\big(v^{opp}_{x(f(t))}\big)$ varies in a compact subset of $c_{\alpha}^{opp}$ away from $0$.

Finally, note that for any $s_1<s_2\in [0,L]$,
\[
\pi_{\alpha}^{opp}\big(v^{opp}_{x(f(s_2))}\big) - \pi_{\alpha}^{opp}\big(v^{opp}_{x(f(s_1))}\big) = \int_{s_1}^{s_2}\frac{d}{dt}\pi_{\alpha}^{opp}\big(v^{opp}_{x(f(t))}\big)dt.
\]
Let $l\in(\mathfrak{u}_{\{\alpha\}}^{opp})^*$ be the functional in the proof of Lemma~\ref{lem15}, i.e., there exists a constant $C_2>1$ such that for any $v\in c_{\alpha}^{opp}$,
\[
\frac{1}{C_2} \|v\|\le l(v)\le C_2 \|v\|.
\]
Then we have
\[
\begin{aligned}
\big\| \pi_{\alpha}^{opp}\big(v^{opp}_{x(f(s_2))}\big) &- \pi_{\alpha}^{opp}\big(v^{opp}_{x(f(s_1))}\big)\big\| \\
&\ge \frac{1}{C_2}l\bigg(\pi_{\alpha}^{opp}\big(v^{opp}_{x(f(s_2))}\big) - \pi_{\alpha}^{opp}\big(v^{opp}_{x(f(s_1))}\big)\bigg) \\[0.5em]
&= \frac{1}{C_2} l\left(\int_{s_1}^{s_2}\frac{d}{dt}\pi_{\alpha}^{opp}\big(v^{opp}_{x(f(t))}\big)dt\right) \\[0.5em]
&= \frac{1}{C_2} \int_{s_1}^{s_2} l\bigg(\frac{d}{dt}\pi_{\alpha}^{opp}\big(v^{opp}_{x(f(t))}\big)\bigg)dt \\[0.5em]
&\ge \frac{1}{(C_2)^2} \int_{s_1}^{s_2} \| \frac{d}{dt}\pi_{\alpha}^{opp}\big(v^{opp}_{x(f(t))}\big)\|dt \\
&\ge \frac{1}{(C_2)^2 C'}(s_2-s_1).
\end{aligned}
\]
So the map $t\mapsto \pi_{\alpha}^{opp}\big(v^{opp}_{x(f(t))}\big)$ is indeed bi-Lipschitz.
}}  
\end{proof}

}

{\color{red} Now we introduce the concept of an \emph{$\alpha$-acute} subset. Let $\Gamma\subset \mathsf{PSL}(2,\mathbb{R})$ be a non-elementary discrete subgroup such that $\Lambda(\Gamma) = S^1$, $G$ be a simple Lie group with $\Theta$-positive structure, $\rho:\Gamma \to G$ be a $\Theta$-positive representation, and $\alpha\in \Theta$.   

\refchangenew{235}{\changednew{
We first set up some notations. For any clockwise quadruple $(a,b,c,d) \subset \Lambda(\Gamma)$, let $(a,d)_{arc}$ and $(b,c)_{arc}$ be the oriented open arcs in $S^1$ joining $a$ to $d$ and $b$ to $c$, respectively, chosen so that $(b,c)_{arc} \subset (a,d)_{arc}$. Let \(L = m_{\alpha}\big(\xi^{\alpha}\big((b,c)_{arc}\big)\big)\).
\begin{definition}
Using the above notations, we say the tuple $(f,g,x)$ is a \emph{chart} for the quadruple $(a,b,c,d)$ if:
\begin{enumerate}
    
    \item $f:[0,L] \to [b,c]_{arc}$ is the boundary preserving arc-length parametrization with respect to the \refchangenew{245}{pull-back metric} $(\xi^{\alpha})^*d_{\alpha}$, i.e.,
\[
f(0) = c,\quad  f(L) = b,
\]
and 
\[
m_{\alpha}\big( \xi^{\alpha}( f([0,t]) ) \big) = t, \qquad \forall\, t \in [0,L].
\]
    \item $g \in Aut_1(\mathfrak{g})$ and $x : [b,c]_{arc} \longrightarrow U_{\Theta}^{opp,>0}$ is a continuous map such that for any $t\in [0,L]$,
\[
\xi^{\Theta}(a, f(t), b) 
= g\big( \mathfrak{p}_{\Theta}^{opp},\, x(f(t)) \,\mathfrak{p}_{\Theta},\, \mathfrak{p}_{\Theta} \big),
\]
and whenever $p,q\in[b,c]_{arc}$ such that $(a,p,q,d)$ is positive, we have $x(q)^{-1}x(p)\in U_{\Theta}^{opp,>0}$.
\end{enumerate}
\end{definition}

Note that for any positive quadruple, a chart $(f,g,x)$ exists. We have talked about the existence of $f$ when setting up the notations of Proposition~\ref{propositionoftangentvectors}. For the existence of $g$ and $x$, see Corollary~\ref{posconv2}~(2). And note that $f$ is uniquely determined, $g$ is unique up to right multiplication by elements of $L^1_{\Theta} \cap \mathsf{Stab}\big(U_{\Theta}^{opp,>0}\big)$ whose adjoint action preserves each cone $\mathring{c}_{\alpha}^{opp},\alpha\in \Theta$, and $x$ is determined by $g$.}}

 \changednew{If $(a,b,c,d)$ is a clockwise quadruple and $(g,f,x)$ is a chart for it, then according to Proposition~\ref{propositionoftangentvectors} the derivative $\frac{d}{dt} \, \pi_{\alpha}^{opp}\!\big( v^{opp}_{x(f(t))} \big)$ is almost everywhere defined.} For any Borel-measurable subset $A\subset S^1$ we define
\[
\changednew{v_{\alpha}\big((a,b,c,d),(f,g,x),A\big)}
:= \int_{f^{-1}(A\cap [b,c]_{arc})} \frac{d}{dt} \, \pi_{\alpha}^{opp}\!\big( v^{opp}_{x(f(t))} \big) \, dt \in c_{\alpha}^{opp}.
\]
We say that $A$ has \emph{$\alpha$-acute support} in $(a,b,c,d)$ if \changednew{there exists a chart $(g,f,x)$ such that}
\[
\changednew{v_{\alpha}\big((a,b,c,d),(g,f,x),A\big) }\in \mathring{c}_{\alpha}^{opp}.
\]
Note that the choice of \changednew{$g$ (respectively, $x$)} is unique up to right multiplication \changednew{(respectively, conjugation)} by elements of $L^1_{\Theta} \cap \mathsf{Stab}\big(U_{\Theta}^{opp,>0}\big)$, whose adjoint action preserves each cone $\mathring{c}_{\alpha}^{opp}$. Thus the condition $\changednew{v_{\alpha}\big((a,b,c,d),(g,f,x),A\big) } \in \mathring{c}_{\alpha}^{opp}$ is independent of the choice of the chart. So the $\alpha$-acute support property is well defined \changednew{for quadruples}.

\begin{definition}\label{acutesubset}
Let $\rho:\Gamma\to G$ be a $\Theta$-positive representation with $\Lambda(\Gamma) = S^1$. For each $\alpha\in\Theta$, a measurable subset $A\subset \Lambda(\Gamma)$ is called \emph{$\alpha$-acute} if:
\begin{enumerate}
    \item $A$ is $\Gamma$-invariant;
    \item $m_{\alpha}(\xi^{\alpha}(A))>0$;
    \item For every clockwise quadruple $(a,b,c,d)$, $A$ has $\alpha$-acute support in $(a,b,c,d)$.
\end{enumerate}
\end{definition}
\refchangenew{235}{\changednew{Note that $\Lambda(\Gamma)$ itself serves as an $\alpha$-acute set.}} Indeed, in the notation above, for any clockwise-ordered quadruple $(a,b,c,d)$, \changednew{pick a chart $(g,f,x)$ for it,} we have
\begin{align*}
\changednew{v_{\alpha}\big((a,b,c,d),(g,f,x),A\big) } &= \int_{f^{-1}((b,c)_{arc})} \frac{d}{dt} \pi_{\alpha}^{opp}(v^{opp}_{x(f(t))}) dt \\
&= \int_{[0,L]} \frac{d}{dt} \pi_{\alpha}^{opp}(v^{opp}_{x(f(t))}) dt \\
&=\pi_{\alpha}^{opp}(v^{opp}_{x(f(L))}) - \pi_{\alpha}^{opp}(v^{opp}_{x(f(0))}) \\
&=\pi_{\alpha}^{opp}(v^{opp}_{x(f(0))^{-1}x(f(L))})\\
&=\pi_{\alpha}^{opp}(v^{opp}_{x(c)^{-1}x(b)}) 
\in \mathring{c}_{\alpha}^{opp}.
\end{align*}
The second last equality is by the formal group law (Lemma~\ref{lem8}).

It is useful to point out that being $\alpha$-acute is a local property. 

\begin{lemma}\label{locality}
Let $\Gamma$ be a lattice and fix $\alpha\in \Theta$. Suppose $A$ is $\Gamma$-invariant and $m_{\alpha}(\xi^{\alpha}(A))>0$. If there exists a clockwise quadruple $(a,b,c,d)$ such that $A$ has $\alpha$-acute support in $(a,b,c,d)$, then $A$ is $\alpha$-acute.
\end{lemma}

\begin{proof}
Let $\mathcal{OQ}$ be the space of clockwise oriented quadruples and $\mathcal{OT}$ the space of clockwise oriented triples of points on $S^1$. Let $\pi:\mathcal{OQ}\to \mathcal{OT}$ be the natural projection sending $(a,b,c,d)\mapsto (a,b,c)$. Since $A$ is $\Gamma$-invariant and $\Gamma$ preserves the orientation of $\Lambda(\Gamma)$, the set $\mathcal{A}$ consisting of clockwise quadruples on which $A$ has $\alpha$-acute support is also $\Gamma$-invariant.

We show several properties of $\mathcal{A}$. Suppose first that $A$ has $\alpha$-acute support in $(a,b,c,d)$. Then we claim that for any $\changednew{p}$ in the clockwise arc $(c,d)_{arc}$ joining $c$ and $d$, $A$ also has $\alpha$-acute support in $(a,b,c,\changednew{p})$. 
Let $[b,\changednew{p}]_{arc}$ be the clockwise closed arc joining $b$ and $\changednew{p}$, and \changednew{let $(g,f,x)$ be a chart for $(a,b,\changednew{p},d)$.} Then for any $e\in [b,\changednew{p}]_{arc}$,
\[
\xi^{\Theta}(a,e,d) = g(\mathfrak{p}_{\Theta}^{opp},\,x(e)\mathfrak{p}_{\Theta},\,\mathfrak{p}_{\Theta}).
\]
Let $(b,c)_{arc}\subset [b,\changednew{p}]_{arc}$ denote the clockwise open arc joining $b,c$. Then for any $\changednew{e'}\in (b,c)_{arc}$,
\[
\xi^{\Theta}(a, \changednew{e'},\changednew{p}) = g(\mathfrak{p}_{\Theta}^{opp},\,x(\changednew{e'})\mathfrak{p}_{\Theta},\,x(\changednew{p})\mathfrak{p}_{\Theta})
= g x(\changednew{p})(\mathfrak{p}_{\Theta}^{opp},\,x(\changednew{p})^{-1}x(\changednew{e'})\mathfrak{p}_{\Theta},\,\mathfrak{p}_{\Theta}).
\]
As $\changednew{p}$ is fixed and $(a,b,c,\changednew{p})$ is positive, the map $\changednew{e'}\mapsto x(\changednew{p})^{-1}x(\changednew{e'})$ on $(b,c)_{arc}$ is still a positive parameterization. \changednew{So $(g x(\changednew{p}),\big(f|_{f^{-1}([b,c]_{arc})}\big)_0,x(\changednew{p})^{-1}x(\cdot))$ is a chart for $(a,b,c,\changednew{p})$. Here if $\tilde{f}$ is a map from interval $[L_1,L_2]$, then $(\tilde{f})_0$ is the map on $[0,L_2-L_1]$ determined by $(\tilde{f})_0(t) = \tilde{f}(t+L_1)$.} Moreover, by the formal group law (Lemma \ref{lem8}),
\[
\pi_{\alpha}^{opp}(v_{x(\changednew{p})^{-1}x(e)}^{opp}) 
= \pi_{\alpha}^{opp}(v_{x(e)}^{opp}) - \pi_{\alpha}^{opp}(v_{x(\changednew{p})}^{opp})
\]
which implies that
\begin{equation}
\frac{d}{dt}\pi_{\alpha}^{opp}(v_{x(\changednew{p})^{-1}x(f(t))}^{opp}) 
= \frac{d}{dt} \pi_{\alpha}^{opp}(v_{x(f(t))}^{opp}).
\end{equation}
\changednew{Thus we have
\[
v_{\alpha}\bigg((a,b,c,\changednew{p}),\big(g x(\changednew{p}),\big(f|_{f^{-1}([b,c]_{arc})}\big)_0, x(\changednew{p})^{-1}x(\cdot)\big|_{[b,c]_{arc}}),A\bigg)  = v_{\alpha}\bigg((a,b,c,d),(g,\big(f|_{f^{-1}([b,c]_{arc})}\big)_0,x|_{[b,c]_{arc}}),A\bigg).
\]
Here we also observed that $(g,\big(f|_{f^{-1}([b,c]_{arc})}\big)_0,x|_{[b,c]_{arc}})$ is a chart for $(a,b,c,d)$.
}
Hence $A$ has $\alpha$-acute support in $(a,b,c,d)$ implies that $A$ also has $\alpha$-acute support in $(a,b,c,\changednew{p})$.

Similarly, if $A$ has $\alpha$-acute support in $(a,b,c,d)$, then for any $\changednew{p}$ in the open arc joining $a,d$ which does not contain $b,c$, $A$ also has $\alpha$-acute support in $(a,b,c,\changednew{p})$. As a result, having $\alpha$-acute support in $(a,b,c,d)$ is a property independent of $d$, so $\mathcal{A} = \mathcal{OQ}$ if and only if $\pi(\mathcal{A}) = \mathcal{OT}$.

Moreover, $\pi(\mathcal{A})$ satisfies a monotonicity property. Let $(a,b)_{arc}$ be the clockwise arc joining $a$ and $b$. From the definition of $\alpha$-acute support it follows that, if $A$ has $\alpha$-acute support in $(a,b,c,d)$, then for any $y\in (a,b)_{arc}$, $A$ has $\alpha$-acute support in $(a,y,c,d)$. Thus if $(a,b,c)\in \pi(\mathcal{A})$, then $(a,y,c)\in \pi(\mathcal{A})$ for all $y\in (a,b)_{arc}$.

Next, we show that $\mathcal{A}$ is open, and hence $\pi(\mathcal{A})$ is also open. \changednew{We note the following continuous dependencies of charts:}
\begin{enumerate}
    \item $L$ varies continuously with respect to $(a,b,c,d)$.
    
    \item $g$ can be chosen to vary continuously with respect to $(a,b,c,d)$ (see Lemma~\ref{posconvergence}).
    
    \item Note that the map $t\mapsto x(f(Lt)) \mathfrak{p}_{\alpha}$ can be written as $t\mapsto g^{-1}\xi^{\alpha}(f(Lt))$. It follows that their tangent maps are equal: $\frac{d}{dt}(x(f(Lt)) \mathfrak{p}_{\alpha}) = \frac{d}{dt} g^{-1}\xi^{\alpha}(f(Lt))$, which means that for any fixed $\changednew{p}\in (b,c)_{arc}$, if $f(Lt_0(L,g)) = \changednew{p}$, then the value of $\frac{d}{dt}(x(f(Lt)) \mathfrak{p}_{\alpha})$ at $t_0(L,g)$ depends continuously only on $g$ and $L$. \changednew{We also recall that in Equation~(\ref{equationforapproximation}) we established that if we let \[
    v_{\alpha}(t,\Delta t) = \pi_{\alpha}^{opp}\big(v^{opp}_{(x(f(t)))^{-1}x(f(t+\Delta t))}\big) = \pi_{\alpha}^{opp}\big(v^{opp}_{x(f(t+\Delta t))}\big)- \pi_{\alpha}^{opp}\big(v^{opp}_{x(f(t))}\big),
    \]
    then
    $$
    x(f(t+\Delta t)) \mathfrak{p}_{\alpha}=x(f(t))\exp \bigg(v_{\alpha}(t,\Delta t)+o(v_{\alpha}(t,\Delta t))\bigg) \mathfrak{p}_{\alpha}.
    $$}
    This implies that if $\changednew{p}\in (b,c)_{arc}$, $f(Lt_0) = \changednew{p}$, and $\xi^{\alpha}(\Lambda(\Gamma))$ is differentiable at $\xi^{\alpha}(\changednew{p})$, then the derivative $\frac{d}{dt}\big(\pi^{opp}_{\alpha}(v^{opp}_{x(f(Lt))})\big)$ also exists at $t_0$, and
\begin{equation}\label{calculationoftangentvectors}
\begin{split}
\frac{d}{dt}\big(\pi^{opp}_{\alpha}(v^{opp}_{x(f(Lt))})\big)(t_0) &= D\changednew{\mathcal{L}}_{x(f(Lt_0))^{-1}} \left( \frac{d}{dt}(x(f(Lt)) \mathfrak{p}_{\alpha}) (t_0) \right) \\
&= \changednew{D\changednew{\mathcal{L}}_{x(f(Lt_0))^{-1}}}\cdot D\changednew{\mathcal{L}}_{g^{-1}} \left( \frac{d}{dt}(\xi^{\alpha}(f(Lt)))(t_0) \right).
\end{split}
\end{equation}
Here, for any $g\in G$, \refchangenew{247}{$D\changednew{\mathcal{L}}_g$} denotes the differential of the left multiplication $\changednew{\mathcal{L}}_g$ acting on $\mathcal{F}_{\alpha}$.
    Thus, under the continuous choice of $g$, it is direct to verify that the map $t\mapsto \pi^{opp}_{\alpha}(v^{opp}_{x(f(Lt))})$ varies continuously with respect to $(a,b,c,d)$ in the space $Lip_{BV}([0,1],\changednew{\mathfrak{u}}_{\alpha}^{opp})$. Here, $Lip_{BV}([0,1],\changednew{\mathfrak{u}}_{\alpha}^{opp})$ consists of all Lipschitz maps $F$ from $[0,1]$ to $\changednew{\mathfrak{u}}_{\alpha}^{opp}$, with topology given by the norm defined by:
    \[
    \| F\|_{BV} = \sup_{0\le t\le 1} \|F(t)\| +\int_{0}^{1} \left\| \frac{d}{dt} F(s)\right\| ds.
    \]
\end{enumerate}

Therefore, for a fixed $A$, \changednew{$v_{\alpha}((a,b,c,d),(g,f,x),A)$} varies continuously with respect to $(a,b,c,d)$, as long as \changednew{the charts $(g,f,x)$ are also chosen to vary continuously as above.} So, if $A$ has $\alpha$-acute support in $(a,b,c,d)$, then there exists a neighborhood $U$ of $(a,b,c,d)$ in $\mathcal{OQ}$ such that for any $(x,y,z,t)\in U$, $A$ has $\alpha$-acute support in $(x,y,z,t)$, so $\mathcal{A}, \pi(\mathcal{A})$ are both open.

Finally we conclude the proof. There is a $\Gamma$-equivariant isomorphism $\mathcal{V}$ between $\mathcal{OT}$ and the unit tangent bundle $T^1\mathbb{D}$, sending $(a,b,c)$ to the unit tangent vector $\mathcal{V}(a,b,c)$ whose forward point is $a$, backward point is $c$, and such that the geodesic from $b$ to the basepoint of $\mathcal{V}(a,b,c)$ is perpendicular to the geodesic from $a$ to $c$. Let $\phi_{t}$ denote the geodesic flow on $T^1\mathbb{D}$. Then $\mathcal{V}\circ\pi(\mathcal{A})$ is a $\Gamma$-invariant and $\phi_{\mathbb{R}^{>0}}$-invariant (which comes from the monotonicity of $\pi(\mathcal{A})$) open set. Since $\Gamma$ is a lattice, the geodesic flow on $T^1\mathbb{D}/\Gamma$ is ergodic with respect to the Bowen–Margulis measure (which is smooth) and conservative. Hence almost every point has a dense forward orbit (for example, see Blayac~\cite[Fact 2.30]{blayac2021pattersonsullivandensitiesconvexprojective}). Therefore $\mathcal{V}\circ\pi(\mathcal{A})$ is open and dense, which must equal to $T^1\mathbb{D}$, so $\pi(\mathcal{A}) = \mathcal{OT}$, so $\mathcal{A} = \mathcal{OQ}$, concluding the proof.
\end{proof}
}

\begin{lemma}\label{dichotomy}
Let $\Gamma\subset \mathsf{PSL}(2,\mathbb{R})$ be a non-elementary discrete subgroup such that $\Lambda(\Gamma)=S^1$, $G$ be a simple Lie group with $\Theta$-positive strucutre, and $\rho:\Gamma\to G$ be a $\Theta$-positive representation.  {\color{red}Let $\alpha\in \Theta$, $A$ be an $\alpha$-acute subset and fix $b_0\in \mathbb{D}$. Then for any $r>0,R\ge 0$, there exists a constant $C>1$ such that if $d_{\mathbb{D}}(\gamma(b_0),z)\le R$, then
$$
\frac{1}{C} e^{-\alpha(\kappa(\rho(\gamma)))}\le m_{\alpha}(\mathcal{O}_{r}^{\alpha}(b_0,z)\cap \xi^{\alpha}(A))\le C e^{-\alpha(\kappa(\rho(\gamma)))}.
$$   }
\end{lemma}

\begin{proof}

     We prove the $\frac{1}{C} e^{-\alpha(\kappa(\rho(\gamma)))}\le m_{\alpha}(\mathcal{O}_{r}^{\alpha}(b_0,z)\cap \xi^{\alpha}(A))$ part first (the other part is similar). If not, then there exists an $\alpha$-acute $A$ and $r>0, R\ge 0$ with the following property: there exists $z_n\in \mathbb{D},\gamma_n\in \Gamma$ such that $d_{\mathbb{D}}(\gamma_n(b_0),z_n)\le R$ and 
    $$
    m_{\alpha}(\mathcal{O}_{r}^{\alpha}(b_0,z_n)\cap \xi^{\alpha}(A))\le \frac{1}{n}e^{-\alpha(\kappa(\rho(\gamma_n)))} .
    $$

   \refchange{238}{\changed{By the definition of $\alpha$-acuteness, we have $m_{\alpha}(\xi^{\alpha}(A))>0$ and $A$ is $\Gamma$-invariant. Note that $m_{\alpha}$ has full support and no atoms, and $\Gamma$ acts topologically transitively on $\Lambda(\Gamma)$. So for any non-empty open set $U \subset \Lambda(\Gamma)$, we have $m_{\alpha}(\xi^{\alpha}(U\cap A))>0$.}} Using similar arguments as we did in the proof of Theorem \ref{thm12}, we claim that $\gamma_n$ is unbounded. So we may assume that $d_{\mathbb{D}}(b_0,z_n)>r$.

   By taking a subsequence, we assume that $\omega_{\gamma_n}\to a$. Define $z'_n = \gamma_n^{-1}(z_n)$, $b'_n = \gamma_n^{-1}(b_0)$, $\mathcal{O}'_n = \gamma_n^{-1}\mathcal{O}_r(b_0,z_n) = \mathcal{O}_r(b'_n,z'_n)$, and $p_n,q_n$ be two end points of $\mathcal{O}'_n$. By passing to a further subsequence, we can assume $z'_n\to z'\in \mathbb{D}$ where $d_{\mathbb{D}}(z',b_0)\le R$, $b'_n\to a$, $p_n\to p, q_n\to q$ and $(a,p,q)$ is a {\color{red}clockwise} triple. Pick $b\in \Lambda(\Gamma)$ such that $(a,p,q,b)$ is {\color{red}clockwise} positive, and we choose an arc $[a,b]_{arc}$ joining $a$ and $b$ such that for $n\gg1$, $\mathcal{O}'_n\subset (a,b)_{arc}$.  Let $E\subset (a,b)_{arc}$ be a closed arc such that for $n\gg1$, $E\subset \mathcal{O}'_n$.

     \changednew{
We first construct a sequence of charts. Let $(g_n)$ be a sequence in $Aut_1(\mathfrak{g})$ and let $(x_n)$ be a sequence of maps $x_n:E\to U_{\Theta}^{opp,>0}$, satisfying the following properties from \refchange{201}{\changed{Corollary \ref{posconv2}~(2)}}:
    \begin{itemize}
        \item $g_n$ converges to $g\in Aut_1(\mathfrak{g})$.
        \item For any $n\in \mathbb{Z}^{>0}$ and any $p\in E$,
        \[
        \xi^{\Theta}(\changed{\omega_{\gamma_n}},p,b) =g_n(\mathfrak{p}_{\Theta}^{opp},x_n(p)\mathfrak{p}_{\Theta},\mathfrak{p}_{\Theta}). 
        \]
        \item $x_n$ converges uniformly to a continuous map $x_{\infty}:E\to U_{\Theta}^{opp,>0}$. And for any $n\in \mathbb{Z}^{>0}\cup \{\infty\}$, and any $p,q\in E$ such that $(a,p,q,b)$ is positive, \refchange{202}{\changed{we have}}
        \[
        x_n(q)^{-1}x_n(p)\in U_{\Theta}^{opp,>0}.
        \]
    \end{itemize}
    Let $L = m_{\alpha}(\xi^{\alpha}(E))$ and choose an parameterization $f:[0,L]\to E$ such that 
\begin{itemize}
    \item For any $t\in [0,L]$, $m_{\alpha}(\xi^{\alpha}(f([0,t]))) = t$.
    \item If $0\le {\color{red} s<t}\le L$, then $(a,f(t),f(s),b)$ is positive.
\end{itemize}
    Then $(g_n,f,x_n)$ is a converging sequence of charts for $(\omega_{\gamma_n},f(L), f(0), b)$, which converges to the chart $(g,f,x_{\infty})$ for $(a,f(L), f(0), b)$ (for details of the convergence, see the enumeration of discussions in the proof of Lemma~\ref{locality}).
}  

\changednew{
Let {\color{red}$E' = E\cap A$ and let $F = f^{-1} (E')$}. \refchangenew{251}{From Proposition~\ref{propositionoftangentvectors} we have for all the $n\in \mathbb{Z}^{>0}\cup \{\infty\}$ the sequence of maps $t\mapsto \pi_{\alpha}^{opp}(v^{opp}_{x_{n}(f(t))})$ is bi-Lipschitz}, and the sequence of derivatives $\frac{d}{dt}(\pi_{\alpha}^{opp}(v^{opp}_{x_{n}(f(t))}))$ is defined almost everywhere.} Let
{\color{red}
$$
v= \int_{F} \frac{d}{dt}(\pi_{\alpha}^{opp}(v^{opp}_{\changednew{x_{\infty}}( f(t))}))dt.
$$
As $A$ is $\alpha$-acute, $v\in \mathring{c}_{\alpha}^{opp}$.} \refchange[-\baselineskip]{247}{\changednew{On the other hand, the third argument in the enumeration appeared in the proof of Lemma~\ref{locality} shows that
\[
\int_{F} \frac{d}{dt}\big(\pi_{\alpha}^{opp}(v^{opp}_{\changednew{x_{n}( f(t))}})\big)\, dt \;\to\; v.
\]
}}
    
\changednew{  
    Let $\rho(\gamma_n) = \mu_n l_n\nu_n$ be a Cartan decomposition of $\rho(\gamma_n)$, and up to a subsequence, we assume that $\mu_n,\nu_n$ converges. From Lemma~\ref{l2}, there exist $u_n^{-}\in U_{\Theta}^{opp}$, $l'_n\in L_{\Theta}^{1}$, and continuous maps $x'_n: E\to U_{\Theta}^{opp,>0}$ such that $u_n^{-},l'_n$ converges, $x'_n$ \refchangenew{244}{uniformly converges to $x_{\infty}:E\to U_{\Theta}^{opp,>0}$}, and the following holds
    \begin{enumerate}
        \item  For any $p\in E$
        \[
                \rho(\gamma_n)\xi^{\Theta}(p) = \mu_n l_n u_n^{-} l_n'x'_n(p)\mathfrak{p}_{\Theta},
        \]
        or equivalently,
        \begin{equation}\label{equationsimplify}
                \xi^{\Theta}(p) = \nu_n^{-1}u_n^{-} l_n'x'_n(p)\mathfrak{p}_{\Theta}.
        \end{equation}

        \item  Moreover, if  $p,q\in E$ and $(a,p,q,b)$ is positive, then for any $n\in \mathbb{Z}^{>0}$, \refchange{199}{\changed{we have}} $x'_n(q)^{-1}x'_n(p)\in U_{\Theta}^{opp,>0}$.
    \end{enumerate}
    Again from Proposition~\ref{propositionoftangentvectors} we have for all the $n\in \mathbb{Z}^{>0}$ the sequence of maps $t\mapsto \pi_{\alpha}^{opp}(v^{opp}_{x'_{n}(f(t))})$ is bi-Lipschitz, and the sequence of derivatives $\frac{d}{dt}(\pi_{\alpha}^{opp}(v^{opp}_{x'_{n}(f(t))}))$ is defined almost everywhere. Note that $\nu_n^{-1}u_n^{-}l'_n$ converges and $x'_n$ converges to $x_{\infty}$ uniformly. \changednew{Applying the third argument of the enumeration appeared in the proof of Lemma~\ref{locality} to Equation~(\ref{equationsimplify}), we obtain
\[
\int_{F} \frac{d}{dt}\big(\pi_{\alpha}^{opp}(v^{opp}_{\changednew{x'_{n}( f(t))}})\big)\, dt \;\to\; v.
\]  
    }}

Now we have
$$
 m_{\alpha}(\mathcal{O}_{r}^{\alpha}(b_0,z_n)\cap \xi^{\alpha}(A))=m_{\alpha}(\rho(\gamma_n) \xi^{\alpha}(\mathcal{O}'_n\cap A))
$$
$$
\ge m_{\alpha}(\rho(\gamma_n) \xi^{\alpha}(E\cap A))= m_{\alpha}(\rho(\gamma_n) \xi^{\alpha}({\color{red}E'}))
$$
$$
=\int_{{\color{red}F}} \| \frac{d}{dt} (\rho(\gamma_n)\xi^{\alpha}(f(t))) \| dt
=\int_{{\color{red}F}} \| \frac{d}{dt} (\mu_nl_nu_n^{-}l'_nx'_n(f(t))\mathfrak{p}_{\alpha}) \| dt
$$
where the norms are the Riemannian norms on the tangent spaces, and $\frac{d}{dt}$ means taking the tangent vector of the curves $t\mapsto \rho(\gamma_n)\xi^{\alpha}(f(t))$ and $t\mapsto \mu_nl_nu_n^{-}l'_nx'_n(f(t))\mathfrak{p}_{\alpha}$ in $\mathcal{F}_{\alpha}$, which are defined almost everywhere.

Note that $l_n,l'_n$ both stabilizes $\mathfrak{p}_{\alpha}$, so we have
$$
\mu_nl_nu_n^{-}l'_nx'_n(f(t))\mathfrak{p}_{\alpha} = \mu_n (l_n u_n^{-} l_n^{-1}) l_n(l'_n\refchange{254}{\changed{x'_n(f(t))}}{l'_n}^{-1}) l'_n  \mathfrak{p}_{\alpha}
$$
$$
=\mu_n (l_n u_n^{-} l_n^{-1}) l_n(l'_n\changed{x'_n(f(t))}{l'_n}^{-1})\mathfrak{p}_{\alpha} = \mu_n (l_n u_n^{-} l_n^{-1}) l_n(l'_n\changed{x'_n(f(t))}{l'_n}^{-1})l_n^{-1}\mathfrak{p}_{\alpha}
$$ 
and note that both sequences $\mu_n$ and $l'_n$ converge (by previous construction), and the sequence $(l_n u_n^{-} l_n^{-1})$ also converges (as $u_n^-\in U_{\Theta}^{opp}$). \refchange{255}{\changed{Recall that $l'_n\in L_{\Theta}^1$, so similar to the proof of Theorem \ref{thm12} (recall that the argument was as follows: when $l'_n \in \psi(L_{\Theta}^{\circ})$, we let the sequence 
$x''_n(f(t)) := l'_n x'_n(f(t)) {l'_n}^{-1}$ be the new sequence of converging maps; 
and when $l'_n \in L_{\Theta}^1 - \psi(L_{\Theta}^{\circ})$, the map is instead conjugated to another identification of the positive semigroup)}}, we can remove $\mu_n,l'_n,u_n^{-}$ and the estimate continues as
$$
\approx \int_{{\color{red}F}} \| \frac{d}{dt} (\mathsf{c}_{l_n}(x''_n(f(t)))\mathfrak{p_{\alpha}}) \| dt.
$$
\changednew{
Here we assume $x''_n$ converges to a map $x''_{\infty}$ conjugate to $x_{\infty}$ and we define
\[
v''= \int_{F} \frac{d}{dt}(\pi_{\alpha}^{opp}(v^{opp}_{\changednew{x''_{\infty}}( f(t))}))dt\in \mathring{c}_{\alpha}^{opp}.
\]
We also have
\[
\frac{d}{dt}(\pi_{\alpha}^{opp}(v^{opp}_{\changednew{x''_{n}}( f(t))}))dt\to v''
\]
as $n\to \infty$.
}

\changednew{From Equation~(\ref{calculationoftangentvectors}) we can continue the estimate of $\int_{{\color{red}F}} \| \frac{d}{dt} (\mathsf{c}_{l_n}(x''_n(f(t)))\mathfrak{p_{\alpha}}) \| dt$ as}
$$
\approx \int_{{\color{red}F}} \| \frac{d}{dt} (\pi_{\alpha}^{opp}(v^{opp}_{\mathsf{c}_{l_n}(x''_n(f(t)))})) \| dt
$$
$$
\ge \| \int_{{\color{red}F}}  \frac{d}{dt} (\pi_{\alpha}^{opp}(v^{opp}_{\mathsf{c}_{l_n}(x''_n(f(t)))})) dt\| = \| Ad_{l_n}\int_{{\color{red}F}}  \frac{d}{dt} (\pi_{\alpha}^{opp}(v^{opp}_{x''_n(f(t))})) dt\| .
$$
\refchange{257}{\changed{(Here we used two equalities: $v_{\mathsf{c}_l (x)}^{opp} = Ad_l v_x^{opp}$ and $\pi_{\alpha} (Ad_l v_x^{opp}) = Ad_l \pi_{\alpha}(v_x^{opp})$ which can be verified directly.)}}

Since $\int_{{\color{red}F}}  \frac{d}{dt} (\pi_{\alpha}^{opp}(v^{opp}_{x''_n(f(t))})) dt\to \changednew{v''}\in \mathring{c}_{\alpha}^{opp}$, \refchange{253}{\changed{the sequence 
$\int_{ {\color{red} F} } \frac{d}{dt} \big(\pi_{\alpha}^{opp}(v^{opp}_{x''_n(f(t))})\big)\, dt$
is contained in a properly bounded subset of $\mathring{c}_{\alpha}^{opp}$. 
We then apply Lemma~\ref{lem7} to the cone $\mathring{c}_{\alpha}^{opp}$ and the maps $Ad_{l_n}$ (as we did in the proof of Theorem~\ref{thm12}),}}
so we get a constant $C'>1$ such that \[\frac{1}{C'} e^{-\alpha(\kappa(\rho(\gamma_n)))}\le \|Ad_{l_n}(\int_{{\color{red}F}}  \frac{d}{dt} (\pi_{\alpha}^{opp}(v^{opp}_{x''_n(f(t))})) dt)\|\le  C' e^{-\alpha(\kappa(\rho(\gamma_n)))}.\] The above integral estimate shows that there exists a constant $C>0$ such that $m_{\alpha}(\mathcal{O}_{r}^{\alpha}(b_0,z_n)\cap \xi^{\alpha}(A))\ge C e^{-\alpha(\kappa(\rho(\gamma_n)))}$ holds for all $n$, which is a contradiction. So we have finished the proof.
\end{proof}

{{\color{red}
Now we show Lemma \ref{dichotomy} implies that $\alpha-$acute sets are of full measure, that is:
\begin{proposition}\label{acutemeansfull}
Let $\rho:\Gamma\to G$ be a $\Theta$-positive representation from a lattice and $\alpha\in\Theta$.  
If $A\subset \Lambda(\Gamma)$ is $\alpha$-acute, then $\xi^{\alpha}(A)$ is of full $m_{\alpha}$-measure. 
\end{proposition}
}

The proof of {\color{red}Proposition \ref{acutemeansfull}} is inspired by Canary-Zhang-Zimmer~\cite[Theorem 13.1]{CaZhZi3}, whose proof uses a shadow version Vitali covering lemma in Roblin~\cite{shadowvitali}.
\begin{proof}
{\color{red}Suppose the proposition is not true, then there exists an $\alpha$-acute $A\subset \Lambda(\Gamma)$ such that $m_\alpha(\xi^{\alpha}(A^c))>0$.} Define the measure $m_{\alpha}'$ on $\xi^{\alpha}(\Lambda(\Gamma))$ by $m_{\alpha}'(E) = m_{\alpha}(E\cap \xi^{\alpha}(A))$. 

Note that when $\Gamma$ is a lattice, $\Lambda(\Gamma)-\Lambda_c(\Gamma)$ is the set of parabolic fixed points which is countable. Also note that $m_{\alpha}$ has no atoms, so $\xi^{\alpha}(\Lambda_c(\Gamma))$ is of $m_{\alpha}$ full measure and $\xi^{\alpha}(\Lambda_c(\Gamma)\cap A)$ is of $m'_{\alpha}-$full measure. Let $B = \xi^{\alpha}(A^c\cap \Lambda_c(\Gamma))$. Clearly $m'_\alpha(B) = 0$ and $m_{\alpha}(B) = m_{\alpha}(\xi^{\alpha}(A^c))>0$. Let \( S_K \) denote the compact complement of the finite-area hyperbolic surface \refchange{258}{\changed{$\mathbb{D}/\Gamma$}} obtained by removing pairwise disjoint cusp neighborhoods. Then any geodesic that does not converge to a cusp (i.e., whose lifts to the universal cover do not go to a parabolic fixed point) returns to \( S_K \) infinitely often. Let $R$ be the diameter of $S_K$ \refchange{259}{\changed{and let $b_0\in \mathbb{D}$ be chosen whose projection lies in $S_K$}}, then $\cup_{\gamma\in\ \Gamma}\mathcal{O}_{R}(b_0,\gamma(b_0))$ covers every point in  $\Lambda_c(\Gamma)$ for infinitely many times.  Lemma \ref{dichotomy} shows that both $m'_{\alpha}$ and $m_{\alpha}$ satisfy the shadow lemma, so there exists a constant $C>1$ such that
$$
\frac{1}{C} m'_\alpha(\mathcal{O}_R^{\alpha}(b_0,\gamma(b_0)))\le m_\alpha(\mathcal{O}_R^{\alpha}(b_0,\gamma(b_0)))\le Cm'_\alpha(\mathcal{O}_R^{\alpha}(b_0,\gamma(b_0))) 
$$
$$
 m_\alpha(\mathcal{O}_{5R}^{\alpha}(b_0,\gamma(b_0)))\le Cm_\alpha(\mathcal{O}_{R}^{\alpha}(b_0,\gamma(b_0))),\quad  m'_\alpha(\mathcal{O}_{5R}^{\alpha}(b_0,\gamma(b_0)))\le Cm'_\alpha(\mathcal{O}_{R}^{\alpha}(b_0,\gamma(b_0))).
$$

Note that $m_\alpha$ and $m'_{\alpha}$ are both outer regular measures. For any $\epsilon>0$, as $m'_{\alpha}(B)=0$, we can pick an open set $U$ containing $B$ such that $m'_{\alpha}(U)<\epsilon$. Let $\mathcal{I}=\{\gamma\mid \refchange{260}{\changed{\gamma\in \Gamma}},  \mathcal{O}^{\alpha}_{R}(b_0,\gamma(b_0))\subset U\}$. For any diverging sequence $\gamma_n\in \Gamma$, the diameter of $\mathcal{O}^{\alpha}_R(b_0,\gamma_n(b_0))$ converges to $0$. This implies that the \refchange{261}{\changed{interior of shadows $\{\mathring{\mathcal{O}}^{\alpha}_{R}(b_0,\gamma(b_0))\}_{\gamma \in \mathcal{I}}$}} is an open covering of $B$. \refchange{262}{Roblin~\changed{\cite[page. 23]{shadowvitali}}} claims that there exists $\mathcal{J}\subset \mathcal{I}$ such that $\{\mathcal{O}_{R}(b_0,\gamma(b_0))|\gamma\in \mathcal{J}\}$ is a disjoint family and 
\[
\cup_{\gamma\in \mathcal{I}}\mathcal{O}_{R}(b_0,\gamma(b_0))\subset \cup_{\gamma\in \mathcal{J}}\mathcal{O}_{5R}(b_0,\gamma(b_0)),
\]
which is equivalent to say $\{\mathcal{O}^{\alpha}_{R}(b_0,\gamma(b_0))|\gamma\in \mathcal{J}\}$ is a disjoint family and 
\[
\cup_{\gamma\in \mathcal{I}}\mathcal{O}^{\alpha}_{R}(b_0,\gamma(b_0))\subset \cup_{\gamma\in \mathcal{J}}\mathcal{O}^{\alpha}_{5R}(b_0,\gamma(b_0)).
\]
As a result,
$$
m_{\alpha}(B)\le m_{\alpha}(\cup_{\gamma\in \mathcal{I}}\mathcal{O}^{\alpha}_{R}(b_0,\gamma(b_0)))\le m_{\alpha}(\cup_{\gamma\in \mathcal{J}}\mathcal{O}^{\alpha}_{5R}(b_0,\gamma(b_0)))\le \sum_{\gamma\in \mathcal{J}} m_{\alpha}(\mathcal{O}^{\alpha}_{5R}(b_0,\gamma(b_0))).
$$
Since
$$
m_\alpha(\mathcal{O}_{5R}^{\alpha}(b_0,\gamma(b_0)))\le Cm_\alpha(\mathcal{O}_{R}^{\alpha}(b_0,\gamma(b_0)))
$$
and
$$
\frac{1}{C} m'_\alpha(\mathcal{O}_R^{\alpha}(b_0,\gamma(b_0)))\le m_\alpha(\mathcal{O}_R^{\alpha}(b_0,\gamma(b_0)))\le Cm'_\alpha(\mathcal{O}_R^{\alpha}(b_0,\gamma(b_0))), 
$$
we continue the estimate as
$$
m_{\alpha}(B)\le C\sum_{\gamma\in \mathcal{J}} m_{\alpha}(\refchange{263}{\changed{\mathcal{O}^{\alpha}_{R}}}(b_0,\gamma(b_0)))\le C^2 \sum_{\gamma\in \mathcal{J}} m'_{\alpha}(\changed{\mathcal{O}^{\alpha}_{R}}(b_0,\gamma(b_0)))
$$
$$
=C^2 m'_{\alpha}(\cup_{\gamma \in \mathcal{J}} \changed{\mathcal{O}^{\alpha}_{R}}(b_0,\gamma(b_0)))
$$
where the equality is from the disjointness of \refchange{264}{\changed{the collection of shadows $\mathcal{O}^{\alpha}_{R}(b_0,\gamma(b_0)),\gamma \in \mathcal{J}$.}} As $\cup_{\gamma\in \mathcal{J}}\changed{\mathcal{O}^{\alpha}_{R}}(b_0,\gamma(b_0))\subset U$, we continue the estimate as
$$
m_{\alpha}(B) \le C^2 m'_{\alpha}(U)\le C^2\epsilon.
$$
As $\epsilon$ is arbitrary, $m_{\alpha}(\xi^{\alpha}(A^c)) = m_{\alpha}(B)=0 $, a contradiction. So we have finished the proof.
\end{proof}

{\color{red}

\begin{proof}[Proof of Theorem \ref{ergodictheoreminpaper}]

Suppose not, then let $A_0, A_1, \dots, A_{r(\mathring{c}_{\alpha}^{opp})}$ be a partition of $\Lambda(\Gamma)$ such that each $A_i$ is $\Gamma$-invariant and satisfies $m_{\alpha}(\xi^{\alpha}(A_i)) > 0$. 

Pick a clockwise quadruple $(a,b,c,d)$ \changednew{and a chart $(g,f,x)$ for it}, we have
\[
\sum_{i=0}^{r(\mathring{c}_{\alpha}^{opp})} v_{\alpha}\big((a,b,c,d),\changednew{(g,f,x)},A_i\big) 
= v_{\alpha}\big((a,b,c,d), \changednew{(g,f,x)}, \Lambda(\Gamma)\big) \in \mathring{c}_{\alpha}^{opp}.
\]
For each $0 \le i \le r(\mathring{c}_{\alpha}^{opp})$, since $A_i$ is $\Gamma-$invariant and $m_{\alpha}(\xi^{\alpha}(A_i)) > 0$, we have 
\[
v_{\alpha}\big((a,b,c,d), \changednew{(g,f,x)}, A_i\big) \in c_{\alpha}^{opp} - \{0\}.
\]
By the definition of $r(\mathring{c}_{\alpha}^{opp})$, there exists some index $j$ such that
\[
\sum_{\substack{0 \le i \le r(\mathring{c}_{\alpha}^{opp}) \\ i \neq j}} 
v_{\alpha}\big((a,b,c,d),\changednew{(g,f,x)},A_i\big) \in \mathring{c}_{\alpha}^{opp}.
\]
But this sum is exactly $v_{\alpha}\big((a,b,c,d),\changednew{(g,f,x)}, A_j^c\big)$, which means that $A_j^{c}$ has $\alpha$-acute support for the chosen quadruple.  
By Lemma~\ref{locality}, $A_j^{c}$ is $\alpha$-acute, so by Proposition \ref{acutemeansfull}, $m_{\alpha}(A_j) = 0$, a contradiction.
\end{proof}
}

\section{\changed{Critical exponent upper bound and rigidity for lattices}}\label{sec5}

In this section we prove Theorem \ref{thmA}~(1) and (2), following the strategy in Pozzetti-Sambarino-Wienhard~\cite{Liplim}.

Assume \refchange{274}{\changed{$\Gamma\subset \mathsf{PSL}(2,\mathbb{R})$ is a non-elementary discrete subgroup}} first. Fix $\alpha\in \Theta$ and a Riemannian metric $d_{\alpha}$ on $\mathcal{F}_{\alpha}$. As $\Gamma$ is discrete, we can also pick a base point $b_0\in \mathbb{D}$ such that $\mathsf{Stab}_{\Gamma}(b_0) = \{id\}$. So for any $z=\gamma(b_0)$,  $c_{\alpha}(z) := e^{-\alpha(\kappa(\rho(\gamma)))}$ is well defined.

Proposition \ref{prop3} shows that $\rho$ is $\alpha-$transverse, and Proposition \ref{cartanproperty} shows that for each $n\in \mathbb{Z}^{>0}$, $\mathcal{A}_n = \{z\in \Gamma(b_0)| e^{-(n+1)} < c_{\alpha}(z)\le e^{-n}\}$ is finite. We invoke a separation lemma of shadows. \refchange{275}{\changed{Since the proof can be reproduced directly from \cite[Lemma~8.5]{CaZhZi}, which treats the case $G=\mathsf{PGL}(d,\mathbb{R})$, we postpone it to Appendix~\ref{appendB}, where a slightly generalized version of this lemma (see Lemma~\ref{lem11refined}) will also be used frequently.}}

\begin{lemma}{\refchange{275}{\changed{\cite[Lemma 8.5]{CaZhZi}}}}\label{lem11}
For any $r>0$, there exists $C_0 = C_0(r)>0$ such that if $n\ge 0$, $z,w\in\mathcal{A}_n$ and $d_{\mathbb{D}}(z,w)>C_0$, then
$$
\mathcal{O}_r(b_0,z)\cap \mathcal{O}_r(b_0,w) = \emptyset.
$$
\end{lemma}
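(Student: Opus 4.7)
My plan is to argue by contradiction, mirroring the structure of \cite[Lemma 8.5]{CaZhZi}. Write $z = \gamma_z b_0$, $w = \gamma_w b_0$, assume without loss of generality that $d_{\mathbb{D}}(b_0, z) \le d_{\mathbb{D}}(b_0, w)$, and suppose $\zeta \in \mathcal{O}_r(b_0, z) \cap \mathcal{O}_r(b_0, w)$. Then the geodesic ray $[b_0, \zeta)$ meets both $\overline{B_{d_{\mathbb{D}}}(z, r)}$ and $\overline{B_{d_{\mathbb{D}}}(w, r)}$; using the CAT($-1$) geometry of $\mathbb{D}$ one obtains nearest-point projections $p_z, p_w$ of $z, w$ onto $[b_0, \zeta)$ satisfying $d_{\mathbb{D}}(z, p_z), d_{\mathbb{D}}(w, p_w) \le r$ and $d_{\mathbb{D}}(b_0, p_z) \le d_{\mathbb{D}}(b_0, p_w)$, yielding
$$d_{\mathbb{D}}(z, w) \le 2r + \bigl(d_{\mathbb{D}}(b_0, p_w) - d_{\mathbb{D}}(b_0, p_z)\bigr) \le d_{\mathbb{D}}(b_0, w) - d_{\mathbb{D}}(b_0, z) + 4r.$$
In particular, $z$ lies within an $O(r)$-neighborhood of the geodesic segment from $b_0$ to $w$.

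The next step is to convert this Fuchsian displacement into a displacement of $\alpha$-Cartan projections. Since $\rho$ is $\Theta$-positive and $\Gamma$ is a lattice in the setting of Section \ref{sec5}, Proposition \ref{thm17} gives relative $\Theta$-Anosovness, so Proposition \ref{anosovgap}(1) furnishes the quasi-isometric comparison $\frac{1}{a} d_{\mathbb{D}}(b_0, \gamma b_0) - A \le \alpha(\kappa(\rho(\gamma))) \le a\, d_{\mathbb{D}}(b_0, \gamma b_0) + A$. This comparison alone is insufficient, because the multiplicative factor $a > 1$ permits $|d_{\mathbb{D}}(b_0, z) - d_{\mathbb{D}}(b_0, w)|$ to be arbitrarily large while $|\alpha(\kappa(\rho(\gamma_z))) - \alpha(\kappa(\rho(\gamma_w)))| \le 1$. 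I therefore need to upgrade to a Morse-type additive estimate: under the radial alignment of $b_0, z, w$ established above, the attracting flag $k_z^1 \mathfrak{p}_{\Theta}$ of the Cartan decomposition of $\rho(\gamma_z)$ should agree with $k_w^1 \mathfrak{p}_{\Theta}$ up to the exponentially small error guaranteed by Proposition \ref{cartanproperty}, and the regular distortion property of Theorem \ref{thm12} should then propagate this alignment through the factorization $\rho(\gamma_w) = \rho(\gamma_z)\rho(\gamma_z^{-1}\gamma_w)$ to yield
$$\alpha(\kappa(\rho(\gamma_w))) \ge \alpha(\kappa(\rho(\gamma_z))) + \alpha(\kappa(\rho(\gamma_z^{-1}\gamma_w))) - C_1(r)$$
for a constant $C_1(r)$ depending only on $r$.

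Once this additive estimate is available, the contradiction is immediate. Applying the quasi-isometric embedding to $\gamma_z^{-1}\gamma_w$, which displaces $b_0$ by $d_{\mathbb{D}}(z, w)$, gives $\alpha(\kappa(\rho(\gamma_z^{-1}\gamma_w))) \ge d_{\mathbb{D}}(z, w)/a - A$, and combining with $z, w \in \mathcal{A}_n$, which forces $|\alpha(\kappa(\rho(\gamma_z))) - \alpha(\kappa(\rho(\gamma_w)))| < 1$, one obtains
$$d_{\mathbb{D}}(z, w) < a\bigl(1 + A + C_1(r)\bigr).$$
Setting $C_0 := a(1 + A + C_1(r)) + 1$ then produces the desired contradiction.

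The main obstacle is justifying the Morse-type additive estimate cleanly. Unlike the Hitchin case of \cite{CaZhZi}, where $(1,1,2)$-hypertransversality permits direct outer radius estimates in the style of \cite{pozzetti2020conformalityrobustclassnonconformal}, the boundary root in the $\Theta$-positive setting may fail hypertransversality, so I plan to substitute the regular distortion property of Theorem \ref{thm12}—which exploits the monotonicity of $\xi^{\Theta}$ along positive tuples—as the appropriate Lipschitz-type control. Translating this control into additivity of the $\alpha$-Cartan projection via a Cartan concentration argument around the approximately radial configuration $b_0, z, w$ will be the technical heart of the proof.
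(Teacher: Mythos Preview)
The paper does not prove this lemma; it simply invokes \cite[Lemma 8.5]{CaZhZi}. Your hyperbolic-geometry opening (aligning $b_0,z,w$ along a common ray up to $O(r)$) is correct, but the heart of your plan—the Morse-type inequality
\[
\alpha(\kappa(\rho(\gamma_w)))\ge \alpha(\kappa(\rho(\gamma_z)))+\alpha(\kappa(\rho(\gamma_z^{-1}\gamma_w)))-C_1(r)
\]
—is exactly strong coarse additivity of the simple root $\alpha$, and you have not shown how Theorem \ref{thm12} delivers it. Regular distortion bounds the ratio $d_\alpha(\rho(\gamma)\xi^\alpha(p),\rho(\gamma)\xi^\alpha(q))\big/e^{-\alpha(\kappa(\rho(\gamma)))}$ for $d$-bounded triples $(\gamma^-,p,q)$; it says nothing directly about how $\alpha\circ\kappa\circ\rho$ behaves under composition. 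If you try to chain the estimate through $\gamma_w=\gamma_z\cdot\eta$ with $\eta=\gamma_z^{-1}\gamma_w$, the intermediate points $\eta p,\eta q$ are typically extremely close to one another, so the $d$-boundedness hypothesis fails at the second step. In the paper's own development (Section \ref{append2}) strong coarse additivity is obtained for the fundamental weights $\omega_\beta$ via Tits representations and then transferred to \emph{non-boundary} simple roots as integer combinations of the $\omega_\beta$ with $\beta\in\Theta$; for the boundary root $\alpha_\Theta$ this route is unavailable in general. So the assertion that regular distortion ``should propagate'' the alignment into additivity is a genuine gap, not a routine verification.

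A secondary issue: invoking Proposition \ref{anosovgap}(1) restricts you to geometrically finite $\Gamma$, whereas Proposition \ref{whenS1} (the place this lemma is used) is stated for arbitrary discrete $\Gamma$ with $\Lambda(\Gamma)=S^1$. The quasi-isometry is in any case unnecessary: once the additive estimate is in hand, $z,w\in\mathcal{A}_n$ forces $\alpha(\kappa(\rho(\eta)))\le 1+C_1(r)$, and the properness of $\gamma\mapsto\alpha(\kappa(\rho(\gamma)))$ supplied by Proposition \ref{cartanproperty}(2) (equivalently, the finiteness of each $\mathcal{A}_m$ noted just before the lemma) already confines $\eta$ to a finite subset of $\Gamma$, bounding $d_{\mathbb{D}}(z,w)$.
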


\refchange{276}{\changed{Next we state a property of the critical exponent, whose proof will be postponed until the proof of a more refined version (see Lemma~\ref{lem13refined}).}}

\begin{lemma}{\refchange{276}{\changed{\cite[Lemma 8.7]{CaZhZi}}}}\label{lem13}    
    For any $0<\delta< \delta^{\alpha}_{\rho}(\Gamma)$, we have
    $$
\limsup_{n\to \infty} \sum_{z\in \mathcal{A}_n} c_{\alpha}(z)^{\delta} = \infty.
    $$
\end{lemma}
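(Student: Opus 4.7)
The plan is to argue by contradiction via a Hölder-type interpolation in the exponent: if the sums $S_n := \sum_{z\in\mathcal{A}_n} c_\alpha(z)^\delta$ were uniformly bounded, then raising the exponent slightly would produce a geometric factor that beats the entire Poincaré series, contradicting the definition of $\delta^\alpha_\rho(\Gamma)$.

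First I would fix an auxiliary exponent $\delta' \in (\delta, \delta^\alpha_\rho(\Gamma))$; such a $\delta'$ exists because $\delta < \delta^\alpha_\rho(\Gamma)$, and by the definition of the critical exponent as an infimum, the Poincaré series $Q^\alpha_{\rho(\Gamma)}(\delta') = \sum_{\gamma \in \Gamma} c_\alpha(\gamma b_0)^{\delta'}$ diverges. Next I would verify that, after discarding finitely many terms, this divergent sum equals $\sum_{n\ge 0}\sum_{z\in \mathcal{A}_n} c_\alpha(z)^{\delta'}$. The point here is that only finitely many $\gamma \in \Gamma$ can satisfy $c_\alpha(\gamma b_0) > 1$: if there were infinitely many, then by discreteness of $\Gamma$ a subsequence of $\{\gamma_n(b_0)\}$ would converge to some $x \in \Lambda(\Gamma)$, and Proposition \ref{cartanproperty}(2) would force $\alpha(\kappa(\rho(\gamma_n))) \to \infty$, contradicting $c_\alpha(\gamma_n b_0) > 1$. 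Hence every $\gamma$ outside a finite set lies in some $\mathcal{A}_n$ with $n \ge 0$.

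Now suppose for contradiction that $M := \sup_n S_n < \infty$. On $\mathcal{A}_n$ we have $c_\alpha(z) \le e^{-n} \le 1$, so since $\delta' - \delta > 0$,
\[
c_\alpha(z)^{\delta'} \;=\; c_\alpha(z)^{\delta}\cdot c_\alpha(z)^{\delta'-\delta} \;\le\; c_\alpha(z)^{\delta}\, e^{-n(\delta'-\delta)}.
\]
Summing over $z \in \mathcal{A}_n$ yields $\sum_{z\in\mathcal{A}_n} c_\alpha(z)^{\delta'} \le S_n\, e^{-n(\delta'-\delta)} \le M e^{-n(\delta'-\delta)}$, and summing over $n \ge 0$ gives a convergent geometric series. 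This contradicts the divergence of $Q^\alpha_{\rho(\Gamma)}(\delta')$ established above, so $\limsup_n S_n = \infty$.

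The argument is entirely soft and I do not anticipate any serious obstacle: the core is just a dyadic decomposition of the Poincaré series by the height function $\alpha(\kappa(\rho(\cdot)))$ together with monotonicity of $t \mapsto t^{\delta'-\delta}$ on $(0,1]$. The only place requiring any care is checking that $\{\gamma : c_\alpha(\gamma b_0) > 1\}$ is finite, which is handled by the Cartan property plus discreteness, as above.
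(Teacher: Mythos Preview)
Your argument is correct and is exactly the standard dyadic interpolation that the paper has in mind when it cites \cite[Lemma 8.7]{CaZhZi}: partition $\Gamma(b_0)$ into the annuli $\mathcal{A}_n$, use $c_\alpha(z)\le e^{-n}$ on $\mathcal{A}_n$ to gain a geometric factor $e^{-n(\delta'-\delta)}$, and contradict divergence at $\delta'$. The finiteness of $\{\gamma:c_\alpha(\gamma b_0)>1\}$ is already implicit in the paper's observation that each $\mathcal{A}_n$ is finite (via Proposition~\ref{cartanproperty}(2)), so no new ingredient is needed there.
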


\changed{We establish an upper bound for the critical exponents, which proves Theorem~\ref{thmA}~(1).}
\begin{proposition}\label{whenS1}
If $\Gamma\subset \mathsf{PSL}(2,\mathbb{R})$ is a non-elementary discrete subgroup, $G$ is a simple Lie group with $\Theta$-positive structure and $\rho:\Gamma\to G$ is a $\Theta$-positive representation, then for any $\alpha\in \Theta$, $\delta^{\alpha}_{\rho}(\Gamma) \le 1$.
\end{proposition}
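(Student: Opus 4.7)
The plan is to show the quantitative statement that $\sum_{z \in \mathcal{A}_n} c_\alpha(z)$ is bounded uniformly in $n$, and then to compare this with Lemma \ref{lem13} to force $\delta^\alpha_\rho(\Gamma) \le 1$. The uniform bound will come from packing the shadow sets $\{\mathcal{O}_r^\alpha(b_0,z)\}_{z \in \mathcal{A}_n}$ into the finite-length curve $\xi^\alpha(\Lambda(\Gamma))$, using the disjointness statement of Lemma \ref{lem11}, the lower shadow estimate of Corollary \ref{shadowradius}, and the rectifiability supplied by Proposition \ref{prop3} together with Remark \ref{rem16}.

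To carry out the packing, fix an $r>0$ and let $C_0 = C_0(r)$ be the constant from Lemma \ref{lem11}. By proper discreteness of $\Gamma$, the set $F := \{\gamma \in \Gamma : d_{\mathbb{D}}(b_0,\gamma b_0) \le C_0\}$ is finite, so the graph on $\Gamma b_0$ joining points at $\mathbb{D}$-distance at most $C_0$ has bounded degree $|F|-1$. A greedy coloring therefore partitions each $\mathcal{A}_n$ into at most $N := |F|$ subclasses $\mathcal{A}_n^1,\dots,\mathcal{A}_n^N$ so that any two points inside a single subclass are more than $C_0$ apart in $\mathbb{D}$, and $N$ is independent of $n$. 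Lemma \ref{lem11} then ensures that the shadows $\{\mathcal{O}_r(b_0,z)\}_{z\in \mathcal{A}_n^j}$ are pairwise disjoint in $S^1 = \Lambda(\Gamma)$; since $\rho$ is $\Theta$-transverse by Proposition \ref{thm17} and the transversality of its limit map forces $\xi^\alpha$ to be injective on $\Lambda(\Gamma)$, the images $\{\mathcal{O}_r^\alpha(b_0,z)\}_{z \in \mathcal{A}_n^j}$ are disjoint subsets of $\xi^\alpha(\Lambda(\Gamma))$. Combining Corollary \ref{shadowradius} (giving $m_\alpha(\mathcal{O}_r^\alpha(b_0,z))\ge C^{-1}c_\alpha(z)$ for $d_\mathbb{D}(b_0,z)>r$) with the rectifiability bound $m_\alpha(\xi^\alpha(\Lambda(\Gamma))) =: L < \infty$ yields, for each $j$, the estimate $\sum_{z\in\mathcal{A}_n^j} c_\alpha(z) \le CL$, and summing over $j$ produces a uniform bound
\[
\sum_{z \in \mathcal{A}_n} c_\alpha(z) \le NCL =: K
\]
valid for all sufficiently large $n$.

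The proof then concludes by contradiction. Suppose $\delta^\alpha_\rho(\Gamma) > 1$ and pick $\delta \in (1,\delta^\alpha_\rho(\Gamma))$. Since $c_\alpha(z) \le e^{-n}$ for $z \in \mathcal{A}_n$, we have
\[
\sum_{z \in \mathcal{A}_n} c_\alpha(z)^\delta \le e^{-n(\delta-1)}\sum_{z \in \mathcal{A}_n} c_\alpha(z) \le K\, e^{-n(\delta-1)} \longrightarrow 0,
\]
directly contradicting Lemma \ref{lem13}, which asserts $\limsup_{n\to\infty}\sum_{z \in \mathcal{A}_n} c_\alpha(z)^\delta = \infty$. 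The main obstacle is conceptually localized in the packing step: one must ensure that $N$ is uniform in $n$ (which is where proper discreteness of $\Gamma$ enters) and that the shadow radius lemma applies to essentially all of $\mathcal{A}_n$ (the finitely many $z$ with $d_\mathbb{D}(b_0,z) \le r$ lie in only finitely many $\mathcal{A}_n$ by properness, and may be discarded without affecting the $\limsup$). The injectivity of $\xi^\alpha$, needed to transfer the disjointness of shadows from $S^1$ to $\mathcal{F}_\alpha$, is free from transversality, so no further input is required.
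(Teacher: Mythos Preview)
Your proof is correct and follows essentially the same strategy as the paper's own argument: partition the orbit into finitely many $C_0$-separated color classes via Lemma~\ref{lem11}, use the regular distortion property to bound each $c_\alpha(z)$ below by a shadow contribution, and exploit rectifiability of $\xi^\alpha$ (Proposition~\ref{prop3} and Remark~\ref{rem16}) to cap the total contribution in each color class, thereby contradicting Lemma~\ref{lem13}.

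There are two cosmetic differences. First, the paper invokes Proposition~\ref{prop4} and bounds $c_\alpha(z)$ by the distance $d_\alpha(\xi^\alpha(x_{z,r}),\xi^\alpha(y_{z,r}))$ between the images of the shadow endpoints, then sums these distances over the positive tuple formed by all endpoints---so the rectifiability bound is applied in its raw form and injectivity of $\xi^\alpha$ is never used. You instead go through the arc-length measure $m_\alpha$ via Corollary~\ref{shadowradius}, which forces you to check that the images $\mathcal{O}_r^\alpha(b_0,z)$ are disjoint in $\mathcal{F}_\alpha$; your injectivity claim for $\xi^\alpha$ is correct (transversality of $\xi^\Theta$ gives $\xi^\alpha(x)=Ad_g\mathfrak{p}_\alpha$ and $\xi^\alpha(y)=Ad_g\mathfrak{p}_{\alpha'}^{opp}$ for some $g$, and these parabolic subalgebras are never equal), but the paper's route sidesteps the issue entirely. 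Second, the paper contradicts Lemma~\ref{lem13} by applying it directly at $\delta=1$, whereas you establish the uniform bound $\sum_{z\in\mathcal{A}_n}c_\alpha(z)\le K$ first and then use $c_\alpha(z)^\delta\le e^{-n(\delta-1)}c_\alpha(z)$ to force $\sum c_\alpha(z)^\delta\to 0$; both are equally valid.
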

\begin{proof}
    Suppose $\delta^{\alpha}_{\rho}(\Gamma)>1$, then from Lemma \ref{lem13}, we have
    $$
    \limsup _{n\to \infty} \sum_{z\in \mathcal{A}_n} c_{\alpha}(z) = \infty.
    $$

\changed{Fix \(b_{0}\in \mathbb{D}\). Let $r$ be a sufficiently large radius (see the discussions preceding Proposition~\ref{prop4}) with respect to $b_0$ and $R= 0$.}
 As $\Gamma$ is discrete, we can make a finite partition $\Gamma = \sqcup_{i = 1}^{L} \Gamma_i$ such that $\Gamma_i(b_0)$ are all $C_0$-separated where $C_0$ is the constant in Lemma \ref{lem11}. Then there exists an $i\in \{1,2,...\refchange{277}{\changed{,}}L\}$, such that
    $$
 \limsup_{n\to \infty} \sum_{z\in \mathcal{A}_n\cap \Gamma_i(b_0)} c_{\alpha}(z) = \infty,
    $$
    and for any $z,w\in \mathcal{A}_n\cap \Gamma_i(b_0)$, $\mathcal{O}_r(b_0,z)\cap \mathcal{O}_r(b_0,w) = \emptyset$. \changed{Recall that $x_{z,r},y_{z,r}\in \Lambda(\Gamma)$ are the coarse end points of $\mathcal{O}_r(b_0,z)$.} Proposition \ref{prop4} implies that there exists a constant $C_1>0$ such that for \changed{any $\gamma\in \Gamma$ where $d_{\mathbb{D}}(b_0,\gamma(b_0))>r$, we have
    $$
    c_{\alpha}(\gamma(b_0))\le  C_1d_{\alpha}(\xi^{\alpha}(x_{\gamma(b_0),r}),\xi^{\alpha}(y_{\gamma(b_0),r})).
    $$}

    \refchange{279}{\changed{As the shadows are disjoint, order the elements of $\mathcal{A}_n \cap \Gamma_i(b_0)$ as 
$z_{1,n}, \dots, z_{N_n,n}$, where $N_n = |\mathcal{A}_n \cap \Gamma_i(b_0)|$, so that 
\[
(x_{z_{1,n},r}, y_{z_{1,n},r}, x_{z_{2,n},r}, y_{z_{2,n},r}, \dots, x_{z_{N_n,n},r}, y_{z_{N_n,n},r})
\]
is positive.}} Since the limit map $\xi^{\alpha}$ is rectifiable \refchange{278}{\changed{(see Proposition~\ref{prop3})}}, by the definition of rectifiability there exists a constant $\tilde{C}>0$ such that for each $n \in \mathbb{Z}^+$,
\[
\sum_{z \in \mathcal{A}_n \cap \Gamma_i(b_0)} c_{\alpha}(z)
= \sum_{k = 1}^{N_n} c_{\alpha}(z_{k,n})
\le C_1 \sum_{k = 1}^{N_n} d_{\alpha}\big(\xi^{\alpha}(x_{z_{k,n},r}),\xi^{\alpha}(y_{z_{k,n},r})\big)
\le \tilde{C},
\]
which is a contradiction. Therefore, $\delta^{\alpha}_{\rho}(\Gamma) \le 1$.

\end{proof}

Canary-Zhang-Zimmer~\refchange{280}{\changed{\cite[Theorem 1.2]{CaZhZi}}} proved a lower-bound for the critical exponent, which is a generalization of Pozzetti-Sambarino-Wienhard's result for Anosov subgroups in \refchange{280}{\changed{\cite[Theorem 3.1]{Liplim}}}.

\begin{theorem}{\cite[Theorem 1.2 and \refchange{282}{\changed{Corollary 5.2}}]{CaZhZi}}\label{thmB}

If $\Gamma\subset \mathsf{PSL}(2,\mathbb{R})$ is a discrete subgroup and $\rho:\Gamma\to G$ is an $\alpha-$transverse representation, then 
$$
\delta^{\alpha}_{\rho}(\Gamma)\ge\dim \xi^{\alpha}(\Lambda_c(\Gamma))
$$
where $\dim$ denotes the Hausdorff dimension with respect to any Riemannian metric $d_\alpha$ on $\mathcal{F}_{\alpha}$.
\end{theorem}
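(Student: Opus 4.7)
The plan is to run a Sullivan--Patterson upper bound for Hausdorff dimension: for every $s>\delta^\alpha_\rho(\Gamma)$ I will exhibit, for each $\epsilon>0$, an $\epsilon$-cover of $\xi^\alpha(\Lambda_c(\Gamma))$ whose total $s$-power diameter is bounded independently of $\epsilon$. This forces $\mathcal H^s(\xi^\alpha(\Lambda_c(\Gamma)))<\infty$, hence $\dim\xi^\alpha(\Lambda_c(\Gamma))\le s$; letting $s\downarrow\delta^\alpha_\rho(\Gamma)$ gives the theorem.

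The technical core is an \emph{outer shadow estimate} valid for any $\alpha$-transverse representation: for each $r>0$ there is a constant $C>0$ such that for every $\gamma\in\Gamma$,
$$
\mathrm{diam}_{d_\alpha}\bigl(\xi^\alpha(\hat{\mathcal O}_r(b_0,\gamma(b_0)))\bigr)\le C\,e^{-\alpha(\kappa(\rho(\gamma)))}.
$$
I would establish this by contradiction: if it failed with arbitrarily large constants, extract divergent $\gamma_n\in\Gamma$ and $p_n,q_n\in\hat{\mathcal O}_r(b_0,\gamma_n(b_0))$ witnessing the failure. Passing to a subsequence, Lemma \ref{lem3} lets us assume $\gamma_n^{-1}(b_0)\to y\in\Lambda(\Gamma)$, and an elementary hyperbolic-geometry computation on shadows shows $p'_n:=\gamma_n^{-1}(p_n)$, $q'_n:=\gamma_n^{-1}(q_n)$ remain uniformly bounded away from $y$. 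Writing $\rho(\gamma_n)=k_{1,n}\exp(\kappa(\rho(\gamma_n)))k_{2,n}$ in Cartan form, Proposition \ref{cartanproperty} gives $k_{2,n}^{-1}\mathfrak p_\alpha^{opp}\to\xi^{\alpha,opp}(y)$; transversality of the limit map then confines $\xi^\alpha(p'_n)$ and $\xi^\alpha(q'_n)$ to a fixed compact subset of the open stratum transverse to $k_{2,n}^{-1}\mathfrak p_\alpha^{opp}$ for all large $n$. The standard singular-value-gap contraction underlying the adapted-representation machinery of Subsection \ref{sssec4} then shows that $\rho(\gamma_n)$ maps this compact stratum into an $O(e^{-\alpha(\kappa(\rho(\gamma_n)))})$-ball around $k_{1,n}\mathfrak p_\alpha$, yielding the required diameter bound and a contradiction.

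Granted this estimate, the proof concludes quickly. Stratify $\Lambda_c(\Gamma)=\bigcup_{R>0}\Lambda_{b_0,R}(\Gamma)$; by countable stability of Hausdorff dimension it suffices to bound $\dim\xi^\alpha(\Lambda_{b_0,R}(\Gamma))$ for fixed $R$. By definition every $x\in\Lambda_{b_0,R}(\Gamma)$ belongs to $\mathcal O_R(b_0,\gamma(b_0))$ for infinitely many $\gamma$. Fix $s>\delta^\alpha_\rho(\Gamma)$, so $\sum_\gamma e^{-s\alpha(\kappa(\rho(\gamma)))}<\infty$. Given $\epsilon>0$, discreteness of $\Gamma$ together with Proposition \ref{cartanproperty}(2) lets us pick $N$ large enough that $Ce^{-\alpha(\kappa(\rho(\gamma)))}<\epsilon$ whenever $d_\mathbb{D}(b_0,\gamma(b_0))>N$. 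Then
$$
\mathcal F_\epsilon:=\bigl\{\,\xi^\alpha(\hat{\mathcal O}_R(b_0,\gamma(b_0)))\,:\,d_\mathbb{D}(b_0,\gamma(b_0))>N\,\bigr\}
$$
still covers $\xi^\alpha(\Lambda_{b_0,R}(\Gamma))$, has members of diameter less than $\epsilon$, and satisfies
$$
\sum_{F\in\mathcal F_\epsilon}\mathrm{diam}(F)^s\le C^s\sum_{\gamma\in\Gamma}e^{-s\alpha(\kappa(\rho(\gamma)))}<\infty
$$
uniformly in $\epsilon$, completing the dimension bound.

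The hardest step is the outer shadow estimate, since only $\alpha$-transversality is assumed and the full regular distortion property of Section \ref{sec4} is unavailable; one cannot expect a matching lower bound on the shadow diameter, which is precisely why this theorem yields only an inequality rather than the equalities of Theorem \ref{thmC}. Securing uniformity of $C$ in $\gamma$ ultimately hinges on the Cartan repeller $k_{2,n}^{-1}\mathfrak p_\alpha^{opp}$ accumulating only on flags of the form $\xi^{\alpha,opp}(y)$, so that transversality of the limit map applies.
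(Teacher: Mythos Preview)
The paper does not give its own proof of this statement: Theorem~\ref{thmB} is simply quoted as \cite[Theorem~1.2]{CaZhZi}, with Remark~\ref{dimup} noting only that the adapted representation of Subsection~\ref{sssec4} reduces the case of general semisimple $G$ to that of $\mathsf{PGL}(d,\mathbb{R})$, where the cited result applies directly. So there is no in-paper argument to compare against.

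Your proposal, by contrast, supplies an actual proof, and it is essentially the standard Sullivan-type argument that underlies the cited result. The two ingredients---an outer shadow diameter bound $\mathrm{diam}\,\xi^\alpha(\hat{\mathcal O}_r(b_0,\gamma(b_0)))\le Ce^{-\alpha(\kappa(\rho(\gamma)))}$ valid for any $\alpha$-transverse representation, followed by a covering of each $\Lambda_{b_0,R}(\Gamma)$ by such shadows and comparison with the $\alpha$-Poincar\'e series---are exactly right, and you correctly identify that only the upper diameter bound (not the full regular distortion of Section~\ref{sec4}) is needed here. The contradiction argument for the outer bound is sound: the key point, which you flag, is that $\gamma_n^{-1}(p_n),\gamma_n^{-1}(q_n)$ stay in a compact subset of $\Lambda(\Gamma)\setminus\{y\}$ because the geodesic $[\gamma_n^{-1}(b_0),\gamma_n^{-1}(p_n))$ passes through $B(b_0,r)$ while $\gamma_n^{-1}(b_0)\to y$; combined with continuity and transversality of the limit map and the convergence $k_{2,n}^{-1}\mathfrak p_\alpha^{opp}\to\xi^{\alpha,opp}(y)$ from Proposition~\ref{cartanproperty}, this yields the uniform compactness in the Bruhat cell that the singular-value contraction requires.
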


\begin{remark}\label{dimup}
  Although the original statement is for transverse subgroups in $\mathsf{PGL}(d,\mathbb{R})$, \refchange{281}{\changed{by applying Proposition~\ref{linearize} to linearize transverse representations, we can use the adapted representation to apply the cited theorem to a general semisimple group $G$. }}
\end{remark}

\begin{proof}[Proof of Theorem \ref{thmA} (2)]
 Recall again that $\xi^{\alpha}$ is rectifiable \refchange{283}{\changed{(see Proposition \ref{prop3})}}, so \refchange{284}{\changed{the image $\xi^{\alpha}(\Lambda(\Gamma))=\xi^{\alpha}(S^1)$ is a rectifiable curve whose Hausdorff dimension is $1$ (see Stein-Shakarchi~\cite[Chapter~3: Section 4 and Chapter~7: Theorem 2.4]{steinreal})}}. As $\Gamma$ is a lattice, $\Lambda(\Gamma)-\Lambda_{c}(\Gamma)$ is the set of parabolic fixed points which is countable, so we have $\dim(\xi^{\alpha}(\Lambda_{c}(\Gamma))) = 1$. By Proposition \ref{thm17}, $\rho$ is $\alpha-$transverse (indeed relatively $\alpha-$Anosov), so Theorem \ref{thmB} implies that $\delta^{\alpha}_{\rho}(\Gamma)\ge1$. Combining with  \refchange{285}{\changed{Proposition \ref{whenS1}}}, we have $\delta^{\alpha}_{\rho}(\Gamma) = 1$.
\end{proof}

\section{Doubling $\Theta$-positive representations}\label{sec6}

In this Section we prove \changed{Theorem \ref{thmA}~(3) and we will assume that $\Gamma$ is geometrically finite.} The strategy is as follows. If \(\Lambda(\Gamma)\neq S^{1}\), then
\refchange{286}{\changed{the convex core of \(\mathbb{D}/\Gamma\)}} is a hyperbolic
\changed{surface with geodesic boundary. Doubling it along this boundary produces a
finite–area hyperbolic surface (see for example, Alhfors~\cite[Page.26, 13H]{ahlforsrs}) — which corresponds to a lattice
\(\Gamma^{D}\supset \Gamma\),} and we compare the critical exponent of the doubled representation and the original representation.

We assume that $\Gamma \subset \mathsf{PSL}(2,\mathbb{R})$ is a non-elementary discrete subgroup. \refchange{287}{\changed{And we assume that $G$ is a real simple Lie group with finitely many components, whose identity component $G^{\circ}$ has finite center. Moreover, if $\mathfrak{g}$ denotes the Lie algebra of $G$ with adjoint representation $\psi:G \to Aut(\mathfrak{g})$, then $\psi(G) \subset Aut_1(\mathfrak{g})$ (see Section~\ref{assumption}). We further assume that $G$ has $\Theta$-positive sturcuture, and note that $Aut_1(\mathfrak{g})$ has $\Theta$-positive structure compatible with the one on $G$.}} And if $\rho:\Gamma\to G$ is a $\Theta$-positive representation, then $\psi\circ \rho$ is also $\Theta$-positive. Moreover, as the Lie algebras of $G$ and $Aut_1(\mathfrak{g})$ are isomorphic, one can choose Cartan projection $\kappa'$ on $Aut_1(\mathfrak{g})$ such that for any $g\in G,\alpha \in \Delta$, we have $\alpha(\kappa(g)) = \alpha (\kappa'(\psi(g)))$.

\refchange{288}{\changed{Labourie–McShane~\cite[Section~9.2]{labourie2009cross} first constructed the double of Hitchin representations for convex cocompact Fuchsian groups, and Canary–Zhang–Zimmer~\cite[Appendix~A]{CaZhZi} extended this construction to Hitchin representations arising from geometrically finite Fuchsian groups. We adapt their proofs to the $\Theta$-positive setting.}}

\begin{proposition}\label{prop6}
    Let $\Gamma\subset \mathsf{PSL}(2,\mathbb{R})$ \refchange{313}{\changed{be a non-elementary and geometrically finite}} subgroup such that $\Lambda(\Gamma)\not = S^1$, \refchange{293}{\changed{G be a simple Lie group with $\Theta$-positive structure}} and $\rho:\Gamma\to G$ be a $\Theta$-positive representation, then there exists a \changed{lattice} $\Gamma^{D}\subset \mathsf{PSL}(2,\mathbb{R})$ \changed{containing $\Gamma$ as an infinite index subgroup}, and a $\Theta$-positive representation $\rho^{D}:\Gamma^D\to Aut_1(\mathfrak{g})$ such that $\rho^{D}|_{\Gamma} =\psi \circ\rho $. Furthermore, we have the following properties:
    
    \begin{enumerate}

        \item If $\xi^{\Theta}$ is the limit map for $\rho$ and $\xi^{\Theta,D}$ is the limit map for $\rho^{D}$, then $\xi^{\Theta, D}|_{\Lambda(\Gamma)} = \xi^{\Theta}$.

        \item  If $G$ is connected and $Z(G)$ is trivial, then one can lift $\rho^{D}$ to $\tilde{\rho}^{D}:\Gamma^{D}\to G$ so that $\tilde{\rho}^{D}|_{\Gamma} = \rho$.

    \end{enumerate} 
    
\refchangenew{304}{\changednew{Before the proof, we establish a Lie theoretic lemma: 
\begin{lemma}\label{stabofaut}
Let $\mathfrak{g}$ be a semisimple Lie algebra and $\Theta$ be a subset of its positive simple roots. If $\phi\in Aut_1(\mathfrak{g})$ preserves both $\mathfrak{p}_{\Theta}$ and $\mathfrak{p}_{\Theta}^{opp}$, then for any $\alpha\in \Theta$, $\phi$ preserves $\mathfrak{u}_{\alpha}$ and $\mathfrak{u}_{\alpha}^{opp}$.
\end{lemma}
\begin{proof}
    Recall that $\mathfrak{u}_{\Theta}$ (respectively, $\mathfrak{u}_{\Theta}^{opp}$) is the unique maximal nilpotent ideal of $\mathfrak{p}_{\Theta}$ (respectively, $\mathfrak{p}_{\Theta}^{opp}$). Since $\phi$ is a Lie algebra automorphism, it must preserve these unique ideals, whence:
     \[
    \phi(\mathfrak{u}_{\Theta}) = \mathfrak{u}_{\Theta} \text{ and } \phi(\mathfrak{u}_{\Theta}^{opp}) = \mathfrak{u}_{\Theta}^{opp}.
    \]
    Furthermore, the Levi subalgebra $\mathfrak{l}_{\Theta}$ is characterized by the intersection:
    \[
    \mathfrak{l}_{\Theta} = \mathfrak{p}_{\Theta}\cap \mathfrak{p}_{\Theta}^{opp},
    \]
    which is consequently preserved by $\phi$. Since $\phi$ preserves $\mathfrak{l}_{\Theta}$, it necessarily preserves its center $\mathfrak{z}_{\Theta}$. It follows that $\phi$ preserves the weight space decomposition of $\mathfrak{u}_{\Theta}$ associated with $\mathfrak{z}_{\Theta}$. In particular, this implies that $\phi$ preserves each $\mathfrak{u}_{\alpha}$ and $\mathfrak{u}_{\alpha}^{opp}$ for $\alpha \in \Theta$  as described in Theorem~\ref{thm6}.
\end{proof}
}}

\end{proposition}

\begin{proof}
\textbf{Step 1: Notation setup.}

Let $\mathcal{C}(\Gamma)$ be the convex hull of $\Lambda(\Gamma)$ and $\mathcal{S}(\Gamma)$ be the set of boundary components of $\mathcal{C}(\Gamma)$. Every $b\in\mathcal{S}(\Gamma)$ is the axis of some hyperbolic element $\beta_{b}\in \Gamma\subset \mathsf{PSL}(2,\mathbb{R})$, and let $r_{b}\in \mathsf{PGL}(2,\mathbb{R})$ be the reflection with respect to $b$ (\refchange{289}{\changed{Note that the action of $r_b$ on $\mathbb{D}$ is not orientation preserving, so $r_b\not \in \mathsf{PSL}(2,\mathbb{R}) $).}} \refchange[\baselineskip]{290}{\changed{Define $\hat{\Gamma} = \langle \Gamma,\{r_b\}_{b\in\mathcal{S}(\Gamma)}\rangle\subset \mathsf{PGL}(2,\mathbb{R})$ and let $\Gamma^D = \hat{\Gamma}\cap \mathsf{PSL}(2,\mathbb{R})$ be the subgroup whose action on $\mathbb{D}$ is orientation preserving.}} \refchangenew{290}{\changednew{It can be shown that \(\mathbb{D}/\Gamma^{D}\) is the finite-area surface obtained by gluing the convex core of \(\mathbb{D}/\Gamma\) to itself along its geodesic boundary. Specifically, applying the arguments in \cite[Proposition 9.2.2 and Section 9.2.3]{labourie2009cross} (which, although stated for convex cocompact surfaces, apply equally to geometrically finite surfaces) to $\mathbb{D}/\Gamma$ with $J = \begin{pmatrix}1&0\\0&-1 \end{pmatrix} \in \mathsf{PGL}(2,\mathbb{R})$ yields the $J$-extension (see \cite[Definition 9.2.1]{labourie2009cross}) of $\Gamma$, which coincides with $\Gamma^{D}$ in our context.}} Hence \(\Gamma^{D}\) is a lattice and \(\Lambda(\Gamma^{D})=S^{1}\), and $\Gamma\subset \Gamma^D$ is an infinite index subgroup (as $\mathbb{D}/\Gamma^{D}$ is of finite area but $\mathbb{D}/\Gamma$ is not).

Let $P$ be a \changed{finite-sided} Dirichlet polygon of the $\Gamma$ action on $\mathbb{D}$ \changed{(which is a closed subset of $\mathbb{D}$)}, chosen so that $P$ intersects each $\Gamma$-orbit in $\mathcal{S}(\Gamma)$ exactly once, and $\mathcal{E}(\Gamma)$ be the collection of geodesics in $\mathcal{S}(\Gamma)$ which intersects $P$. The side identification of $P$ gives a finite presentation of $\Gamma = \langle X:R\rangle$. \refchange{292}{\changed{Then
\[
\big\langle X,\ \{r_b\}_{b\in\mathcal{E}(\Gamma)} \ \big| \ R,\ \{r_b^{2}\}_{b\in\mathcal{E}(\Gamma)}\big\rangle
\]
is also a finite presentation of $\hat{\Gamma}$. Indeed, $X\cup\{r_b\}_{b\in\mathcal{E}(\Gamma)}$ generates $\hat{\Gamma}$, since every reflection in $\hat{\Gamma}$ is $\Gamma$–conjugate to some $r_b$ with $b\in\mathcal{E}(\Gamma)$; moreover, no additional relations are needed beyond those in $R$ together with $r_b^2=1$ for $b\in\mathcal{E}(\Gamma)$.}}

\vspace{2mm}

\textbf{Step 2: Construction of the involution $x_I$.}

\refchange{294}{\changed{We use a construction that appeared in Guichard-Wienhard~\cite[Proposition 5.2]{GW2}. Theorem \ref{thm6}~(2), (3) shows that the following properties uniquely determine an element $x_I \in Aut_1(\mathfrak{g})$: $x_I|_{\mathfrak{l}_{\Theta}} = id$; for each $\alpha \in \Theta$, $x_I|_{\mathfrak{u}_{\alpha}} = -id$, and $x_I|_{\mathfrak{u}_{\alpha}^{opp}} = -id$. Clearly, $x_I$ also satisfies the additional properties:
\begin{itemize}
  \item $x_I^2 = id \in Aut_1(\mathfrak{g})$;
  \refchange{295}{\changed{\item $x_I \psi(U_{\Theta}^{>0})x_I^{-1} = \psi(U_{\Theta}^{>0})^{-1}$}}.
  \item \refchangenew{304}{\changednew{$x_I$ lies in the center of $\mathrm{Stab}_{Aut_1(\mathfrak{g})}(\mathfrak{p}_{\Theta}) \cap \mathrm{Stab}_{Aut_1(\mathfrak{g})}(\mathfrak{p}_{\Theta}^{opp})$;}}
\end{itemize}}}
The last item is due to Lemma~\ref{stabofaut}.

\vspace{2mm}

\textbf{Step 3: Construction of the double representation.}

Let $G^{\circ}$ be the identity component of $G$. For each $b\in \mathcal{E}(\Gamma)$, pick $\refchange{297}{\changed{g_b}}\in G^{\circ}$ such that $\xi^{\Theta}(\beta_{b}^{-},\beta_{b}^{+}) = \changed{g_b}(\mathfrak{p}_{\Theta}^{opp}, \mathfrak{p}_{\Theta})$, and set $R_b = \psi(\changed{g_b}) x_{I} \psi(\changed{g_b^{-1}})\in Aut_1(\mathfrak{g})$. One can see that \refchange{296}{\changed{$R_b^2 = id$}}, so we can extend the representation $\rho:\Gamma \to G$ to $\hat{\rho}: \hat{\Gamma}\to Aut_1(\mathfrak{g})$ by requiring $\hat{\rho}|_{\Gamma} = \rho$ and $\hat{\rho}(r_b) = R_b$. And denote the restriction as $\rho^{D}:\Gamma^{D}\to Aut_1(\mathfrak{g})$.

An element $\gamma$ in $\hat{\Gamma}$ is orientation preserving if and only if its expression involves even number of elements in $\{r_b\}_{b\in\mathcal{S}(\Gamma)}$. In this case, $\hat{\rho}(\gamma)$ involves even number of $x_{I}$. And note that if $g\in G^{\circ}$, then $x_I \psi(g) x_I\in Inn(\mathfrak{g})$ (as if one deform $g$ to $id$, then $x_I \psi(g) x_I$ is also deformed to $id$). So if $\psi(G)=Inn(\mathfrak{g})$, then we have $\rho^D(\Gamma^D)\subset Inn(\mathfrak{g})$. \refchangenew{303}{\changednew{Specially, if $G$ is connected and $Z(G)$ is trivial, then $G\cong Inn(\mathfrak{g})$, so $\rho^D$ can be lifted to $\tilde{\rho}:\Gamma^{D}\to G$. We proceed to show that $\rho^{D}$ is $\Theta$-positive, which verifies the second item in the ``furthermore'' part of the proposition.}}

\vspace{2mm}

\textbf{Step 4: Extension of $\xi^{\Theta}$.}

Let $\Gamma_{R} = \langle\{r_b\}_{b\in \mathcal{S}(\Gamma)}\rangle\subset \mathsf{PGL(2,\mathbb{R}})$. Note that the boundary components \refchange{298}{\changed{of $\mathcal{C}(\Gamma)$}} are all disjoint, a Ping-Pong argument shows every element in $\Gamma_{R}$ has a unique reduced word expression in $r_b$:  \refchange{299}{\changed{If $x \in \Gamma_{R}$ has two different reduced expressions, then by removing the common initial words, we may assume that the first words of the two expressions are different, say one is $r_{b_1}$ and the other is $r_{b_2}$. Choose $b_0 \in \mathcal{C}(\Gamma)$. The fact that $x$ has a reduced expression starting with $r_{b_1}$ implies that $x(b_0)$ lies in the component of $\mathbb{D}-\mathcal{C}(\Gamma)$ bounded by $b_1$; on the other hand, the reduced expression starting with $r_{b_2}$ implies that $x(b_0)$ lies in the component bounded by $b_2$. These two components are disjoint whenever $b_1 \neq b_2$, giving a contradiction.}} As a result, we have the following claim (see also Canary-Zhang-Zimmer~\cite[Lemma A.2]{CaZhZi}):
\begin{claim}
For any $r_1\not = r_2\in \Gamma_{R}$, one of the following holds 
\begin{enumerate}
    \item There exists $b\in\mathcal{S}(\Gamma)$, such that $r_1 = r_2 r_b$, in which case $r_1(\mathcal{C}(\Gamma))\cap r_2(\mathcal{C}(\Gamma)) = r_1(b) = r_2(b)$.

    \item $r_1(\mathcal{C}(\Gamma))\cap r_2(\mathcal{C}(\Gamma)) = \emptyset$.
\end{enumerate}  
As a result, for any $r_1,r_2\in \Gamma_{R}$, $r_1(\Lambda(\Gamma))\cap r_2(\Lambda(\Gamma)) $ is non-empty if and only if $r_1 = r_2$ or $r_1 = r_2 r_b$ for some $b\in \mathcal{S}(\Gamma)$. In the latter case, the intersection is $\{\beta_b^{-},\beta_b^{+}\}$.
\end{claim}

\refchange{300}{\changed{Recall that $\hat{\Gamma}$ is generated by $\Gamma$ and $\{r_b\}_{b\in \mathcal{E}(\Gamma)}$ and $\Lambda(\hat\Gamma) = \Lambda(\Gamma^D) = S^1$}}, so one can then \refchangenew{303}{\changednew{extend the limit map}} to $\xi^{\Theta, D}: \changednew{\Gamma_R(\Lambda(\Gamma))}\to \mathcal{F}_{\Theta}$ \refchange{300}{\changed{in the $\hat{\rho}(\hat{\Gamma})$-equivariant way}} by requiring \refchangenew{303}{\changednew{$\xi^{\Theta, D}|_{\Lambda(\Gamma)} = \xi^{\Theta}$}} and  $\xi^{\Theta, D}(r_b x) = R_b \xi^{\Theta,D}(x)$. Obviously $\xi^{\Theta, D}$ is also $\rho^D(\Gamma^D)$-equivariant. \refchangenew{304}{\changednew{Next we show that it is positive on $\Gamma_{R}\big(\Lambda(\Gamma)\big)$.}}

\vspace{2mm}

\textbf{Step 5: Verification of the positivity of $\xi^{\Theta,D}$ on $\Gamma_R(\Lambda(\Gamma))$.}

\changed{Recall that $r_b$ flips $\mathbb{D}$ with respect to the axis with endpoints $\beta_b^{+}, \beta_b^{-} \in \Lambda(\Gamma)$.} Note that $\beta_{b}^{+}, \beta_{b}^{-}$ divide $\partial \mathbb{D}$ into \refchange{301}{\changed{two closed arcs, say $I_{b,1}$ and $I_{b,2}$, with $r_b(I_{b,1}) = I_{b,2}$ and $r_b(I_{b,2}) = I_{b,1}$, and 
\[
R_b\big(\refchangenew{301}{\changednew{g_b}}(\mathfrak{p}_{\Theta}^{opp}, U_{\Theta}^{>0}\mathfrak{p}_{\Theta}^{opp}, \mathfrak{p}_{\Theta})\big) 
= \refchangenew{301}{\changednew{g_b}}(\mathfrak{p}_{\Theta}^{opp}, (U^{>0}_{\Theta})^{-1}\mathfrak{p}_{\Theta}^{opp}, \mathfrak{p}_{\Theta}).
\]
}} 

Fix $r_b$ and we first check that $\xi^{\Theta,D}|_{\Lambda(\Gamma)\cup r_b(\Lambda(\Gamma))}$ is positive. As $\xi^{\Theta,D}$ is equivariant and $b\in \Gamma\cdot \mathcal{E}(\Gamma)$, up to a $\Gamma$ conjugation we can assume that $b\in \mathcal{E}(\Gamma)$. \refchange{303,304,305}{{\color{blue}{It suffices to show that for any positive tuple in \refchangenew{302}{\changednew{$\Lambda(\Gamma)\cup r_b(\Lambda(\Gamma))$}}, its $\xi^{\Theta,D}$-image is positive in $\mathcal{F}_{\Theta}$. Since subtuples of positive tuples remain positive, it is enough to check that the $\xi^{\Theta,D}$-images of positive tuples containing \refchange{302}{$\beta_b^-$ and $\beta_b^+$} are positive. 

Since $\Lambda(\Gamma)$ is contained in one of $I_{b,1}$ and $I_{b,2}$, we can assume the tuple can be written in cyclic order as
\[
(\beta_b^-,x_1,\ldots,x_n,\beta_b^+,y_1,\ldots,y_m),
\]
where $x_i \in \Lambda(\Gamma)$ for $i=1,\ldots,n$ and $y_j = r_b y'_j$ with $y'_j \in \Lambda(\Gamma)$ for $j=1,\ldots,m$.  
\refchangenew{304}{\changednew{
From Corollary~\ref{posconv2}~(2), there exist $\tilde{g}_b\in Aut_1(\mathfrak{g})$, $p_i, p'_j \in U_{\Theta}^{opp,>0}$ for $i=1,\ldots,n$ and $j=1,\ldots,m$ such that
\[
\xi^{\Theta}(\beta_b^-,\beta_b^+) = \tilde{g}_b(\mathfrak{p}_{\Theta}^{opp},\mathfrak{p}_{\Theta})
\]
\[
\xi^{\Theta}(x_i) = \tilde{g}_b p_i \mathfrak{p}_{\Theta}, \quad 
\xi^{\Theta}(y'_j) = \tilde{g}_b p'_j \mathfrak{p}_{\Theta},
\]
with $p_{i+1}^{-1}p_i \in U_{\Theta}^{opp,>0}$ and $(p'_j)^{-1}p'_{j+1} \in U_{\Theta}^{opp,>0}$ for $i=1,\ldots,n-1$ and $j=1,\ldots,m-1$.  
}}
Recall that $g_b \in G^{\circ}$ is chosen so that 
\[
\xi^{\Theta}(\beta_b^-,\beta_b^+) = g_b(\mathfrak{p}_{\Theta}^{opp},\mathfrak{p}_{\Theta}),
\]
so
\changednew{
\[
\psi(g_b^{-1})\tilde{g}_b \in \mathrm{Stab}_{Aut_1(\mathfrak{g})}(\mathfrak{p}_{\Theta}) \cap \mathrm{Stab}_{Aut_1(\mathfrak{g})}(\mathfrak{p}_{\Theta}^{opp}).
\]
}

\changednew{\refchangenew{304}{Write
\[
\xi^{\Theta}(y_j) = R_b\xi^{\Theta}(y_j') = \psi(g_b) x_I \big(\psi (g_b^{-1}) \tilde{g}_b\big)p'_j \mathfrak{p}_{\Theta} = \tilde{g}_b x_I  p'_j \mathfrak{p}_{\Theta} .\] The last equality is due to the third property of $x_I$ in Step 2. Since $x_I \psi(U_{\Theta}^{opp,>0}) x_I^{-1} = \psi(U_{\Theta}^{opp,>0})^{-1}$, if we let $q_j =  \psi^{-1}(x_I \psi(p'_j) x_{I}^{-1}) \in (U_{\Theta}^{opp,>0})^{-1}$ for $j=1,\ldots,m$, then}
\[
\xi^{\Theta}(y_j) = \tilde{g}_b q_j \mathfrak{p}_{\Theta}, \quad 
q_j^{-1}q_{j+1} \in (U_{\Theta}^{opp,>0})^{-1}.
\]
}
(recall that we abbreviate $\psi(g)$ as $g$ when acting $G$ on flag manifolds, see Section \ref{assumption}.) Consequently,
\[
\xi^{\Theta,D}(\beta_b^-,x_1,\ldots,x_n,\beta_b^+,y_1,\ldots,y_m) 
= \changednew{\tilde{g}_b}(\mathfrak{p}_{\Theta}^{opp},p_1 \mathfrak{p}_{\Theta},\ldots,p_n \mathfrak{p}_{\Theta},\mathfrak{p}_{\Theta},q_1 \mathfrak{p}_{\Theta},\ldots,q_m \mathfrak{p}_{\Theta}),
\]
\[
= \changednew{\tilde{g}_b} q_m(\mathfrak{p}_{\Theta}^{opp},q_m^{-1}p_1 \mathfrak{p}_{\Theta},\ldots,q_m^{-1}p_n \mathfrak{p}_{\Theta},q_m^{-1}\mathfrak{p}_{\Theta},q_m^{-1}q_1 \mathfrak{p}_{\Theta},\ldots,\mathfrak{p}_{\Theta}).
\] And the properties of the sequences $p_i$ and $q_j$ established above show that the tuple is positive. Hence, $\xi^{\Theta,D}$ is positive on $\Lambda(\Gamma)\cup r_b(\Lambda(\Gamma))$.
}}}

\vspace{3mm}

\begin{comment}\refchange{306,307}{\changed{As $\Lambda(\Gamma)\cup \big(\bigcup_{r\in \Gamma_R } r(\Lambda(\Gamma))\big) = \Lambda(\Gamma^D) = S^1$}}, to prove $\xi^{\Theta,D}$ is positive on the whole circle, it suffices to show that
\end{comment}
Next we show that $\xi^{\Theta, D}$ is positive on \refchange{306,307}{\changed{$\Lambda(\Gamma)\cup \big(\bigcup_{r\in \mathcal{S}'} r(\Lambda(\Gamma))\big)$}} for any finite subset \refchange{307}{\changed{$\mathcal{S}'\subset \Gamma_R -\{id\}$}} such that \refchange{306,307}{\changed{$\mathcal{C}(\Gamma) \cup \big(\bigcup_{r\in \mathcal{S'}} r(\mathcal{C}(\Gamma))\big)$}} is connected. We prove this by induction on the cardinality of $\mathcal{S}'$.

When $\# \mathcal{S}' = 1$, it has been completed above, \refchange{306,307}{\changed{as $\mathcal{C}(\Gamma)\cup r \mathcal{C}(\Gamma), r\in\Gamma_R$ is connected if and only if $r = r_b$ for some $b\in \mathcal{S}(\Gamma)$.}}

\refchangenew{304}{\changednew{
For the inductive step, let $\mathcal{S}''\subset \mathcal{S}'$ be a subset such that $\#\mathcal{S}'' = \#\mathcal{S}'-1$ and \changed{$\mathcal{C}(\Gamma) \cup \big(\bigcup_{r\in \mathcal{S''}} r(\mathcal{C}(\Gamma))\big)$} is connected. \changed{Such an $\mathcal{S}''$ exists. Indeed, the Claim in Step 3 shows that for any distinct $r_1, r_2$, the sets $r_1(\mathcal{C}(\Gamma))$ and $r_2(\mathcal{C}(\Gamma))$ overlap along at most one boundary component. Thus, we can choose an element $r_1\in \mathcal{S}'$ such that $r_1(\mathcal{C}(\Gamma))$ intersects $\mathcal{C}(\Gamma) \cup \big(\bigcup_{r\in \mathcal{S'}- \{r_1\}} r(\mathcal{C}(\Gamma))\big)$ at exactly one boundary component, and let $\mathcal{S}'-\mathcal{S}''=\changed{\{r_1\}}$.} Moreover, using the same claim, we can find $r_2\in \mathcal{S}''$ such that $\changed{r_{1}}(\mathcal{C}(\Gamma))$ and $\changed{r_{2}}(\mathcal{C}(\Gamma))$ \refchange{309}{\changed{share}} a common boundary component $d$. Moreover, if $d = r_2(b')$ and $ b'\in \mathcal{S}(\Gamma)$, then $r_1 = r_2 r_{b'}$.

Let
\[
\Lambda'' = \Lambda(\Gamma)\cup \bigcup_{r\in (\mathcal{S}''\cup\{id\}) - \{r_2\}} r_2^{-1} r\Lambda(\Gamma).
\]
By the choice of $r_1,r_2$, for any $r\in (\mathcal{S}''\cup\{id\})$, the set $r_2^{-1} r\mathcal{C}(\Gamma)$ does not lie in the component of $\mathbb{D}-\mathcal{C}(\Gamma)$ bounded by $b'$. This implies that $\Lambda''$ and $r_{b'} \Lambda(\Gamma)$ belong to the distinct sets $I_{b',1}$ and $I_{b',2}$, respectively.}} Since $\xi^{\Theta,D}$ is $\hat{\rho}(\hat{\Gamma})$-equivariant, the induction hypothesis implies that $\xi^{\Theta,D}$ is positive on $\Lambda''$. Moreover, the base case of the induction carries over verbatim, showing that $\xi^{\Theta,D}$ is positive on $\Lambda'' \cup r_{b'}(\Lambda'')$. On the other hand, we have
\[
r_1\Lambda(\Gamma)=r_2r_{b'}\Lambda(\Gamma)\subset r_2r_{\refchangenew{311}{\changednew{b'}}}\big( \Lambda''\big).
\]
So $$\Lambda(\Gamma)\cup \bigcup_{r\in \mathcal{S}'} r\Lambda(\Gamma)\subset r_2 \big( \Lambda'' \cup r_{b'}(\Lambda'')\big).$$ From the $\hat{\rho}(\hat{\Gamma})$-equivariance again, $\xi^{\Theta,D}$ is positive on $\Lambda(\Gamma)\cup \bigcup_{r\in \mathcal{S}'} r\Lambda(\Gamma)$.

\textbf{Step 6: Concluding the proof.}

\refchangenew{306}
{
\changednew{
Thus far, we have shown that $\rho^{D}:\Gamma^{D}\to Aut_1(\mathfrak{g})$ is a representation admitting a positive and equivariant map $\xi^{\Theta,D}:\Gamma_{R}(\Lambda(\Gamma))\to \mathcal{F}_{\Theta}$. To conclude the proof, it remains to show that $\xi^{\Theta,D}$ extends to a continuous positive map (such an extension is automatically equivariant).

Note that by the construction of the doubling $\Gamma^{D}$, every $\Gamma^{D}$-cusp point in $S^1$ lies in the $\hat{\Gamma}$-orbit of a $\Gamma$-cusp point in $\Lambda(\Gamma)$. Since $\xi^{\Theta,D}$ is $\hat{\Gamma}$-equivariant and $\xi^{\Theta}$ is continuous on $\Lambda(\Gamma)$ which is a perfect set, it follows that $\xi^{\Theta,D}$ is continuous at all $\Gamma^{D}$-cusp points in $S^1$.

By Flamm-Tholozan-Wang-Zhang~\cite[Theorem~C]{flamm2026thetapositiverepresentationsrealclosed}, there exists a unique left-continuous, $\rho^{D}$-equivariant, and positive map $\xi^{\Theta,l}:S^1\to \mathcal{F}_{\Theta}$ that is continuous at conical limit points. The equivariance implies that for any hyperbolic $\gamma\in \Gamma$, the points $\xi^{\Theta,l}(\gamma^{\pm})\in \mathcal{F}_{\Theta}$ are the attractor and repeller of $\rho(\gamma)$. The continuity at the conical limit sets implies that $\xi^{\Theta,l}|_{\Lambda(\Gamma)} = \xi^{\Theta}$, and the $\Gamma^{D}$-equivariance implies that $\xi^{\Theta,D} = \xi^{\Theta,l}$ on $\Gamma^{D}(\Lambda(\Gamma))$, which is a dense subset of $S^1$. The continuity of $\xi^{\Theta,D}$ at cusp points and the continuity of $\xi^{\Theta,l}$ at conical limit points together imply that $\xi^{\Theta,D}$ and $\xi^{\Theta,l}$ are continuous and coincide everywhere. Since $\xi^{\Theta,l}$ is positive, so is $\xi^{\Theta,D}$, and this concludes the proof.
}
}
\end{proof}

To show Theorem \ref{thmA} (3), We invoke the strict entropy drop result when $\Gamma$ is geometrically finite. \refchange{314}{\changed{Although the original statement was for relatively Anosov representations (and note that in their setting they did not distinguish between relatively Anosov and Anosov) into $\mathsf{PGL}(d,\mathbb{R})$, we observe that $\Theta$-transverse representations of geometrically finite groups are $\Theta$-relatively Anosov. Moreover, Lemma~\ref{lem4} and Proposition~\ref{linearize} show that we can linearize transverse representations to general Lie groups just as in the case of $\mathsf{PGL}(d,\mathbb{R})$.}}

\begin{proposition}[\protect{\cite[Proposition 11.5]{CaZhZi}}]\label{prop7} 
Suppose $\Gamma$ is \refchange{314}{\changed{a non-elementary geometrically finite Fuchsian group}} and $\rho:\Gamma\to G $ is a relatively $\alpha-$Anosov representation. If $\Gamma'\subset \Gamma$ is an infinite index, geometrically finite subgroup, then we have
$$
\delta^{\alpha}_{\rho|\Gamma'}(\Gamma')<\delta^{\alpha}_{\rho}(\Gamma).
$$
\end{proposition}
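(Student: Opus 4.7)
The plan is to extend Patterson--Sullivan theory for geometrically finite Fuchsian groups to the weighted cocycle $\alpha \circ \kappa \circ \rho$, and then run a Dal'bo--Otal--Peigné-style strict entropy drop argument. By Proposition~\ref{anosovgap}(1), $\alpha \circ \kappa \circ \rho$ is bi-Lipschitz equivalent on $\Gamma$ to $d_{\mathbb{D}}(b_0,\cdot)$, so $\delta := \delta^{\alpha}_{\rho}(\Gamma)$ and $\delta' := \delta^{\alpha}_{\rho|_{\Gamma'}}(\Gamma')$ are finite and positive, and Patterson's construction produces a $\delta$-conformal measure $\mu$ on $\Lambda(\Gamma)$ and a $\delta'$-conformal measure $\mu'$ on $\Lambda(\Gamma')$, both with respect to the Busemann cocycle derived from $\alpha \circ \kappa \circ \rho$.

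The central analytic input is a weighted shadow lemma of the form
\[
\mu\bigl(\hat{\mathcal{O}}_r(b_0,\gamma b_0)\bigr) \asymp e^{-\delta\,\alpha(\kappa(\rho(\gamma)))}
\]
for $r\gg 0$ and $\gamma b_0$ sufficiently deep in $\mathbb{D}$. In the conical direction this is immediate from the regular distortion property (Theorem~\ref{thm12}). For orbit points near cusp fixed points, the classical horoball computation must be replaced by a direct analysis inside the adapted representation, tracking how $\alpha(\kappa(\rho(\gamma)))$ grows along parabolic subgroups using the weakly unipotent structure of $\psi_\Theta\circ\rho$ on parabolics (Proposition~\ref{anosovgap}(2)). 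The shadow lemma then yields the Hopf--Tsuji--Sullivan dichotomy: $\rho$ is of divergent type at $\delta$, the measure $\mu$ is ergodic and gives full mass to $\Lambda_c(\Gamma)$, and $\delta$-conformal measures on $\Lambda(\Gamma)$ are unique up to scaling. The analogous dichotomy holds for $\Gamma'$ at $\delta'$.

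Suppose for contradiction that $\delta' = \delta$. Since $\Gamma$ and $\Gamma'$ are both geometrically finite with $[\Gamma:\Gamma'] = \infty$, one has $\Lambda(\Gamma') \subsetneq \Lambda(\Gamma)$. Comparing the two $\delta$-conformal measures $\mu|_{\Lambda(\Gamma')}$ and $\mu'$ for $\Gamma'$ via the weighted shadow lemma yields $\mu' \asymp \mu|_{\Lambda(\Gamma')}$; summing over $\Gamma/\Gamma'$-coset representatives and absorbing them through Corollary~\ref{cor1}, the $\Gamma$-ergodicity of $\mu$ combined with the infinite-index hypothesis forces $\mu(\Lambda(\Gamma))$ to be infinite, contradicting finiteness of the Patterson construction. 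The main obstacle is the cusp part of the weighted shadow lemma: Corollary~\ref{shadowradius} treats only the lattice case where no cusps lie in the convex core, whereas here one must quantitatively control the density of the Patterson measure near parabolic fixed points using the Jordan block structure of $\psi_\Theta\circ\rho(\gamma)$ for parabolic $\gamma$, without any recourse to the classical hyperbolic horospherical geometry.
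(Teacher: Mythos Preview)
The paper does not prove this proposition; it is invoked directly from \cite[Proposition 11.5]{CaZhZi}, with the adapted representation used only to transfer the statement from $\mathsf{PGL}(d,\mathbb{R})$ to a general semisimple $G$. Your overall strategy---Patterson--Sullivan theory for the $\alpha$-weighted cocycle, a weighted shadow lemma, the Hopf--Tsuji--Sullivan dichotomy, and then an entropy-drop contradiction in the style of Dal'bo--Otal--Peign\'e---is indeed the route taken in \cite{CaZhZi}, so at the level of architecture you are aligned with the source.

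There is, however, a genuine gap in your sketch. You invoke Theorem~\ref{thm12} (the regular distortion property) and Corollary~\ref{shadowradius} to obtain the weighted shadow lemma, but both of these results are proved in this paper \emph{only for $\Theta$-positive representations}, and Corollary~\ref{shadowradius} additionally requires $\Lambda(\Gamma)=S^1$. Proposition~\ref{prop7} is stated for an arbitrary relatively $\alpha$-Anosov representation, with no positivity hypothesis. The regular distortion property is simply not available in that generality---the whole point of Section~\ref{sec4} is that it is extracted from the positive semigroup structure. The shadow lemma needed here must instead be built directly from $\Theta$-transversality via the adapted representation (Lemma~\ref{lem4}), as in \cite{CaZhZi}, and not from the results of Section~\ref{sec4}.

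Beyond this, the cusp part of the shadow lemma is flagged as an obstacle rather than resolved, and the final contradiction (``summing over $\Gamma/\Gamma'$-coset representatives \ldots\ forces $\mu(\Lambda(\Gamma))$ to be infinite'') is too compressed to be verifiable as written; the usual argument proceeds instead by showing that $\mu'$, viewed as a $\delta$-conformal density on $\Lambda(\Gamma)$, must agree up to scale with $\mu$ by uniqueness, which is incompatible with $\mathrm{supp}\,\mu' = \Lambda(\Gamma') \subsetneq \Lambda(\Gamma) = \mathrm{supp}\,\mu$.
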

Now Theorem \ref{thmA}~(3) is a corollary of Proposition \ref{whenS1}, Proposition \ref{prop6}, Proposition \ref{prop7}, and Theorem \ref{thmA} (2).

\vspace{3mm}
\begin{comment}
The doubling of a $\Theta$-positive representation extends the limit map on $\Lambda(\Gamma)$ to a rectifiable limit map defined on $S^1$, so we can generalize Corollary \ref{shadowradius} to the $\Lambda(\Gamma) \not = S^1$ case:

\begin{corollary}\label{shadowradius2}
Assume $\Gamma\subset \mathsf{PSL}(2,\mathbb{R})$ is a discrete subgroup and $\rho:\Gamma \to G$ is a $\Theta$-positive representation. For fixed $b_0\in \mathbb{D}$ and any $r,R>0$, there exists a constant $C>1$ with the following property: for any $x\in \Lambda(\Gamma)$, $z\in \changed{\overrightarrow{b_0x}}$, $\gamma\in \Gamma$ where $d_{\mathbb{D}}(b_0,z)>r$ and $d_{\mathbb{D}}(z,\gamma(b_0))<R$, we have
$$
B_{d_{\alpha}}(\xi^{\alpha}(x),\frac{1}{C} e^{-\alpha(\kappa(\rho(\gamma)))})\cap \xi^{\alpha}(\Lambda(\Gamma))\subset \mathcal{O}^{\alpha}_{r}(b_0,z)\subset B_{d_{\alpha}}(\xi^{\alpha}(x),C e^{-\alpha(\kappa(\rho(\gamma)))}).
$$
\end{corollary}
\end{comment}

\section{Proof of Theorem \ref{thmC}}\label{append2}
By Theorem \ref{thmB}, we have  
\[
\delta^{\alpha}_{\rho}(\Gamma) \ge \dim \xi^{\alpha}(\Lambda_c(\Gamma)).
\]  
Therefore, it suffices to establish the corresponding lower bound. Specifically, we aim to prove that for any \emph{non-boundary root} \(\alpha \in \Theta\),  
$
\dim(\xi^{\alpha}(\Lambda_c(\Gamma))) \ge \delta^{\alpha}_{\rho}(\Gamma),
$ 
and for the \emph{boundary root} \(\alpha_{\Theta}\),  
$
\dim(\xi^{\alpha_{\Theta}}(\Lambda_c(\Gamma))) \ge \frac{r}{2} \delta^{\omega'_{\alpha_{\Theta}}}_{\rho}(\Gamma),
$  
where \(\omega'_{\alpha_{\Theta}}\) denotes the fundamental weight of \(\alpha_{\Theta}\) considered as a root of \(S_{\Theta - \alpha_{\Theta}}\), and \(r\) is the real rank of \(S_{\Theta - \alpha_{\Theta}}\).

Let \(\Gamma \subset \mathsf{PSL}(2, \mathbb{R})\) be a \refchange{318}{\changed{non-elementary}} discrete subgroup, and let \(G\) be a real simple Lie group as assumed at the beginning of Section \ref{sec3}, that is, \refchange{319}{\changed{$G$ has finitely many components, its identity component $G^{\circ}$ has finite center, and if $\mathfrak{g}$ is the Lie algebra of $G$ with adjoint representation $\psi:G \to Aut(\mathfrak{g})$, then $\psi(G) \subset Aut_1(\mathfrak{g})$ (see Section~\ref{assumption}).}}

Fix a basepoint \(b_0 \in \mathbb{D}\). For any \(x \ne b_0 \in \mathbb{D}\), \refchange{320}{\changed{define 
$
l_{b_0}(x) = \overrightarrow{b_0x} \cap \partial \mathbb{D},
$ 
where \(\overrightarrow{b_0x}\) denotes the geodesic ray from \(b_0\) toward \(x\).}}

\begin{definition}
Let \(\rho: \Gamma \to G\) be a \(\Theta\)-transverse representation, and let \(\phi \in \mathfrak{a}^*\). We say that \(\phi\) has \emph{strong coarse additivity} if for any \(\epsilon > 0\), there exists a constant \(C > 0\) such that whenever \refchange{321}{\changed{$\gamma,\eta\in \Gamma$ satisfy}} 
\[
d_{S^1}(l_{b_0}(\gamma^{-1}(b_0)), l_{b_0}(\eta(b_0))) > \epsilon,
\]  
the following inequality holds:
\[
\refchange{333,335,336,342}{\changed{|\phi(\kappa(\rho(\gamma\eta))) -\phi(\kappa(\rho(\gamma))) - \phi(\kappa(\rho(\eta)))| \le  C.}}
\]
\end{definition}
\refchange{333,335,336,342}{\changed{Note that the inequality in the definition involves absolute values, so linear combinations of strongly coarse additive functionals remain strongly coarse additive.}}

\refchangenew{19}{\changednew{Recall that $(\mathfrak{a}^*)^+ \subset \mathfrak{a}^*$ denotes the cone of functionals positive on $\mathfrak{a}^+$ (which is a closed cone with $\{0\}$ removed)}}. For any simple root $\alpha\in \Delta$, let $\omega_{\alpha}\in (\mathfrak{a}^*)^+$ denote the associated fundamental weight.  We have

\begin{proposition}{\refchange{322-328}{\changed{\cite[\refchangenew{322-328}{\changednew{Proposition B.1 and Lemma B.2}}]{CaZhZi3}}}}\label{stronglycoarseaddprop}
Let $\rho \colon \Gamma \to G$ be a $\Theta$-transverse representation. For any $\alpha \in \Theta$, the weight $\omega_\alpha$ satisfies strong coarse additivity.
\end{proposition}
\changed{
Note that the cited Lemma~B.2 is stated for transverse representations into $\mathsf{PGL}(d,\mathbb{R})$, 
but Proposition~B.1~(2) and~(6) extends this result to our setting.
And although the reference assumes that $G$ is connected, when $G$ has finitely many components we can pass to the finite-index subgroup 
\[
\Gamma_0 = \{\gamma \in \Gamma \mid \rho(\gamma) \in G^{\circ}\},
\] 
as in the discussion following Remark~\ref{convexcocompact}. In this way, Proposition~\ref{stronglycoarseaddprop} extends to the general case.
}

\vspace{3mm}

The strong coarse additivity is used in proving the following proposition:

\begin{proposition}\label{prop10}
Let $\Gamma\subset \mathsf{PSL}(2,\mathbb{R})$ be a \refchange{329}{\changed{non-elementary}} discrete subgroup and $\rho:\Gamma\to G$ be a $\Theta$-transverse representation satisfying the regular distortion property. Fix a root $\alpha\in \Theta$, suppose $\phi\in (\mathfrak{a}^*)^{+}$ satisfies the strong coarse additivity, \changed{$\delta_{\rho}^{\phi}(\Gamma) <\infty$}, and $\phi \ge A\alpha$ \refchange{330}{\changed{when evaluating on the positive Weyl Chamber}} for some $A>0$. Then $\dim \xi^{\alpha}(\Lambda_{c}(\Gamma))\ge A\delta_{\rho}^{\phi}(\Gamma)$.
\end{proposition}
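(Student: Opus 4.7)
The plan is to construct a Patterson--Sullivan-type measure $\mu$ on $\Lambda(\Gamma)$ at dimension $\delta := \delta^{\phi}_{\rho}(\Gamma)$, establish a Sullivan-type shadow lemma for $\mu$ using the strong coarse additivity of $\phi$, push $\mu$ forward via $\xi^{\alpha}$, and then apply the mass distribution principle.

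First I would carry out the standard Patterson construction. Let $s_n\searrow\delta$ and form the atomic probability measures
\[
\mu_{s_n} := \frac{1}{\sum_{\gamma\in\Gamma} h_n(\gamma)e^{-s_n\phi(\kappa(\rho(\gamma)))}}\sum_{\gamma\in\Gamma} h_n(\gamma)e^{-s_n\phi(\kappa(\rho(\gamma)))}\,\delta_{\gamma(b_0)},
\]
where $h_n\equiv 1$ if $Q^{\phi}_{\rho(\Gamma)}(\delta)=\infty$ and otherwise $h_n(\gamma)=h(\phi(\kappa(\rho(\gamma))))$ for Patterson's slowly varying function $h$. Any weak-$*$ subsequential limit $\mu$ is a finite Borel measure supported on $\Lambda(\Gamma)$. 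The Cartan property (Proposition~\ref{cartanproperty}) forces mass concentrated near a parabolic fixed point to decay at the conformal rate, so one may arrange $\mu(\Lambda_c(\Gamma))>0$.

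Second I would prove a Sullivan-type shadow lemma: for any $r,R>0$ there exists $C_1>0$ such that whenever $x\in\Lambda_c(\Gamma)$ and $\gamma\in\Gamma$ satisfies $d_{\mathbb{D}}(\gamma(b_0),[b_0,x))\le R$, one has
\[
\mu\!\bigl(\mathcal{O}_r(b_0,\gamma(b_0))\cap\Lambda(\Gamma)\bigr) \le C_1\, e^{-\delta\phi(\kappa(\rho(\gamma)))}.
\]
Orbit points $\eta(b_0)=\gamma\eta'(b_0)\in\mathcal{O}_r(b_0,\gamma(b_0))$ correspond to $\eta'\in\Gamma$ whose $\eta'(b_0)$ is at angular separation bounded below (in terms of $r,R$) from $\gamma^{-1}(b_0)$. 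Strong coarse additivity then gives $\phi(\kappa(\rho(\gamma\eta')))\ge \phi(\kappa(\rho(\gamma)))+\phi(\kappa(\rho(\eta')))-C$, and summing $e^{-s_n\phi(\kappa(\rho(\gamma\eta')))}$ over such $\eta'$ and passing to the weak-$*$ limit yields the bound.

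Third, push $\mu$ forward to $\nu:=\xi^{\alpha}_{*}\mu$. For every $x\in\Lambda_c(\Gamma)$ there are $\gamma_n\in\Gamma$ with $\gamma_n(b_0)\to x$ and $d_{\mathbb{D}}(\gamma_n(b_0),[b_0,x))\le R$ for some fixed $R$. By Corollary~\ref{shadowradius2},
\[
B_{d_\alpha}\!\bigl(\xi^{\alpha}(x), C^{-1}e^{-\alpha(\kappa(\rho(\gamma_n)))}\bigr)\cap\xi^{\alpha}(\Lambda(\Gamma)) \subset \mathcal{O}^{\alpha}_r(b_0,\gamma_n(b_0)),
\]
so the shadow lemma combined with the hypothesis $\phi\ge A\alpha$ yields
\[
\nu\!\bigl(B_{d_\alpha}(\xi^{\alpha}(x), C^{-1}e^{-\alpha(\kappa(\rho(\gamma_n)))})\bigr) \le C_1 e^{-\delta\phi(\kappa(\rho(\gamma_n)))} \le C_1 e^{-A\delta\alpha(\kappa(\rho(\gamma_n)))}.
\]
Hence $\nu$ is $(A\delta)$-Frostman at every point of $\xi^{\alpha}(\Lambda_c(\Gamma))$ along a natural sequence of radii; a standard $5r$-covering argument upgrades this to a uniform upper bound on $\nu$-measure of all sufficiently small balls centered in $\xi^{\alpha}(\Lambda_c(\Gamma))$, and the mass distribution principle then gives $\dim\xi^{\alpha}(\Lambda_c(\Gamma))\ge A\delta = A\delta^{\phi}_{\rho}(\Gamma)$. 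The main obstacle I expect is the shadow-lemma step: proving the correct dependence of constants and ensuring that the Patterson measure gives positive mass to the conical limit set. This is classical when $\Gamma$ is geometrically finite, and for a general discrete $\Gamma\subset\mathsf{PSL}(2,\mathbb{R})$ it should be handled as in \cite{CaZhZi} by combining strong coarse additivity with the regular distortion property.
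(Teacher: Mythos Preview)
Your approach via a Patterson--Sullivan measure at the critical exponent is genuinely different from the paper's, which follows the Bishop--Jones strategy of \cite{CaZhZi}: for each $\delta<\delta^{\phi}_{\rho}(\Gamma)$ one builds a tree $\mathcal{T}_{\delta}$ of orbit points (Proposition~\ref{prop8}) whose shadows nest, are pairwise disjoint at each level, and satisfy $\sum_{w\in\mathcal{C}_{\delta}(z)}c_{\phi}(w)^{\delta}\ge c_{\phi}(z)^{\delta}$; the accumulation set $E_{\delta}$ lies in some uniformly conical set $\Lambda_{b_0,R}(\Gamma)$ (Lemma~\ref{lem17}), carries a probability measure with $\mu(\mathcal{O}_{2r_0}(b_0,z))\le c_{\phi}(z)^{\delta}$ (Lemma~\ref{lem16}), and the shadow inclusion of Proposition~\ref{weakshadow} together with $\phi\ge A\alpha$ feeds the mass distribution principle. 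One then lets $\delta\nearrow\delta^{\phi}_{\rho}(\Gamma)$.

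There is a genuine gap in your sketch at the step $\mu(\Lambda_c(\Gamma))>0$. The Cartan property (Proposition~\ref{cartanproperty}) only asserts that $\alpha(\kappa(\rho(\gamma_n)))\to\infty$ along escaping sequences; it says nothing about how Patterson mass is apportioned between conical and non-conical limit points. For a general discrete $\Gamma\subset\mathsf{PSL}(2,\mathbb{R})$ (no geometric finiteness is assumed here), the Patterson--Sullivan measure for the $\phi$-series can in principle be carried entirely by non-conical points, and you give no argument ruling this out. Your appeal to \cite{CaZhZi} is misplaced: that paper does \emph{not} prove conical positivity of a Patterson measure---it circumvents the issue altogether by the Bishop--Jones tree, which for every $\delta<\delta^{\phi}_{\rho}(\Gamma)$ manufactures a measure supported \emph{by construction} inside a uniformly conical set. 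That is precisely the point of the method. A secondary issue: you invoke Corollary~\ref{shadowradius2}, but its proof uses the doubling construction and hence requires $\Theta$-positivity, while Proposition~\ref{prop10} only assumes $\Theta$-transversality with the regular distortion property. Under those hypotheses only Proposition~\ref{weakshadow} is available, and it applies only at uniformly conical points---which again pushes you toward the tree construction rather than a global Patterson measure.
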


The proof of Proposition \ref{prop10} is repeating the Bishop-Jones~\refchange{331}{\changed{\cite{bishop1994hausdorffdimensionkleiniangroups}}} argument carried out in Canary-Zhang-Zimmer~\refchange{332}{\changed{\cite[Section 8]{CaZhZi}}}. We will introduce the idea briefly after proving Theorem \ref{thmC}.

\begin{proof}[Proof of Theorem \ref{thmC}]
\refchangenew{343}{\changednew{Recall that Theorem~\ref{thmA} established that every $\alpha\in \Theta$ has a finite critical exponent.}}

First we prove for each non-boundary root $\alpha\in \Theta$, it has the strong coarse additivity. We discuss it case by case according to Theorem \ref{thm7}.

\begin{enumerate}
    \item If $\Delta = \Theta$, then $\mathrm{Span}(\alpha,\alpha\in\Theta) = \mathrm{Span}(\omega_{\alpha},\alpha\in \Theta)$. As each $\omega_{\alpha}$ has the strong coarse additivity, so does $\alpha$.

    \item If $\Theta=\{\alpha_r\}$, then there is no non-boundary root so we do not need to prove anything.

    \item If $G$ is locally isomorphic to $\mathsf{SO}(p,q)$, then \refchange{334}{\changed{the non-boundary roots are $\alpha_k, k<p-1$}}. \refchangenew{334}{\changednew{For such $k$, one can check from the Cartan matrix}} that $\alpha_k = 2\omega_{\alpha_{k}}-\omega_{\alpha_{k-1}}-\omega_{\alpha_{k+1}}$, which is a linear combination of strongly coarse additive elements, so $\alpha_k$ is strongly coarse additive. 

    \item If the restricted root system of $G$ is of type $\mathsf{F}_4$, then \refchange{336}{\changed{the only non-boundary root is $\alpha_1$}}. From the Cartan matrix of $\mathsf{F}_4$ (for example, see Humphreys~\cite[Seciton 11.4]{humphreys1994introduction}), $\alpha_1 =2\omega_{\alpha_1}-\omega_{\alpha_2} $, so $\alpha_1$ is strongly coarse additive.
\end{enumerate}

As $\Theta$-positive representations \refchange{339}{\changed{are $\Theta$-transverse (Proposition \ref{thm17})}} and \refchange{337}{\changed{have}} the regular distortion property (Theorem \ref{thm12}), Proposition \ref{prop10} and the above discussions show that $\dim \xi^{\alpha}(\Lambda_{c}(\Gamma))\ge \delta_{\rho}^{\alpha}(\Gamma)$. Together with Theorem \ref{thmB}, we have proved Theorem \ref{thmC}~(1). 

Moreover, if the representation is transverse on the adjacent roots of $\Theta$,  
\refchange{340}{{\color{blue}{then, by the properties of the Dynkin diagram and the Cartan matrix, two roots $\alpha$ and $\beta$ are linked if and only if $\alpha$ has a nonzero $\omega_{\beta}$-coefficient in the basis of fundamental weights. Hence, transversality on the adjacent roots of $\Theta$ implies that for any $\alpha \in \Theta$, the root $\alpha$ can be expressed as a linear combination of weights with strong coarse additivity. So any $\alpha\in \Theta$ is strongly coarse additive. From Theorem \ref{thmB} and Proposition \ref{prop10}, we have proved Theorem \ref{thmC}~(3). }}}

As to Theorem \ref{thmC}~(2), recall from Theorem \ref{thm8} that $S_{\Theta-\alpha_{\Theta}}$ is Hermitian of tube type with root system generated by $(\Delta-\Theta)\cup \{\alpha_{\Theta}\}$ \refchange{341}{\changed{(see Guichard-Wienhard~\cite[Section 2.7 and Section 3.5]{GW2})}}, and $\alpha_{\Theta}$ is the long root. If $\omega'_{\alpha_{\Theta}}$ is the fundamental weight of $\alpha_{\Theta}$ in $S_{\Theta-\alpha_{\Theta}}$, then $\omega'_{\alpha_{\Theta}}\in \mathrm{Span}(\omega_{\alpha},\alpha\in \Theta)\cap \mathrm{Span}(\beta,\beta\in (\Delta-\Theta)\cup \{\alpha_{\Theta}\})$, which is strongly coarse additive.

According to Theorem \ref{thmB} and Proposition \ref{prop10}, \refchange{343}{\changed{it suffices to show the constant $A$ in Proposition \ref{prop10} can be chosen as $\frac{r}{2}$, that is to prove $\omega_{\alpha_{\Theta}}'\ge \frac{r}{2} \alpha_{\Theta}$. \refchangenew{343}{\changednew{Note that once this is established, the fact that $\alpha_{\Theta}$ has a finite critical exponent directly implies that $\omega_{\alpha_{\Theta}}'$ also has a finite critical exponent.}}

And this inequality only depends on the structure of the root system, so from the discussions in the last paragraph, we only need to prove:}} For Hermitian tube type Lie groups, $\omega_{\alpha_r}\ge \frac{r}{2} \alpha_{r}$ where $r$ is the real rank and $\alpha_{r}$ is the long root. And as the root system is of type $\mathsf{C}_r$ (see Guichard-Wienhard~\cite[Section 3.5]{GW2}), we only need to check the case when $G = \mathsf{Sp}(2r,\mathbb{R})$. Now if the Cartan subalgebra of $\mathfrak{sp}(2r,\mathbb{R})$ is identified as $\mathrm{diag}(\lambda_1,...,\lambda_r,-\lambda_1,...,-\lambda_r)$ where $\lambda_1>...>\lambda_r>0$, \refchange{344}{\changed{then we have $\alpha_r: \mathrm{diag}(...)\mapsto  2\lambda_r$ and $\omega_{\alpha_r}:\mathrm{diag}(...)\mapsto   \lambda_1+\cdots+\lambda_r$}}, so clearly $\omega_{\alpha_r}\ge \frac{r}{2} \alpha_r$.

\end{proof}

Now we sketch the proof of Proposition \ref{prop10}. As $\Gamma$ is discrete, one can pick $b_0\in \mathbb{D}$ such that $\mathsf{Stab}_{\Gamma}(b_0) = \{id\}$, and for any $\phi\in \mathfrak{a}^{*}$, define $c_{\phi}:\Gamma(b_0)\to \mathbb{R}$ by $c_{\phi}(\gamma(b_0)) = e^{-\phi(\kappa(\rho(\gamma)))}$.

Essentially, the strong coarse additivity is used in the following construction in Canary-Zhang-Zimmer~\cite{CaZhZi} (although the reference did not write it as an assumption).

\begin{proposition}[\protect{\cite[Proposition 8.2]{CaZhZi}}]\label{prop8}
\changed{Let $\Gamma\subset \mathsf{PSL}(2,\mathbb{R})$ be a non-elementary discrete subgroup and $\rho:\Gamma\to G$ be a $\Theta$-transverse representation.} Suppose $\phi\in (\mathfrak{a}^*)^+$ has strong coarse additivity \changed{and $\delta^{\phi}_{\rho}(\Gamma)<\infty$} . For any $0<\delta<\delta^{\phi}_{\rho}(\Gamma)$, there exists $r_0,D_0>0$ such that if $z\in \Gamma(b_0)$, then there exists a finite subset $\mathcal{C}_{\delta}(z)$ of $\Gamma_0(b_0)-\{z\}$ with the following properties
\begin{enumerate}
    \item if $w\in \mathcal{C}_{\delta}(z)$, then $\mathcal{O}_{2r_0}(b_0,w)\subset \mathcal{O}_{r_0}(b_0,z)$.
    \item $\{\mathcal{O}_{2r_0}(b_0,w)|w\in \mathcal{C}_{\delta}(z)\}$ are pairwise disjoint.
    \item if $w\in \mathcal{C}_{\delta}(z)$, then $d_{\mathbb{D}}(z,w)\le D_0$.
    \item $\sum_{w\in\mathcal{C}_{\delta}(z)} c_{\phi}^{\delta}(w)\ge c_{\phi}^{\delta}(z)$.
\end{enumerate}
\end{proposition}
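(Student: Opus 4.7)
The plan is to construct $\mathcal{C}_\delta(z)$, for $z = \gamma b_0$, as the $\gamma$-translate of a judiciously chosen subset $\mathcal{E} \subset \mathcal{B}_{n_0}$ of a single fixed shell $\mathcal{B}_{n_0} := \{\eta \in \Gamma : n_0 \le \phi(\kappa(\rho(\eta))) < n_0+1\}$. Translating by $\gamma$ automatically handles property (3): $d_\mathbb{D}(z, \gamma\eta b_0) = d_\mathbb{D}(b_0, \eta b_0)$, which is uniformly bounded on a single shell once $\phi$ dominates some $\alpha \in \Theta$ controlling hyperbolic distance through Corollary \ref{cor1} and the coarse Lipschitz estimates on $\alpha \circ \kappa \circ \rho$. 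The starting input is that $\delta < \delta^\phi_\rho(\Gamma)$ forces the $\phi$-Poincar\'e series to diverge, and a standard partition argument (as in Lemma \ref{lem13}) upgrades this to $\limsup_{n\to\infty} S_n = \infty$, with $S_n := \sum_{\eta \in \mathcal{B}_n} c_\phi^\delta(\eta b_0)$.

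Property (4) will follow from strong coarse additivity, provided one can restrict to $\eta$ whose direction $l_{b_0}(\eta b_0)$ is at angular distance $> \epsilon$ from $\xi_\gamma := l_{b_0}(\gamma^{-1} b_0)$. With $C_\phi$ the strong coarse additivity constant at threshold $\epsilon$, such a restriction gives
\[
c_\phi^\delta(\gamma\eta b_0) \ge e^{-\delta C_\phi}\, c_\phi^\delta(z)\, c_\phi^\delta(\eta b_0),
\]
so (4) reduces to showing that, for $n_0$ large enough, the restricted shell sum
\[
S_{n_0}^{\xi_\gamma} := \sum_{\substack{\eta \in \mathcal{B}_{n_0} \\ d_{S^1}(l_{b_0}(\eta b_0),\, \xi_\gamma) > \epsilon}} c_\phi^\delta(\eta b_0)
\]
exceeds $e^{\delta C_\phi}$ uniformly in $\gamma$. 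Since $\xi_\gamma$ ranges over $\Lambda(\Gamma)$, such uniformity requires an anti-concentration estimate: the mass of $\mathcal{B}_{n_0}$ inside any $\epsilon$-sector of $S^1$ must occupy a strict sub-proportion of $S_{n_0}$. I plan to establish this by combining the non-elementariness of $\Gamma$ with the observation that elements $\eta$ packed into a narrow angular cone near $\xi_\gamma$ can be factored through a fixed finite set of ``corridor'' elements, forcing their contributions to satisfy an iterated strong coarse additivity estimate that caps the mass in any such cone.

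For properties (1) and (2), standard hyperbolic geometry takes over. If $\eta b_0$ lies at angular distance $> \epsilon$ from $\xi_\gamma$, then $\gamma \cdot l_{b_0}(\eta b_0)$ is pushed near the direction $l_{b_0}(z)$, so choosing $r_0$ sufficiently small (depending on $n_0$ and $\epsilon$) forces $\mathcal{O}_{2r_0}(b_0,\, \gamma\eta b_0) \subset \mathcal{O}_{r_0}(b_0, z)$ for every $\eta \in \mathcal{E}$, giving (1). Disjointness (2) is then obtained by a standard Vitali-type sub-selection inside $\gamma \cdot \mathcal{E}$, which discards at most a uniform fraction of the sum and can be absorbed by enlarging $n_0$. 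The main obstacle is the anti-concentration estimate of the second paragraph: divergence of the Poincar\'e series alone does not rule out all the shell mass accumulating toward a single direction on $S^1$, and carrying out the iterated coarse-additivity bookkeeping inside narrow angular cones is the technical heart of the argument.
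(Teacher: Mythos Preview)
The paper does not supply its own proof of this proposition; it is quoted directly from \cite[Proposition~8.2]{CaZhZi}, with only the parenthetical comment that strong coarse additivity is the operative hypothesis (``although the reference did not write it as an assumption''). There is therefore no in-paper argument to compare yours against.

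On its own terms, your outline is the standard Bishop--Jones shape and correctly isolates strong coarse additivity as the mechanism behind property~(4). You also correctly flag the one real obstacle: the anti-concentration step guaranteeing that, uniformly over $\gamma$, a definite fraction of the shell mass $S_{n_0}$ lies at angular distance greater than $\epsilon$ from $\xi_\gamma$. Your proposed resolution (elements ``factored through a fixed finite set of `corridor' elements, forcing their contributions to satisfy an iterated strong coarse additivity estimate'') is not worked out, and as stated it is a gap: divergence of $S_n$ does not by itself prevent the entire mass of $\mathcal{B}_{n_0}$ from sitting inside a single $\epsilon$-arc that might contain $\xi_\gamma$. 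The way this is handled in \cite{CaZhZi} and the earlier Bishop--Jones literature it follows is not to bound the bad-sector mass but to use non-elementariness of $\Gamma$ directly: one fixes in advance finitely many hyperbolic elements $h_1,\dots,h_k$ whose attracting fixed points are pairwise $\epsilon$-separated, so that for every $\gamma$ some $h_j^+$ is far from $\xi_\gamma$; pre-multiplying the shell by an appropriate power of $h_j$ (at the cost of a bounded shift of $n_0$ and a bounded enlargement of $D_0$, both absorbed into the constants) pushes the directions $l_{b_0}(h_j^N\eta\, b_0)$ uniformly close to $h_j^+$ and hence away from $\xi_\gamma$. After that single translation the rest of your plan for (1)--(4) goes through without any iterated bookkeeping.
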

\refchange{350}{\changed{As the proof is almost the same as the proof of \cite[Proposition 8.2]{CaZhZi}, we attach it in Appendix~\ref{appendB}.}} \refchangenew{350}{\changednew{The only difference is that we replace their Theorem~7.1 (which is a key feature of the hyperconvexity therein) with our Corollary~\ref{shadowradius}.}}

\refchange{346}{\changed{Fix $0<\delta<\delta^{\phi}_{\rho}(\Gamma)$}}, let $\mathcal{T}_{\delta}$ be the directed tree whose set of vertices is in $\Gamma(b_0)$, and for all $z,w\in \Gamma(b_0)$, there is a directed edge from $z$ to $w$ if and only if $w\in \mathcal{C}_\delta(z)$. Let $E_{\delta}\subset \Lambda(\Gamma)$ be the set of accumulation points of \refchange{347}{\changed{the vertices of $\mathcal{T}_{\delta}$}}. Recall that in Section \ref{ssec6} we defined the $R-$uniformly conical limit set $\Lambda_{b_0, R}(\Gamma)$ with respect to $b_0$.

\refchange{348}{\changed{Once we have Proposition \ref{prop8}, the same proof of Canary-Zhang-Zimmer~\cite[Lemma 8.3 and Lemma 8.4~(1)]{CaZhZi} (which only depend on the properties in Proposition~\ref{prop8}) works and gives the following lemmas: }}
\begin{lemma}[\protect{\cite[Lemma 8.3]{CaZhZi}}]\label{lem17}

For any $x\in E_{\delta}$, there exists a piecewise geodesic ray in $\mathcal{T}_{\delta}$ such that if $\{x_n\}$ is the sequence of vertices of $\mathcal{T}_{\delta}$ along the ray, then $x_0 = b_0, x_n\to x$ and $x \in \cap_{n = 0}^{\infty} \mathcal{O}_{r_0}(b_0,x_n)$. Moreover, there exists $R>0$, such that $E_{\delta}\subset \Lambda_{b_0,R}(\Gamma)$.

\end{lemma}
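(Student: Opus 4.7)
The plan is to construct the ray $\{x_n\}$ by greedy descent in the tree $\mathcal{T}_\delta$, always choosing a child whose $r_0$-shadow still contains $x$, and then to read off the uniform conical radius $R$ from the bounded-step property of Proposition \ref{prop8}(3).

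For the induction, I set $x_0 = b_0$ and, given $x_n$ with $x \in \mathcal{O}_{r_0}(b_0, x_n)$, I want to produce $w \in \mathcal{C}_\delta(x_n)$ with $x \in \mathcal{O}_{r_0}(b_0, w)$, which I then take as $x_{n+1}$. Property (1) of Proposition \ref{prop8} gives $\mathcal{O}_{r_0}(b_0, w) \subset \mathcal{O}_{2r_0}(b_0, w) \subset \mathcal{O}_{r_0}(b_0, x_n)$, so every step stays inside the previous shadow. To obtain coverage of $x$, I will combine the disjointness in (2), the weighted lower bound in (4), and the two-sided shadow estimate of Corollary \ref{shadowradius2} at the scales $r_0$ and $2r_0$, using that $x \in E_\delta$ is approached arbitrarily closely by elements of $\Gamma(b_0)$, so $x$ cannot be separated from every child's shadow. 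The $x_n$ are pairwise distinct, and $\Gamma(b_0)$ is discrete in $\mathbb{D}$, so the bounded steps force $d_{\mathbb{D}}(b_0, x_n) \to \infty$; together with the nested shadows, whose Euclidean diameters then shrink, this gives $x_n \to x$ and $x \in \bigcap_{n \ge 0} \mathcal{O}_{r_0}(b_0, x_n)$.

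For the uniform conical bound, the condition $x \in \mathcal{O}_{r_0}(b_0, x_n)$ means $[b_0, x)$ meets $\overline{B_{d_{\mathbb{D}}}(x_n, r_0)}$, so I pick $p_n \in [b_0, x)$ with $d_{\mathbb{D}}(p_n, x_n) \le r_0$. Since $x_n \to x$ the sequence $\{p_n\}$ exhausts $[b_0, x)$, and for any $p \in [b_0, x)$ lying between consecutive $p_n$ and $p_{n+1}$ the triangle inequality gives
\[
d_{\mathbb{D}}(p, x_n) \le d_{\mathbb{D}}(p_n, p_{n+1}) + r_0 \le 3r_0 + D_0,
\]
so $R := 3r_0 + D_0$ is independent of $x$ and $E_\delta \subset \Lambda_{b_0, R}(\Gamma)$.

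The hard part will be the coverage claim in the induction step: Proposition \ref{prop8} only provides disjointness of $2r_0$-shadows and a weighted Poincar\'e-type lower bound, not a genuine $r_0$-shadow cover of the accumulation points inside $\mathcal{O}_{r_0}(b_0, x_n)$. Closing this gap is where the two-sided shadow estimate from Corollary \ref{shadowradius2} and the accumulation property $x \in E_\delta$ must be used jointly to rule out $x$ sitting in a gap between consecutive children, and the precise quantitative statement needed there is what I expect to be the main technical step.
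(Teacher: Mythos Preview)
Your argument for the uniform conical bound $R = 3r_0 + D_0$ is correct and is exactly the intended one.

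The greedy descent, however, has a real gap at the inductive step, and the tools you list cannot close it. Proposition \ref{prop8} gives you that the $2r_0$-shadows of the children are disjoint and sit inside $\mathcal{O}_{r_0}(b_0,x_n)$, together with a Poincar\'e--type lower bound (4); it gives no coverage statement whatsoever. Corollary \ref{shadowradius2} compares shadows in $\partial\mathbb{D}$ with metric balls in $\mathcal{F}_\alpha$ for the representation $\rho$, and item (4) is an inequality about $c_\phi$; neither sees the gaps between the children's shadows in $\partial\mathbb{D}$, so neither can prevent $x$ from lying in such a gap. In fact nothing in Proposition \ref{prop8} excludes the children's shadows occupying an arbitrarily small portion of $\mathcal{O}_{r_0}(b_0,x_n)$, so a direct ``$x$ must land in one of them'' argument is hopeless.

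The argument that actually works (and is the one in \cite[Lemma 8.3]{CaZhZi}) uses the tree structure of $\mathcal{T}_\delta$ and K\"onig's lemma rather than coverage. Read $E_\delta$ as the set of accumulation points of the \emph{vertex set} of $\mathcal{T}_\delta$ (not all of $\Gamma(b_0)$). Pick tree vertices $v_m\to x$; each $v_m$ has a unique path to $b_0$ in $\mathcal{T}_\delta$, and since every $\mathcal{C}_\delta(z)$ is finite, a diagonal extraction produces an infinite ray $b_0=x_0,x_1,\dots$ such that for every $n$, infinitely many of the $v_m$ are descendants of $x_n$. For those $m$, the nesting in (1) gives $l_{b_0}(v_m)\in\mathcal{O}_{2r_0}(b_0,v_m)\subset\mathcal{O}_{r_0}(b_0,x_n)$; since $v_m\to x$ implies $l_{b_0}(v_m)\to x$ and the shadow is closed, $x\in\mathcal{O}_{r_0}(b_0,x_n)$. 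This replaces your coverage step entirely. Your remaining claims ($d_\mathbb{D}(b_0,x_n)\to\infty$, $x_n\to x$, and the bound on $R$) then go through as you wrote them.
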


\begin{lemma}[\protect{\cite[Lemma 8.4~(1)]{CaZhZi}}]\label{lem16} 

There exists a probability measure $\mu$ supported on $E_{\delta}$ such that for any $z\in \mathcal{T}_{\delta}$, one has:
$$
\mu(\mathcal{O}_{2r_0}(b_0,z)) \le c_{\phi}(z)^{\delta}.
$$
\end{lemma}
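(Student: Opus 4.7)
The plan is to construct $\mu$ as a weak-$*$ limit of explicit probability measures built from the tree $\mathcal{T}_\delta$. Starting from the root $b_0$ (which satisfies $c_\phi(b_0)=1$ since it corresponds to $\gamma=\mathrm{id}$), for each $n\geq 0$ I would define a probability measure $\mu_n$ supported on the $n$th generation $T_n\subset\Gamma(b_0)$ of $\mathcal{T}_\delta$ by the recursive rule
\[
\mu_{n+1}(\{w\}) \;:=\; \mu_n(\{z\})\cdot\frac{c_\phi(w)^\delta}{\sum_{w'\in\mathcal{C}_\delta(z)}c_\phi(w')^\delta}\qquad(w\in\mathcal{C}_\delta(z)).
\]
A routine induction using property (4) of Proposition \ref{prop8}, namely $\sum_{w\in\mathcal{C}_\delta(z)}c_\phi(w)^\delta\geq c_\phi(z)^\delta$, gives $\mu_n(\{z\})\leq c_\phi(z)^\delta$ for every $z\in T_n$. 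More usefully, for any tree vertex $z$ at level $k\leq n$, the total mass $\mu_n$ assigns to the descendants of $z$ in $T_n$ equals $\mu_k(\{z\})\leq c_\phi(z)^\delta$, because the recursion merely redistributes the mass of earlier ancestors.

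Next I would view each $\mu_n$ as a probability measure on the compact space $\overline{\mathbb{D}}$ via Dirac masses at the vertex positions. Property (3) of Proposition \ref{prop8} bounds the hyperbolic step length along any ray in $\mathcal{T}_\delta$ by $D_0$, so rays of infinite length must escape to $\partial\mathbb{D}$, and by Lemma \ref{lem17} their accumulation points lie in $E_\delta$. Extracting a subsequential weak-$*$ limit then yields a probability measure $\mu$ supported on $E_\delta$.

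For the shadow bound $\mu(\mathcal{O}_{2r_0}(b_0,z))\leq c_\phi(z)^\delta$, I would fix a vertex $z\in\mathcal{T}_\delta$ at level $k$ and argue that the mass the limit measure places in $\mathcal{O}_{2r_0}(b_0,z)$ is inherited from the descendants of $z$. Iterating property (1) gives $\mathcal{O}_{2r_0}(b_0,w)\subset\mathcal{O}_{r_0}(b_0,z)\subset\mathcal{O}_{2r_0}(b_0,z)$ for every descendant $w$ of $z$, while property (2) makes sibling shadows disjoint. Combining these with Lemma \ref{lem17}, which realizes each $x\in E_\delta$ as a nested intersection $\bigcap_n\mathcal{O}_{r_0}(b_0,x_n)$ along a tree ray, the rays whose endpoints fall in $\mathcal{O}_{2r_0}(b_0,z)$ are essentially those that pass through $z$, so their total $\mu_n$-weight is bounded by $\mu_k(\{z\})\leq c_\phi(z)^\delta$, and the bound persists in the limit.

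The main obstacle lies in making the last inclusion rigorous: one must rule out mass being contributed by tree rays that accumulate in $\mathcal{O}_{2r_0}(b_0,z)$ without passing through $z$. The essential tools should be the gap between the radii $r_0$ and $2r_0$ (so near-miss rays are controlled by a disjointness/separation argument in the spirit of Lemma \ref{lem11}) combined with the bounded tree step-length from property (3). The cleanest implementation is likely to bound $\mu_n$ on a slightly enlarged open neighborhood of $\mathcal{O}_{2r_0}(b_0,z)$ in $\overline{\mathbb{D}}$, apply the Portmanteau theorem to transfer the bound to $\mu$, and then invoke outer regularity together with the finite-depth constraint to verify that only descendants of $z$ contribute asymptotically.
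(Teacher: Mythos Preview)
The paper does not supply its own proof of this lemma; it is quoted verbatim from \cite[Lemma 8.4]{CaZhZi} and used as a black box in the sketch of Proposition~\ref{prop10}. So there is no in-paper argument to compare against. Your construction (distribute mass recursively along $\mathcal{T}_\delta$ using property~(4), then pass to a limit) is the standard Bishop--Jones construction and is exactly what the cited reference does.

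One comment on your ``main obstacle'': it is not actually an obstacle. Properties (1) and (2) of Proposition~\ref{prop8} imply inductively that for every level $k$ the shadows $\{\mathcal{O}_{2r_0}(b_0,z'):z'\in T_k\}$ are pairwise disjoint (children's $2r_0$-shadows sit inside the parent's $r_0$-shadow, hence inside the parent's $2r_0$-shadow, and siblings' $2r_0$-shadows are disjoint). Hence if $x\in E_\delta\cap\mathcal{O}_{2r_0}(b_0,z)$ and $\{x_n\}$ is the ray from Lemma~\ref{lem17}, then $x\in\mathcal{O}_{r_0}(b_0,x_k)\subset\mathcal{O}_{2r_0}(b_0,x_k)$ forces $x_k=z$. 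Every endpoint landing in $\mathcal{O}_{2r_0}(b_0,z)$ really does come from a ray through $z$, so no separation argument in the spirit of Lemma~\ref{lem11} is needed.

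Given this, the cleanest way to finish (and the one that avoids the Portmanteau direction issue you flag) is to skip the weak-$*$ limit on $\overline{\mathbb{D}}$ altogether: define a probability measure $\nu$ directly on the space $\partial\mathcal{T}_\delta$ of infinite tree rays by setting $\nu([z])=\mu_k(\{z\})$ on the cylinder through $z\in T_k$ (consistency is immediate from your recursion), then let $\mu$ be its pushforward under the endpoint map $\partial\mathcal{T}_\delta\to E_\delta$ of Lemma~\ref{lem17}. The preceding paragraph gives $\mu(\mathcal{O}_{2r_0}(b_0,z))\le\nu([z])=\mu_k(\{z\})\le c_\phi(z)^\delta$ directly.
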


\begin{proof}[Proof of Proposition \ref{prop10}]

Pick any $x\in E_{\delta}$. Let $\{x_n\}$ be the sequences of vertices in Lemma \ref{lem17}. Assume $x_n = \gamma_n(b_0)$, then $d_{\mathbb{D}}(\gamma_n(b_0),\changed{\overrightarrow{b_0x}})<r_0$. \refchange{353}{\changed{Recall that $c_{\alpha}(x_n) = e^{-\alpha(\kappa(\rho(\gamma_n)))}$.}}

\refchange{350,351,352}{\changed{Corollary \ref{shadowradius}~(2)}} shows that there exists some $C_1>0$, such that for each $n$,
$$
B_{d_{\alpha}}(\xi^{\alpha}(x), C_1c_{\alpha}(x_n))\cap \xi^{\alpha}(\Lambda(\Gamma)) \subset  \mathcal{O}^{\alpha}_{2r_0}(b_0,x_n).
$$
By Proposition \ref{prop8} (3), $\gamma_{n+1}^{-1}\gamma_n$ is bounded independent of $x$ and $n$. The uniform continuity of Cartan projection (see Corollary \ref{cor1}) shows that there exists a constant $C_2>1$ such that $\frac{1}{C_2} c_{\alpha}(x_n) \le c_{\alpha}(x_{n+1})\le C_2 c_{\alpha}(x_n)$. As \refchange{354}{\changed{$\rho$ is $\Theta$-transverse and $\gamma_n(b_0)\to x$, Proposition \ref{cartanproperty}~(2) implies that}} $c_{\alpha}(x_n)\to 0$. So for any $t$ small enough, there always exists a $n$ such that
$$
\frac{1}{(C_2)^2} c_{\alpha}(x_n)\le  t\le  (C_2)^2 c_{\alpha}(x_n).
$$

Let $\mu$ be the probability measure in Lemma \ref{lem16}. Since $\phi\ge A\alpha$ \refchange[2\baselineskip]{355}{\changed{ when evaluating on the positive Weyl chamber}}, \refchange{356}{\changed{we have}} $(c_{\alpha}(z))^{A}\ge c_{\phi}(z)$ \refchange{357}{\changed{for any $z\in \Gamma(b_0)$.}}  So by Lemma \ref{lem16}
$$
\xi^{\alpha}_{*}\mu(B_{d_{\alpha}}(\xi^{\alpha}(x), C_1c_{\alpha}(x_n)) \le c_{\phi}(x_n)^{\delta} \le c_{\alpha}(x_n)^{A\delta}.
$$
Thus, for any $t$ small enough, \refchange{358}{\changed{let $t' = \frac{C_1}{(C_2)^2}t$. Suppose $\frac{1}{(C_2)^2} c_{\alpha}(x_n)\le t\le (C_2)^2 c_{\alpha}(x_n)$, then $\frac{C_1}{(C_2)^4} c_{\alpha}(x_n)\le t' \le C_1 c_{\alpha}(x_n)$. Moreover, we have $$ \xi^{\alpha}_*\mu (B_{d_{\alpha}}(\xi^{\alpha}(x), t')) \le \xi^{\alpha}_*\mu (B_{d_{\alpha}}(\xi^{\alpha}(x), C_1 c_{\alpha}(x_n))) \le c_{\alpha}(x_n)^{A\delta} \le \left(\frac{(C_2)^4}{C_1}\right)^{A\delta} t'^{A\delta}. $$ Let $C' = (\frac{C_1}{(C_2)^4})^{A\delta}$, then the above discussions show that for any $y\in \xi^{\alpha}(E_{\delta})$ and any $t'$ small enough, $$t'^{A\delta}\ge C'\xi^{\alpha}_*\mu (B_{d_{\alpha}}(y, t')).$$}} 

Pick any countable covering $B_{d_{\alpha}}(y_n,t_n)$ of $\xi^{\alpha}(E_{\delta})$. Then $\sum t_n^{A{\delta}}\ge C'\xi^{\alpha}_*\mu(\xi^{\alpha}(E_\delta))=C'$, \refchange{359}{\changed{so the $A\delta$-Hausdorff measure of $\xi^{\alpha}(E_{\delta})$ is non-zero}}, and therefore $\dim \xi^{\alpha}(\Lambda_{c}(\Gamma))\ge \dim \xi^{\alpha}(E_{\delta})\ge A\delta$.

 As $\delta$ is arbitrary in $(0, \delta^{\phi}_{\rho}(\Gamma))$, we have finished the proof.
\end{proof}

\appendix

{\color{blue}{

\section{Proof of Corollary \ref{shadowradius}~(2)}\label{append1}
We restate the Corollary in a stronger version (which implies Corollary \ref{shadowradius}~(2) by Theorem \ref{thm12}):
\begin{corollary}\label{shadowradiusstrong}
Let $\Gamma\subset \mathsf{PSL}(2,\mathbb{R})$ be a non-elementary discrete subgroup, $G$ be a semisimple Lie group $\rho:\Gamma\to G$ be a $\Theta$-transvserse representation with regular distortion property. Fix $b_0\in \mathbb{D}$. For any $r>0,R\ge0$, there exists a constant $C>1$ such that: Let $x = S^1\cap \changed{\overrightarrow{b_0z}}$. If $x\in \Lambda_{b_0, R}(\Gamma)$, then for any $\gamma\in \Gamma$ and $z\in \mathbb{D}$, whenever $d_{\mathbb{D}}(\gamma(b_0),z)<r$, we have
    \[B_{d_{\alpha}}(\xi^{\alpha}(x),\frac{1}{C} e^{-\alpha(\kappa(\rho(\gamma)))})\cap \xi^{\alpha}(\Lambda(\Gamma))\subset \mathcal{O}^{\alpha}_{r}(b_0,z)\subset B_{d_{\alpha}}(\xi^{\alpha}(x),C e^{-\alpha(\kappa(\rho(\gamma)))}). \]
\end{corollary}

The proof follows Canary-Zhang-Zimmer~\cite[Theorem~5.1 and Theorem~7.1]{CaZhZi}.

Recall that we work in the \emph{Klein model} of $\mathbb{D}$, so that $\mathbb{D}$ is identified with a properly convex domain in $\mathbb{RP}^2$. 
For $a,b\in \mathbb{RP}^2$ with $a\neq b$, let $l_{ab}$ denote the projective line through $a$ and $b$ (which, when $a,b\in \overline{\mathbb{D}}$, restricts to the geodesic in $\mathbb{D}$ joining them). 
If $x,y\in \overline{\mathbb{D}}$, we write $[x,y]$, $(x,y)$, $[x,y)$, and $(x,y]$ for the closed, open, and the two half-open geodesic segments with endpoints $x$ and $y$.

Fix $b_0\in \mathbb{D}$. For $x\in \partial\mathbb{D}$, let $x^{opp}$ be the second intersection point of $l_{b_0x}$ with $\partial\mathbb{D}$. 
Let $\mathring{x}$ be the intersection of the two tangent lines to $\partial\mathbb{D}$ at $x$ and at $x^{opp}$ (this may be the point at infinity if the tangents are parallel). 
Define $\pi_x:\partial\mathbb{D}\to [x,x^{opp}]$ by
\[
\pi_x(y)\;=\; l_{\mathring{x}y}\cap [x,x^{opp}].
\]
(When $\mathring{x}=\infty$, interpret $l_{\mathring{x} y}$ as the line through $y$ parallel to these tangents.)

\begin{comment}
    From now on, Throughout this section we fix $\Gamma \subset \mathsf{PSL}(2,\mathbb{R})$ non\-elementary and discrete, a simple Lie group $G$ with a $\Theta$-positive structure, and a $\Theta$-positive representation $\rho:\Gamma \to G$.
\end{comment} 

We invoke some lemmas from Canary-Zhang-Zimmer~\cite{CaZhZi}.

\begin{lemma}{\cite[Lemma~7.5]{CaZhZi}}\label{lem9}
For any $r,r'>0$ there exists $T>0$ with the following property: if $x,y\in \Lambda(\Gamma)$, $z\in [b_0,x)$, $d_{\mathbb{D}}\big(z,\Gamma( b_0)\big)\le r'$, $\pi_x(y)\in (z,x)$, and $d_{\mathbb{D}}\big(\pi_x(y),z\big)\ge T$, then $y\in \mathcal{O}_r(b_0,z)$.
\end{lemma}

\begin{lemma}{\cite[Lemma~7.6]{CaZhZi}}\label{lem10}
Let $\rho:\Gamma\to G$ be a $\Theta$-transverse representation with regular distortion property and let $\alpha\in \Theta$. For any $R\ge 0$ and $0<\delta<1$, there exists a constant $C_1>1$ such that for any $x,y\in \Lambda(\Gamma)$ and $\gamma\in \Gamma$, if
\[
d_{\mathbb{D}}\big(\gamma(b_0),\pi_x(y)\big)\le R
\quad\text{and}\quad
d_{S^1}(x,y)\le \delta,
\]
then
\[
d_{\alpha}\big(\xi^\alpha(x),\xi^{\alpha}(y)\big)\ \ge\ \frac{1}{C_1}\,e^{-\alpha(\kappa(\rho(\gamma)))}.
\]
\end{lemma}

\begin{proof}
Assume for contradiction that the claim fails. Then there exist sequences $x_n,y_n\in \Lambda(\Gamma)$ and $\gamma_n\in \Gamma$ such that
\[
0< d_{S^1}(x_n,y_n)\le \delta,\qquad
d_{\mathbb{D}}\big(\gamma_n(b_0),\pi_{x_n}(y_n)\big)\le R,
\]
but
\[
d_{\alpha}\big(\xi^{\alpha}(x_n),\xi^{\alpha}(y_n)\big)
\ \le\ \frac{1}{n}\, e^{-\alpha(\kappa(\rho(\gamma_n)))}.
\]
Set $p_n=\gamma_n^{-1}(y_n)$ and $q_n=\gamma_n^{-1}(x_n)$. Passing to a subsequence, assume
\[
x_n\to x\in \Lambda(\Gamma),\quad y_n\to y\in \Lambda(\Gamma),\qquad
\gamma_n(b_0)\to b\in \overline{\mathbb{D}},\quad \gamma_n^{-1}(b_0)\to a\in \overline{\mathbb{D}},\quad
\gamma_n^{-1}\!\big(\pi_{x_n}(y_n)\big)\to z\in \overline{B_{d_{\mathbb{D}}}(b_0,R)},
\]
and
\[
p_n\to p\in \Lambda(\Gamma),\qquad q_n\to q\in \Lambda(\Gamma),\qquad \gamma_n^{-1}\!\big(x_n^{opp}\big)\to \hat a\in \Lambda(\Gamma).
\]

Since the right-hand side of the inequality above tends to $0$ as $n\to\infty$, we have
$d_{\alpha}\big(\xi^{\alpha}(x_n),\xi^{\alpha}(y_n)\big)\to 0$, and because the limit map $\xi^{\alpha}$ is injective, it follows that $x=y$. Hence $d_{S^1}(x_n,y_n)\to 0$ and $\pi_{x_n}(y_n)\to x$. As $d_{\mathbb{D}}\big(\gamma_n(b_0),\pi_{x_n}(y_n)\big)\le R$, we obtain $\gamma_n(b_0)\to x$, so $\{\gamma_n\}$ is unbounded and therefore $a,b\in \Lambda(\Gamma)$; moreover, $\hat a=a$.

Let $l_n := \gamma_n^{-1}\!\big(l_{b_0 x_n}\big)$. Then $\gamma_n^{-1}(b_0),\, \gamma_n^{-1}\!\big(\pi_{x_n}(y_n)\big)\in l_n$. Since $\gamma_n^{-1}\!\big(\pi_{x_n}(y_n)\big)\to z\in \overline{B_{d_{\mathbb{D}}}(b_0,R)}$, the triple $(a,p,q)$ is distinct. Hence there exists $d>0$ such that, for all sufficiently large $n$, the triple $(\omega_{\gamma_n},p_n,q_n)$ is $d$–bounded. By the regular distortion property, there exists $C_2>0$ with
\[
d_{\alpha}\big(\xi^{\alpha}(x_n),\xi^{\alpha}(y_n)\big)
= d_{\alpha}\big(\rho(\gamma_n)\xi^{\alpha}(p_n),\,\rho(\gamma_n)\xi^{\alpha}(q_n)\big)
\ \ge\ C_2\, e^{-\alpha(\kappa(\rho(\gamma_n)))},
\]
which contradicts $d_{\alpha}\big(\xi^{\alpha}(x_n),\xi^{\alpha}(y_n)\big)\le \frac{1}{n} e^{-\alpha(\kappa(\rho(\gamma_n)))}$. 
\end{proof}

\begin{proof}[Proof of Corollary~\ref{shadowradiusstrong}.]
We first show that for any $r>0,R\ge0$ there exists a constant $C>1$ such that for any $\alpha\in \Theta$, $x\in \Lambda_{b_0,R}(\Gamma)$, $z\in [b_0,x)$, and $\gamma\in \Gamma$ with $d_{\mathbb{D}}(z,\gamma(b_0))<r$, one has
\[
B_{d_{\alpha}}\!\left(\xi^{\alpha}(x),\frac{1}{C}\,e^{-\alpha(\kappa(\rho(\gamma)))}\right)\cap \xi^{\alpha}(\Lambda(\Gamma))
\ \subset\ \mathcal{O}^{\alpha}_{r}(b_0,z).
\]

Assume not. Then there exist $\alpha$ and sequences $x_n\in \Lambda_{b_0,R}(\Gamma)$, $y_n\in \Lambda(\Gamma)$, $z_n\in [b_0,x_n)$, $\gamma_n\in \Gamma$ such that
\[
d_{\mathbb{D}}(z_n,\gamma_n(b_0))<r,\qquad y_n\notin \mathcal{O}_{r}(b_0,z_n),\qquad
d_{\alpha}\!\big(\xi^{\alpha}(x_n),\xi^{\alpha}(y_n)\big)<\frac{1}{n}\,e^{-\alpha(\kappa(\rho(\gamma_n)))}.
\]
Passing to a subsequence, assume $x_n\to x\in \Lambda_{b_0,R}(\Gamma)$, $y_n\to y\in \Lambda(\Gamma)$, and $\gamma_n(b_0)\to b\in \overline{\mathbb{D}}$. Since $\xi^{\alpha}$ is continuous and injective and $d_{\alpha}\!\big(\xi^{\alpha}(x_n),\xi^{\alpha}(y_n)\big)<\frac{1}{n}\,e^{-\alpha(\kappa(\rho(\gamma_n)))}$, we must have $x=y$.

Let $w_n:=\pi_{x_n}(y_n)$. Because $x_n\in \Lambda_{b_0,R}(\Gamma)$, there exists $\beta_n\in \Gamma$ with $d_{\mathbb{D}}(w_n,\beta_n(b_0))\le R$. By Lemma~\ref{lem10} there is $C_1>1$ such that
\[
d_{\alpha}\!\big(\xi^{\alpha}(x_n),\xi^{\alpha}(y_n)\big)\ \ge\ \frac{1}{C_1}\,e^{-\alpha(\kappa(\rho(\beta_n)))}.
\]
Together with $d_{\alpha}\!\big(\xi^{\alpha}(x_n),\xi^{\alpha}(y_n)\big)<\frac{1}{n}\,e^{-\alpha(\kappa(\rho(\gamma_n)))}$, this yields
\[
\alpha\!\big(\kappa(\rho(\beta_n))\big)\ \ge\ \alpha\!\big(\kappa(\rho(\gamma_n))\big)+\log n-\log C_1. 
\]

Set $\eta_n:=\beta_n^{-1}\gamma_n\in \Gamma$. By Corollary~\ref{cor1} (uniform continuity of the Cartan projection), the above sequence of inequalities implies that $\{\eta_n\}$ is unbounded, hence $d_{\mathbb{D}}(b_0,\eta_n b_0)\to\infty$. It follows that $d_{\mathbb{D}}(\gamma_n(b_0),\beta_n(b_0))\to\infty$, and therefore $d_{\mathbb{D}}(z_n,w_n)\to\infty$ since $d_{\mathbb{D}}(z_n,\gamma_n(b_0))< r$ and $d_{\mathbb{D}}(w_n,\beta_n(b_0))\le R$.

By Lemma~\ref{lem9}, $w_n$ cannot lie in $(z_n,x_n)$ for infinitely many $n$ (otherwise $y_n$ would be in the shadow), so after passing to a subsequence we may suppose $w_n\in (b_0,z_n]$. Using $d_{\mathbb{D}}(\gamma_n(b_0),z_n)< r$ and $d_{\mathbb{D}}(\beta_n(b_0),w_n)\le R$, we estimate
$$
d_{\mathbb{D}}\!\big(\beta_n(b_0),[b_0,\gamma_n(b_0)]\big)
\le d_{\mathbb{D}}\!\big(\beta_n(b_0),w_n\big)+ d_{\mathbb{D}}\!\big(w_n,[b_0,\gamma_n(b_0)]\big)
$$
$$
\le R+ d_{\mathbb{D}}\!\big(z_n,[b_0,\gamma_n(b_0)]\big) \\
\le R+r.
$$
Here we used the fact from hyperbolic geometry that for any $y\in \overline{\mathbb{D}}, z\in \mathbb{D}$, $w\in[b_0,z]\to  d_{\mathbb{D}}\big(w,[b_0,y)\big)\le  d_{\mathbb{D}}\big(z,[b_0,y)\big)$.

Applying Lemma~\ref{partialcoarseadditive}, we obtain a constant $C_0>0$ such that
\[
\alpha\!\big(\kappa(\rho(\gamma_n))\big)\ \ge\ \alpha\!\big(\kappa(\rho(\beta_n))\big) + \alpha\!\big(\kappa(\rho(\beta_n^{-1}\gamma_n))\big)-C_0\ge \ \alpha\!\big(\kappa(\rho(\beta_n))\big)-C_0,
\]
which contradicts inequality $\alpha\!\big(\kappa(\rho(\beta_n))\big)\ \ge\ \alpha\!\big(\kappa(\rho(\gamma_n))\big)+\log n-\log C_1$ for all sufficiently large $n$. This proves the first inclusion.

\medskip
For the converse inclusion,
\[
\mathcal{O}^{\alpha}_{r}(b_0,z)\ \subset\ B_{d_{\alpha}}\!\big(\xi^{\alpha}(x),\, C\, e^{-\alpha(\kappa(\rho(\gamma)))}\big),
\]
we appeal to \cite[Theorem~5.1]{CaZhZi}. Although the result there is stated for transverse representations into $\mathsf{PGL}(d,\mathbb{R})$, Lemma \ref{lem4} and Proposition~\ref{linearize} allow us to linearize general transverse representations into that setting, with the flag manifold embedded equivariantly via the adapted representation. This yields the desired bound and completes the proof. 
\end{proof}
}}

{\color{blue}{
\section{Proof of Proposition~\ref{prop8}}\label{appendB}
In this appendix, we reproduce the proof by Canary-Zhang-Zimmer~\cite[Proposition~8.2]{CaZhZi}.

We begin with several lemmas.  
Let $\Gamma \subset \mathsf{PSL}(2,\mathbb{R})$ be a discrete subgroup, $G$ a semisimple Lie group, and $\rho:\Gamma \to G$ a $\Theta$-transverse representation.  
Suppose $\phi \in (\mathfrak{a}^*)^+$. Fix $b_0\in\mathbb{D}$ which satisfies $\mathrm{Stab}_{\Gamma}(b_0) = \{id\}$, and for each $w=\gamma(b_0)\in \Gamma(b_0)$, recall that we define
\(
c_{\phi}(w) \;=\; e^{-\phi(\kappa(\rho(\gamma)))}.
\)
Moreover, in this appendix we define
\(
\mathcal{A}_n \;:=\; \Bigl\{\,z\in \Gamma(b_0)\ \Bigm|\ e^{-(n+1)} < c_{\phi}(z) \le e^{-n}\,\Bigr\}.
\)

We say that $\phi \in (\mathfrak{a}^*)^+$ is \emph{divergent} if, for every sequence of distinct points $w_n \in \Gamma(b_0)$, one has $c_{\phi}(w_n)\to 0$.  
In this case, each set $\mathcal{A}_n$ is finite.  
Clearly, the condition $\delta_{\rho}^{\phi}(\Gamma)<\infty$ implies that $\phi$ is divergent.  
Moreover, for $\Theta$-transverse representations, every $\alpha \in \Theta$ is also divergent.

\begin{lemma}[{\cite[Lemma~8.5]{CaZhZi}}]\label{lem11refined}
Let $\rho:\Gamma\to G$ be a $\Theta$-transverse representation.  
Suppose $\phi$ is either a positive multiple of a root in $\Theta$, or a divergent functional in $(\mathfrak{a}^*)^+$ that satisfies strong coarse additivity.  
Then, for any $r>0$, there exists a constant $C_0=C_0(r)>0$ such that for all $n\ge 0$ and all $z,w\in \mathcal{A}_n$ with $d_{\mathbb{D}}(z,w)>C_0$, one has
\[
\mathcal{O}_r(b_0,z)\,\cap\, \mathcal{O}_r(b_0,w)\;=\;\varnothing.
\]
\end{lemma}

\begin{proof}
We prove the contrapositive. Suppose $\gamma(b_0),\eta(b_0)\in \mathcal{A}_n$ and
\[
x\in \mathcal{O}_r(b_0,\gamma(b_0))\cap \mathcal{O}_r(b_0,\eta(b_0)).
\]
Then there exist $z',w'\in [b_0,x)$ with $d_{\mathbb{D}}(\gamma(b_0),z')\le  r$ and $d_{\mathbb{D}}(\eta(b_0),w')\le r$. 
Without loss of generality we assume that $z'\in [b_0,w']$. Then
\[
d_{\mathbb{D}}\big(\gamma(b_0),[b_0,\eta(b_0)]\big)
\ \le\ d_{\mathbb{D}}(\gamma(b_0),z')+d_{\mathbb{D}}\big(z',[b_0,\eta(b_0)]\big)
\ \le\ 2r.
\]
Here we again used the fact from hyperbolic geometry that for any $y\in \overline{\mathbb{D}}, z\in \mathbb{D}$, $w\in[b_0,z]\to  d_{\mathbb{D}}\big(w,[b_0,y)\big)\le  d_{\mathbb{D}}\big(z,[b_0,y)\big)$.

If $\phi$ is a positive multiple of a root in $\Theta$, then by Lemma~\ref{partialcoarseadditive}, there exists $C=C(2r)>0$ such that
\[
\phi\!\big(\kappa(\rho(\eta))\big)
\ \ge\ 
\phi\!\big(\kappa(\rho(\gamma))\big)
\;+\;
\phi\!\big(\kappa(\rho(\gamma^{-1}\eta))\big)
\;-\; C.
\]
If $\phi$ has strong coarse additivity, then the condition 
$
d_{\mathbb{D}}\big(\gamma(b_0),[b_0,\eta(b_0)]\big)\le 2r
$
is equivalent to
$
d_{\mathbb{D}}\big(b_0,[\gamma^{-1}(b_0),\gamma^{-1}\eta(b_0)]\big)\le 2r
$
which implies that 
$
d_{S^1}\big(l_{b_0}(\gamma^{-1}(b_0)),\,l_{b_0}(\gamma^{-1}\eta(b_0))\big) > \epsilon
$
for some $\epsilon=\epsilon(2r)>0$.  
Hence, by the definition of strong coarse additivity, there still exists a constant $C=C(2r)>0$ such that
\[
\phi\!\big(\kappa(\rho(\eta))\big)
\;\ge\;
\phi\!\big(\kappa(\rho(\gamma))\big)
\;+\;
\phi\!\big(\kappa(\rho(\gamma^{-1}\eta))\big)
\;-\; C.
\]

In both cases, since $\gamma,\eta\in \mathcal{A}_n$, we have $e^{-(n+1)}<c_{\phi}(\gamma),c_{\phi}(\eta)\le e^{-n}$, hence
\[
\phi\!\big(\kappa(\rho(\gamma^{-1}\eta))\big)
\ \le\ C \;+\; \phi\!\big(\kappa(\rho(\eta))\big)-\phi\!\big(\kappa(\rho(\gamma))\big)
\ \le\ C+1.
\]
Because $\phi$ is divergent in both cases, a uniform bound on $\phi(\kappa(\rho(\gamma^{-1}\eta)))$ implies a uniform bound on 
$d_{\mathbb{D}}\!\big(b_0,(\gamma^{-1}\eta)(b_0)\big)$. Thus there exists $C_0>0$ such that
\[
\gamma(b_0),\eta(b_0)\in \mathcal{A}_n
\quad\text{and}\quad
\mathcal{O}_r(b_0,\gamma(b_0))\cap \mathcal{O}_r(b_0,\eta(b_0))\neq \emptyset
\ \Longrightarrow\
d_{\mathbb{D}}\big(\gamma(b_0),\eta(b_0)\big)\le C_0.
\]
This proves the contrapositive.
\end{proof}

The second lemma holds for general projectively visible subgroups (a class that includes discrete subgroups of $\mathsf{PSL}(2,\mathbb{R})$ acting on the Klein model of $\mathbb{D}$); see Canary--Zhang--Zimmer~\cite{CaZhZi}.
\begin{lemma}[{\cite[Lemma~8.6]{CaZhZi}}]\label{lem12}
Let $\Gamma\subset \mathsf{PSL}(2,\mathbb{R})$ be a discrete subgroup. Given $t>0$, there exist $r_0=r_0(t)>0$ and $N_0=N_0(t)>0$ such that if $z,w\in \Gamma(b_0)-\{b_0\}$, $d_{\mathbb{D}}(b_0,w)\ge N_0$, and $d_{S^1}\big(l_{b_0}(z),l_{b_0}(w)\big)\ge t$, then
\[
\mathcal{O}_{2r_0}(z,w)\ \subset\ \mathcal{O}_{r_0}(z,b_0)\cap \mathcal{O}_{2r_0+1}(b_0,w).
\]
\end{lemma}

Finally we refine Lemma~\ref{lem13} with a proof from Canary-Zhang-Zimmer~\cite[Lemma~8.7]{CaZhZi}. 
For $t>0$ and $y\in S^1$, define
\[
\mathcal{B}(y,t)\ :=\ \bigl\{\,w\in \Gamma(b_0)-\{b_0\}\ \bigm|\ d_{S^1}\big(l_{b_0}(w),y\big)<t\,\bigr\}.
\]

Suppose $\phi \in (\mathfrak{a}^*)^+$ satisfy $\delta^{\phi}_{\rho}(\Gamma)<\infty$. Fix $\delta,\epsilon>0$ such that $0<\delta<\delta+\epsilon<\delta^{\phi}_{\rho}(\Gamma)$. By definition of the critical exponent,
\[
\sum_{w\in \Gamma(b_0)} c_{\phi}(w)^{\delta+\epsilon}\ =\ \infty.
\]
Hence for each $n>0$ there exists $x_n\in \Lambda(\Gamma)$ such that
\[
\sum_{w\in \mathcal{B}(x_n,\tfrac{1}{n})} c_{\phi}(w)^{\delta+\epsilon}\ =\ \infty.
\]
Passing to a subsequence, suppose $x_n\to x$. Then, for every $t>0$,
\[
\sum_{w\in \mathcal{B}(x,t)} c_{\phi}(w)^{\delta+\epsilon}\ =\ \infty.
\]

\begin{lemma}{\cite[Lemma~8.7]{CaZhZi}}\label{lem13refined}
In the above setting, if $y\in \Gamma(x)$ and $t>0$, then
\[
\limsup_{n\to\infty}\ \sum_{w\in \mathcal{A}_n\cap \mathcal{B}(y,t)} c_{\phi}(w)^{\delta}\ =\ \infty.
\]
\end{lemma}

\begin{proof}
Choose $\gamma\in \Gamma$ with $\gamma(x)=y$. By the uniform continuity of the Cartan projection (Corollary~\ref{cor1}), there exists $D>1$ such that for all $w\in \Gamma(b_0)$,
\[
\frac{1}{D}\,c_{\phi}(\gamma(w))\ \le\ c_{\phi}(w)\ \le\ D\,c_{\phi}(\gamma(w)).
\]

\refchangenew{276}{\changednew{Assuming that the action of $\gamma$ on $S^1$ is $D'$-bi-Lipschitz, we have $\gamma\big(\mathbb{B}_{d_{S^1}}(x,\frac{t}{2D'})\big)\subset \mathbb{B}_{d_{S^1}}(y,\frac{t}{2})$. Thus, for $n\gg 1$,
\[
\mathcal{A}_n\cap \gamma \mathcal{B}\left(x,\frac{t}{2D'}\right) \subset \mathcal{A}_n\cap \mathcal{B}(y,t).
\]
Consequently, for $n\gg1$,
\[
\sum_{w\in \mathcal{A}_n\cap \mathcal{B}(y,t)} c_{\phi}(w)^{\delta}
\ge \sum_{w\in \mathcal{A}_n\cap \gamma\left( \mathcal{B}(x,\frac{t}{2D'})\right)} c_{\phi}(w)^{\delta}
\ge \frac{1}{D^{\delta}} \sum_{w\in (\gamma^{-1}\mathcal{A}_n)\cap \mathcal{B}(x,\,\frac{t}{2D'})} c_{\phi}(w)^{\delta}.
\]
}}
If, contrary to the claim,
\[
\limsup_{n\to\infty}\ \sum_{w\in \mathcal{A}_n\cap \mathcal{B}(y,t)} c_{\phi}(w)^{\delta} < \infty,
\]
then there exists a constant $C>0$ such that for all $n$,
\changednew{
\[
\sum_{w\in (\gamma^{-1}\mathcal{A}_n)\cap \mathcal{B}(x,\,\frac{t}{2D'})} c_{\phi}(w)^{\delta} < C.
\]
}
Since $\Gamma (b_0) = \bigcup_{n} \gamma^{-1}(\mathcal{A}_n)$, it follows that
\changednew{
\[
\sum_{w\in \mathcal{B}(x,\frac{t}{2D'})} c_{\phi}(w)^{\delta+\epsilon}
\le \sum_{n=0}^{\infty}\left(D^{\epsilon}e^{-\epsilon n}\!\!\sum_{w\in (\gamma^{-1}\mathcal{A}_n)\cap \mathcal{B}(x,\frac{t}{2D'})} c_{\phi}(w)^{\delta}\right)
\le C D^{\epsilon} \sum_{n=0}^{\infty} e^{-\epsilon n} < \infty,
\]
}
a contradiction.
\end{proof}

Now we conclude the proof of Proposition~\ref{prop8}. Still assume $x\in \Lambda(\Gamma)$ as above, choose $x'\in \Gamma(x)-\{x\}$ and fix
\[
0<t_0<\tfrac14\, d_{S^1}(x,x').
\]
By the strong coarse additivity of $\phi$, there exists $C_1=C_1(t_0)>0$ so that if $\gamma,\eta\in \Gamma$ satisfy
\[
d_{S^1}\big(l_{b_0}(\eta(b_0)),\,l_{b_0}(\gamma^{-1}(b_0))\big)\ \ge\ t_0,
\]
then
\begin{equation}\label{eqappendB5}
e^{-\phi(\kappa(\rho(\gamma\eta)))}\ \ge\ C_1\;e^{-\phi(\kappa(\rho(\gamma)))}\,e^{-\phi(\kappa(\rho(\eta)))}.  
\end{equation}

Also, let $r_0=r_0(t_0)>0$ and $N_0=N_0(t_0)>0$ be the constants given by Lemma~\ref{lem12}. Then let $C_0=C_0(2r_0+1)>0$ be the constant from Lemma~\ref{lem11refined}. Since $\Gamma$ is a discrete group, there exist a finite partition
\[
\Gamma(b_0)\;=\;P_1\ \cup\ \cdots\ \cup\ P_L
\]
such that each $P_i$ is $C_0$–separated. Since $\Gamma$ is discrete and $\phi$ has finite critical exponent,
\[
\lim_{n\to\infty}\ \min_{z\in \mathcal{A}_n} d(b_0,z)\;=\;\infty.
\]
So by Lemma~\ref{lem13refined}, there exist $n,n'\ge 1$ such that
\begin{equation}\label{eqappendB2}
    \min_{z\in \mathcal{A}_n\cup \mathcal{A}_{n'}} d(b_0,z)\ \ge\ N_0,\qquad
\sum_{w\in \mathcal{A}_n\cap \mathcal{B}(x,t_0)} c_{\phi}(w)^{\delta}\ \ge\ \frac{L}{C_1^{\delta}},
\qquad
\sum_{w\in \mathcal{A}_{n'}\cap \mathcal{B}(x',t_0)} c_{\phi}(w)^{\delta}\ \ge\ \frac{L}{C_1^{\delta}}.
\end{equation}

Thus, there exist $i_x,i_{x'}\in\{1,\dots,L\}$ such that
\begin{equation}\label{eqappendB4}
 \sum_{w\in S_x} c_{\phi}(w)^{\delta}\ \ge\ \frac{1}{C_1^{\delta}}
\qquad\text{and}\qquad
\sum_{w\in S_{x'}} c_{\phi}(w)^{\delta}\ \ge\ \frac{1}{C_1^{\delta}},   
\end{equation}
where $S_x=P_{i_x}\cap \mathcal{A}_n\cap \mathcal{B}(x,t_0)$ and $S_{x'}=P_{i_{x'}}\cap \mathcal{A}_{n'}\cap \mathcal{B}(x',t_0)$. Set
\[
D_0\ :=\ \max_{z\in \mathcal{A}_n\cup \mathcal{A}_{n'}} d(b_0,z).
\]

Pick $z=\gamma(b_0)\in \Gamma(b_0)-\{b_0\}$. Since $d_{S^1}(x,x')>4t_0$, there exists $y\in\{x,x'\}$ such that
\begin{equation}\label{eqappendB1}
d_{S^1}\big(y,\,l_{b_0}(\gamma^{-1}(b_0))\big)\ \ge\ 2t_0.
\end{equation}

Let $\mathcal{C}_{\delta}(z)=\gamma(S_y)\subset \Gamma(b_0)-\{z\}$. We check that $\mathcal{C}_{\delta}(z)$ satisfies all the requirements in Proposition~\ref{prop8}. 

Since $S_y\subset \mathcal{B}(y,t_0)$, Inequality~(\ref{eqappendB1}) implies that
\begin{equation}\label{eqappendB3}
  d_{S^1}\big(l_{b_0}(\gamma^{-1}(w)),l_{b_0}(\gamma^{-1}(b_0))\big)\ >\ t_0
\quad\text{for all } w\in \mathcal{C}_{\delta}(z).  
\end{equation}

Since $S_y\subset \mathcal{A}_n\cup \mathcal{A}_{n'}$, from Equation~(\ref{eqappendB2}) we have
\[
d_{\mathbb{D}}\big(b_0,\,\gamma^{-1}(w)\big)\ \ge\ N_0
\quad\text{for all } w\in \mathcal{C}_{\delta}(z).
\]
So for any $w\in \mathcal{C}_{\delta}(z)$, Inequality~(\ref{eqappendB3}) and Lemma~\ref{lem12} shows:
\[
\mathcal{O}_{2r_0}(\gamma^{-1}(b_0),\gamma^{-1}(w))\ \subset\ \mathcal{O}_{r_0}(\gamma^{-1}(b_0),b_0) \cap \mathcal{O}_{2r_0+1}\big(b_0,\gamma^{-1}(w)\big).
\]
Applying $\gamma$ to both sides we get
\[
\mathcal{O}_{2r_0}(b_0,w)\ \subset\ \mathcal{O}_{r_0}(b_0,z) \cap \mathcal{O}_{2r_0+1}\big(z,w\big)\subset \mathcal{O}_{r_0}(b_0,z) \quad\text{for all } w\in \mathcal{C}_{\delta}(z).
\]
So Proposition~\ref{prop8}~(1) holds.

Since $S_y\subset P_{i_y}$, $S_y$ is $C_0$–separated, so by Lemma~\ref{lem11refined},
\[
\mathcal{O}_{2r_0+1}\big(b_0,\gamma^{-1}(w)\big) \cap \mathcal{O}_{2r_0+1}\big(b_0,\gamma^{-1}(w')\big)\;=\;\varnothing
\quad\text{for }w\ne w'\in \mathcal{C}_{\delta}(z)
\]
From the same application of Lemma~\ref{lem12} as above, for $w\ne w'\in \mathcal{C}_{\delta}(z)$,
\[
\mathcal{O}_{2r_0}(b_0,w)\cap \mathcal{O}_{2r_0}(b_0,w') = \gamma \bigg( \mathcal{O}_{2r_0}\big(\gamma^{-1}(b_0),\gamma^{-1}(w)\big)\cap \mathcal{O}_{2r_0}\big(\gamma^{-1}(b_0),\gamma^{-1}(w')\big)\bigg)
\]
\[
\subset \gamma\bigg(\mathcal{O}_{2r_0+1}\big(b_0,\gamma^{-1}(w)\big)\cap \mathcal{O}_{2r_0+1}\big(b_0,\gamma^{-1}(w')\big)\bigg)=\emptyset.
\]
So Proposition~\ref{prop8}~(2) holds. 

Since $S_y\subset \mathcal{A}_n\cup \mathcal{A}_{n'}$, we have
\[
d_{\mathbb{D}}(z,w) = d_{\mathbb{D}}\big(b_0,\,\gamma^{-1}(w)\big)\ \le\ D_0
\quad\text{for all } w\in \mathcal{C}_{\delta}(z).
\]
So Proposition~\ref{prop8}~(3) holds. 

Finally, for each $w\in \mathcal{C}(z)$, choose $\eta_w\in \Gamma$ so that
\[
\eta_w(b_0)=\gamma^{-1}(w)\in S_y .
\]
Since $\eta_{w}(b_0)\in \mathcal{B}(y,t_0)$, from Inequality~(\ref{eqappendB1}) we have
\[
d_{S^1}\!\big(l_{b_0}(\eta_w(b_0)),\, l_{b_0}(\gamma^{-1}(b_0))\big) = d_{S^1}\big(l_{b_0}(\gamma^{-1}(w)),l_{b_0}(\gamma^{-1}(b_0))\big)\ >\ t_0,
\]
so by Inequality~(\ref{eqappendB5}) we have
\[
c_{\phi}(w)
= e^{-\phi(\kappa(\rho(\gamma\eta_w)))}
\ \ge\ C_1\, e^{-\phi(\kappa(\rho(\gamma)))}\, e^{-\phi(\kappa(\rho(\eta_w)))}
= C_1\, c_{\phi}(z)\, c_{\phi}\!\big(\gamma^{-1}(w)\big).
\]
Inequality~(\ref{eqappendB4}) implies that
\[
\sum_{w\in \mathcal{C}(z)} c_{\phi}(w)^{\delta}
\ \ge\ C_1^{\delta}\, c_{\phi}(z)^{\delta}\! \sum_{w\in S_y} c_{\phi}(w)^{\delta}
\ \ge\ c_{\phi}(z)^{\delta}.
\]
So Proposition~\ref{prop8}~(4) holds, which completes the proof of Proposition~\ref{prop8}.

}}

{\color{red}
{
\section{Ergodic components of multi-Fuchsian representations}\label{append3}
This appendix provides a more detailed explanation of Example~\ref{ergodicitycounterexample}.  
We use products of Fuchsian representations to construct maximal representations into $\mathsf{Sp}(2n,\mathbb{R})$ whose limit curves have $n$ ergodic components with respect to the Lebesgue measure.  
Moreover, \refchangenew{2.1.1.}{\changednew{recall that $\mathsf{Pos}(n)$ is the cone consisting of $n\times n$ positive definite matrices.}} We show that the rank (see Definition~\ref{rankofcone}) of $\mathsf{Pos}(n)$ is exactly $n$, which, by Theorem~\ref{ergodictheoreminpaper}, implies that $D(\mathfrak{sp}(2n,\mathbb{R}),\alpha)=n$ is optimal in Theorem~\ref{ergodictheorem}.

Let $\Gamma$ be a torsion-free closed surface group \refchangenew{2.1.2.}{\changednew{of genus at least two}}. For any $n\in\mathbb{Z}^{>0}$, pick
pairwise nonconjugate Fuchsian representations
\[
\rho_i:\Gamma\to \mathsf{SL}(2,\mathbb{R}),\qquad i=1,\dots,n.
\]
For each $1\le i\le n$ write, for $\gamma\in\Gamma$,
\[
\rho_i(\gamma)=
\begin{pmatrix}
a_i(\gamma)& b_i(\gamma)\\
c_i(\gamma)& d_i(\gamma)
\end{pmatrix}.
\]
Define $\rho:\Gamma\to \mathsf{SL}(2,\mathbb{R})^n \hookrightarrow \mathsf{Sp}(2n,\mathbb{R})$
to be the composition of $(\rho_1,\dots,\rho_n)$ with the block-diagonal embedding
relative to $\mathbb{R}^{2n}\cong \bigoplus_{i=1}^n \mathbb{R}^2$. In coordinates,
\[
\rho(\gamma)=
\begin{pmatrix}
\operatorname{diag}\!\big(a_i(\gamma)\big) & \operatorname{diag}\!\big(b_i(\gamma)\big)\\[2pt]
\operatorname{diag}\!\big(c_i(\gamma)\big) & \operatorname{diag}\!\big(d_i(\gamma)\big)
\end{pmatrix}\in \mathsf{Sp}(2n,\mathbb{R}),
\]
where $\operatorname{diag}(\cdot)$ denotes the $n\times n$ diagonal matrix with the indicated entries.
Since each $\mathsf{SL}(2,\mathbb{R})$ factor preserves the standard symplectic form on $\mathbb{R}^2$,
the block-diagonal image lies in $\mathsf{Sp}(2n,\mathbb{R})$.

$\rho$ is maximal \refchangenew{2.1.3.}{\changednew{(see Burger, Iozzi, Labourie and Wienhard~\cite[Example 3.9]{maximal})}} with respect to the unique long simple root
$\alpha$ of $\mathfrak{sp}(2n,\mathbb{R})$, characterized by
\[
\alpha\!\left(\operatorname{diag}(\lambda_1,\dots,\lambda_n,-\lambda_1,\dots,-\lambda_n)\right)=2\lambda_n.
\]
Moreover, let $\kappa$ denote the Cartan projection on $\mathsf{Sp}(2n,\mathbb{R})$,
and let $\kappa'$ denote the Cartan projection on $\mathsf{SL}(2,\mathbb{R})$.
Let $\alpha'$ be the (unique) simple root of $\mathfrak{sl}(2,\mathbb{R})$.
Then, for all $\gamma\in\Gamma$,
\[
\alpha\big(\kappa(\rho(\gamma))\big)
 \;=\; \min_{1\le i\le n}\,\alpha'\big(\kappa'(\rho_i(\gamma))\big).
\]

We continue to use $\xi^{\alpha}:\Lambda(\Gamma)\to \mathcal{F}_{\alpha}$ to denote the positive limit map and $m_{\alpha}$ to denote the Lebesgue measure on $\xi^{\alpha}(\Lambda(\Gamma))$, as in Section~\ref{sectionfinitenessofergodiccomponents}. \refchangenew{2.1.4.}{\changednew{We denote the pullback of $m_{\alpha}$ via $\xi^{\alpha}$ by $m'$, which is a Borel measure on $\Lambda(\Gamma)$.}}

\refchangenew{2.1.4.}{\changednew{Note that the $\alpha'$-flag manifold $\mathcal{F}_{\alpha'}$ of $\mathsf{SL}(2,\mathbb{R})$ is isomorphic to the boundary $\partial\mathbb{D}\cong S^1$. For each $i=1,\dots,n$, let $\xi^{\alpha',i}:\Lambda(\Gamma)\to \partial \mathbb{D}$ denote the limit map associated with $\rho_i$, and let $m'_i$ denote the pullback of the Lebesgue measure on $\partial\mathbb{D}$ via $\xi^{\alpha',i}$, which is a measure on $\Lambda(\Gamma)$.}}

\begin{theorem}\label{ncomponents}
$m_{\alpha}$ has $n$ ergodic components under the $\rho(\Gamma)$ action.
\end{theorem}

\begin{proof}
\refchangenew{2.1.4.}{\changednew{It is equivalent to prove that $m'$ has $n$ ergodic components under the $\Gamma$ action.}}

Fix $r>0$ and $b_0\in \mathbb{D}$.  
Since Fuchsian representations are maximal when we identify $\mathsf{SL}(2,\mathbb{R})$ with $\mathsf{Sp}(2,\mathbb{R})$, by applying Corollary~\ref{shadowradius}\,(1) to $\rho_i$ ($1\le i\le n$) and to $\rho$, there exists a constant $C>1$ such that for any $\gamma\in \Gamma$ with $d_{\mathbb{D}}(b_0,\gamma(b_0))>r$, we have
\[
\frac{1}{C}\, e^{-\alpha'(\kappa'(\rho_i(\gamma)))} 
   \le \refchangenew{2.1.4.}{\changednew{m'_{i}\big(\mathcal{O}_r(b_0,\gamma(b_0))\big)}}
   \le C\, e^{-\alpha'(\kappa'(\rho_i(\gamma)))}, 
   \qquad i = 1,\dots,n,
\]
and
\[
\frac{1}{C}\, e^{-\alpha(\kappa(\rho(\gamma)))} 
   \le \refchangenew{2.1.4.}{\changednew{m'\big(\mathcal{O}_r(b_0,\gamma(b_0))\big)}}
   \le C\, e^{-\alpha(\kappa(\rho(\gamma)))}.
\]

Since
\[
\alpha\big(\kappa(\rho(\gamma))\big)
   = \min_{1\le i\le n}\,\alpha'\big(\kappa'(\rho_i(\gamma))\big),
\]
we deduce:
\begin{enumerate}
    \item For any $i$,
    \[
    \refchangenew{2.1.4.}{\changednew{m' \big(\mathcal{O}_r(b_0,\gamma(b_0))\big)}}
       \ge \frac{1}{C^2}\,
         \refchangenew{2.1.4.}{\changednew{m'_{i}\big(\mathcal{O}_r(b_0,\gamma(b_0))\big)}}.
    \]
    \item We also have
    \[
    \refchangenew{2.1.4.}{\changednew{m'\big(\mathcal{O}_r(b_0,\gamma(b_0))\big)}}
       \le \sum_{i=1}^n C^2\,
         \refchangenew{2.1.4.}{\changednew{m'_{i}\big(\mathcal{O}_r(b_0,\gamma(b_0))\big)}}.
    \]
\end{enumerate}
Hence,
\[
   \refchangenew{2.1.4.}{\changednew{\frac{1}{nC^2}\sum_{i=1}^n \,
m'_{i}\big(\mathcal{O}_r(b_0,\gamma(b_0))\big)
   \;\le\;
m' \big(\mathcal{O}_r(b_0,\gamma(b_0))\big)
   \;\le\;
  C^2 \sum_{i=1}^n \,
   m'_{i}\big(\mathcal{O}_r(b_0,\gamma(b_0))\big)}}.
\]

Repeating the same Vitali-covering argument as in the proof of Proposition~\ref{acutemeansfull} we see that    \refchangenew{2.1.4.}{\changednew{$m'$ and $\sum_{i = 1}^{n} m'_{i}$}} are mutually absolutely continuous measures.  
On the other hand, it is standard that each \refchangenew{2.1.4.}{\changednew{$m'_{i}$}} is ergodic, and since the $\rho_i$ are pairwise nonconjugate, the measures    \refchangenew{2.1.4.}{\changednew{$m'_{i}$ are mutually singular (see Kuusalo~\cite[Theorem~1]{KUUSALO}).}}  
Consequently,    \refchangenew{2.1.4.}{\changednew{$\sum_{i = 1}^{n} m'_{i}$}} has $n$ ergodic components, and so does    \refchangenew{2.1.4.}{\changednew{$m'$}}.
\end{proof}

Next we show the rank of $\mathsf{Pos}(n)$ is $n$ (see Definition~\ref{rankofcone}), so the above example implies that the estimate of ergodic components in Theorem~\ref{ergodictheoreminpaper} is optimal.
\begin{proposition}\label{rankofpos}
$r(\mathsf{Pos}(n)) = n$.
\end{proposition}
\begin{proof}
Firstly, $r(\mathsf{Pos}(n)) \ge n$, as $\sum_{i = 1}^{n} E_{i,i}=I_n$ is positive definite, but deleting any $E_{i,i}$ the remaining sum is not,  
where $E_{i,i}$ denotes the matrix whose only nonzero entry is $1$ on the $i$-th row and the $i$-th column.  
Thus, it remains to prove the following statement:

\medskip
\noindent
\textbf{Claim.}
Let $A_0,\dots,A_n$ be $n+1$ symmetric positive semidefinite matrices.  
If $\sum_{i=0}^{n} A_i$ is positive definite, then there exists an index $0 \le k \le n$ such that  
\[
\sum_{\substack{0\le i\le n \\ i\ne k}} A_i
\]
is also positive definite.

\vspace{3mm}

Define $S:=\sum_{i=0}^n A_i$. By the assumption that $S$ is positive definite, \refchangenew{2.1.5.}{\changednew{the square root $S^{\frac{1}{2}}\in \mathsf{Pos}(n)$ is well-defined. Define}}
\[
B_i \;:=\; S^{-1/2} A_i S^{-1/2}\ \ (\text{positive semidefinite}), 
\qquad\text{so}\qquad \sum_{i=0}^n B_i \;=\; I_n .
\]
If the maximal eigenvalue $\lambda_{\max}(B_k)<1$ for some $k$, then
\[
S-A_k \;=\; S^{1/2}\bigl(I_n-B_k\bigr)S^{1/2}
\]
is positive definite, and we are done.  
Thus it suffices to show that there exists $k$ with $\lambda_{\max}(B_k)<1$.

Suppose on the contrary that $\lambda_{\max}(B_i)\ge 1$ for all $i=0,\dots,n$.  
Since each $B_i$ is positive semidefinite, $\operatorname{tr}(B_i)\ge \lambda_{\max}(B_i)\ge 1$, hence
\[
n \;=\; \operatorname{tr}\!\Bigl(\sum_{i=0}^n B_i\Bigr)
   \;=\; \sum_{i=0}^n \operatorname{tr}(B_i)
   \;\ge\; \sum_{i=0}^n 1
   \;=\; n+1,
\]
a contradiction.  
Therefore some $k$ satisfies $\lambda_{\max}(B_k)<1$, and consequently $\sum_{i\ne k} A_i = S-A_k$ is positive definite.
\end{proof}

}
}

\bibliographystyle{amsplain}
\bibliography{ref}

\providecommand{\bysame}{\leavevmode\hbox to3em{\hrulefill}\thinspace}
\providecommand{\MR}{\relax\ifhmode\unskip\space\fi MR }
% \MRhref is called by the amsart/book/proc definition of \MR.
\providecommand{\MRhref}[2]{%
  \href{http://www.ams.org/mathscinet-getitem?mr=#1}{#2}
}
\providecommand{\href}[2]{#2}
\begin{thebibliography}{10}

\bibitem{ahlforsrs}
Lars~V. Ahlfors and Leo Sario, \emph{Riemann surfaces}, Princeton Mathematical Series, vol. No. 26, Princeton University Press, Princeton, NJ, 1960. \MR{114911}

\bibitem{barvinokcourse}
Alexander Barvinok, \emph{{A Course in Convexity}}, Graduate Studies in Mathematics, vol.~54, Providence, RI: American Mathematical Society (AMS), 2002.

\bibitem{beardonconvergence}
A.~F. Beardon, \emph{The exponent of convergence of {Poincar{\'e}} series}, Proceedings of the London Mathematical Society. Third Series \textbf{18} (1968), 461--483.

\bibitem{Benoistasym}
Y.~B{\'e}noist, \emph{Asymptotic properties of linear groups}, Geometric and Functional Analysis \textbf{7} (1997), no.~1, 1--47.

\bibitem{benoist2}
Yves Benoist, \emph{Proper actions on reductive homogeneous spaces}, Annals of Mathematics. Second Series \textbf{144} (1996), no.~2, 315--347.

\bibitem{beyrer2024positivitycrossratioscollarlemma}
Jonas Beyrer, Olivier Guichard, Fran{\c{c}}ois Labourie, Beatrice Pozzetti, and Anna Wienhard, \emph{Positivity, cross-ratios and the collar lemma}, Preprint, {arXiv}:2409.06294.

\bibitem{bishop1994hausdorffdimensionkleiniangroups}
Christopher~J. Bishop and Peter~W. Jones, \emph{Hausdorff dimension and {Kleinian} groups}, Acta Mathematica \textbf{179} (1997), no.~1, 1--39.

\bibitem{blayac2021pattersonsullivandensitiesconvexprojective}
Pierre-Louis Blayac, \emph{Patterson--{S}ullivan {D}ensities in {C}onvex {P}rojective {G}eometry}, Commentarii Mathematici Helvetici. A Journal of the Swiss Mathematical Society \textbf{100} (2025), no.~4, 663--744. \MR{4964218}

\bibitem{Bowditch+1999+23+54}
B.~H. Bowditch, \emph{Convergence groups and configuration spaces}, {Geometric Group Theory Down Under. Proceedings of a Special Year in Geometric Group Theory, Canberra, Australia, July 14--19, 1996}, Berlin: de Gruyter, 1999, pp.~23--54.

\bibitem{maximal}
Marc Burger, Alessandra Iozzi, Fran{\c{c}}ois Labourie, and Anna Wienhard, \emph{Maximal representations of surface groups: symplectic {Anosov} structures}, Pure and Applied Mathematics Quarterly \textbf{1} (2005), no.~3, 543--590.

\bibitem{maximalearlier}
Marc Burger, Alessandra Iozzi, and Anna Wienhard, \emph{Surface group representations with maximal {T}oledo invariant}, C. R. Math. Acad. Sci. Paris \textbf{336} (2003), no.~5, 387--390. \MR{1979350}

\bibitem{burger2008surfacegrouprepresentationsmaximal}
\bysame, \emph{Surface group representations with maximal {Toledo} invariant}, Annals of Mathematics. Second Series \textbf{172} (2010), no.~1, 517--566.

\bibitem{AnosovNotes}
Richard Canary, \emph{{Anosov} representations: informal lecture notes}, 2021, Unpublished, available at \texttt{\detokenize{https://pagine.dm.unipi.it/~a019210/Canary_Survey_Anosov.pdf}}.

\bibitem{CaZhZi}
Richard Canary, Tengren Zhang, and Andrew Zimmer, \emph{Entropy rigidity for cusped {Hitchin} representations}, Preprint, {arXiv}:2201.04859.

\bibitem{CaZhZi2}
\bysame, \emph{Cusped {Hitchin} representations and {Anosov} representations of geometrically finite {Fuchsian} groups}, Advances in Mathematics \textbf{404} (2022), 67, Id/No 108439.

\bibitem{CaZhZi3}
\bysame, \emph{{Patterson-Sullivan} measures for transverse subgroups}, Journal of Modern Dynamics \textbf{20} (2024), 319--377.

\bibitem{Colin}
Colin Davalo, \emph{Maximal and {B}orel {A}nosov representations into {${\rm Sp}(4,\mathbb{R})$}}, Adv. Math. \textbf{442} (2024), Paper No. 109578, 21. \MR{4711845}

\bibitem{Dey_2022}
Subhadip Dey and Michael Kapovich, \emph{{Patterson-Sullivan} theory for {Anosov} subgroups}, Transactions of the American Mathematical Society \textbf{375} (2022), no.~12, 8687--8737.

\bibitem{dey2024ahlforsregularitypattersonsullivanmeasures}
Subhadip Dey, Dongryul~M. Kim, and Hee Oh, \emph{Ahlfors regularity of {Patterson}-{Sullivan} measures of {Anosov} groups and applications}, Preprint, {arXiv}:2401.12398.

\bibitem{flamm2026thetapositiverepresentationsrealclosed}
Xenia Flamm, Nicolas Tholozan, Tianqi Wang, and Tengren Zhang, \emph{$\theta$-positive representations over real closed fields}, 2026.

\bibitem{fock2006modulispaceslocalsystems}
Vladimir Fock and Alexander Goncharov, \emph{Moduli spaces of local systems and higher {Teichm{\"u}ller} theory}, Publications Mathématiques de l'IHÉS \textbf{103} (2006), 1--211.

\bibitem{glorieux2023hausdorffdimensionlimitsets}
Olivier Glorieux, Daniel Monclair, and Nicolas Tholozan, \emph{Hausdorff dimension of limit sets for projective {Anosov} representations}, Journal de l'{\'E}cole polytechnique -- Math{\'e}matiques \textbf{10} (2023), 1157--1193.

\bibitem{Goldman1988}
William~M. Goldman, \emph{Topological components of spaces of representations}, Inventiones Mathematicae \textbf{93} (1988), no.~3, 557--607.

\bibitem{Gu_ritaud_2017}
Fran{\c{c}}ois Gu{\'e}ritaud, Olivier Guichard, Fanny Kassel, and Anna Wienhard, \emph{{Anosov} representations and proper actions}, Geometry \& Topology. \textbf{21} (2017), no.~1, 485--584.

\bibitem{GLW}
Olivier Guichard, Fran{\c{c}}ois Labourie, and Anna Wienhard, \emph{Positivity and representations of surface groups}, Preprint, {arXiv}:2106.14584.

\bibitem{GW1}
Olivier Guichard and Anna Wienhard, \emph{{Anosov} representations: domains of discontinuity and applications}, Inventiones Mathematicae \textbf{190} (2012), no.~2, 357--438.

\bibitem{GW3}
\bysame, \emph{Positivity and higher {Teichm{\"u}ller} theory}, {European Congress of Mathematics. Proceedings of the 7th European Congress of Mathematics, Berlin, Germany, July 18--22, 2016}, Z{\"u}rich: European Mathematical Society (EMS), 2018, pp.~289--310.

\bibitem{GW2}
\bysame, \emph{Generalizing {Lusztig}'s total positivity}, Inventiones Mathematicae \textbf{239} (2025), no.~3, 707--799.

\bibitem{helgason}
Sigurdur Helgason, \emph{{Differential Geometry, {Lie} Groups, and Symmetric Spaces}}, Graduate Studies in Mathematics, vol.~34, Providence, RI: American Mathematical Society (AMS), 2001.

\bibitem{HITCHIN1992449}
Nigel~J. Hitchin, \emph{Lie groups and {Teichm{\"u}ller} space}, Topology \textbf{31} (1992), no.~3, 449--473.

\bibitem{humphreys1994introduction}
J.~E. Humphreys, \emph{{Introduction to {Lie} Algebras and Representation Theory}}, Graduate Texts in Mathematics, vol.~9, Springer New York, NY, 1972.

\bibitem{kapovich2017anosov}
Michael Kapovich, Bernhard Leeb, and Joan Porti, \emph{{Anosov} subgroups: dynamical and geometric characterizations}, European Journal of Mathematics \textbf{3} (2017), no.~4, 808--898.

\bibitem{kim2023hausdorffdimensiondirectionallimit}
Dongryul~M. Kim, Yair~N. Minsky, and Hee Oh, \emph{Hausdorff dimension of directional limit sets for self-joinings of hyperbolic manifolds}, Journal of Modern Dynamics \textbf{19} (2023), 433--453.

\bibitem{kim_properly_2025}
Dongryul~M. Kim, Hee Oh, and Yahui Wang, \emph{Properly discontinuous actions, growth indicators, and conformal measures for transverse subgroups}, Mathematische Annalen (2025), 1--60.

\bibitem{knapp}
Anthony~W. Knapp, \emph{{Lie Groups Beyond an Introduction}}, Progress in Mathematics, vol. 140, Boston: Birkh{\"a}user, 1996.

\bibitem{Kostant2010}
Bertram Kostant, \emph{Root systems for {Levi} factors and {Borel}-de {Siebenthal} theory}, {Symmetry and Spaces. In Honor of Gerry Schwarz on the occasion of his 60th birthday}, Basel: Birkh{\"a}user, 2010, pp.~129--152.

\bibitem{KUUSALO}
Tapani Kuusalo, \emph{Boundary mappings of geometric isomorphisms of {F}uchsian groups}, Ann. Acad. Sci. Fenn. Ser. A. I. \textbf{545} (1973), 7. \MR{342692}

\bibitem{labourie2005anosovflowssurfacegroups}
Fran{\c{c}}ois Labourie, \emph{{Anosov} flows, surface groups and curves in projective space}, Inventiones Mathematicae \textbf{165} (2006), no.~1, 51--114.

\bibitem{labourie2009cross}
Fran{\c{c}}ois Labourie and Gregory McShane, \emph{Cross ratios and identities for higher {Teichm{\"u}ller}-{Thurston} theory}, Duke Mathematical Journal \textbf{149} (2009), no.~2, 279--345.

\bibitem{Lusztig1998TotalPI}
G.~Lusztig, \emph{Total positivity in partial flag manifolds}, Representation Theory of the American Mathematical Society \textbf{2} (1998), 70--78.

\bibitem{Patterson1976}
S.~J. Patterson, \emph{The exponent of convergence of {Poincar{\'e}} series}, Monatshefte f{\"u}r Mathematik \textbf{82} (1976), 297--315.

\bibitem{potrie2017eigenvaluesentropyhitchinrepresentation}
Rafael Potrie and Andr{\'e}s Sambarino, \emph{Eigenvalues and entropy of a {Hitchin} representation}, Inventiones Mathematicae \textbf{209} (2017), no.~3, 885--925.

\bibitem{pozzetti2020conformalityrobustclassnonconformal}
Maria~Beatrice Pozzetti, Andr{\'e}s Sambarino, and Anna Wienhard, \emph{Conformality for a robust class of non-conformal attractors}, Journal f{\"u}r die reine und angewandte Mathematik \textbf{774} (2021), 1--51.

\bibitem{Liplim}
\bysame, \emph{{Anosov} representations with {Lipschitz} limit set}, Geometry \& Topology \textbf{27} (2023), no.~8, 3303--3360.

\bibitem{shadowvitali}
Thomas Roblin, \emph{{Ergodicit{\'e} et {\'e}quidistribution en courbure n{\'e}gative}}, M{\'e}moires de la Soci{\'e}t{\'e} Math{\'e}matique de France. Nouvelle S{\'e}rie, vol.~95, Soci{\'e}t{\'e} Math{\'e}matique de France (SMF), Paris, 2003.

\bibitem{steinreal}
Elias~M. Stein and Rami Shakarchi, \emph{{Real Analysis. Measure Theory, Integration, and {Hilbert} Spaces}}, Princeton Lectures in Analysis, vol.~3, Princeton, NJ: Princeton University Press, 2005.

\bibitem{warner_harmonic_1972}
Garth Warner, \emph{Harmonic {Analysis} on {Semi}-{Simple} {Lie} {Groups} {I}}, Springer Berlin Heidelberg, Berlin, Heidelberg, 1972.

\end{thebibliography}

\end{document}